\newcommand{\inverse}[1] {{\mathfrak{inv}(#1)}}
\newcommand{\NN} {\mathbb{N}}
\newcommand{\CC} {\mathbb{C}}
\newcommand{\RR} {\mathbb{R}}
\newcommand{\bl} {\boldsymbol{[}}
\newcommand{\br} {\boldsymbol{]}}
\newcommand{\JI} {\mathrm{J}}
\newcommand{\lip}  {\mathrm{lip}}
\newcommand{\uu} {\mathfrak{u}}
\newcommand{\mm} {\mathfrak{m}}
\newcommand{\nn} {\mathfrak{n}}
\newcommand{\oo} {\mathfrak{o}}
\newcommand{\pp} {\mathfrak{p}}
\newcommand{\qq} {\mathfrak{q}}
\newcommand{\ff} {\mathfrak{f}}
\newcommand{\vv} {\mathfrak{v}}
\newcommand{\ww} {\mathfrak{w}}
\newcommand{\symb} {\centerdot}
\newcommand{\bg} {\boldsymbol{\gamma}}
\newcommand{\mmm} {{\symb}\mm}
\newcommand{\nnn} {{\symb}\nn}
\newcommand{\ppp} {{\symb}\pp}
\newcommand{\qqq} {{\symb}\qq}
\newcommand{\vvv} {{\symb}\vv}
\newcommand{\www} {{\symb}\ww}
\newcommand{\lleq} {\prec}
\newcommand{\ggeq} {\succ}
\newcommand{\cpp} {\comp{\pp}}
\newcommand{\cqq} {\comp{\qq}}
\newcommand{\SEM} {\mathfrak{P}}
\newcommand{\SEML} {\symb\mathfrak{P}}
\newcommand{\SEMM} {\mathfrak{Q}}
\newcommand{\inv} {\mathrm{inv}}
\DeclareMathOperator*{\innt}{\ThisStyle{\vstretch{0.9}{\hstretch{1.5}{\rotatebox{10}{$\SavedStyle\hspace{-0.5pt}\!\int\!\hspace{-0.5pt}$}}}}}
\DeclareMathOperator*{\Der}{\ThisStyle{\hstretch{1.2}{\rotatebox{0}{$\SavedStyle\delta^r$}}}}
\newcommand{\DIDE} {\mathfrak{D}}
\newcommand{\DIDED} {\mathrm{D}}
\newcommand{\EV} {\mathrm{Evol}}
\newcommand{\EVE} {{\mathrm{evol}}}
\newcommand{\MX} {\mathfrak{X}}
\newcommand{\finite} {\mathrm{F}}
\newcommand{\B} {\mathrm{B}}
\newcommand{\OB} {\ovl{\B}}
\newcommand{\chart} {\Xi}
\newcommand{\chartinv} {\Xi^{-1}}
\newcommand{\RT} {\mathrm{R}}
\newcommand{\LT} {\mathrm{L}}
\newcommand{\ad} {\mathrm{ad}}
\newcommand{\Ad} {\mathrm{Ad}}
\newcommand{\com}[1] {\ad\:{#1}}
\newcommand{\pkt} {\boldsymbol{.}}
\newcommand{\CP} {\mathrm{CP}}
\newcommand{\COP} {\mathrm{CoP}}
\newcommand{\Poly} {\mathrm{Poly}}
\newcommand{\DP} {\DIDE\mathrm{P}}
\newcommand{\rhon} {\boldsymbol{\rho}}
\newcommand{\dindp}  {\mathrm{p}}
\newcommand{\dind}  {\mathrm{s}}
\newcommand{\dindo}  {\mathrm{o}}
\newcommand{\dindu}  {\mathrm{u}}
\newcommand{\mackeyconst} {\mathfrak{c}}
\newcommand{\mackeyindex} {\mathfrak{l}}
\newcommand{\wmackeyindex} {\mathfrak{j}}
\newcommand{\rad} {\mathfrak{r}}
\newcommand{\EINS} {\mathbf{1}}
\newcommand{\comp}[1] {\ovl{#1}}
\newcommand{\mgc}  {\ovl{\mg}}
\newcommand{\llleq} {\preceq}
\newcommand{\exxp}  {\mathfrak{exp}}
\newcommand{\conj}  {\mathrm{Conj}}
\newcommand{\id} {\mathrm{id}}
\newcommand{\dermap}  {\Omega}
\newcommand{\dermapinv}  {\Upsilon}
\newcommand{\dermapdiff}  {\omega}
\newcommand{\dermapinvdiff}  {\upsilon}
\newcommand{\oee} {\"o}
\newcommand{\sequ} {\mathfrak{Sequ}}
\newcommand{\mack}{\mathfrak{Mack}}
\newcommand{\sequy} {\mathfrak{s}}
\newcommand{\mackey}{\mathfrak{m}}
\newcommand{\MA} {\mathcal{A}}
\newcommand{\MAU} {\MA^\times}
\newcommand{\ovl}[1] {\overline{#1}}
\newcommand{\wt}[1] {\widetilde{#1}}
\newcommand{\wh}[1] {\widehat{#1}}
\newcommand{\dd} {\mathrm{d}}
\newcommand{\im} {\mathrm{im}}
\newcommand{\dom} {\mathrm{dom}}
\newcommand{\OO}  {\mathcal{O}}
\newcommand{\U}  {\mathcal{U}}
\newcommand{\V}  {\mathcal{V}}
\newcommand{\bound}  {\mathrm{B}}
\newcommand{\rcK}[1] {(#1)} 
\newcommand{\rcf}[1] {\frac{1}{#1}} 
\newcommand{\rcfsp} {\hspace{6.4pt}} 
\newcommand{\rcfstrich} {f'} %
\newcommand{\rcp} {\partial_1} 
\newcommand{\rccf}[1] {(cf.\ #1)} 
\newcommand{\evek}[1] {\EVE_{#1}} 
\newcommand{\w} {\omega}
\newcommand{\deff} {{\it iff\hspace{0.7pt} }}
\newcommand{\defff} {\deff}    
\newcommand{\mg} {\mathfrak{g}}
\newcommand{\cp} {\circ}
\newcommand{\COMP} {\mathfrak{K}}
\newcommand{\INT} {\mathfrak{I}}
\newcommand{\INTE} {\mathfrak{S}}
\newcommand{\mult}  {\mathrm{m}}
\newcommand{\op} {\mathrm{op}}
\newcommand{\compact} {\mathrm{C}}
\newcommand{\compacto} {\mathrm{K}}
\newcommand{\he} {\hspace{1pt}}
\renewcommand{\theenumi}{\arabic{enumi})} 
\renewcommand{\labelenumi}{\theenumi}
\let\origenumerate\enumerate
\def\enumerate{\origenumerate\itemsep0pt}
\let\origitemize\itemize
\def\itemize{\origitemize\itemsep0pt}
\newenvironment{customthm}[1]
  {\innercustomthm}
  {\endinnercustomthm}
\newenvironment{customtpr}[1]
  {\innercustomtpr}
  {\endinnercustomtpr}
\newenvironment{customlem}[1]
  {\innercustomlem}
  {\endinnercustomlem}
\newenvironment{customco}[1]
  {\innercustomco}
  {\endinnercustomco}
\newenvironment{customrem}[1]
  {\innercustomrem}
  {\endinnercustomrem}
\newtheorem*{satz}{Theorem}
\newtheorem{theorem}{Theorem}
\newtheorem{proposition}{Proposition}
\newtheorem{lemma}{Lemma}
\newtheorem{corollary}{Corollary}
\newtheorem{remark}{Remark}
\newtheorem{example}{Example}
\def\blfootnote{\gdef\@thefnmark{}\@footnotetext}
\begin{document}
\title{Regularity of Lie Groups}
\author{
  \textbf{Maximilian Hanusch}\thanks{\texttt{mhanusch@math.upb.de}}
  \\[1cm]
  Institut f\"ur Mathematik \\
  Lehrstuhl f\"ur Mathematik X \\
  Universit\"at W\"urzburg \\
  Campus Hubland Nord \\
  Emil-Fischer-Stra\ss e 31 \\
  97074 W\"urzburg \\
  Germany\\[0,9cm] 
      {\em Research done in part at} \\[0,3cm]
    Physics Department \\
  Florida Atlantic University \\
  777 Glades Road \\
  Boca Raton \\
    FL 33431 \\
  USA
}
\date{August 1, 2022}
\maketitle

\begin{abstract}  
We solve the regularity problem for Milnor's infinite dimensional Lie groups in the $C^0$-topological context, and provide necessary and sufficient regularity conditions for the (standard) $C^k$-topological setting. Specifically, we prove that if $G$ is an infinite dimensional Lie group in Milnor's sense, then the evolution map is $C^0$-continuous on its domain \deff $G$ is locally $\mu$-convex -- This is a continuity condition imposed on the Lie group multiplication that generalizes the triangle inequality for locally convex vector spaces.    
We furthermore show that if the evolution map is defined on all smooth curves, then $G$ is Mackey complete -- This is a completeness condition formulated in terms of the Lie group operations that generalizes Mackey completeness as defined for locally convex vector spaces; hence, we generalize the well known fact that a locally convex vector space is Mackey complete if each smooth (compactly supported) curve is Riemann integrable. 
Then, under the assumption that $G$ is locally $\mu$-convex, we show that each $C^k$-curve for $k\in \NN_{\geq 1}\sqcup\{\lip,\infty\}$ is integrable (contained in the domain of the evolution map) \deff $G$ is Mackey complete and $\mathrm{k}$-confined. The latter condition states that each $C^k$-curve in the Lie algebra $\mg$ of $G$ can be uniformly approximated by a special type of sequence that consists of piecewise integrable curves. A similar result is proven for the case $k\equiv 0$; and, we provide several mild conditions that ensure that $G$ is $\mathrm{k}$-confined for each $k\in \NN\sqcup\{\lip, \infty\}$.  
We finally discuss the differentiation of parameter-dependent integrals in the (standard) $C^k$-topological context. In particular, we show that if the evolution map is defined and continuous on $C^k([0,1],\mg)$ for $k\in \NN\sqcup\{\infty\}$, then it is smooth thereon: 
\begingroup
\setlength{\leftmarginii}{12pt}
\begin{itemize}
\item
	For $k=0$:\hspace{46.95pt} \deff it is differentiable at zero \deff $\mg$ is integral complete.
\item
	For $k\in \NN_{\geq 1}\sqcup\{\infty\}$:\hspace{4pt} \deff it is differentiable at zero \deff $\mg$ is Mackey complete.
\end{itemize}
\endgroup
\noindent
This result is obtained by calculating the directional derivatives explicitly, recovering the standard formulas (Duhamel) that hold, e.g., in the Banach (finite dimensional) case.
\end{abstract}
\noindent
{MSC Class: 22E65}
\tableofcontents 
\section{Introduction}
\label{intro}
The right logarithmic derivative and its inverse -- the evolution map -- play a central role in Lie theory. For instance, the existence of the exponential map -- indispensable for structure theory of Lie groups -- is based on the integrability of each constant curve (each such curve is contained in the domain of the evolution map). Moreover, given a principal fibre bundle, the existence of holonomies -- essential for gauge field theories -- is based on the integrability of curves that are pairings of a smooth connection with the derivative of a smooth curve in the  base manifold. In this paper, we discuss the evolution map in the infinite dimensional setting introduced by Milnor \cite{HG,HA,MIL,KHN}. Specifically, we consider an infinite dimensional Lie group $G$ as defined in \cite{HG} that is modeled over a Hausdorff locally convex vector space $E$, with system of continuous seminorms $\SEM$. We  denote the Lie algebra of $G$ by $(\mg,\bl\cdot,\cdot\br)$, 
the inversion of $G$ by $\inv\colon G\ni g\mapsto g^{-1}\in G$, the Lie group multiplication by $\mult\colon G\times G\rightarrow G$; and define $\LT_g:=\mult(g,\cdot)$ as well as $\RT_g:=\mult(\cdot, g)$ for each $g\in G$. We furthermore let $\Ad\colon G\times \mg\rightarrow\mg$ denote the adjoint action; and fix a chart $\chart\colon G\supseteq \U\rightarrow \V\in E$  with $\V$ convex, $e\in \U$, and $\chart(e)=0$. 
The right logarithmic derivative is defined by 
\begin{align*}
	\Der\colon C^1(D,G) \rightarrow C^0(D,\mg),\qquad \mu\mapsto \dd_{\mu}\RT_{\mu^{-1}}(\dot\mu)
\end{align*}
for $D\subseteq \RR$ a non-singleton interval and $\mu^{-1}\equiv\inv\cp\mu$; and, the evolution maps by 
\begin{align*}
	\EV\colon \DIDED\rightarrow C^1([0,1],G), \qquad     &\Der(\mu)\mapsto \mu\cdot \mu^{-1}(0)\\
	\EVE\colon \DIDED\rightarrow G,\hspace{48.7pt}\qquad &\Der(\mu)\mapsto \mu(1)\cdot \mu^{-1}(0)
\end{align*}
for $\mu\in \DIDED:=\Der(C^1([0,1],G))$. Then, the differential equation to be investigated is
\begin{align}
\label{popdsodpodspofffpods}
	\phi= \Der(\mu) \qquad\quad\text{for}\qquad\quad \phi \in C^0(D,\mg),\:\: \mu\in C^1(D,G);
\end{align}
whereby, in contrast to the Banach case, no theory of ODE's is available 
in the generic locally convex case --  
The core of this problem is rather the ``infinite dimensionality'' of the locally convex topology than the infinite dimensionality of the vector space $E$ itself. More specifically, in the context of a given continuous (linear) map $\phi\colon E\rightarrow E$, continuous seminorms can usually only be estimated against each other but not against themselves -- In general, this prevents the Banach fixed-point theorem (Picard-Lindel{\"o}f) and the Gr\oee{nwall} lemma to work.\footnote{Even if $E$ is metrizable via $\dd\colon E\times E\rightarrow \RR_{\geq 0}$, this metric usually fails to have the important property that $\dd(\lambda\cdot X+\lambda'\cdot X',0)\leq |\lambda|\cdot \dd(X,0)+ |\lambda'|\cdot \dd(X',0)$ holds for all $\lambda,\lambda'\in \RR$ and $X,X'\in E$ \cite{Rudin} -- making it incompatible with the Riemann integral (mean values).}  
Thus, given a specific differential equation, one has to use its particular ``symmetries'' in order to prove existence and uniqueness of solutions for arbitrary initial values. 
The ``symmetries'' hidden in \eqref{popdsodpodspofffpods} are
\begin{align}
\label{fgfggfssscvvccvcvcvcvccvcv}
\begin{split}
	\Der(\mu\cdot g)=\Der(\mu)\qquad\quad\text{and}\qquad\quad \Der(\mu|_{D'})=\Der(\mu)|_{D'}\hspace{90pt}\\[5pt]
	\Der( \mu\cp\varrho)=\dot\varrho\cdot\rcK{\Der(\mu)\cp\varrho}\hspace{170pt}\\[5pt]
	\Der(\mu\cdot \nu)= \Der(\mu)+\Ad_\mu(\Der(\nu))\qquad\text{implying}\qquad \Der(\mu^{-1}\nu)=\Ad_{\mu^{-1}}(\Der(\nu) -\Der(\mu))\hspace{18pt}
	\end{split}
\end{align}
for all $\mu,\nu\in C^1(D,G)$, $g\in G$, $\INT\ni D'\subseteq D\in \INT$, and each $\rho\colon \INT\ni D''\rightarrow D$ of class $C^1$; where $\INT$ denotes the set of all non-singleton intervals in $\RR$.
\vspace{6pt}

\noindent 
For instance, already in the Banach (finite dimensional) case, the first line in \eqref{fgfggfssscvvccvcvcvcvccvcv} 
is used to glue together local solutions that are provided by the Picard-Lindel{\"o}f theorem in this context. Following this philosophy, we will apply the second line in \eqref{fgfggfssscvvccvcvcvcvccvcv} to Riemann integrals of suitable bump functions to prove that \rccf{Theorem \ref{dsjkhjsdjhsd}}:
\begin{satz}
$G$ is Mackey complete if $C^\infty([0,1],\mg)\subseteq \DIDED$ holds; i.e., if  each smooth curve is integrable. 
\end{satz}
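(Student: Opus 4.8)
The plan is to transport to $G$ the classical criterion that a Hausdorff locally convex space is Mackey complete precisely when every Mackey-summable series converges, replacing sums by ordered products of exponentials. By the characterization of Mackey completeness for $G$ established earlier (which, for $G=E$ abelian, reduces to convergence of $\sum_n x_n$ since there $\Der$ is ordinary differentiation and $\exp$ is the identity), it suffices to fix a bounded, absolutely convex $B\subseteq\mg$ together with a sequence $(x_n)$ in $\mg$ satisfying $x_n\in\lambda_n B$ and $\sum_n\lambda_n<\infty$, and to show that the partial products $\exp(x_n)\cdots\exp(x_1)$ converge in $G$. First I would note that each constant curve is smooth and hence integrable by hypothesis, so that $\exp\colon\mg\to G$, $X\mapsto\EVE(X)$, is defined on all of $\mg$ and these products make sense. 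Passing to a subsequence (harmless, since a Mackey--Cauchy sequence converges as soon as one of its subsequences does) I may moreover assume $(x_n)$ to be fast falling, i.e.\ $n^k\lambda_n\to 0$ for every $k\in\NN$.

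Next I would encode the data $(x_n)$ into a single smooth curve. Choose pairwise disjoint intervals $I_n=[a_n,b_n]\subseteq[0,1)$ with $b_n<a_{n+1}$ and $a_n\nearrow 1$, and on each $I_n$ a smooth bump $\beta_n\geq 0$ vanishing to all orders at $a_n$ and $b_n$ with $\int_{a_n}^{b_n}\beta_n=1$; then set $\phi|_{I_n}:=\beta_n\cdot x_n$, $\phi:=0$ off $\bigcup_n I_n$, and $\phi(1):=0$. The one genuinely delicate point, and the main obstacle, is to verify that $\phi$ is of class $C^\infty$ at the accumulation point $t=1$: writing $\beta_n$ as a rescaling of a fixed bump of width $h_n:=b_n-a_n$, one obtains for each $\pp\in\SEM$ an estimate $\pp(\phi^{(k)})\leq \const_k\cdot h_n^{-(k+1)}\lambda_n\cdot\sup_{b\in B}\pp(b)$ on $I_n$, so that all derivatives tend to $0$ at $1$ provided the widths $h_n$ shrink slowly relative to the fast decay of $\lambda_n$. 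This is exactly a special-curve construction in the spirit of Kriegl--Michor, and the fast-falling reduction above is precisely what makes it go through.

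Now I would invoke the hypothesis $C^\infty([0,1],\mg)\subseteq\DIDED$ to write $\phi=\Der(\mu)$ and set $\eta:=\EV(\phi)$, so that $\eta(0)=e$, $\Der(\eta)=\phi$ by right-invariance, and $\eta$ is continuous on the closed interval $[0,1]$. On each $I_n$ I would compare $\eta$ with the one-parameter subgroup $\alpha_n:=\EV(x_n)$ (the evolution of the constant curve $x_n$, so $\Der(\alpha_n)=x_n$, $\alpha_n(0)=e$, $\alpha_n(1)=\exp(x_n)$), reparametrized by the Riemann integral $\varrho_n(t):=\int_{a_n}^t\beta_n$ of the bump. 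The reparametrization identity in \eqref{fgfggfssscvvccvcvcvcvccvcv} then gives $\Der(\alpha_n\cp\varrho_n)=\dot\varrho_n\cdot\Der(\alpha_n)\cp\varrho_n=\beta_n\cdot x_n=\phi|_{I_n}=\Der(\eta|_{I_n})$; since two $C^1$-curves with equal right logarithmic derivative differ by a constant right factor — immediate from \eqref{fgfggfssscvvccvcvcvcvccvcv}, as $\Der(\lambda)=0$ forces $\dot\lambda=0$ — I conclude $\eta(t)=\alpha_n(\varrho_n(t))\cdot\eta(a_n)$ on $I_n$, whence $\eta(b_n)=\exp(x_n)\cdot\eta(a_n)$. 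As $\phi$ vanishes on the gaps, $\eta$ is constant there, so $\eta(a_{n+1})=\eta(b_n)$, and inductively $\eta(a_{n+1})=\exp(x_n)\cdots\exp(x_1)\cdot\eta(a_1)$.

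Finally, continuity of $\eta$ at $t=1$ forces $\eta(a_n)\to\eta(1)\in G$, i.e.\ the partial products $\exp(x_n)\cdots\exp(x_1)$ converge; this is precisely the asserted Mackey completeness, completing the argument. I expect the two load-bearing ingredients to be the smoothness of $\phi$ at $t=1$ (controlled by the special-curve estimate together with the fast-falling reduction) and the clean bookkeeping in the reparametrization step, whereas the passage from ``the evolution exists on $[0,1]$'' to ``the products converge'' is then immediate from continuity of $\eta$ on the compact parameter interval.
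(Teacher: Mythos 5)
Your construction of the smooth curve out of bump functions, the reparametrization bookkeeping on each block $I_n$, the gluing across the gaps, and the verification of smoothness at $t=1$ via a fast-falling reduction all mirror the paper's proof of Theorem \ref{dsjkhjsdjhsd} quite faithfully. The gap is in your very first step, the reduction. You assert that Mackey completeness of $G$ is equivalent to convergence of the ordered partial products $\exp(x_n)\cdot{\dots}\cdot\exp(x_1)$ for every Mackey-summable sequence $(x_n)$ in $\mg$, and you cite a ``characterization established earlier''. No such characterization is established (in this paper or elsewhere for general Milnor Lie groups), and the direction you need fails to be obvious: Mackey completeness of $G$ is defined via Mackey-Cauchy sequences $\{g_n\}\subseteq G$ measured in the chart, i.e., $(\pp\cp\chart)(g_m^{-1}g_n)\leq \mackeyconst_\pp\lambda_{m,n}$. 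The increments of such a sequence are $\chartinv(X_n)$ with $X_n=\chart(g_n^{-1}g_{n-1})$, and one recovers $g_n$ from ordered products of the $\chartinv(X_k)$ — \emph{not} of $\exp(X_k)$. Since $\exp$ need not be locally surjective for a general Milnor Lie group (and the chart $\chart$ is not assumed to be an exponential chart), convergence of all Mackey-summable exponential products does not let you conclude convergence of an arbitrary Mackey-Cauchy sequence. As written, your argument proves a genuinely weaker statement.

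The repair is exactly what the paper does: take the increments to be $\chartinv(X_n)$ and, on each block, use not the constant curve $x_n$ but the right logarithmic derivative $\phi_n=\Der(\chartinv\cp\gamma_n)=\dermapdiff(\gamma_n,Y_n)$ of the chart-straight-line path $\gamma_n(t)=|t-t_n|\cdot Y_n$, so that the evolution over the block produces precisely $\chartinv(\delta_n Y_n)=\chartinv(X_n)$. This changes your derivative estimate at $t=1$: $\phi^{(k)}$ is no longer just $\beta_n^{(k)}x_n$ but involves the iterated partials $\dermapdiff[q]$ of $\dermapdiff$, which must be controlled by the multilinear estimate \eqref{omegakll}; the quadratic decay $\lambda_{n,n-1}\leq 2^{-n^2}$ (your fast-falling reduction) is then still enough to beat the resulting $\delta_n^{-p^2}$ growth. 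Your passage ``a Mackey-Cauchy sequence converges iff a subsequence does'' is the correct justification for the subsequence extraction, but note it applies to the given sequence $\{g_n\}$ in $G$, not to a summable series whose partial products you have not yet shown to be Mackey-Cauchy.
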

Here, \emph{Mackey completeness} is a condition formulated in terms of the Lie group operations that generalizes Mackey completeness as defined for locally convex vector spaces. The above theorem thus  generalizes the well-known fact that (cf., e.g., Theorem 2.14 in \cite{COS}) a Hausdorff locally convex vector space $E$ is Mackey complete if the Riemann integral of each (compactly supported) smooth curve (in $E$) exists in $E$. 
\vspace{6pt}

\noindent
Now, there is a further property of the evolution map that can be encoded in a topological condition imposed on the Lie group operations: We consider the restriction $\evek{k}\colon \DIDED\cap C^k([0,1],\mg)\rightarrow G$ for each $k\in \NN\sqcup\{\lip,\infty\}$; and say that $\evek{k}$ is $C^p$-continuous for $p\leq k$ ($p=0$ for $k\equiv \lip$) \defff it is continuous w.r.t.\ the subspace topology that is inherited by the $C^p$-topology on $C^k([0,1],\mg)$. 
We furthermore say that $G$ is \emph{locally $\mu$-convex} \defff for each $\uu\in \SEM$, there exists some $\uu\leq \oo\in \SEM$ with
\begin{align}
\label{aaajjhguoiuouoaaa}
	(\uu\cp\chart)(\chart^{-1}(X_1)\cdot {\dots}\cdot \chart^{-1}(X_n))\leq \oo(X_1)+{\dots}+\oo(X_n)
\end{align} 
for all $X_1,\dots,X_n\in E$, with $\oo(X_1)+{\dots}+\oo(X_n) \leq 1$.\footnote{This notion was originally introduced in \cite{HGGG} as a tool to investigate regularity properties of weak direct products of Lie groups.} 
Then, using the second line in \eqref{fgfggfssscvvccvcvcvcvccvcv}, we will show that \rccf{Theorem \ref{lfdskfddflkfdfd}}:  
\begin{satz}
   $\evek{0}$ is $C^0$-continuous \deff $G$ is locally $\mu$-convex \deff $\evek{\infty}$ is $C^0$-continuous.
\end{satz}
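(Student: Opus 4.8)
The plan is to establish the cycle of implications (i)$\Rightarrow$(iii)$\Rightarrow$(ii)$\Rightarrow$(i), where (i) is ``$\EVE_0$ is $C^0$-continuous'', (ii) is ``$G$ is locally $\mu$-convex'', and (iii) is ``$\EVE_\infty$ is $C^0$-continuous''. The arrow (i)$\Rightarrow$(iii) is immediate: every right logarithmic derivative is continuous, so $\DIDED\subseteq C^0([0,1],\mg)$ and $\EVE_0=\EVE$, while $\EVE_\infty$ is just the restriction of $\EVE$ to $\DIDED\cap C^\infty([0,1],\mg)$ carrying the subspace topology of the $C^0$-topology; a restriction of a continuous map is continuous. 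Hence only the two substantive implications (iii)$\Rightarrow$(ii) and (ii)$\Rightarrow$(i) require work, and the missing equivalence (ii)$\Rightarrow$(iii) then follows by composing (ii)$\Rightarrow$(i) with the trivial arrow.

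For (ii)$\Rightarrow$(i) I would first convert local $\mu$-convexity into a seminorm estimate for $\EVE$ and then read off continuity. Fixing $\phi=\Der(\mu)$ with $\mu(0)=e$, so that $\EVE(\phi)=\mu(1)$, a telescoping along a partition $0=s_0<\dots<s_n=1$ writes $\mu(1)=g_n\cdots g_1$ as an ordered product of the near-identity elements $g_i=\mu(s_i)\mu(s_{i-1})^{-1}$, with $\chart(g_i)=(s_i-s_{i-1})\,\dd_e\chart(\phi(s_{i-1}))+o(s_i-s_{i-1})$. After a standard localization keeping the partial products inside the chart domain, the defining inequality $\pp(\chart(g_n\cdots g_1))\le\sum_i\qq(\chart(g_i))$ of local $\mu$-convexity survives the passage to finer partitions and yields, for each $\pp\in\SEM$, a seminorm $\qq\in\SEM$ with $\pp(\chart(\EVE(\phi)))\le\int_0^1\qq(\phi)\,\dd t\le\sup_t\qq(\phi(t))$. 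To obtain continuity at a general $\phi_0=\Der(\nu)$, not merely near the constant curve, I would invoke the last identity in \eqref{fgfggfssscvvccvcvcvcvccvcv}: a nearby $\phi=\Der(\mu)$ gives $\mu(1)^{-1}\nu(1)=\EVE(\Ad_{\mu^{-1}}(\phi_0-\phi))$, whose chart-seminorm is bounded by $\sup_t\qq(\Ad_{\mu(t)^{-1}}(\phi_0(t)-\phi(t)))$. Since $\mu$ ranges over a fixed curve with compact image, $\Ad_{\mu(t)^{-1}}$ is uniformly bounded on seminorms, so the right-hand side is controlled by a $C^0$-seminorm of $\phi_0-\phi$; continuity of the group operations and of $\chart$ then finishes the argument.

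For (iii)$\Rightarrow$(ii) I would realize finite ordered products of near-identity elements as time-one evolutions of smooth curves and feed them into the continuity hypothesis. Given $g_1,\dots,g_n$ near $e$, the straight segments $c_i(t)=\chartinv(t\cdot\chart(g_i))$ are smooth curves from $e$ to $g_i$; flattening them near their endpoints with a fixed bump function and concatenating them by means of the reparametrization law $\Der(\mu\cp\varrho)=\dot\varrho\cdot\Der(\mu)\cp\varrho$ (second line of \eqref{fgfggfssscvvccvcvcvcvccvcv}) produces a single smooth curve $\phi$ on $[0,1]$ with $\EVE_\infty(\phi)=g_n\cdots g_1$. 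Choosing the lengths of the concatenation subintervals proportional to $\qq(\chart(g_i))$, the $C^0$-size $\sup_t\qq(\phi)$ stays controlled by $\sum_i\qq(\chart(g_i))$ up to a fixed factor, so the continuity of $\EVE_\infty$ at the zero curve translates into the summable bound defining local $\mu$-convexity.

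I expect this last implication to be the main obstacle. Local $\mu$-convexity asks for a bound that is uniform in the number $n$ of factors, whereas continuity of $\EVE_\infty$ a priori controls only one scale; the delicate point is to choose the reparametrization so that neither refining nor enlarging the product inflates the $C^0$-norm of the associated smooth curve beyond $\sum_i\qq(\chart(g_i))$, exploiting the homogeneity of $\Der$ under reparametrization, all the while keeping the concatenated curve smooth and inside the chart domain.
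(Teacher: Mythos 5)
Your overall architecture coincides with the paper's: the same cycle (i)$\Rightarrow$(iii)$\Rightarrow$(ii)$\Rightarrow$(i), the same straight chart segments glued by bump\hspace{1pt}-function reparametrizations for (iii)$\Rightarrow$(ii) (Lemma \ref{hgffhhfhg} together with Lemma \ref{pofdspospods}), and the same use of $\Der(\mu^{-1}\nu)=\Ad_{\mu^{-1}}(\Der(\nu)-\Der(\mu))$ to pass from continuity at zero to continuity everywhere (Lemma \ref{klklllkjlaaa}). However, your (iii)$\Rightarrow$(ii) has a genuine gap. Continuity of $\EVE_\infty$ at the zero curve only furnishes a modulus of continuity: for each $\pp$ and $\epsilon'>0$ there are $\qq$ and $\delta'>0$ with $\qqq_\infty(\psi)\leq \delta'\Rightarrow(\pp\cp\chart)(\innt\psi)\leq\epsilon'$. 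Local $\mu$-convexity demands the \emph{linear} bound $(\uu\cp\chart)(\chartinv(X_1)\cdots\chartinv(X_n))\leq \oo(X_1)+\dots+\oo(X_n)$ for a single fixed $\oo$. With your normalization the concatenated curve $\phi$ has $\sup_t\qqq(\phi)$ of order $\epsilon:=\sum_i\oo(X_i)$, so continuity at zero only yields ``$\leq\epsilon'$'' for whatever $\epsilon'$ the modulus assigns to the scale $\epsilon$ -- not ``$\leq\epsilon$''. The missing bridge is the self-improvement of $C^0$-continuity at zero to the $L^1$-type \emph{linear} estimate $(\pp\cp\chart)(\innt_r^\bullet\phi)\leq\int_r^\bullet\qqq(\phi(s))\:\dd s$ for all $\phi$ with $\qqq_\infty(\phi)\leq 1$ (Lemma \ref{opdfopfdopfdpoas}, extended to piecewise integrable curves in Lemma \ref{pofdspospods}). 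This estimate does not come from continuity as such: one writes $\gamma=\chart\cp\innt_r^\bullet\phi$, uses $\dot\gamma=\dermapinvdiff(\gamma,\phi)$ and integrates $\pp(\dot\gamma)\leq\www(\phi)$, continuity being needed only to keep $\gamma$ inside the ball where \eqref{dssdsdsdsdsddsds} applies. Once you have it, subintervals of length $\delta_i=\oo(X_i)$ carry an integrand bounded by $1$ on a set of total measure $\sum_i\oo(X_i)$, which is exactly where the summable bound comes from. The same estimate also cleans up your (ii)$\Rightarrow$(i): instead of telescoping with $o(s_i-s_{i-1})$ errors whose uniformity under refinement you would have to control, fix one partition fine enough that each partial evolution stays in the chart ball and apply the exact derivative bound there (Lemmas \ref{pofdpofdpofdpofdaaa} and \ref{cxcxcxxccxcxxccx}).

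A smaller slip in your continuity-at-$\phi_0$ step: in $\Der(\mu^{-1}\nu)=\Ad_{\mu^{-1}}(\phi_0-\phi)$ the adjoint is taken along the \emph{varying} curve $\mu=\EV(\phi)$, so ``$\mu$ ranges over a fixed curve with compact image'' is not correct as written. Swap the roles: $\Der(\nu^{-1}\mu)=\Ad_{\nu^{-1}}(\phi-\phi_0)$ places the adjoint along the fixed curve $\nu=\EV(\phi_0)$, whose compact image yields the uniform seminorm bound via \eqref{askasjkjksaasqwqqwasw}; this is the paper's \ref{kdskdsdkdslkds} combined with Lemma \ref{opopsopsdopds}.
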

\emph{Evidently, \eqref{aaajjhguoiuouoaaa} generalizes the triangle inequality for locally convex vector spaces; and, due to the above theorem, it is independent of the explicit choice of the chart $\chart$.}
\vspace{6pt}

\noindent
Then, using the above two theorems, we will be able to partially answer the question under which circumstances $G$ is $C^k$-semiregular \cite{HGGG} for some given $k\in \NN\sqcup\{\lip,\infty\}$; i.e., under which circumstances $C^k([0,1],\mg)\subseteq \DIDED$ holds \rccf{Theorem \ref{confev}}:
\begin{satz}
Suppose that $G$ is locally $\mu$-convex. Then, $G$ is $C^k$-semiregular for $k\in \NN_{\geq 1}\sqcup \{\lip,\infty\}$ \deff $G$ is Mackey complete and $\mathrm{k}$-confined. Moreover, $G$ is $C^0$-semiregular if $G$ is sequentially complete and $\mathrm{0}$-confined. 
\end{satz}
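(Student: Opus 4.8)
The plan is to establish both implications for $k\in\NN_{\geq 1}\sqcup\{\lip,\infty\}$ and then obtain the $C^0$-statement as a variant of the sufficiency argument. The forward direction is cheap. Suppose $G$ is $C^k$-semiregular, i.e.\ $C^k([0,1],\mg)\subseteq\DIDED$. Since $C^\infty([0,1],\mg)\subseteq C^k([0,1],\mg)$ for every such $k$, we get $C^\infty([0,1],\mg)\subseteq\DIDED$, so Theorem \ref{dsjkhjsdjhsd} immediately yields that $G$ is Mackey complete. For $\mathrm{k}$-confinement, note that now every $\phi\in C^k([0,1],\mg)$ is itself integrable, hence (trivially) piecewise integrable, so the constant sequence $\phi_n:=\phi$ already provides the required approximation; thus $G$ is $\mathrm{k}$-confined. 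This half uses neither $\mu$-convexity nor any limit passage.

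For the converse, assume $G$ is locally $\mu$-convex, Mackey complete and $\mathrm{k}$-confined, fix $\phi\in C^k([0,1],\mg)$, and aim to produce $\mu\in C^1([0,1],G)$ with $\Der(\mu)=\phi$. By $\mathrm{k}$-confinement I first choose the approximating sequence $(\phi_n)$ of piecewise integrable curves with $\phi_n\to\phi$. Each $\phi_n$ is in fact \emph{globally} integrable: I split $[0,1]$ along a partition on whose subintervals $\phi_n$ is integrable, solve on each piece using $\Der(\mu|_{D'})=\Der(\mu)|_{D'}$, and then glue the local solutions by right translations $\RT_g$, which is admissible because $\Der(\mu\cdot g)=\Der(\mu)$ leaves the equation invariant (first line of \eqref{fgfggfssscvvccvcvcvcvccvcv}); continuity of $\phi_n$ forces the one-sided derivatives of the glued curve to coincide at the breakpoints, so it is genuinely $C^1$. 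I then set $\mu_n:=\EV(\phi_n)$, which is normalised so that $\mu_n(0)=e$.

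The heart of the matter is the limit passage, where both standing hypotheses enter. Local $\mu$-convexity gives, through Theorem \ref{lfdskfddflkfdfd}, the $C^0$-continuity of the evolution and hence the quantitative seminorm estimates (built on \eqref{aaajjhguoiuouoaaa}) that control $\mu_n(t)$ in terms of $\phi_n$; the special (Mackey-type) nature of the approximating sequence converts the $C^0$-convergence $\phi_n\to\phi$ into Mackey-Cauchy behaviour of the endpoints, so that Mackey completeness supplies the limits in $G$. Concretely, I would estimate the difference curve via $\Der(\mu_n^{-1}\mu_m)=\Ad_{\mu_n^{-1}}(\phi_m-\phi_n)$ (last line of \eqref{fgfggfssscvvccvcvcvcvccvcv}) and push it through the continuity estimate to see that $t\mapsto\mu_n(t)$ is Mackey-Cauchy uniformly in $t$, obtaining $\mu$ pointwise. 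Finally I rewrite $\Der(\mu_n)=\phi_n$ as the chart-level integral equation satisfied by $\mu_n$ and pass to the limit — invoking Mackey completeness once more to guarantee that the integral of the limiting right-hand side exists — to conclude that $\mu$ is $C^1$ with $\Der(\mu)=\phi$, i.e.\ $\phi\in\DIDED$.

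For $k\equiv 0$ the same scheme runs, but the approximants and their limit are merely continuous, so the derivative-based Mackey estimates are no longer available; this is exactly why that case demands the stronger hypothesis of sequential completeness, which I would use in place of Mackey completeness to extract the convergent limit of $(\mu_n)$. The main obstacle throughout is precisely this limit passage: showing that the limit curve is truly $C^1$ and solves $\Der(\mu)=\phi$, and not merely that the endpoints $\EVE(\phi_n)$ converge. Since no ODE theory is available in the locally convex setting, this must be carried out by hand through the chart, controlling $\mu_n$ and its putative derivative simultaneously and using completeness to force the relevant Riemann integrals to converge; keeping these estimates uniform in the parameter $t$, so that the limit is $C^1$ rather than only pointwise, is the delicate point.
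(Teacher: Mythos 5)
Your route is the paper's: the forward direction via Theorem \ref{dsjkhjsdjhsd} together with the constant-sequence observation (Lemma \ref{sdsddsd}), and the converse via the limit passage of Proposition \ref{dfopfdpoofdp}, carried out in a chart with the quantitative estimates supplied by local $\mu$-convexity (Proposition \ref{aaapofdpofdpofdpofd}) and with sequential/Mackey completeness extracting the limit. However, one step fails as stated. You claim each approximant $\phi_n\in\DP^0([0,1],\mg)$ is globally integrable with a genuinely $C^1$ product integral, arguing that ``continuity of $\phi_n$'' matches the one-sided derivatives at the breakpoints. But elements of $\DP^0([0,1],\mg)$ are only required to agree with integrable curves on the \emph{open} subintervals of a partition; they are in general discontinuous at the breakpoints -- the piecewise constant approximants of Lemma \ref{jshjsdahjsda}, which are exactly the ones used when $G$ merely admits an exponential map, are a case in point. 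Hence $\mu_n:=\innt_0^\bullet\phi_n$, defined by the piecewise formula \eqref{oppopopo}, is continuous and piecewise $C^1$ but not $C^1$, and $\phi_n\notin\DIDE^0_{[0,1]}$ in general. This does not derail the strategy, since every subsequent estimate only needs the piecewise product integral and the piecewise identity \eqref{dfdssfdsfd}; but the gluing claim should be dropped in favour of working with \eqref{oppopopo} directly, as the paper does.

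A second ingredient is left implicit and is indispensable: after writing $\mu_m^{-1}\mu_n=\innt_0^\bullet\Ad_{[\innt_0^\bullet\phi_m]^{-1}}(\phi_n-\phi_m)$, the Cauchy estimate for $\mu_n(t)$ requires bounding $\Ad_{[\innt_0^\bullet\phi_m]^{-1}}$ by a seminorm \emph{independent of $m$}; this is precisely the tameness condition \eqref{confii} built into the definitions of $\sequy$- and $\mackey$-integrability, and your phrase ``the special (Mackey-type) nature of the approximating sequence'' does not capture it -- without it the argument collapses. Two smaller remarks: in the final step no further appeal to completeness is needed, since $\int_0^\bullet\dermapinvdiff(\gamma(s),\phi(s))\:\dd s$ exists in the completion automatically and is identified with the uniform limit $\gamma=\chart\cp\mu$, which already takes values in $E$; and in the forward direction the constant sequence must also be checked to be tame, which follows from compactness of $\im[\innt_0^\bullet\phi]$ and \eqref{askasjkjksaasqwqqwasw}.
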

\noindent
Here, k-confinedness is an approximation property for $C^k$-curves 
that is automatically fulfilled, 
e.g., if $(\mg,\bl\cdot,\cdot\br)$ is submultiplicative; or, if $G$ admits an exponential map, and $(\mg,\bl\cdot,\cdot\br)$ is constricted. The precise definitions, and more conditions can be found in Sect.\ \ref{patcases} or in Sect.\ \ref{dslklkslkdslkdsyxyyx}.
\vspace{6pt}

\noindent
In the last part of this paper, we will discuss the differentiation of parameter-dependent integrals in the standard topological setting. We first show that 
if $G$ is $C^k$-semiregular and $\evek{k}$ is $C^k$-continuous for $k\in \NN\sqcup\{\lip,\infty\}$, then the directional derivative (w.r.t.\ the $C^k$-topology) of $\evek{k}$ at zero along some $\phi\in C^k([0,1],\mg)$ exists in the completion $\mgc$ of $\mg$, as it is explicitly given by 
\begin{align*}
	\textstyle\frac{\dd}{\dd h}\big|_{h=0}\: \evek{k}(h\cdot \phi)=\int \phi(s)\:\dd s\in \mgc.
\end{align*}  
More generally: Recall that $\mg$ is said to be integral complete \cite{HGGG} \defff $\int \phi(s)\:\dd s\in \mg$ exists for each $\phi\in C^0([0,1],\mg)$; and let
\begin{align*}
	\textstyle\innt^s \phi := \EV(\phi)(s)\qquad\quad\text{as well as}\qquad\quad \innt\phi:=\innt^1\phi\qquad\qquad\forall\: \phi\in \DIDED,\:\: s\in [0,1].
\end{align*} 	
	Then, the above statement generalizes to \rccf{Theorem \ref{kdfklfklfdkjljljjllk}}: 
\begin{satz} 
\noindent
\vspace{-5pt}
\begingroup
\setlength{\leftmargini}{17pt}
{
\renewcommand{\theenumi}{{\arabic{enumi}})} 
\renewcommand{\labelenumi}{\theenumi}
\begin{enumerate}
\item
	Suppose that $G$ is $C^0$-semiregular and that $\evek{0}$ is $C^0$-continuous. Then, $\evek{0}$ is of class $C^1$ \deff $\mg$ is integral complete \deff $\evek{0}$ is differentiable at zero.
\item
	Suppose that $G$ is $C^k$-semiregular and that $\evek{k}$ is $C^k$-continuous, for $k\in \NN_{\geq 1}\sqcup\{\lip,\infty\}$. Then, $\evek{k}$ is of class $C^1$ \deff $\mg$ is Mackey complete \deff $\evek{k}$ is differentiable at zero.
\end{enumerate}}
\endgroup
\noindent
Here, for $k=0$ in the first-, and $k\in \NN_{\geq 1} \sqcup\{\lip,\infty\}$ in the  second case, we have 
\begin{align*}
	\textstyle\big(\dd_\phi\he\evek{k}\big)(\psi)\textstyle 
	=\dd_e\LT_{\innt \phi}\big(\int \Ad_{[\innt^s\!\phi]^{-1}}(\psi(s))\:\dd s\big)\qquad\quad\forall\: \phi,\psi\in C^k([0,1],\mg).
\end{align*}
\noindent
For $k\in \NN\sqcup\{\infty\}$, Theorem E in \cite{HGGG} additionally shows that $\evek{k}$ is even smooth.
\end{satz}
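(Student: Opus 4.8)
The plan is to prove the asserted three-fold equivalence as a cycle, \emph{class $C^1$} $\Rightarrow$ \emph{differentiable at zero} $\Rightarrow$ \emph{complete} $\Rightarrow$ \emph{class $C^1$}, and to read off the Duhamel formula for $\dd_\phi\EVE_\kk$ along the way; the implication \emph{class $C^1$} $\Rightarrow$ \emph{differentiable at zero} is immediate, so the content lies in the other two arrows. Throughout I use that $C^k$-semiregularity makes $\EVE_\kk$ and $\EV$ defined on all of $C^k([0,1],\mg)$, and that $\EVE_\kk(0)=e$.

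First I would settle the base point $\phi=0$ and the completeness equivalence. Since $\Der$ annihilates the constant curve $e$, the solution $\mu_h:=\EV(h\psi)$ satisfies $\mu_h(0)=e$ and $\EVE_\kk(h\psi)=\mu_h(1)$; pushing the fundamental theorem of calculus through the chart $\chart$ and rescaling gives, in the identification of $\mg$ with $E$ via $\dd_e\chart$, the expansion
\[
   \textstyle\frac1h\,\chart(\EVE_\kk(h\psi))=\int \dd\chart\big(\dd_e\RT_{\mu_h(s)}(\psi(s))\big)\,\dd s .
\]
As $h\to 0$ the continuity hypothesis forces $\mu_h\to e$, so that $\dd_e\RT_{\mu_h(s)}\to\id$ uniformly in $s$, and the limit equals $\int\psi(s)\,\dd s$; this is exactly the derivative-at-zero formula $\partial_h|_{0}\EVE_\kk(h\psi)=\innt\psi\in\mgc$ recorded above. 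Consequently $\EVE_\kk$ is differentiable at zero (computed in the chart around $e$) \deff $\innt\psi\in\mg$ for \emph{every} $\psi\in C^k([0,1],\mg)$. For $k=0$ this is integral completeness by definition; for $k\in\NN_{\geq 1}\sqcup\{\lip,\infty\}$ each such $\psi$ is Lipschitz, and since Mackey completeness is equivalent to Riemann integrability of all Lipschitz curves in $\mg$, it is equivalent to Mackey completeness.

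Next I would reduce an arbitrary base point to zero via the symmetries \eqref{fgfggfssscvvccvcvcvcvccvcv}. Fix $\phi,\psi\in C^k([0,1],\mg)$, set $\mu:=\EV(\phi)$ so that $\innt^s\phi=\mu(s)$ and $\mu(0)=e$, and put $\eta_h:=\mu^{-1}\cdot\EV(\phi+h\psi)$ (pointwise). The identity $\Der(\mu^{-1}\nu)=\Ad_{\mu^{-1}}(\Der(\nu)-\Der(\mu))$ gives $\Der(\eta_h)=h\cdot\Ad_{\mu^{-1}}\psi$ and $\eta_h(0)=e$, whence $\eta_h=\EV(h\cdot\Ad_{\mu^{-1}}\psi)$. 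Writing $\EVE_\kk(\phi+h\psi)=\EV(\phi+h\psi)(1)=\innt\phi\cdot\eta_h(1)$ and differentiating at $h=0$ — using smoothness of the left translation $\LT_{\innt\phi}$ together with the previous paragraph applied to the curve $s\mapsto\Ad_{[\innt^s\phi]^{-1}}\psi(s)$ — yields precisely
\[
   (\dd_\phi\he\EVE_\kk)(\psi)=\dd_e\LT_{\innt\phi}\Big(\innt \Ad_{[\innt^s\phi]^{-1}}(\psi(s))\,\dd s\Big).
\]
One checks that the integrand is again of class $C^k$ (the solution $\mu=\EV(\phi)$ is one degree more regular than $\phi$, and $\Ad$ and $\inv$ are smooth), so under completeness the integral lies in $\mg$ and the directional derivative exists in $G$ at \emph{every} $\phi$.

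Finally I would upgrade these pointwise derivatives to class $C^1$ by establishing joint continuity of $(\phi,\psi)\mapsto(\dd_\phi\he\EVE_\kk)(\psi)$: in the displayed formula $\phi\mapsto\innt\phi=\EVE_\kk(\phi)$ and $\phi\mapsto(s\mapsto\innt^s\phi)=\EV(\phi)$ are continuous by the assumed $C^k$-continuity of the evolution map, while $\Ad$, $\inv$ and $\dd_e\LT_{(\cdot)}$ are smooth and the parameter integral depends continuously on its integrand; as the expression is linear in $\psi$, this gives $C^1$. This closes the cycle, and for $k\in\NN\sqcup\{\infty\}$ Theorem E in \cite{HGGG} bootstraps $C^1$ to smoothness. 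The main obstacle is the analytic core of the second paragraph: justifying the first-order expansion and the interchange of limit and integral without ODE theory, i.e.\ converting $C^k$-continuity of $\EVE_\kk$ and $\EV$ into uniform-in-$s$ control of $\dd_e\RT_{\mu_h(s)}-\id$ so that the remainder is $o(h)$ in every continuous seminorm; the algebraic reduction and the continuity bookkeeping are comparatively routine once this is in hand.
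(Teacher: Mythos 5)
Your proposal is correct, and its overall architecture --- compute the directional derivative at zero, identify its existence in $\mg$ with integral/Mackey completeness, reduce a general base point to zero via $\Der(\mu^{-1}\nu)=\Ad_{\mu^{-1}}(\Der(\nu)-\Der(\mu))$, then establish joint continuity of $(\phi,\psi)\mapsto (\dd_\phi\EVE_\kk)(\psi)$ and invoke Theorem E of \cite{HGGG} --- is the same as the paper's (Proposition \ref{rererererr}, Corollary \ref{MC}, Theorem \ref{kckjckjs} applied to $\Phi[\phi,\psi](h,t)=\phi(t)+h\cdot\psi(t)$, and Lemma \ref{fdfdfdf}). Where you genuinely diverge is in the analytic core, the derivative at zero. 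The paper proves Proposition \ref{rererererr} by approximating $\phi$ by smooth curves with values in finite-dimensional subspaces (so that their Riemann integrals exist in $\mg$ and $\chartinv(h\cdot\dd_e\chart(\int_r^\bullet\phi_n(s)\:\dd s))$ is an honest curve in $G$), computing the logarithmic derivatives of these curves, and comparing them with $h\cdot\phi$ through the local Lipschitz estimate of Proposition \ref{hghghggh}; you instead work on the solution side, writing $\tfrac{1}{h}\cdot\chart(\EVE_\kk(h\cdot\psi))=\int \dermapinvdiff(\gamma_h(s),\psi(s))\:\dd s$ for $\gamma_h=\chart\cp\innt^\bullet h\cdot\psi$ and passing to the limit using that $\gamma_h\rightarrow 0$ uniformly (Lemma \ref{opdfopfdopfdpoas}) together with joint continuity of $\dermapinvdiff$ and compactness of $\im[\psi]$. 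This is more direct and avoids both the finite-dimensional approximation and Proposition \ref{hghghggh}; the step you flag as the main obstacle does close by the standard covering argument, since $\dermapinvdiff(\cdot,\cdot)-\dermapinvdiff(0,\cdot)$ is continuous and vanishes on the compact set $\{0\}\times\im[\psi]$. Likewise, because $\phi+h\cdot\psi$ is exactly affine in $h$, your $\eta_h=\EV(h\cdot\Ad_{\mu^{-1}}(\psi))$ carries no error term, so you bypass the dominated-convergence machinery of the general Theorem \ref{kckjckjs}; the price is only that your argument is tailored to this one family, whereas the paper's route also yields the parameter-dependent differentiation theorem and Duhamel's formula as intermediate output. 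Two bookkeeping points: the continuity of $\phi\mapsto\innt^\bullet\phi$ into $C^0([0,1],G)$ used in your last paragraph is an upgrade of the hypothesis on the endpoint map and is exactly what Lemma \ref{posdpospodspoaaaa} and Lemma \ref{fddfxxxxfd} supply (this is the content of Lemma \ref{fdfdfdf}), and $\dd_e\LT_{\innt\phi}$ must be read through its extension to $\mgc$ (Lemma \ref{sdsdds}) whenever the inner integral is only known to exist in the completion.
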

\noindent
Recall that $G$ is said to be \emph{$C^k$-regular} for $k\in \NN\sqcup\{\lip,\infty\}$ \defff $G$ is $C^k$-semiregular and $\evek{k}$ is smooth (w.r.t.\ the $C^k$-topology). Then,
\begingroup
\setlength{\leftmargini}{12pt}
\begin{itemize}
\item
the first point in the above theorem generalizes Theorem C.(a) in \cite{HGGG}, stating that each $C^0$-regular Lie group has an integral complete Lie algebra (modeling space). It furthermore generalizes  Theorem F in \cite{HGGG}, as it drops the presumption that  there  exists a point-separating family $(\alpha_j)_{j\in J}$ of smooth Lie group homomorphisms $\alpha_j\colon G\rightarrow H_j$ to $C^0$-regular Lie groups $H_j$. 
\item
since $C^\infty$-regular Lie groups are $C^\infty$-semiregular, the second point in the above theorem generalizes the result announced in Remark II.5.3.(b) in \cite{KHN2}, stating that each $C^\infty$-regular Lie group has a Mackey complete Lie algebra. 
\end{itemize}
\endgroup
\noindent
Actually, the last theorem is a consequence of a more general theorem concerning differentiation of parameter-dependent integrals.  
We write $\dind\llleq k$ for $\dind\in \NN$ and 
\begingroup
\setlength{\leftmargini}{12pt}
\begin{itemize}
\item
	$k\in \NN$\hspace{5.5pt}\:\:\he\defff\:\:$\dind\leq k$\:\: holds,
\item
	$k\equiv\lip$\:\:\he\defff\:\:$\dind=0$\:\: holds,
	\item
	$k\equiv\infty$\hspace{1.3pt}\:\:\he\defff\:\:$\dind\in \NN$\:\: holds.
\end{itemize}
\endgroup
\noindent
We furthermore recall that the $C^k$-topology on $C^k([r,r'],\mg)$ for $r<r'$ is generated by the seminorms
\begin{align}
\label{lkdlkfdlkfdlkfdlkfdlkfd}
	\ppp^\dind_\infty(\phi):=\sup\{(\pp\cp \dd_e\chart)\big(\phi^{(m)}(t)\big)\:|\: 0\leq m\leq \dind,\:t\in [r,r']\}\qquad\quad\forall\: \phi\in C^k([r,r'],\mg)
\end{align}
for $\pp\in \SEM$ and $\dind \llleq k$; and let $\ppp_\infty\equiv \ppp_\infty^0$ for each $\pp\in \SEM$. Then, we will show that \rccf{Theorem \ref{kckjckjs}}:
\begin{satz}
Suppose that $G$ is $C^k$-semiregular and that $\evek{k}$ is $C^k$-continuous, for some $k\in \NN\sqcup\{\lip,\infty\}$. Let furthermore $\Phi\colon I\times [0,1]\rightarrow \mg$ ($I\subseteq \RR$ open) be given with $\Phi(z,\cdot)\in C^k([0,1],\mg)$ for each $z\in I$.  
Then, 
\begin{align*}
	\textstyle\frac{\dd}{\dd h}\big|_{h=0} \big([\innt \Phi(x,\cdot)]^{-1}[\innt\Phi(x+h,\cdot)]\big)=\textstyle\int \Ad_{[\innt^s\Phi(x,\cdot)]^{-1}}(\partial_1\Phi(x,s))\:\dd s\hspace{1pt}\in \comp{\mg}
\end{align*}
holds for $x\in I$, provided that
\begingroup
\setlength{\leftmargini}{17pt}{
\renewcommand{\theenumi}{{\alph{enumi}})} 
\renewcommand{\labelenumi}{\theenumi}
\begin{enumerate}
\item
We have 
$(\partial_1 \Phi)(x,\cdot)\in C^k([0,1],\mg)$.
\item
For each $\pp\in \SEM$ and $\dind\leq k$, there exists $L_{\pp,\dind}\geq 0$, as well as $I_{\pp,\dind}\subseteq I$ open with $x\in I_{\pp,\dind}$, such that
\begin{align*}
	\textstyle\rcf{|h|}\cdot\ppp^\dind_\infty(\Phi(x+h,\cdot)-\Phi(x,\cdot))\leq L_{\pp,\dind}\qquad\quad \forall\: h\in \RR_{\neq 0}\:\text{ with }\: x+h\in I_{\pp,\dind}.
\end{align*}
\end{enumerate}}
\endgroup
\end{satz}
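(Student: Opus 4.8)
The plan is to collapse the quotient $[\innt\Phi(x,\cdot)]^{-1}[\innt\Phi(x+h,\cdot)]$ into a single evolution by means of the product rule for $\Der$, and then to differentiate the result in the fixed chart $\chart$ via the fundamental theorem of calculus, so that the problem reduces to interchanging $\tfrac{\dd}{\dd h}\big|_0$ with $\int_0^1(\cdot)\,\dd s$.

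For $x\in I$ fixed and small $h$, set $\mu_h(s):=[\innt^s\Phi(x,\cdot)]^{-1}\,[\innt^s\Phi(x+h,\cdot)]$, which exists for all $s\in[0,1]$ by $C^k$-semiregularity and is a $C^1$-curve in $G$ with $\mu_h(0)=e$ and $\mu_h(1)=[\innt\Phi(x,\cdot)]^{-1}[\innt\Phi(x+h,\cdot)]$. The last relation in \eqref{fgfggfssscvvccvcvcvcvccvcv} gives $\Der(\mu_h)(s)=\psi_h(s):=\Ad_{[\innt^s\Phi(x,\cdot)]^{-1}}(\Phi(x+h,s)-\Phi(x,s))$, so that $\mu_h=\EV(\psi_h)$ and $\mu_h(1)=\EVE_\kk(\psi_h)$. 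Since $\Ad$ is smooth and the solution $s\mapsto\innt^s\Phi(x,\cdot)$ is one degree more regular than $\Phi(x,\cdot)$ (bootstrapping $\dot\mu=\dd_e\RT_\mu(\Der(\mu))$), one has $\psi_h\in C^k([0,1],\mg)$ with $\psi_0=0$ and pointwise $\partial_h\big|_0\psi_h(s)=\psi_0'(s):=\Ad_{[\innt^s\Phi(x,\cdot)]^{-1}}(\partial_z\Phi(x,s))$; thus $\int_0^1\psi_0'(s)\,\dd s$ is exactly the claimed right-hand side. (Taking $\Phi(z,s)=z\cdot\phi(s)$ and $x=0$ here recovers the directional derivative of $\EVE_\kk$ at zero.)

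Next, by hypothesis b) with $\dind\llleq k$ we have $\Phi(x+h,\cdot)\to\Phi(x,\cdot)$ in the $C^k$-topology, hence $\psi_h\to0$ there; continuity of the curve-valued evolution $\psi\mapsto\EV(\psi)$ on $C^k([0,1],\mg)$ (which I would deduce from the $C^k$-continuity of $\EVE_\kk$ together with the reparametrisation symmetry in \eqref{fgfggfssscvvccvcvcvcvccvcv}) then forces $\mu_h\to e$ uniformly, so $\mu_h([0,1])\subseteq\U$ for all small $h$. On this range $\chart\circ\mu_h$ is $C^1$ with $\chart(\mu_h(0))=0$, and the fundamental theorem of calculus yields
\[
	\chart(\mu_h(1))=\int_0^1 \dd_{\mu_h(s)}\chart\big(\dd_e\RT_{\mu_h(s)}(\psi_h(s))\big)\,\dd s=\int_0^1 G(\mu_h(s),\psi_h(s))\,\dd s,
\]
where $G(g,X):=\dd_g\chart(\dd_e\RT_g(X))$ is smooth with $G(g,0)\equiv0$ and $\partial_X G(e,0)=\dd_e\chart$ (as $\RT_e=\id$). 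Because $G(e,0)=0$, factoring out the second argument gives
\[
	\tfrac{1}{h}\,\chart(\mu_h(1))=\int_0^1\!\!\int_0^1 \partial_X G\big(\mu_h(s),\tau\psi_h(s)\big)\big(\tfrac{1}{h}\psi_h(s)\big)\,\dd\tau\,\dd s.
\]

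The hard part is letting $h\to0$ under these integrals, for which -- there being no dominated-convergence theorem for vector-valued Riemann integrals -- I must show that $\tfrac1h\psi_h\to\psi_0'$ \emph{uniformly} on $[0,1]$ in every continuous seminorm. Granting this, smoothness of $G$ together with $\mu_h(s),\tau\psi_h(s)\to(e,0)$ uniformly makes $\partial_X G(\mu_h(s),\tau\psi_h(s))\to\dd_e\chart$ uniformly, the integrand converges uniformly to $\dd_e\chart(\psi_0'(s))$, and hence $\tfrac1h\chart(\mu_h(1))\to\dd_e\chart\big(\int_0^1\psi_0'(s)\,\dd s\big)$; applying $(\dd_e\chart)^{-1}$ delivers the asserted derivative, which a priori lies only in the completion $\comp{\mg}$ because the Riemann integral of $\psi_0'$ need not converge in $\mg$. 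It therefore remains to extract this uniform convergence from a) and b): pointwise in $s$ it is the differentiability of $z\mapsto\Phi(z,s)$ at $x$ (existing and continuous by a)), and the upgrade to uniformity I would obtain by an Arzel\`a--Ascoli argument, using b) to bound, uniformly in $h$, the $s$-modulus of continuity of the difference quotients $\tfrac1h(\Phi(x+h,\cdot)-\Phi(x,\cdot))$ via the representation $\Phi(z,s)-\Phi(z,s')=\int_{s'}^s\partial_s\Phi(z,\sigma)\,\dd\sigma$ and the bound $\ppp^1_\infty$ (the Lipschitz bound directly when $k\equiv\lip$), while the interchange of $\partial_z$ with $\partial_s$ identifies the limit. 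Establishing this equicontinuity-plus-boundedness, which turns the pointwise derivative into a uniform one, is the main obstacle; once in place, the computation above closes.
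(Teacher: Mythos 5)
Your reduction of the quotient to a single evolution via $\Der(\mu^{-1}\nu)=\Ad_{\mu^{-1}}(\Der(\nu)-\Der(\mu))$ and the chart computation are fine, but the step you yourself flag as the main obstacle is a genuine gap: hypotheses a) and b) do \emph{not} yield uniform convergence of the difference quotients $D_h:=\tfrac1h(\Phi(x+h,\cdot)-\Phi(x,\cdot))$ to $\partial_z\Phi(x,\cdot)$ for all admissible $k$. Your Arzel\`a--Ascoli upgrade needs an $h$-uniform bound on the $s$-modulus of continuity of $D_h$, which you propose to extract from $\ppp^1_\infty$; but for $k\equiv 0$ and $k\equiv\lip$ the only index allowed in b) is $\dind=0$, so b) gives merely uniform boundedness $\ppp_\infty(D_h)\leq L_{\pp,0}$ and no equicontinuity at all (and for $k\equiv\lip$ there is likewise no $h$-uniform Lipschitz bound on $D_h$ in the hypotheses). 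Pointwise convergence plus uniform boundedness does not imply uniform convergence, so your passage to the limit under the integral collapses precisely in the cases $k\in\{0,\lip\}$ covered by the theorem.

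The paper avoids this by never requiring uniform convergence of the vector-valued remainder. It writes $\Phi(x+h,\cdot)=\Phi(x,\cdot)+h\,\partial_z\Phi(x,\cdot)+h\,\epsilon(x+h,\cdot)$ where $\epsilon(x+h,\cdot)\to 0$ only \emph{pointwise} but is bounded in the relevant seminorms by b), then factorizes $\innt\Phi(x+h,\cdot)=\alpha(1)\cdot\beta(h,1)\cdot\gamma(h,1)$ (Corollary \ref{asghasgasghashsa}). The linear piece $\beta$ is differentiated at $h=0$ by the separately proven Proposition \ref{rererererr} (which you treat as a corollary of the general statement rather than as an ingredient), and the remainder $\gamma$ is killed by estimating $\tfrac1{|h|}(\pp\cp\chart)(\gamma(h,1))$ from above by the \emph{scalar} integral $\int\nnn(\epsilon(x+h,s))\,\dd s$, to which the ordinary dominated convergence theorem applies: only pointwise convergence and a uniform bound are needed. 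If you want to salvage your route, you must either restrict to $k\geq 1$ (where b) with $\dind=1$ does give the equicontinuity you need) or replace the uniform-convergence step by a scalar domination argument of this kind.
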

In particular, we will derive Duhamel's formula from this theorem in Sect.\ \ref{kjdkjdjkdkjd}.

This paper is organized as follows:
\begingroup
\setlength{\leftmargini}{12pt}
\begin{itemize}
\item
In Sect.\ \ref{pofpofdpofdpofdpfdfd}, we give a precise synopsis of the results obtained in this paper, and compare them to the results obtained in the literature so far. 
\item 
In Sect.\ \ref{prelim}, we provide the basic definitions, and prove the most elementary properties of the core mathematical objects of this paper. 
\item
In Sect.\ \ref{dsjshdhkjshkhjsd}, we prove certain continuity properties of the evolution map; and discuss piecewise integrable curves.
\item
In Sect.\ \ref{podposddpospodpods}, we show equivalence of $C^0$-continuity and local $\mu$-convexity.
\item
In Sect.\ \ref{COMPAPPR}, we show that each $C^\infty$-semiregular Lie group is Mackey complete; and prove certain approximation statements that are relevant for our discussion in Sect.\ \ref{patcases}.
\item
In Sect.\ \ref{sec:confined}, we show that, under the presumption that $G$ is locally $\mu$-convex, $G$ is $C^k$-semiregular for $k\in \NN_{\geq 1}\sqcup \{\lip,\infty\}$ \deff $G$ is Mackey complete and k-confined. Similar statements are proven for the case $k\equiv 0$.
\item
In Sect.\ \ref{asopsopdsopsdpoosdp}, we discuss the differentiation of parameter-dependent integrals.
\end{itemize}
\endgroup

\section{Precise Synopsis of the Results}
\label{pofpofdpofdpofdpfdfd}
In this section, we give a precise synopsis of the most important results obtained in this paper, and compare them to the results obtained in the literature so far, primarily in \cite{HG}. 
\subsection{Setting the Stage}
We are concerned with the following situation in this paper. We are given a Lie group $G$ in the sense of \cite{HG} (cf.\ Definition 3.1 and Definition 3.3 in \cite{HG}) that is modeled over a Hausdorff locally convex vector space $E$, with system of continuous seminorms $\SEM$. We denote Lie algebra of $G$ by $\mg$, fix a chart $\chart\colon G\supseteq \U\rightarrow \V\subseteq E$ with $\V$ convex, $e\in \U$, $\chart(e)=0$; and identify $\mg$ with $E$ via $\dd_e\chart\colon \mg\rightarrow E$ -- more specifically, this means that we define the seminorms $\{\ppp:=\pp\cp\dd_e\chart \:|\: \pp\in \SEM \}$ on $\mg$. We denote the inversion in $G$ by $\inv\colon G\ni g\mapsto g^{-1}\in G$, the Lie group multiplication by $\mult\colon G\times G\rightarrow G$; and let $\LT_g:=\mult(g,\cdot)$ as well as $\RT_g:=\mult(\cdot, g)$ for each $g\in G$. The adjoint action is denoted by  
$\Ad\colon G\times \mg\rightarrow \mg$; i.e., we have 
\begin{align*}
	 \Ad(g,X)\equiv \Ad_g(X):=\dd_e\conj_g(X)\qquad\quad\text{with}\qquad\quad \conj_g\colon G\ni h\mapsto g\cdot  h\cdot g^{-1}
\end{align*}
for each $X\in  \mg$ and $g\in G$. 
The differential equation under consideration then is
\begin{align}
\label{popdsodpodspopods}
	\phi=\Der(\mu)\equiv \dd_{\mu}\RT_{\mu^{-1}}(\dot\mu)\qquad\quad\text{for}\qquad\quad\: \phi \in C^0(D,\mg),\:\: \mu\in C^1(D,G),\:\: D\in \INT,
\end{align}
where $\INT$ denotes the set of all non-singleton intervals $D\subseteq \RR$.  
It is immediate from the definitions that
\vspace{5pt}
\begin{align}
\label{fgfggfsss}
	\Der(\mu\cdot g)=\Der(\mu)\qquad\quad\text{and}\qquad\quad \Der(\mu|_{D'})=\Der(\mu)|_{D'}\hspace{90pt}\\[5pt]
	\label{substi}
	\Der( \mu\cp\varrho)=\dot\varrho\cdot\rcK{\Der(\mu)\cp\varrho}\hspace{170pt}\\[5pt]
	\label{fgfggfssss}
	\Der(\mu\cdot \nu)= \Der(\mu)+\Ad_\mu(\Der(\nu))\qquad\text{implying}\qquad \Der(\mu^{-1}\nu)=\Ad_{\mu^{-1}}(\Der(\nu) -\Der(\mu))\hspace{17pt}
\end{align}
\vspace{-10pt}

\noindent
holds, for all $\mu,\nu\in C^1(D,G)$, $g\in G$, $\INT\ni D'\subseteq D\in \INT$, and each $\rho\colon \INT\ni D''\rightarrow D$ of class $C^1$ (we write $\mu^{-1}\equiv\inv\cp\mu$). 
Together with smoothness of the Lie group operations, and 
\begin{align}
\label{isdsdoisdiosdsss}
	\textstyle\gamma(t)-\gamma(r)=\int_r^t \dot\gamma(s)\:\dd s\in F\qquad\quad\forall\:t\in [r,r'],\:\: \gamma\in C^1([r,r'],F)
\end{align}
for $F$ a Hausdorff locally convex vector space,  
these are the only properties we have in hand to investigate Equation \eqref{popdsodpodspopods}. Let now $\COMP\subseteq \INT$ denote the set of all non-singleton compact intervals $[r,r'] \subseteq \RR$. 
Then, 
\begingroup
\setlength{\leftmargini}{12pt}
\begin{itemize}
\item
It follows from \eqref{fgfggfsss} that \rccf{Lemma \ref{evk}} for $k\in \NN$, we have 
\begingroup
\setlength{\leftmarginii}{12pt}
\begin{itemize}
\item[$\cp$]
	\vspace{-4pt}
	$\Der\colon C^{k+1}([r,r'],G)\rightarrow C^k([r,r'],\mg)$. 
	\vspace{2pt}
\item[$\cp$]
	$\mu\in C^{k+1}([r,r'],G)$ for each  $\mu\in C^1([r,r'],G)$ with $\Der(\mu)\in C^k([r,r'],\mg)$.
\end{itemize}
\endgroup
	\vspace{-4pt}
\item
It is then immediate from \eqref{isdsdoisdiosdsss} and the right side of \eqref{fgfggfssss} that (cf.\ Lemma \ref{xckxklxc})
\begin{align*}
	\Der\colon C_*^{k+1}([r,r'],G)\rightarrow C^k([r,r'],\mg)
\end{align*}
is injective for $k\geq 0$, with $C_*^{k+1}([r,r'],G):=\{\mu\in C^{k+1}([r,r'],G)\: |\: \mu(r)=e\}$. 
\end{itemize}
\endgroup
\noindent
We let $\DIDE_{[r,r']}:=\Der(C^{1}([r,r'],G))$ for each $[r,r']\in \COMP$, as well as $\DIDE_{[r,r']}^k:=\DIDE_{[r,r']}\cap C^k([r,r'],\mg)$ for each $k\in \NN\sqcup\{\lip,\infty\}$. Then, (we let $\lip+1:=1$,  $\infty +1:=\infty$) 
\begin{align*}
	\EV_{[r,r']}^k\colon \DIDE_{[r,r']}^k\rightarrow C_*^{k+1}([r,r'],G),\qquad \Der(\mu)\mapsto \mu\cdot \mu^{-1}(r)
\end{align*}
is well defined for $k\in \NN\sqcup\{\lip,\infty\}$, as well as surjective for $k\in \NN\sqcup\{\infty\}$. We define
\begin{align}
\label{podspodspopodspodpods}
	\textstyle\EVE_{[r,r']}^k\colon \DIDE_{[r,r']}^k\rightarrow G,\qquad \phi\mapsto \EV_{[r,r']}(\phi)(r') 
\end{align} 
with $\EV_{[r,r']}\equiv \EV_{[r,r']}^0$, for each $k\in \NN\sqcup\{\lip,\infty\}$; and denote
		\begin{align*}
		\textstyle\innt_a^b \phi := \EV_{[a,b]}(\phi|_{[a,b]})(b),\qquad\innt\phi:=\innt_r^{r'}\phi,\qquad\innt_c^c\phi:=e,\qquad \innt_r^\bullet\phi\colon [r,r']\ni t\mapsto \innt_r^t\phi 
	\end{align*} 
	for each $\phi\in \DIDE_{[r,r']}$, with $r\leq a<b\leq r'$ and $c\in [r,r']$. 
There are now several issues to be clarified. We first discuss
\subsection{Semiregularity and Mackey Completeness}
We say that $G$ is \emph{$C^k$-semiregular} for $k\in \NN\sqcup\{\lip,\infty\}$ \defff $\DIDE_{[0,1]}^k=C^k([0,1],\mg)$ holds. In this case, 
\begingroup
\setlength{\leftmargini}{12pt}
\begin{itemize}
\item
$G$ is $C^p$-semiregular for each $p\geq k$ (we let $1\geq \lip\geq \lip\geq 0$).
\item
it is straightforward from \eqref{substi} that $\DIDE_{[r,r']}^k=C^k([r,r'],\mg)$ holds for each $[r,r']\in \COMP$ \rccf{Lemma \ref{assasaasas}}.
\end{itemize}
\endgroup
\noindent
One then clearly wants to have criteria in hand for $G$ to be $C^k$-semiregular for some given $k\in \NN\sqcup \{\lip,\infty\}$.  
We provide the following necessary condition \rccf{Theorem \ref{dsjkhjsdjhsd}}:
\begin{customthm}{I}\label{A}
$G$ is Mackey complete if $G$ is $C^\infty$-semiregular. 
\end{customthm}
\noindent
Here, $G$ is said to be \emph{Mackey complete} \defff each \emph{Mackey-Cauchy sequence} converges in $G$; i.e., each sequence $\{g_n\}_{n\in \NN}\subseteq G$, such that
\begin{align*}
 	(\pp\cp\chart)(g^{-1}_m\cdot g_{n})\leq \mackeyconst_\pp\cdot \lambda_{m,n}\qquad\quad\forall\: m,n\geq \mackeyindex_\pp,\:\:\pp\in\SEM
 \end{align*}
holds for certain $\{\mackeyconst_\pp\}_{\pp\in \SEM}\subseteq \RR_{\geq 0}$, $\{\mackeyindex_\pp\}_{\pp\in \SEM}\subseteq \NN$, and $\RR_{\geq 0}\supseteq \{\lambda_{m,n}\}_{(m,n)\in \NN\times \NN}\rightarrow 0$. 
 \begingroup
\setlength{\leftmargini}{12pt}
\begin{itemize}
\item
This definition is independent of the explicit choice of the chart $\chart$ \rccf{Remark \ref{popodspodspodssds}}.
\item
This definition specializes to Mackey completeness as defined for locally convex vector spaces; i.e., the case where $(G,\cdot)\equiv (E,+)$ equals the additive group of a locally convex vector space $E$. 

Theorem \ref{A} thus generalizes the well-known fact (cf.\ Theorem 2.14 in \cite{COS}) that a locally convex vector space is Mackey complete if each smooth (compactly supported) curve is Riemann integrable.
\item
Mackey completeness is exemplarily verified in Example \ref{opfdopdfpofddfop} for Banach Lie groups; and the setting considered in \cite{HGIA}.
\end{itemize}
\endgroup 
\begin{customrem}{I}\label{opodfopd}
	The idea of the proof of Theorem \ref{A} is to construct some $\phi\in C^\infty([0,1],\mg)$ whose integral $\innt\phi$ is the limit of a (subsequence of a) given Mackey-Cauchy sequence $\{g_n\}_{n\in\NN}\subseteq G$. 
 	Roughly speaking, we will use \eqref{substi} to glue together smooth curves whose integrals equal $g_n^{-1}\cdot g_{n-1}$ via suitable bump functions. Here, it is important that (1.) a Mackey-Cauchy sequence converges \deff one of its subsequences converges, and (2.) passing to a subsequence if necessary, we can achieve that $(\pp\cp\chart)(g_n^{-1}\cdot g_{n-1})$ decreases suitably fast -- namely, (up to a factor $\mackeyconst_\pp$) in the same way for all seminorms $\pp\in \SEM$: This ensures that the so-constructed $\phi$ is defined and smooth at $1$ (where all of its derivatives must necessarily be zero).\hspace*{\fill}$\ddagger$
\end{customrem}
\subsection{Topologies and Continuity}
We say that $\EVE_{[r,r']}^k$ is $C^p$-continuous for $p\leq k\in \NN\sqcup\{\lip,\infty\}$ and $[r,r']\in \COMP$ \defff it is continuous w.r.t.\ the seminorms \eqref{lkdlkfdlkfdlkfdlkfdlkfd}, for $\dind \llleq p$. We say that $G$ is 
\begingroup
\setlength{\leftmargini}{12pt}
\begin{itemize}
\item
	p$\pkt$k-continuous for $p\llleq k$ \defff $\EVE_{[r,r']}^k$ is $C^p$-continuous for each $[r,r']\in \COMP$,
\item
	\hspace{6pt} k-continuous\hspace{46.5pt} \defff $G$ is k$\pkt$k-continuous.
\end{itemize}
\endgroup
\noindent
It is straightforward from \eqref{substi} and the right side of \eqref{fgfggfssss} that \rccf{Lemma \ref{klklllkjlaaa}} 
\begin{customlem}{I}\label{opfdopfdop}
$G$ is {\rm p$\pkt$k}-continuous \deff $\EVE^k_{[0,1]}$ is $C^p$-continuous at zero.
\end{customlem}
{\it Under the presumtion that $G$ is $C^k$-semiregular (for $k\in \NN\sqcup \{\infty\}$), it had already been shown in Theorem D in \cite{HGGG} that $\EVE^k_{[0,1]}$ is $C^k$-continuous \deff it is $C^k$-continuous at zero.}
\vspace{6pt}

\noindent
Clearly, for $k\geq 1$, the $C^0$-topology is strictly coarser than the $C^k$-topology; so that 0$\pkt$k-continuity implies k-continuity but usually not vice versa. Anyhow, it is straightforward from \eqref{substi} and \eqref{fgfggfssss} that \rccf{Lemma \ref{ssssss}}: 
\begin{customlem}{II}\label{CL}
If $G$ is abelian, then $G$ is {\rm k}-continuous for $k\in \NN\sqcup\{\infty\}$ \deff $G$ is {\rm 0$\pkt$k}-continuous.
 \end{customlem}
 \noindent
The important feature of 0.k-continuity is that it can be encoded in a continuity property of the Lie group multiplication:       
Recall that $G$ is said to be locally $\mu$-convex \defff \eqref{aaajjhguoiuouoaaa} holds. We  
will show that \rccf{Theorem \ref{lfdskfddflkfdfd}}:  
\begin{customthm}{II}\label{B}
$G$ is {\rm 0}-continuous \deff $G$ is locally $\mu$-convex \deff $G$ is {\rm 0}$\pkt\infty$-continuous.
\end{customthm}
\noindent
Specifically, for $k\in \NN\sqcup\{\infty\}$ and $[r,r']\in \COMP$, let   
$\DP^k([r,r'],\mg)$ denote the set of all maps $\phi\colon [r,r']\rightarrow \mg$ such that there exist $r=t_0<{\dots}<t_n=r'$ and $\phi[p]\in \DIDE^k_{[t_p,t_{p+1}]}$ for $p=0,\dots,n-1$ with
\begin{align*}
	\phi|_{(t_p,t_{p+1})}=\phi[p]|_{(t_p,t_{p+1})}\qquad\quad\forall\: p=0,\dots,n-1.
\end{align*}
Moreover, define the integral of $\phi$ by (well-definedness is straightforward from \eqref{fgfggfsss})
\begin{align}
\label{oppopopo}
	\textstyle\innt_r^t\phi&\textstyle:=\innt_{t_{p}}^t \phi[p] \cdot \innt_{t_{p-1}}^{t_p} \phi[p-1]\cdot {\dots} \cdot \innt_{t_0}^{t_1}\phi[0]\qquad\quad \forall\: t\in (t_{p}, t_{p+1}],\:\: p=0,\dots,n-1.
\end{align} 	
Then, the one direction in Theorem \ref{B} is covered by \rccf{Proposition \ref{aaapofdpofdpofdpofd}}:
\begin{customtpr}{I}\label{C}
Suppose that $G$ is locally $\mu$-convex. Then, for each $\pp\in \SEM$, there exists some $\pp\leq \qq\in \SEM$, such that
\begin{align*}
	\textstyle\int\qqq(\phi(s))\:\dd s \leq 1\quad\:\text{for}\quad\: \phi\in \DP^0([r,r'],\mg) \qquad\quad\Longrightarrow\qquad\quad	\textstyle(\pp\cp\chart)\big(\innt_r^\bullet\phi\big)\leq \int_r^\bullet \qqq(\phi(s))\:\dd s, 
\end{align*} 
for each $[r,r']\in \COMP$.
\end{customtpr}
\begin{customrem}{II}\label{khdshdshsdjhsdhjsd}
Apart from Proposition \ref{C}, the set $\DP^k([r,r'],\mg)$ plays an important role in the proof of the other direction in Theorem \ref{B}. Here, the key observation is that $\phi\in \DP^k([r,r'],\mg)$ given with $\qqq_\infty(\phi)\leq 1/2$ for some $\qq\in \SEM$, it is possible to construct $\varrho\colon [r,r']\rightarrow [r,r']$ smooth with $|\dot\varrho|\leq 2$, such that 
\begin{align*}
	\textstyle\dot\varrho\cdot \rcK{\phi\cp\varrho}\in \DIDE^k_{[r,r']}\qquad\quad\text{as well as}\qquad\quad\innt\phi=\innt\dot\varrho\cdot \rcK{\phi\cp\varrho} 
\end{align*}
holds; i.e., $\qqq_\infty(\dot\varrho\cdot \rcK{\phi\cp\varrho})\leq 1$ \rccf{Lemma \ref{pofdspospods}}. Continuity of $\EVE_{[r,r']}^k$ w.r.t.\ to the seminorms $\{\ppp_\infty\}_{\pp\in \SEM}$ thus carries over to the set $\DP^k([r,r'],\mg)$; and, then  \eqref{aaajjhguoiuouoaaa} is a straightforward consequence of \eqref{oppopopo}. 
Here, $\varrho$ is obtained by glueing together (and then integrating) suitable bump functions; so that the argument fails on the level of the $C^k$-topology for $k\geq 1$, just because the higher derivatives of a so-constructed $\varrho$ become that larger that finer the decomposition of $[r,r']$ is made.  
\hspace*{\fill}$\ddagger$
\end{customrem}
\noindent
Finally, we say that $G$ is $\rm L^1$-continuous \defff $\EVE_{[r,r']}^0$ is continuous w.r.t.\ the $L^1$-seminorms
\begin{align}
\label{ofdpofdpofdpofd}
	\textstyle\ppp_{\int}(\phi):=\int \ppp(\phi(s))\: \dd s\qquad\quad\forall\: \pp\in \SEM,\:\: \phi\in C^0([r,r'],\mg)
\end{align}
for each $[r,r']\in \COMP$. 
Then, Theorem \ref{B} and Proposition \ref{C} show that $G$ is $\rm L^1$-continuous \deff $G$ is locally $\mu$-convex \deff $G$ is 0$\pkt\infty$-continuous; which generalizes Lemma 14.9 in \cite{HGGG}. Here, the equivalence of $\rm L^1$-continuity and 0-continuity is already straightforward from \eqref{substi} \rccf{cf.\ Lemma \ref{sdsddsdsdsdsds}}.

\subsection{Integrability}
\label{dslklkslkdslkdsyxyyx}
We now come back to the question under which circumstances a given $\phi\in C^0([0,1],\mg)$ is integrale, i.e., contained in $\DIDE_{[0,1]}^0$. A sequence $\{\phi_n\}_{n\in \NN}\subseteq\DP^0([0,1],\mg)$ is said to be \emph{tame} \defff for each $\vv\in \SEM$, there exists some $\vv\leq \ww\in \SEM$, such that 
\begin{align*}	 
	 \vvv\cp \Ad_{[\innt_0^\bullet \phi_n]^{-1}}\leq  \www\qquad\quad\forall\: n\in \NN 
\end{align*}
holds. 
Moreover, 
$\{\phi_n\}_{n\in \NN}\subseteq\DP^0([0,1],\mg)$ is said to be a
	 \begingroup
\setlength{\leftmargini}{12pt}
\begin{itemize}
\item
	Cauchy sequence \defff to each $\pp\in \SEM$ and $\varepsilon>0$, there exists some $p\in \NN$ with 
	$\ppp_\infty(\phi_m-\phi_n)\leq \varepsilon$ for all $m,n\geq p$.
\item
	 Mackey-Cauchy sequence \defff there exists a net $\RR_{\geq 0}\supseteq \{\lambda_{m,n}\}_{(m,n)\in \NN\times \NN}\rightarrow 0$, as well as constants $\{\mackeyconst_\pp\}_{\pp\in \SEM}\subseteq \RR_{\geq 0}$ and $\{\mackeyindex_\pp\}_{\pp\in \SEM}\subseteq \NN$, such that for each $\pp\in\SEM$, we have
	\begin{align*}
	\ppp_\infty(\phi_m-\phi_n)\leq \mackeyconst_\pp\cdot \lambda_{m,n} \qquad\quad\forall\: m,n\geq \mackeyindex_\pp.
\end{align*}
\end{itemize}
\endgroup
\noindent 
Then, $\phi\in C^0([0,1],\mg)$ is said to be
\begingroup
\setlength{\leftmargini}{12pt}
\begin{itemize}
\item
		\emph{$\sequy$-integrable} \defff there exists a tame Cauchy sequence 
	$\{\phi_n\}_{n\in \NN}\subseteq\DP^0([0,1],\mg)$ with $\{\phi_n\}_{n\in \NN}\rightarrow\phi$ uniformly (i.e., w.r.t\ the seminorms $\{\ppp_\infty\}_{\pp\in \SEM}$).
\item
	\emph{$\mackey$-integrable} \defff there exists a tame Mackey-Cauchy sequence 
	$\{\phi_n\}_{n\in \NN}\subseteq\DP^0([0,1],\mg)$ with $\{\phi_n\}_{n\in \NN}\rightarrow\phi$ uniformly. 
\end{itemize}
\endgroup
\noindent
We will show that \rccf{Lemma \ref{sdsddsd} and Proposition \ref{dfopfdpoofdp}}:
\begin{customtpr}{II}\label{popodfopdfopdfoppfdo}
Suppose that $G$ is locally $\mu$-convex.  
\begingroup
\setlength{\leftmargini}{17pt}
\begin{enumerate}
\item
If $G$ is sequentially complete, then $\phi\in \DIDE_{[0,1]}^0$ holds for $\phi\in C^0([0,1],\mg)$ \deff $\phi$ is $\sequy$-integrable.
\item
If $G$ is Mackey complete, then $\phi\in \DIDE_{[0,1]}^\lip$ holds for $\phi\in C^\lip([0,1],\mg)$ \deff $\phi$ is $\mackey$-integrable.
\end{enumerate}
\endgroup
\end{customtpr}
\begin{customrem}{III}\label{fddffdd}
The one direction in Proposition \ref{popodfopdfopdfoppfdo} is immediate from the fact that $\mu\colon t\mapsto \innt^t_0\phi$ has compact image, for each $\phi\in \DIDE_{[0,1]}^0$. For the other direction (in analogy to the Riemann integral) one defines 
\begin{align*}
	\textstyle\mu(t) := \lim_n \innt_0^t \phi_n\qquad\quad\forall\: t\in [0,1];
\end{align*}
and then has to verify (1.) that the limit exists pointwise, i.e., that $\mu$ is defined, (2.) that $\mu$ is continuous, (3.) that $\{\innt_0^\bullet \phi_n\}_{n\in \NN}\rightarrow \mu$ converges uniformly, and (4.) that $\mu$ is of class $C^1$ with $\Der(\mu)=\phi$. \hspace*{\fill}$\ddagger$
\end{customrem}
\noindent 
We say that $G$ is \emph{$\mathrm{k}$-confined} 
\begingroup
\setlength{\leftmargini}{12pt}
\begin{itemize}
\item
	for $k\equiv 0$:\hspace{63.6pt}\quad\:\: \defff each $\phi\in C^0([0,1],\mg)$ is $\sequy$-integrable,
\item
	for $k\in \NN_{\geq 1}\sqcup\{\lip,\infty\}$:\quad\:\: \defff each $\phi\in C^k([0,1],\mg)$ is $\mackey$-integrable;
\end{itemize}
\endgroup
\noindent
and obtain from Theorem \ref{A} that \rccf{Theorem \ref{confev}}:	
\begin{customthm}{III}\label{D}
Suppose that $G$ is locally $\mu$-convex. Then, $G$ is $C^k$-semiregular for $k\in \NN_{\geq 1}\sqcup \{\lip,\infty\}$ \deff $G$ is Mackey complete and $\mathrm{k}$-confined. Moreover, $G$ is $C^0$-semiregular if $G$ is sequentially complete and $\mathrm{0}$-confined. 
\end{customthm}	
\noindent
For instance, $G$ is k-confined for each $k\in \NN\sqcup\{\lip,\infty\}$ \rccf{Sect.\ \ref{patcases}}:
\begingroup
\setlength{\leftmargini}{12pt}
\begin{itemize}
\item
If $G$ is abelian; or, more generally, if $(\mg,\bl\cdot,\cdot\br)$ is submultiplicative.
\item
If $G$ is locally $\mu$-convex and \emph{reliable}; i.e., if for each $\vv\in \SEM$, there exists a symmetric neighbourhood $V\subseteq G$ of $e$, and a sequence $\{\ww_n\}_{n\in \NN_{\geq 1}}\subseteq \SEM$, 
such that 
\begin{align*}
	\vvv\cp \Ad_{g_1}\cp{\dots}\cp\Ad_{g_n}\leq \www_{n}\qquad\quad\forall\: g_1,\dots,g_n\in V,\:\:n\geq 1.
\end{align*}
	This is the case, e.g., 
	for the unit group $\MAU$ of a  continuous inverse algebra $\MA$ fulfilling the condition $(*)$ (cf.\ \eqref{invACond}) formulated in the theorem proven in \cite{HGIA}.
\item
If $G$ admits an exponential map, is constricted, and has a sequentially complete Lie algebra. 

Here, the first condition means that $\phi_X|_{[0,1]}\in \DIDE_{[0,1]}$ holds, for each constant curve $\phi_X\colon \RR\ni t\mapsto X\in \mg$; i.e., that 
\begin{align}
\label{pofdpofdpofdpofdfpodfpopofdpofd}
	\textstyle\exp\colon \mg\ni X\mapsto \innt_0^1 \phi_X\in G
\end{align}
is defined. Moreover, constrictedness states that for each bounded subset $\bound\subseteq \mg$, and each $\vv\in \SEM$, there exist $C\geq 0$ and $\vv\leq \ww\in \SEM$, such that 
\begin{align*}
	\vvv\cp \com{X_1}\cp {\dots}\cp \com{X_n}\leq  C^n\cdot \www\qquad\quad\forall\: X_1,\dots,X_n\in \bound,\:\: n\geq 1
\end{align*}
holds, with $\com{X} \colon \mg\ni Y\mapsto \bl X,Y\br\in \mg$ for each $X\in \mg$. 
\end{itemize}
\endgroup
\noindent 
In particular, 
\begin{customco}{I}\label{CCA}
If $G$ is abelian, then $G$ is $C^\infty$-semiregular and $\rm\infty$-continuous \deff $G$ is Mackey complete and locally $\mu$-convex \deff $G$ is $C^k$-semiregular and {\rm k}-continuous for each $k\in \NN_{\geq 1}\sqcup\{\lip,\infty\}$.  
\end{customco}
\begin{proof}
If $G$ is $C^\infty$-semiregular and $\infty$-continuous, then $G$ is Mackey complete by Theorem \ref{A}, as well as 0$\pkt\infty$-continuous by Lemma \ref{CL}; thus, locally $\mu$-convex by Theorem \ref{B}. 
Conversely, if $G$ is locally $\mu$-convex, then $G$ is (even 0$\pkt$)k-continuous for each $k\in \NN_{\geq 1}\sqcup \{\lip,\infty\}$ by Theorem \ref{B}. Since $G$ is $\lip$-confined, Theorem \ref{D} shows that $G$ is $C^k$-semiregular for each $k\in \NN_{\geq 1}\sqcup \{\lip,\infty\}$ if $G$ is additionally Mackey complete.
\end{proof}
\subsection{Smoothness and Differentiation}
In Sect.\ \ref{asopsopdsopsdpoosdp}, we will discuss the differentiation of parameter-dependent integrals in the standard setting; i.e., w.r.t.\ the $C^k$-topology. Our key observation there is \rccf{Proposition \ref{rererererr}}: 
\begin{customtpr}{III}\label{pcvcvcvvccvopodfopdfopdfoppfdo}
Suppose that $G$ is \emph{k-continuous} for $k\in \NN\sqcup \{\lip,\infty\}$; and that $(-\delta,\delta)\cdot \phi\subseteq \DIDE^k_{[r,r']}$ holds for some $\phi\in C^k([r,r'],\mg)$ for $[r,r']\in \COMP$ and $\delta>0$.
Then, we have
\begin{align*}
	\textstyle\frac{\dd}{\dd h}\big|_{h=0}\: \EVE_{[r,r']}^k(h\cdot \phi)=\int \phi(s)\:\dd s\in \comp{\mg}.
\end{align*}
Thus, the directional derivative of $\EVE_{[r,r']}^k$ at zero along such a $\phi\in C^k([r,r'],\mg)$ always exists; namely, in the completion $\mgc$ of $\mg$.
\end{customtpr}
\noindent
We say that $\mg$ is \emph{integral complete} \cite{HGGG} \defff $\int \phi(s)\:\dd s\in \mg$ exists for each $\phi\in C^0([0,1],\mg)$;  and recall that $\mg$ is Mackey complete \deff $\int \phi(s)\:\dd s\in \mg$ exists for each $\phi\in C^\infty([0,1],\mg)$.  
Then, the above proposition immediately shows that \rccf{Corollary \ref{MC}}:
\begin{customco}{II}\label{lkflkf}
\noindent
\vspace{-5pt}
\begingroup
\setlength{\leftmargini}{17pt}
\begin{enumerate}
\item
Suppose that $G$ is {\rm 0}-continuous and $C^0$-semiregular. Then,   
 $\EVE^0_{[0,1]}$ is differentiable at zero \deff $\mg$ is \emph{integral complete}.
\item
Suppose that $G$ is {\rm k}-continuous for some $k\in \NN_{\geq 1}\sqcup\{\lip,\infty\}$, as well as $C^\infty$-semiregular. Then,   
 $\EVE^k_{[0,1]}\big|_{C^\infty([0,1],\mg)}$ is differentiable at zero \deff $\mg$ is Mackey complete.
\end{enumerate}
\endgroup
\end{customco}
\noindent
Here, the first point generalizes Theorem C.(a) in \cite{HGGG} stating that each $C^0$-regular Lie group has an integral complete Lie algebra (modeling space). The  second point generalizes the analogous result announced in Remark II.5.3.(b) in \cite{KHN2} stating that each $C^\infty$-regular Lie group has a Mackey complete Lie algebra -- Recall that $G$ is said to be \emph{$C^k$-regular} for $k\in \NN\sqcup\{\lip,\infty\}$ \defff $G$ is $C^k$-semiregular and $\EVE^k_{[0,1]}$ is smooth w.r.t.\ the $C^k$-topology.
\vspace{6pt}

\noindent
Next, using the above proposition, we show that, cf. Theorem \ref{kckjckjs}: 
\begin{customthm}{IV}\label{E}
Suppose that $G$ is {\rm k}-continuous and $C^k$-semiregular for some $k\in \NN\sqcup\{\lip,\infty\}$; and let $\Phi\colon I\times [r,r']\rightarrow \mg$ ($I\subseteq \RR$ open) be fixed with $\Phi(z,\cdot)\in C^k([r,r'],\mg)$ for each $z\in I$.  
Then, 
\begin{align*}
	\textstyle\frac{\dd}{\dd h}\big|_{h=0} \big([\innt \Phi(x,\cdot)]^{-1}[\innt\Phi(x+h,\cdot)]\big)=\textstyle\int \Ad_{[\innt_r^s\Phi(x,\cdot)]^{-1}}(\rcp\Phi(x,s))\:\dd s\hspace{1pt}\in \comp{\mg}
\end{align*}
holds for $x\in I$, provided that
\begingroup
\setlength{\leftmargini}{17pt}{
\renewcommand{\theenumi}{{\alph{enumi}})} 
\renewcommand{\labelenumi}{\theenumi}
\begin{enumerate}
\item
We have 
$(\rcp \Phi)(x,\cdot)\in C^k([r,r'],\mg)$.
\item
For each $\pp\in \SEM$ and $\dind\llleq k$, there exists $L_{\pp,\dind}\geq 0$, as well as $I_{\pp,\dind}\subseteq I$ open with $x\in I_{\pp,\dind}$, such that
\begin{align*}
	\textstyle \rcf{|h|}\cdot\ppp^\dind_\infty(\Phi(x+h,\cdot)-\Phi(x,\cdot))\leq L_{\pp,\dind}\qquad\quad \forall\: h\in \RR_{\neq 0}\:\text{ with }\: x+h\in I_{\pp,\dind}.
\end{align*}
\end{enumerate}}
\endgroup
\end{customthm}
\vspace{-4pt}
\noindent
For instance, we obtain \rccf{Corollary \ref{sasassasasa}}:
\begin{customco}{III}\label{ospospddopsopsdopdp}
Suppose that $G$ is $\rm\infty$-continuous, and $C^\infty$-semiregular; and that $\mg$ is Mackey complete. Then, for $\MX\colon I\rightarrow \mg$ of class $C^1$, we have
\begin{align*}
	\textstyle\partial_z \exp(\MX(x))=\dd_e\LT_{\exp(\MX(x))}\big(\int \Ad_{\exp(-s\cdot \MX(x))}(\partial_z\MX(x)) \:\dd s\big)\qquad\quad\forall\: x\in I.
\end{align*}
\end{customco}
Imposing further presumptions, this specializes to \emph{Duhamel's formula} \rccf{Proposition \ref{sahjhjsahjsahjsahjsaqqwppowqpowq}}: 

{\it(Actually, in Proposition \ref{sahjhjsahjsahjsahjsaqqwppowqpowq}, a slightly more general situation is considered.)}
\begin{customtpr}{IV}\label{ajshahjshaashsahasasasas}
Suppose that $G$ is $\infty$-continuous, $C^\infty$-semiregular, and constricted; and that $\mg$ is sequentially complete.    
Then, for each $\MX\colon I\rightarrow \mg$ of class $C^1$, we have\footnote{The precise definition of the expression in the parentheses on the left side can be found in Sect.\ \ref{kjdkjdjkdkjd} \rccf{Equation \eqref{odsoidoioisdoidsoids}}.}
\begin{align*}
	\textstyle\partial_z \exp(\MX(x))\textstyle=\dd_e\LT_{\exp(\MX(x))}\Big(\frac{\id_\mg-\exp(-\com{\MX(x)})}{\com{\MX(x)}}(\partial_z\MX(x))\Big)\qquad\quad\forall\: x\in I.
\end{align*}
\end{customtpr}
\noindent
Now, Theorem E in \cite{HGGG} states that $\EVE_{[0,1]}^k$ is smooth if $G$ is $C^k$-semiregular, and $\EVE_{[0,1]}^k$ is of class $C^1$. Combining this with Corollary \ref{lkflkf} and Theorem \ref{E}, we  obtain \rccf{Theorem \ref{kdfklfklfdkjljljjllk}}:
\begin{customthm}{V}\label{F} 
\noindent
\vspace{-5pt}
\begingroup
\setlength{\leftmargini}{17pt}
{
\renewcommand{\theenumi}{{\arabic{enumi}})} 
\renewcommand{\labelenumi}{\theenumi}
\begin{enumerate}
\item
\label{aaaaa22}
		If $G$ is {\rm 0}-continuous and $C^0$-semiregular, then $\EVE_{[r,r']}^0$ is smooth for each $[r,r']\in \COMP$ \deff $\mg$ is integral complete \deff $\EVE_{[0,1]}^0$ is differentiable at zero.
\item
\label{aaaaa12}
	If $G$ is {\rm k}-continuous and $C^k$-semiregular for $k\in \NN_{\geq 1}\sqcup\{\infty\}$, then $\EVE_{[r,r']}^k$ is smooth for each $[r,r']\in \COMP$ \deff $\mg$ is Mackey complete \deff $\EVE_{[0,1]}^k$ is differentiable at zero.
\end{enumerate}}
\endgroup
\noindent
Here, for $k=0$ in the first-, and $k\in \NN_{\geq 1} \sqcup\{\infty\}$ in the second case, we have 
\begin{align}
\label{poasopsaosappoasposa}
	\textstyle\big(\dd_\phi\:\EVE_{[r,r']}^k\big)(\psi)\textstyle 
	=\dd_e\LT_{\innt \phi}\big(\int \Ad_{[\innt_r^s\phi]^{-1}}(\psi(s))\:\dd s\big)\qquad\quad\forall\: \phi,\psi\in C^k([r,r'],\mg),\:\: [r,r']\in \COMP.
\end{align}
\end{customthm}
\begin{proof}[Sketch of the Proof given in Sect.\ \ref{osposdopopsd}]
	By Corollary \ref{lkflkf}, it suffices to show that (under the given presumptions) $\EVE_{[r,r']}^k$ is smooth and  fulfills \eqref{poasopsaosappoasposa}, namely
	\begingroup
\setlength{\leftmargini}{12pt}
\begin{itemize}
\item
	for $k\equiv 0$\hspace{46.5pt}\quad  
	if $\mg$ is integral complete,
\item
	for $k\in \NN_{\geq 1}\sqcup\{\infty\}$\quad 
	if $\mg$ is Mackey complete.
\end{itemize}
\endgroup
\noindent
	Now, in both cases, formula \eqref{poasopsaosappoasposa} is immediate from Theorem \ref{E} applied to
\begin{align*}
	\Phi[\phi,\psi]\colon  (0,1)\times [r,r']\ni(h,t)\mapsto \phi(t)+ h\cdot \psi(t)\qquad\quad\forall\: \phi,\psi\in C^k([r,r'],\mg),
\end{align*}	
	whereby the right side of \eqref{poasopsaosappoasposa} is easily seen to be continuous (cf.\ Lemma \ref{fdfdfdf}). It thus follows from Theorem E in \cite{HGGG} that $\EVE_{[0,1]}^k$ is smooth. Then, smoothness of $\EVE_{[r,r']}^k$ for $[r,r']\in \COMP$ is clear from
	\begin{align*}
		\textstyle\EVE_{[r,r']}^k\stackrel{\eqref{substi}}{=}\EVE_{[0,1]}^k\cp \:\eta,
	\end{align*}
	for $\eta\colon C^k([r,r'],\mg)\rightarrow C^k([0,1],\mg)$ given by 
		\begin{align*} 
		&\eta(\phi)\mapsto \dot\varrho\cdot \rcK{\phi\cp\varrho}\equiv |r'-r|\cdot \rcK{\phi\cp\varrho} \\
&\text{with}\quad \varrho\colon [0,1]\rightarrow [r,r'],\quad t\mapsto r+ t\cdot |r'-r|,
	\end{align*}
as $\eta$ is evidently smooth.
\end{proof}
\begin{customrem}{IV}\label{khdsdsdsdsshdshsdjhsdhjsd}
\noindent
\vspace{-5pt}
\begingroup
\setlength{\leftmargini}{12pt}
\begin{itemize}
\item
Up to the point where Theorem E from \cite{HGGG} is applied, the above argument also works for the Lipschitz case (cf.\ Corollary \ref{sddsdsdsds}); i.e., we have:
\begingroup
\setlength{\leftmarginii}{20pt}
\begin{enumerate}
\item[2')]
If $G$ is $\lip$-continuous and $C^\lip$-semiregular, then  $\EVE^\lip_{[r,r']}$ is of class $C^1$ for each $[r,r']\in \COMP$ \deff $\mg$ is Mackey complete \deff $\EVE^\lip_{[0,1]}$ is differentiable at zero.	
\end{enumerate}
\endgroup
\item
Then, instead of using Theorem E from \cite{HGGG} in the above argument, one might use the explicit formula \eqref{poasopsaosappoasposa} to prove smoothness of $\EVE_{[0,1]}^k$ inductively for $k\in \NN\sqcup\{\lip,\infty\}$, which would strengthen the statement in the previous point of course.  
The details, however, seem to be quite elaborate and technical; so that we leave this issue to another paper.
\end{itemize}
\endgroup
\end{customrem}
\noindent
Now, Theorem \ref{F} shows:
\begingroup
\setlength{\leftmargini}{17pt}
{
\renewcommand{\theenumi}{{\Alph{enumi}})} 
\renewcommand{\labelenumi}{\theenumi}
\begin{enumerate}
\item
\label{sasaassasasa1}
	$G$ is $C^0$-regular \deff $G$ is $C^0$-semiregular and 0-continuous, with $\mg$ integral complete.	
\item
\label{sasaassasasa2}
$G$ is $C^k$-regular for $k\in \NN_{\geq 1}\sqcup \{\infty\}$ \deff $G$ is $C^k$-semiregular and k-continuous, with $\mg$ Mackey complete.
\end{enumerate}}
\endgroup
\noindent
Here, \ref{sasaassasasa1} generalizes Theorem F in \cite{HGGG} stating that $G$ is $C^0$-regular if $G$ is $C^0$-semiregular and 0-continuous with integral complete Lie algebra, such that there exists a point-separating family $(\alpha_j)_{j\in J}$ of smooth Lie group homomorphisms $\alpha_j\colon G\rightarrow H_j$ to $C^0$-regular Lie groups $H_j$. 
\vspace{6pt}

\noindent
Moreover, let us say that $G$ admits a $C^1$-exponential map \deff $\exp$ as defined in \eqref{pofdpofdpofdpofdfpodfpopofdpofd} is of class $C^1$. 
We then have
\begin{customlem}{III}\label{lkfdlkfdlkfd}
Suppose that $G$ is abelian. Then, 
\begingroup
\setlength{\leftmargini}{17pt}
{
\renewcommand{\theenumi}{{\arabic{enumi}})} 
\renewcommand{\labelenumi}{\theenumi}
\begin{enumerate}
\item
\label{sasasasasasadd1}
$G$ is $C^0$-regular\hspace{6pt} \deff\he\he $G$ admits a $C^1$-exponential map, and $\mg$ is integral complete.
\item
\label{sasasasasasadd2}
$G$ is $C^\infty$-regular \he\he \deff \he\he $G$ admits a $C^1$-exponential map, and $\mg$ is Mackey complete

\hspace{80pt} \deff \he\he $G$ is $C^k$-regular for each $k\in \NN_{\geq 1}\sqcup \{\lip,\infty\}$.
\end{enumerate}}
\endgroup
\end{customlem}
\begin{proof}
	If $G$ is abelian and $\exp\colon \mg\rightarrow G$ is of class $C^1$, then we have \rccf{Remark \ref{exponentialmap}.\ref{exponentialmap3}}
	\begin{align*}
		\textstyle\innt \phi=\exp(\int \phi(s)\: \dd s)\qquad\text{for each}\qquad \phi\in C^0([0,1],\mg)\qquad\text{with}\qquad\int_0^t \phi(s)\: \dd s\in \mg\quad\:\:\forall\: t\in [0,1];   
	\end{align*}		
	which is obviously continuous w.r.t.\ the seminorms $\{\ppp_\infty\}_{\pp\in \SEM}$. It is thus clear that $G$ is
	\begingroup
\setlength{\leftmargini}{12pt}
\begin{itemize}
\item
$C^0$-semiregular and 0-continuous if $\mg$ is integral complete; thus, $C^0$-regular by \ref{sasaassasasa1}.
\item
$C^k$-semiregular and k-continuous for each $k\in \NN_{\geq 1}\sqcup \{\lip,\infty\}$ if $\mg$ is Mackey complete; thus, $C^k$-regular for each $k\in \NN_{\geq 1}\sqcup \{\lip,\infty\}$ by \ref{sasaassasasa2}.
\end{itemize}
\endgroup
\noindent	
Since $\exp$ is of class $C^1$ if $G$ is $C^\infty$-regular (cf.\ Remark \ref{exponentialmap}.\ref{exponentialmap2}), the rest is clear from \ref{sasaassasasa1} and \ref{sasaassasasa2}.
\end{proof}
Then, using Proposition V.1.9 in \cite{KHNM}, we obtain:
\begin{customtpr}{V}\label{lkflkfdfdf}
	Suppose that $G$ is connected and abelian. 
	Then, 
\begingroup
\setlength{\leftmargini}{17pt}
{
\renewcommand{\theenumi}{{\arabic{enumi}})} 
\renewcommand{\labelenumi}{\theenumi}
\begin{enumerate}
\item
\label{fdfdfdfdfdfdfdllllq1}
$G$ is \emph{$C^0$-regular} \hspace{2.4pt} \deff \he\he $G\cong E\slash \Gamma$ holds for a discrete subgroup $\Gamma\subseteq E$, with $E$ integral complete 

\hspace{82.1pt}					\deff \he\he $G$ admits a $C^1$-exponential map, and $E$ is integral complete. 
\item
\label{fdfdfdfdfdfdfdllllq2}
$G$ is \emph{$C^\infty$-regular} \he\he \deff \he\he $G\cong E\slash \Gamma$ holds for a discrete subgroup $\Gamma\subseteq E$, with $E$ \emph{Mackey complete}

\hspace{80.2pt}	\he\he				\deff \he\he $G$ admits a $C^1$-exponential map, and $E$ is Mackey complete 

\hspace{80.2pt}	\he\he				\deff \he\he $G$ is $C^k$-regular for each $k\in \NN_{\geq 1}\sqcup\{\lip,\infty\}$.
\end{enumerate}}
\endgroup
\end{customtpr}
\begin{proof}
Observe that $E$ is integral/Mackey complete \deff $\mg$ is integral/Mackey complete; and that, by Lemma \ref{lkfdlkfdlkfd}, it suffices to show the equivalences in the first line of \ref{fdfdfdfdfdfdfdllllq1} and \ref{fdfdfdfdfdfdfdllllq2}:
	\begingroup
\setlength{\leftmargini}{12pt}
\begin{itemize}
\item
If $G$ is $C^k$-regular for $k\in \{0,\infty\}$, then $G$ is $C^\infty$-regular. Then, $\exp$ is smooth \rccf{Remark \ref{exponentialmap}.\ref{exponentialmap2}}; and, by \ref{sasaassasasa1} and \ref{sasaassasasa2}, $E$ is Mackey complete -- even integral complete for $k\equiv 0$. Consequently, $G\cong E\slash \Gamma$ holds for a discrete subgroup $\Gamma\subseteq E$, by Proposition V.1.9 in \cite{KHNM}.
\item
Suppose that $G= E\slash \Gamma$ holds for a discrete subgroup $\Gamma\subseteq E=\mg$; and let $k\in \{0,\infty\}$ be fixed. Suppose furthermore that 
$E$ is Mackey complete for $k\equiv \infty$, and integral complete for $k\equiv 0$. Then, the evolution map \eqref{podspodspopodspodpods} (for $[r,r']\equiv [0,1]$) of $(E,+)$ is given by
\begin{align*}
	\textstyle\innt^k_{\!E}\colon C^k([0,1],E)\rightarrow E,\qquad \phi \mapsto \int \phi(s)\:\dd s,
\end{align*}
which is obviously smooth w.r.t.\ the $C^0$-topology (as it is linear and continuous therein); and the canonical projection $\pi\colon E\rightarrow E\slash \Gamma$ is a smooth Lie group homomorphism -- confer Example \ref{exxxcon}.\ref{exxxcon0} for more details concerning the Lie group structure on $E\slash\Gamma$. We thus have (confer, e.g., statement \ref{homtausch} in Sect.\ \ref{opsopdsospdosdpdsoppods})
\begin{align*}
	\textstyle\EVE_{[0,1]}^k (\phi)=\big(\pi\cp \innt^k_{\!E}\big)(\phi)\qquad\quad\forall\:\phi\in C^k([0,1],E);
\end{align*}
which is evidently smooth w.r.t.\ the $C^0$-topology. It is thus clear that $G$ is $C^k$-regular.
\end{itemize}
\endgroup
\noindent	
The claim now follows from Lemma \ref{lkfdlkfdlkfd}.
\end{proof}
\noindent
Evidently, Proposition \ref{lkflkfdfdf} generalizes Theorem C.(b),(c) in \cite{HGGG} stating that $(E,+)$ is $C^0$-regular \deff $E$ is integral complete; and that $(E,+)$ is $C^1$-regular \deff $E$ is Mackey complete.

\section{Preliminaries}
\label{prelim}
In this section, we fix the notations; and recall the most important facts concerning locally convex vector spaces, differentiable maps, and Lie groups that we will need in the main text. 
\subsection{Conventions}
Intervals are non-empty, non-singleton, connected subsets of $\RR$. In the following, $D$ always denotes an arbitrary-, $I$ an open-, and $K$ a compact interval. The set of all intervals is denoted by $\INT$, and the set of all compact ones by $\COMP$. 
Let $F$ be a (Hausdorff) locally convex vector space with corresponding system of continuous seminorms $\SEMM$. We recall that $\SEMM$ is filtrating, i.e., that for $\qq_1,\dots, \qq_n\in \SEMM$ with $n\geq 1$ given, there exists some $\qq\in \SEMM$ with $\qq_1,\dots,\qq_n\leq \qq$. For $\varepsilon>0$ and $\qq\in \SEMM$, we define
\begin{align*}
	\B_{\qq,\varepsilon}:=\{X\in F\:|\: \qq(X)<\varepsilon\}\qquad\qquad\qquad \OB_{\qq,\varepsilon}:=\{X\in F\:|\: \qq(X)\leq\varepsilon\};
\end{align*}
and write $\qq\lleq V$ (or $V\ggeq \qq$) for $\qq\in \SEMM$ and $V\subseteq F$ \defff \he$\OB_{\qq,1}\subseteq V$ holds. We say that $\bound\subseteq F$ is bounded \defff it is von Neumann bounded, i.e., \defff we have
\begin{align*}
	\sup\{\qq(X)\:|\: X\in \bound\}<\infty\qquad\quad\forall\:\qq\in \SEMM.
\end{align*} 
We let $\comp{F}$ denote the completion of $F$; as well as $\cqq$ the (unique) extension of $\qq\in \SEMM$ to $\comp{F}$. 
\vspace{6pt}

Manifolds and Lie groups are always assumed to be in the sense of \cite{HG} (cf.\ Definition 3.1 and Definition 3.3 in \cite{HG}); i.e., smooth, Hausdorff, and modeled over a Hausdorff locally convex vector space: The corresponding differential calculus is reviewed in Sect.\ \ref{dAINT}. 
If $f\colon M\rightarrow N$ is a $C^1$-map between the manifolds $M$ and $N$, then $\dd f\colon TM \rightarrow TN$ denotes the corresponding tangent map between their tangent manifolds; and we write $\dd_xf\equiv\dd f(x,\cdot)\colon T_xM\rightarrow T_{f(x)}N$ for each $x\in M$. 
A curve is a continuous map $\gamma\colon D\rightarrow M$, where $M$ is a manifold and $D\in \INT$ an interval. If $D\equiv I$ is open, then $\gamma$ is said to be of class $C^k$ for $k\in \NN\sqcup \{\infty\}$ \defff it is of class $C^k$ when considered as a map between the manifolds $I$ and $M$. 
We say that $\gamma\colon D\rightarrow M$ is of class $C^k$ for $k\in \NN\sqcup \{\infty\}$ --  and write $\gamma\in C^k(D,M)$ -- \defff $\gamma=\wt{\gamma}|_D$ holds for some $\wt{\gamma}\colon I\rightarrow M$ of class $C^k$ with $D\subseteq I$.  
If $\gamma\colon D\rightarrow M$ is of class $C^1$ (or differentiable), 
we let 
$\dot\gamma(t)\in T_{\gamma(t)}M$ denote the corresponding tangent vector at $\gamma(t)\in M$. 
The same conventions also hold if $M\equiv F$ is a Hausdorff locally convex vector space -- In this case, 
	we let $C^\lip([r,r'],F)$ denote the set of all Lipschitz curves on $[r,r']\in \COMP$; i.e., all curves $\gamma\colon [r,r']\rightarrow F$ with
\begin{align*}
	\qq(\gamma(t)-\gamma(t'))\leq L_\qq\cdot |t-t'|\qquad\quad\forall\: t,t'\in [r,r'],\:\: \qq\in \SEMM
\end{align*}
for certain Lipschitz constants $\{L_\qq\}_{\qq\in \SEMM}\subseteq \RR_{\geq 0}$. 
We let 
$\infty +1:=\infty$ as well as $\lip+1:=1$; and, for $k\in \NN\sqcup\{\lip,\infty\}$ and $[r,r']\in \COMP$, we define
\begin{align*}
	\qq^\dind(\gamma):=\qq\big(\gamma^{(\dind)}\big),\qquad\:\: \qq^\dind_\infty(\gamma):=\sup\big\{\qq\big(\gamma^{(m)}(t)\big)\:\big|\: 0\leq m\leq \dind,\:\:t\in [r,r']\big\},\qquad\:\: \qq_\infty:=\qq_\infty^0
\end{align*}
for each $s\llleq k$ and $\gamma\in C^k([r,r'],F)$ -- Here, $s\llleq k$ means 
\begingroup
\setlength{\leftmargini}{12pt}
\begin{itemize}
\item
	$\dind\leq k$\hspace{2pt}\: for\: $k\in \NN$, 
\item
	$\dind=0$\:\: for\: $k\equiv\lip$,
	\item
	$\dind\in \NN$\hspace{1.2pt}\: for\: $k\equiv\infty$.
\end{itemize}
\endgroup
\noindent
The $C^k$-topology on $C^k([r,r'],F)$ is the Hausdorff locally convex topology that is generated by the seminorms $\qq_\infty^\dind$, for each $\qq\in \SEMM$ and $\dind\llleq k$.
\vspace{6pt}

\noindent
In this paper, $G$ will always denote an infinite dimensional Lie group (in Milnor's sense) that is modeled over a Hausdorff locally convex vector space $E$, with system of continuous seminorms $\SEM$. We denote the Lie algebra of $G$ by $(\mg,\bl\cdot,\cdot\br)$, fix a chart $\chart\colon G\supseteq\U\rightarrow \V\subseteq E$ with $\V$ convex, $e\in \U$, $\chart(e)=0$; and identify $\mg$ with $E$ via $\dd_e\chart\colon E\rightarrow \mg$ -- specifically, we define 
\begin{align*}
	\SEML:=\{\ppp\equiv\pp\cp\dd_e\chart\colon \mg\rightarrow \RR_{\geq 0}\: |\: \pp\in \SEM\}.
\end{align*}
We denote the inversion and the Lie group multiplication by
\begin{align*}
\inv\colon G\rightarrow G,\qquad g\mapsto g^{-1}\qquad\qquad\text{and}\qquad\qquad\mult\colon G\times G\rightarrow G,\qquad (g,g')\mapsto g\cdot g',
\end{align*}
respectively, say that $\mathrm{A}\subseteq G$ is symmetric \defff $\inv(\mathrm{A})=\mathrm{A}$ holds; and recall the product rule\footnote{Confer, e.g., \ref{productrule} in Sect.\ \ref{Diffmaps}.} 
\begin{align}
\label{LGPR}
	\dd_{(g,h)}\mult(v,w)= \dd_g\RT_h(v) + \dd_h\LT_g(w)\qquad\quad\forall\: g,h\in G,\:\: v\in T_gG,\:\: w\in T_h G.
\end{align} 
We let $\conj\colon G\times G\ni (g,h)\mapsto \conj_g(h)\in G$ with
	\begin{align*}
		\conj_g:=\LT_g\cp\RT_{g^{-1}}\qquad\text{for}\qquad \RT_g:=\mult(\cdot,g)\qquad \text{and}\qquad \LT_g:=\mult(g,\cdot) \qquad\quad\:\:\forall\: g\in G,
	\end{align*} 
define $\Ad_g:=\dd_e \conj_g\colon \mg\rightarrow \mg$ for each $g\in G$; and let $\Ad\colon G\times \mg\ni (g,X)\mapsto \Ad_g(X)\in \mg$ denote the adjoint action. 
We furthermore let
\begin{align*}
	\ad\:X(Y):=\dd_e\Ad\bl Y\br(X)\quad\:\:\forall\: X\in \mg \qquad\quad\:\:\text{for}\qquad\quad\:\: \Ad\bl Y\br\colon G\ni g\mapsto \Ad_g(Y)\in \mg;
\end{align*}
and recall that 
$\ad\:X(Y)=\bl X,Y\br$ holds for each $X,Y\in \mg$.
\subsection{Locally Convex Vector Spaces}
Let $F_1,\dots,F_n$ be (Hausdorff) locally convex vector spaces with corresponding system of continuous seminorms $\SEMM_1,\dots,\SEMM_n$. Obviously, the Tychonoff topology on $F:=F_1\times{\dots}\times F_n$ is the (Hausdorff) locally convex topology that is generated by the seminorms
\begin{align}
\label{rttrrttrtr}
	\mm[\qq_1,\dots,\qq_n]\colon F\ni (X_1,\dots,X_n)\mapsto \max\{\qq_k(X_k)\:|\: k=1,\dots,n\},
\end{align}
with $\qq_k\in \SEMM_k$ for $k=1,\dots,n$. Let $E$ be a further locally convex vector space with system of continuous seminorms $\SEM$. We then have
\begin{lemma}
\label{kldskldsksdklsdl}
	Let $X$ be a topological space; and let $\Phi\colon X\times F_1\times{\dots}\times F_n\rightarrow E$ be continuous with $\Phi(x,\cdot)$ 
	$n$-multilinear for each $x\in X$. Then, for each $x\in X$ and $\pp\in \SEM$, there exist seminorms $\qq_1\in \SEMM_1,\dots,\qq_n\in \SEMM_n$ as well as $V\subseteq X$ open with $x\in V$, such that 
	\begin{align*}
		(\pp\cp\Phi)(y,X_1,\dots,X_n) \leq \qq_1(X_1)\cdot {\dots}\cdot \qq_n(X_n)\qquad\quad\forall\: y\in V 
	\end{align*} 
	holds for all $X_1\in F_1,\dots,X_n\in F_n$.
\end{lemma}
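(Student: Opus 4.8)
The plan is to exploit the fact that a continuous $n$-multilinear map vanishes as soon as one argument is zero, and then to convert local boundedness near the origin into a global multilinear estimate by a rescaling (homogeneity) argument; continuity is used only at the single point $(x,0,\dots,0)$.

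First I would fix $x\in X$ and $\pp\in\SEM$ and note that, by $n$-multilinearity of $\Phi(x,\cdot)$, one has $\Phi(x,0,\dots,0)=0$, hence $(\pp\cp\Phi)(x,0,\dots,0)=0$. Since $\pp\cp\Phi$ is continuous into $\RR_{\geq 0}$, the sublevel set $\{(\pp\cp\Phi)<1\}$ is open and contains $(x,0,\dots,0)$; as the systems $\SEMM_1,\dots,\SEMM_n$ are filtrating, the balls $\B_{\qq_k,\epsilon}$ form a neighbourhood basis of $0\in F_k$, so I can extract an open $V\ni x$, seminorms $\qq_k\in\SEMM_k$, and a single $\epsilon>0$ with
\[
(\pp\cp\Phi)(y,X_1,\dots,X_n)<1\qquad\text{whenever}\qquad y\in V\ \text{ and }\ \qq_k(X_k)<\epsilon\ (k=1,\dots,n).
\]

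The second step is the rescaling that turns this into the claimed estimate for \emph{arbitrary} arguments. Given $X_1,\dots,X_n$ and $\delta>0$, I would feed the normalised vectors $Y_k:=\tfrac{\epsilon}{2(\qq_k(X_k)+\delta)}X_k$ (which satisfy $\qq_k(Y_k)<\epsilon$) into the above bound, and then use that $\Phi(y,\cdot)$ is $n$-homogeneous and $\pp$ is absolutely homogeneous to pull the scalars out:
\[
\Big(\prod_{k=1}^n\tfrac{\epsilon}{2(\qq_k(X_k)+\delta)}\Big)\,(\pp\cp\Phi)(y,X_1,\dots,X_n)<1.
\]
Rearranging and letting $\delta\downarrow 0$ gives $(\pp\cp\Phi)(y,X_1,\dots,X_n)\leq\prod_{k=1}^n\tfrac{2}{\epsilon}\qq_k(X_k)$ for all $y\in V$, so that $\qq_k':=\tfrac{2}{\epsilon}\qq_k$---again continuous seminorms on $F_k$, hence members of $\SEMM_k$---are the desired seminorms.

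The main (and essentially only) subtlety I expect is the degenerate locus where $\qq_k(X_k)=0$ for some $k$: there the naive normalisation $X_k/\qq_k(X_k)$ is undefined. Introducing the regulariser $\delta>0$ before normalising is exactly what circumvents this---it never divides by zero and, in the limit $\delta\downarrow 0$, forces the left-hand side to vanish wherever the right-hand side does. Everything else is routine: multilinearity does the work of propagating a purely local estimate (continuity at one point) to all of $F_1\times\dots\times F_n$, while the neighbourhood $V$ is required only in the $X$-direction.
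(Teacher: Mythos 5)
Your proof is correct and follows essentially the same route as the paper's: continuity of $\pp\cp\Phi$ at $(x,0,\dots,0)$ gives a uniform bound on a product of balls over a neighbourhood $V$ of $x$, and multilinearity plus homogeneity of $\pp$ upgrades this to the global product estimate. The only (cosmetic) difference is in the degenerate locus $\qq_k(X_k)=0$: you regularise with $\delta>0$ and pass to the limit, whereas the paper treats this case separately by scaling the offending argument by $n$ and letting $n\to\infty$ to conclude the value is zero; both are valid, and your rescaled seminorms $\tfrac{2}{\epsilon}\qq_k$ are indeed again members of the (saturated) systems $\SEMM_k$.
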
 
\begin{proof}
The proof is elementary, and can be found in Appendix \ref{appA1}.
\end{proof}
\begin{corollary}
\label{ofdpopfdofdp}
	Let $X$ be a topological space; and let $\Phi\colon X\times F_1\times{\dots}\times F_n\rightarrow E$ be continuous with $\Phi(x,\cdot)$ 
	$n$-multilinear for each $x\in X$. Then, for each compact $\compacto\subseteq X$ and each $\pp\in \SEM$, there exist seminorms $\qq_1\in \SEMM_1,\dots,\qq_n\in \SEMM_n$ as well as $O\subseteq X$ open with $\compacto\subseteq O$, such that
	\begin{align}
	\label{oaosapop}
		(\pp\cp\Phi)(y,X_1,\dots,X_n) \leq \qq_1(X_1)\cdot {\dots}\cdot \qq_n(X_n)\qquad\quad\forall\: y\in O
	\end{align} 
	holds for all $X_1\in F_1,\dots,X_n\in F_n$.
\end{corollary}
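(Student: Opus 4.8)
The plan is to obtain the corollary from the pointwise statement of Lemma \ref{kldskldsksdklsdl} by a standard compactness argument, the only additional ingredient being that each seminorm system $\SEMM_k$ is filtrating.

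First I would fix $\pp\in\SEM$ together with the compact set $\compacto\subseteq X$, and apply Lemma \ref{kldskldsksdklsdl} at every point $x\in\compacto$: this furnishes seminorms $\qq_1^x\in\SEMM_1,\dots,\qq_n^x\in\SEMM_n$ and an open neighbourhood $V_x\subseteq X$ of $x$ such that $(\pp\cp\Phi)(y,X_1,\dots,X_n)\leq \qq_1^x(X_1)\cdot{\dots}\cdot\qq_n^x(X_n)$ holds for all $y\in V_x$ and all $X_1\in F_1,\dots,X_n\in F_n$. The collection $\{V_x\}_{x\in\compacto}$ is then an open cover of $\compacto$, from which compactness extracts a finite subcover $V_{x_1},\dots,V_{x_m}$; I would set $O:=V_{x_1}\cup{\dots}\cup V_{x_m}$, which is open and contains $\compacto$.

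Next, for each $k\in\{1,\dots,n\}$, I would use that $\SEMM_k$ is filtrating to pick a single $\qq_k\in\SEMM_k$ with $\qq_k^{x_1},\dots,\qq_k^{x_m}\leq\qq_k$. Given any $y\in O$, there is an index $j$ with $y\in V_{x_j}$, and then
\[
	(\pp\cp\Phi)(y,X_1,\dots,X_n)\leq \qq_1^{x_j}(X_1)\cdot{\dots}\cdot\qq_n^{x_j}(X_n)\leq \qq_1(X_1)\cdot{\dots}\cdot\qq_n(X_n),
\]
where the second inequality uses the domination $\qq_k^{x_j}(X_k)\leq\qq_k(X_k)$ for each $k$ together with nonnegativity of all factors. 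Since $y\in O$ was arbitrary, this is exactly \eqref{oaosapop}.

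There is no genuine obstacle here; the argument is the routine compactness-plus-filtration upgrade of a local estimate to a uniform one on a compact set. The only point requiring (minor) care is that one must pass to the dominating seminorms $\qq_k$ \emph{after} fixing the finite subcover, so that only finitely many seminorms $\qq_k^{x_1},\dots,\qq_k^{x_m}$ need to be absorbed for each $k$ — which is precisely where the filtrating property of $\SEMM_k$ is used, and why compactness (rather than mere local boundedness at each point) is what makes the estimate uniform on all of $O\supseteq\compacto$.
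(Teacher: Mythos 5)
Your argument is correct and coincides with the paper's own proof in Appendix \ref{appA2}: apply Lemma \ref{kldskldsksdklsdl} pointwise on $\compacto$, extract a finite subcover, take $O$ to be its union, and use the filtrating property of each $\SEMM_k$ to dominate the finitely many local seminorms. No differences worth noting.
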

\begin{proof}
	The proof is elementary, and can be found in Appendix \ref{appA2}.
\end{proof}
Let us finally recall the following standard result concerning completions.
\begin{lemma}
\label{pofsdisfdodjjxcycxj}
	Let $F_1,\dots,F_n,E$ be Hausdorff locally convex vector spaces; and let $\Phi\colon F_1\times{\dots}\times F_n\rightarrow E$  be continuous and $n$-multilinear. Then, $\Phi$ extends uniquely to a continuous $n$-multilinear map
	$\comp{\Phi}\colon \comp{F}_1\times {\dots}\times \comp{F}_n\rightarrow \comp{E}$. 
\end{lemma}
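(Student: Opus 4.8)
The plan is to build $\comp{\Phi}$ by continuous extension, relying on the fundamental multilinear estimate and the density of each $F_i$ in $\comp{F}_i$. Applying Lemma~\ref{kldskldsksdklsdl} with $X$ a one-point space yields, for every $\pp\in\SEM$, seminorms $\qq_1\in\SEMM_1,\dots,\qq_n\in\SEMM_n$ with
\[
	(\pp\cp\Phi)(X_1,\dots,X_n)\leq \qq_1(X_1)\cdots\qq_n(X_n)\qquad\forall\, X_1\in F_1,\dots,X_n\in F_n.
\]
The second ingredient is the telescoping identity
\[
	\Phi(X_1,\dots,X_n)-\Phi(Y_1,\dots,Y_n)=\sum_{k=1}^{n}\Phi(Y_1,\dots,Y_{k-1},X_k-Y_k,X_{k+1},\dots,X_n),
\]
valid by $n$-multilinearity for all $X_i,Y_i\in F_i$.

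For the construction, fix $(\xi_1,\dots,\xi_n)\in\comp{F}_1\times\dots\times\comp{F}_n$ and, using density, choose nets $\{X_i^\alpha\}\subseteq F_i$ with $X_i^\alpha\to\xi_i$ in $\comp{F}_i$ for each $i$. I would show that $\{\Phi(X_1^\alpha,\dots,X_n^\alpha)\}$ is a Cauchy net in $E$: fixing $\pp\in\SEM$ and the associated $\qq_1,\dots,\qq_n$ above, each $\{\qq_i(X_i^\alpha)\}$ is a Cauchy net in $\RR_{\geq 0}$ (since the $X_i^\alpha$ are $\qq_i$-Cauchy), hence eventually bounded by some $C_i$; feeding the telescoping identity into the product estimate then bounds $\pp\big(\Phi(X_1^\alpha,\dots,X_n^\alpha)-\Phi(X_1^\beta,\dots,X_n^\beta)\big)$ by $\sum_k\big(\prod_{j\neq k}C_j\big)\cdot\qq_k(X_k^\alpha-X_k^\beta)$, which tends to $0$ as $\alpha,\beta$ increase. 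Thus the net converges in $\comp{E}$, and I define $\comp{\Phi}(\xi_1,\dots,\xi_n)$ to be its limit.

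It remains to verify the required properties. Well-definedness (independence of the chosen nets) follows from the same telescoping-plus-product estimate applied to two competing approximating nets. Multilinearity of $\comp{\Phi}$ is inherited from that of $\Phi$ because the vector operations on $\comp{E}$ are continuous, so limits commute with sums and scalar multiples; agreement $\comp{\Phi}|_{F_1\times\dots\times F_n}=\Phi$ is immediate by taking constant nets. For continuity, passing to the limit in the product estimate gives $\big(\cpp\cp\comp{\Phi}\big)(\xi_1,\dots,\xi_n)\leq\comp{\qq}_1(\xi_1)\cdots\comp{\qq}_n(\xi_n)$, since each $\comp{\qq}_i$ is the continuous extension of $\qq_i$; this is exactly the multilinear continuity bound on the completions. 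Uniqueness is clear: any two continuous extensions agree on the dense subset $F_1\times\dots\times F_n$ of $\comp{F}_1\times\dots\times\comp{F}_n$, hence everywhere.

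The main obstacle is the Cauchy estimate for the image net. The delicate point is that a multilinear map is not uniformly continuous on the whole space, so the telescoping terms mix arguments from different nets and one needs uniform control of the spectator factors $\qq_j(X_j^\alpha)$. This is precisely where the observation that $\{\qq_j(X_j^\alpha)\}$ is a convergent (hence bounded) real net is essential; the filtrating property of each $\SEMM_i$ is used only to combine the finitely many seminorms $\qq_1,\dots,\qq_n$ into single members when convenient. Everything else is a routine limit argument.
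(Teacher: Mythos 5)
Your argument is correct and complete: the product estimate from Lemma \ref{kldskldsksdklsdl} (applied with a one-point parameter space) combined with the telescoping identity is exactly the standard route, and your handling of the spectator factors via boundedness of the convergent real nets $\qq_j(X_j^\alpha)$ closes the only delicate gap. The paper itself offers no proof of this lemma -- it is merely ``recalled'' as a standard fact about completions -- so there is nothing to compare against; your write-up supplies precisely the argument the author is implicitly invoking.
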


\subsection{Differentiation and Integrals}
\label{dAINT}
In this subsection, we recall the differential calculus from \cite{HA,HG,MIL,KHN}; and provide some facts that we will need to work efficiently in the main text.

\subsubsection{Differentiable Maps}
\label{Diffmaps}
Let $E$ and $F$ be Hausdorff locally convex vector spaces with systems of continuous seminorms $\SEM$ and $\SEMM$, respectively.  
Let $U\subseteq F$ be open, and $f\colon U\rightarrow E$ be a map.

We say that $f$ is differentiable at $x\in U$ \defff 
\begin{align*}
	\textstyle(D_v f)(x):=\lim_{t\rightarrow 0} \rcf{t}\cdot (f(x+t\cdot v)-f(x))\in E
\end{align*} 
exists for each $v\in F$. Moreover, 
\begingroup
\setlength{\leftmargini}{12pt}
\begin{itemize}
\item
$f$ is said to be differentiable \defff it is differentiable at each $x\in U$; i.e., \defff  $D_v f\colon U\rightarrow E$ is defined for each $v\in F$.
\item
$f$ is said to be $k$-times differentiable for $k\geq 1$ \defff 
	\begin{align*}
	D_{v_k,\dots,v_1}f\equiv D_{v_k}(D_{v_{k-1}}( {\dots} (D_{v_1}(f))\dots))\colon U\rightarrow E
\end{align*}
is defined for each $v_1,\dots,v_k\in F$; implicitly meaning that $f$ is $p$-times differentiable for each $1\leq p\leq k$. In this case, we define
\begin{align*}
	\dd^p_xf(v_1,\dots,v_p)\equiv \dd^p f(x,v_1,\dots,v_p):=D_{v_p,\dots,v_1}f(x)\qquad\quad\forall\: x\in U,\:\:v_1,\dots,v_p\in F,
\end{align*} 	
for $p=1,\dots,k$; and let $\dd f\equiv \dd^1 f$, as well as $\dd_x f\equiv \dd^1_x f$ for each $x\in U$.
\end{itemize}
\endgroup
\noindent
Then,
\begingroup
\setlength{\leftmargini}{12pt}
\begin{itemize}
\item
	$f$ is said to be of class $C^0$ \defff it is continuous; and we let $\dd^0 f\equiv f$ in this case.
\item
	$f$ is said to be of class $C^k$ for $k\geq 1$ \defff it is $k$-times differentiable, such that 
\begin{align*}
	\dd^pf\colon U\times F^p\rightarrow E,\qquad (x,v_1,\dots,v_p)\mapsto D_{v_p,\dots,v_1}f(x)
\end{align*} 
is continuous for $p=0,\dots,k$. 

In this case, $\dd^p_x f$ is symmetric and $p$-multilinear for each $x\in U$ and $p=1,\dots,k$, cf.\ \cite{HG}.  
\item
$f$ is said to be of class $C^\infty$ \defff it is of class $C^k$ for each $k\in \NN$. 
\end{itemize}
\endgroup
\noindent
We have the following differentiation rules, cf.\ \cite{HG}:
\begingroup
\setlength{\leftmargini}{17pt}
{
\renewcommand{\theenumi}{{\bf \alph{enumi}})} 
\renewcommand{\labelenumi}{\theenumi}
\begin{enumerate}
\item
\label{iterated}
A map $f\colon F\supseteq U\rightarrow E$ is of class $C^k$ for $k\geq 1$ \defff $\dd f$ is of class $C^{k-1}$ when considered as a map $F' \supseteq U' \rightarrow E$ for $F'\equiv F \times F$ and $U'\equiv U\times F$.
\item
\label{linear}
If $f\colon F\rightarrow E$ is linear and continuous, then $f$ is smooth; with $\dd^1_xf=f$ for each $x\in F$, as well as $\dd^kf=0$ for each $k\geq 2$.  
\item
\label{speccombo}
Let $E_1,\dots,E_m$ be Hausdorff locally convex vector spaces; and 
$f_u\colon F\supseteq U\rightarrow E_u$ be of class $C^k$ for $k\geq 1$ and $u=1,\dots,m$. Then, 
\begin{align*}
	f=f_1\times{\dots}\times f_m \colon U\rightarrow E_1\times{\dots}\times E_m,\qquad x\mapsto (f_1(x),\dots,f_m(x))
\end{align*}
if of class $C^k$ with $\dd^p f=\dd^pf_1{\times} \dots\times \dd^pf_m$ for $p=1,\dots,k$.
\item
\label{chainrule}
	Suppose that $f\colon F\supseteq U\rightarrow U'\subseteq F'$ and $\rcfstrich \colon F'\supseteq U'\rightarrow  F''$ are of class $C^k$ for $k\geq 1$, for Hausdorff locally convex vector spaces $F,F',F''$. Then, $\rcfstrich\cp f\colon U\rightarrow F''$ is of class $C^k$ with 
	\begin{align*}
		\dd_x(\rcfstrich\cp f)=\dd_{f(x)}\rcfstrich\cp \dd_x f\qquad\quad \forall\: x\in U.
	\end{align*}
\item
\label{productrule}
	Let $F_1,\dots,F_m,E$ be Hausdorff locally convex vector spaces, and $f\colon F_1\times {\dots} \times F_m\supseteq U\rightarrow E$ be of class $C^0$. Then, $f$ is of class $C^1$ \defff the ``partial derivatives''
	\begin{align*}
		\partial_u f \colon U\times F_u\ni((x_1,\dots,x_m),v_u)&\textstyle\mapsto \lim_{t\rightarrow 0} \rcf{t}\cdot (f(x_1,\dots, x_u+t\cdot v_u,\dots,x_m)-f(x_1,\dots,x_m))
	\end{align*}
	exist in $E$ and are continuous, for $u=1,\dots,m$. In this case, we have
	\begin{align*}
		\textstyle\dd_{(x_1,\dots,x_m)} f(v_1,\dots,v_m)&\textstyle=\sum_{u=1}^m\partial_u f((x_1,\dots,x_m),v_u)\\
		&\textstyle= \sum_{u=1}^m \dd f((x_1,\dots,x_m),(0,\dots,0, v_u,0,\dots,0))
	\end{align*}
	for all $(x_1,\dots,x_m)\in U$, and $v_u\in F_u$ for $u=1,\dots,m$.
\end{enumerate}}
\endgroup
\noindent
Finally, for $f\colon F\supseteq U\rightarrow E$ of class $C^{k}$ for $k\geq 1$, we have Taylor's formula, cf.\ \cite{HG}
\begin{align}
\label{Taylor}
\begin{split}
 \textstyle f(x+\Delta)= f(x)+ \dd^1_x f(\Delta) + {\dots} &\textstyle+ \frac{1}{(k-1)!} \cdot\dd^{k-1}_xf(\Delta,\dots,\Delta) \\
 &\textstyle+ \frac{1}{(k-1)!}\cdot \int_0^1 (1-s)^{k-1} \cdot \dd^k_{x+s\cdot \Delta}f(\Delta,\dots,\Delta)\:\dd s
\end{split}
\end{align}
for each $x\in U$ and $\Delta\in F$ with $x+[0,1]\cdot \Delta\subseteq U$.
Here, $\int\dd s$ denotes the Riemann integral, discussed in Sect.\ \ref{lkdslkdslkdslkdslkdslkds} below.
\subsubsection{Differentiable Curves}
\label{sdsdsddsdssdsdaaaa}
We now consider the situation where $f\equiv\gamma\colon I\rightarrow E$ holds -- i.e., we have $F\equiv\RR$, and $U\equiv I$ is an open interval. It is then not hard to see that $\gamma$ is of class $C^k$ for $k\geq 1$ \defff $\gamma^{(p)}$, inductively defined by $\gamma^{(0)}:=\gamma$ and\footnote{We have $\gamma^{(p)}(t)=\dd^p_t\gamma(1,\dots,1)$ for $p=1,\dots,k$, $t\in I$.} 
\begin{align*}
	\gamma^{(p)}(t):=\textstyle\lim_{h\rightarrow 0}\rcf{h}\cdot (\gamma^{(p-1)}(t+h)-\gamma^{(p-1)}(t))\qquad\quad\forall\: t\in I,\:\:p=1,\dots,k, 
\end{align*}
exists and is continuous for $p=0,\dots,k$. Then, 
for $\gamma\in C^k(D,E)$ with extension $\wt{\gamma}\colon D\supseteq I\rightarrow E$, we define 
	$\gamma^{(p)}:=\wt{\gamma}^{(p)}|_D$ for $p=0,\dots,k$, and let $\dot\gamma\equiv\wt{\gamma}^{(1)}|_D$. 
\begin{lemma}
\label{aaasasdswewe}
Suppose that $\gamma\in C^k(D,E)$ holds for $k\geq 1$ and $D\in \INT$. Then, $\gamma$ is of class $C^{k+1}$ \deff $\gamma$ is of class $C^k$ with $\gamma^{(k)}$ of class $C^1$.
\end{lemma}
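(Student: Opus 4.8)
The plan is to treat the two implications separately, reducing everything to the recursive description of $C^m$-curves recalled just above the statement (namely that a curve on an \emph{open} interval is of class $C^m$ \deff each $\gamma^{(p)}$, $0\leq p\leq m$, exists and is continuous) together with the fundamental theorem of calculus \eqref{isdsdoisdiosdsss}. Throughout I would keep in mind that, for $\gamma\in C^k(D,E)$, the curve $\gamma^{(p)}$ with $p\leq k$ does not depend on the chosen $C^k$-extension: on the interior of $D$ it is the intrinsic $p$-th derivative of $\gamma$, and at the (at most two) endpoints of $D$ lying in $D$ its value is forced by continuity. I would record this consistency remark first, since both directions invoke it.

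For the forward implication, suppose $\gamma$ is of class $C^{k+1}$, i.e.\ $\gamma=\gamma''|_D$ for some $C^{k+1}$-curve $\gamma''\colon I\rightarrow E$ on an open interval $I\supseteq D$. Then $\gamma''^{(k)}\colon I\rightarrow E$ exists and is of class $C^1$ by the recursive description, and by the consistency just noted $\gamma''^{(k)}|_D=\gamma^{(k)}$; hence $\gamma''^{(k)}$ is a $C^1$-extension of $\gamma^{(k)}$, so $\gamma^{(k)}$ is of class $C^1$. That $\gamma$ is in particular of class $C^k$ is immediate. This direction is essentially bookkeeping.

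The substance lies in the converse. Assume $\gamma$ is of class $C^k$, witnessed by a $C^k$-extension $\gamma'\colon I\rightarrow E$, and that $\gamma^{(k)}$ is of class $C^1$, witnessed by a $C^1$-extension $\psi\colon J\rightarrow E$ of $\gamma^{(k)}$; here $I,J$ are open intervals containing $D$, and $\psi|_D=\gamma^{(k)}=\gamma'^{(k)}|_D$. I would set $I_0:=I\cap J$, an open interval with $D\subseteq I_0$, fix some $t_0\in D$, and build a $C^{k+1}$-extension of $\gamma$ by iterated antidifferentiation of $\psi$:
\[
\beta_k:=\psi|_{I_0},\qquad \beta_j(t):=\gamma^{(j)}(t_0)+\int_{t_0}^t \beta_{j+1}(s)\,\dd s\quad (j=k-1,\dots,0).
\]
By \eqref{isdsdoisdiosdsss} each $\beta_j$ is of class $C^1$ with $\beta_j'=\beta_{j+1}$ and $\beta_j(t_0)=\gamma^{(j)}(t_0)$; since $\beta_k=\psi$ is of class $C^1$, moving up the chain shows $\wt{\gamma}:=\beta_0$ is of class $C^{k+1}$ on $I_0$, with $\wt{\gamma}^{(j)}=\beta_j$ for $0\leq j\leq k$ and $\wt{\gamma}^{(k)}=\psi$. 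It then remains to verify $\wt{\gamma}|_D=\gamma$, which I would do by showing that the $C^k$-curve $\delta:=\wt{\gamma}-\gamma'|_{I_0}$ vanishes on $D$. By construction $\delta^{(j)}(t_0)=\beta_j(t_0)-\gamma'^{(j)}(t_0)=\gamma^{(j)}(t_0)-\gamma^{(j)}(t_0)=0$ for $0\leq j\leq k$, while $\delta^{(k)}=\psi-\gamma'^{(k)}$ vanishes on all of $D$. Since for $t\in D$ the segment $[t_0,t]$ lies in $D$, \eqref{isdsdoisdiosdsss} gives $\delta^{(k-1)}(t)=\delta^{(k-1)}(t_0)+\int_{t_0}^t\delta^{(k)}(s)\,\dd s=0$, and a finite downward induction on the remaining derivatives yields $\delta|_D=0$, i.e.\ $\wt{\gamma}|_D=\gamma'|_D=\gamma$. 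Thus $\wt{\gamma}$ is a $C^{k+1}$-extension of $\gamma$, so $\gamma\in C^{k+1}(D,E)$.

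The only genuine obstacle is the converse direction at the endpoints of $D$: on the open interior the statement is immediate from the recursive definition, but a naive gluing fails because the two given extensions $\gamma'$ and $\psi$ need not agree off $D$. The antiderivative construction circumvents this by producing a single, sufficiently smooth curve whose $k$-th derivative is exactly the good extension $\psi$, and the vanishing-of-$\delta$ computation is what transfers agreement from the finitely many matched jet data at $t_0$ to all of $D$ using only the fundamental theorem -- in particular requiring no differentiation under the integral sign.
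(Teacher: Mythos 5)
Your forward direction and your reduction to the endpoint problem are fine, and the downward induction showing $\delta|_D=0$ is correct. The gap is in the construction of $\wt{\gamma}$: in a Hausdorff locally convex space the Riemann integral of a continuous (even $C^1$ or smooth) curve exists a priori only in the completion $\comp{E}$, cf.\ \eqref{opgfgofppof}; it lies in $E$ when the integrand is the derivative of an $E$-valued $C^1$ curve, which is exactly what \eqref{isdsdoisdiosdsss} asserts, but that is not the situation you are in. On $D$ your integrands are derivatives of the $\gamma'^{(j)}$, so $\beta_j|_D=\gamma^{(j)}$ is $E$-valued there; but on $I_0\setminus D$ the curve $\psi$ is an arbitrary $C^1$-extension of $\gamma^{(k)}$, and $\int_{t_0}^t\psi(s)\,\dd s$ for $t\notin D$ need not lie in $E$ unless $E$ is Mackey complete -- indeed, by Corollary \ref{Mackeycor} every non-Mackey-complete $E$ admits smooth curves that are not Riemann integrable in $E$, and such a curve can occur as the part of $\psi$ beyond an endpoint of $D$ (take, e.g., $\gamma\equiv 0$ on $D$ and graft such a curve onto $\psi$ outside $D$). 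So your $\beta_j$, hence $\wt{\gamma}$, are in general only $\comp{E}$-valued off $D$, and a $\comp{E}$-valued extension does not certify $\gamma\in C^{k+1}(D,E)$. Since the lemma is stated (and used) without any completeness hypothesis on $E$, this is a genuine gap, not a removable technicality.

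The paper avoids the integration entirely: off the (at most two) endpoints of $D$ contained in $D$ it extends $\gamma$ by the explicit Taylor polynomial
\begin{align*}
	\textstyle t\mapsto (t-r)^{k+1}/(k+1)!\cdot \big(\gamma^{(k)}\big)^{(1)}(r)+\sum_{p=0}^{k}(t-r)^p/p!\cdot\gamma^{(p)}(r),
\end{align*}
which is a finite linear combination of fixed vectors of $E$ and therefore manifestly $E$-valued, while matching all one-sided derivatives up to order $k+1$ at the junction; the interior of $D$ is handled directly by the recursive description of $C^m$-curves on open intervals. Your argument becomes correct if you first replace $\psi$ outside $D$ by its first-order Taylor expansion at the endpoints (so that all iterated antiderivatives are polynomials plus $E$-valued constants there) -- but that modification is essentially the paper's proof.
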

\begin{proof}
The proof is elementary, and can be found in Appendix \ref{appdiffssddsssdds}.
\end{proof}
\begin{lemma}
\label{sddsdssd}
Let $F_1,F_2,E$ be Hausdorff locally convex vector spaces; and $\gamma_i\colon D\rightarrow W_i\subseteq F_i$ be of class $C^k$ for $i=1,2$, for some $k\geq 1$. Suppose furthermore that $\Omega\colon W_1\times W_2\rightarrow E$ is smooth. 
Then, $\delta\colon D\ni t\mapsto \Omega(\gamma_1(t),\gamma_2(t))$ is of class $C^k$; and  $\delta^{(p)}$, for $0\leq p \leq k$, can be written as a finite sum of terms of the form
\begin{align}
\label{lkdslkdslkdslkdsds}
	\alpha=\Psi\big(\gamma^{(z_1)}_{i_1},\dots,\gamma_{i_m}^{(z_m)}\big)\qquad\text{for some}\qquad 0\leq z_1,\dots,z_m\leq p,\:\: 1\leq i_1,\dots,i_m \leq 2,\:\:m\geq 2,
\end{align}
where $\Psi\colon V\equiv V_{i_1}\times {\dots}\times V_{i_m}\rightarrow E$ is smooth with open neighbourhoods $V_{i_u}\subseteq F_{i_u}$ for $u=1,\dots,m$.
\end{lemma}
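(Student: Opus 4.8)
The plan is to reduce to the case of an open interval and then run an induction on $p \in \{0, \dots, k\}$ establishing the structural form \eqref{lkdslkdslkdslkdsds} of $\delta^{(p)}$. By the conventions of Sect.~\ref{sdsdsddsdssdsdaaaa}, a $C^k$-map on $D$ is the restriction of a $C^k$-map on an open $I \supseteq D$, and all derivatives on $D$ are restrictions of the corresponding derivatives on $I$; so I may assume $D = I$ is open, the general case following by restriction. That $\delta$ is of class $C^k$ is then immediate: by rule \ref{speccombo} the map $\gamma_1 \times \gamma_2 \colon I \rightarrow W_1 \times W_2$ is of class $C^k$, and $\delta = \Omega \circ (\gamma_1 \times \gamma_2)$ is $C^k$ by the chain rule \ref{chainrule}. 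It remains to identify the form of the derivatives.

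For $p = 0$ there is nothing to do: $\delta = \Omega(\gamma_1^{(0)}, \gamma_2^{(0)})$ already has the form \eqref{lkdslkdslkdslkdsds}, with $m = 2$, $\Psi = \Omega$, and $z_1 = z_2 = 0$. For the inductive step, assume $0 \le p < k$ and that $\delta^{(p)}$ is a finite sum of terms $\alpha = \Psi(\gamma^{(z_1)}_{i_1}, \dots, \gamma^{(z_m)}_{i_m})$ with $\Psi$ smooth on a product of open neighbourhoods and $0 \le z_j \le p$. Since $p < k$, every order satisfies $z_j \le k - 1$, so each factor $\gamma^{(z_j)}_{i_j}$ is of class $C^{k - z_j}$ with $k - z_j \ge 1$, hence of class $C^1$; writing $\alpha = \Psi \circ c$ with $c = \gamma^{(z_1)}_{i_1} \times \dots \times \gamma^{(z_m)}_{i_m}$ (of class $C^1$ by \ref{speccombo}), the chain rule \ref{chainrule} together with the product rule \ref{productrule} yields
\[
	\dot\alpha = \sum_{j=1}^m \partial_j\Psi\big(\gamma^{(z_1)}_{i_1}, \dots, \gamma^{(z_m)}_{i_m}, \gamma^{(z_j+1)}_{i_j}\big),
\]
where $\partial_j\Psi$ is the partial differential of $\Psi$ in its $j$-th slot. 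Each $\partial_j\Psi$ is smooth (being, by \ref{productrule} and \ref{iterated}, the composite of the smooth map $\dd\Psi$ with a continuous linear inclusion) and is defined on a product of open neighbourhoods in $F_{i_1}, \dots, F_{i_m}, F_{i_j}$; hence every summand is again of the form \eqref{lkdslkdslkdslkdsds}, now with $m + 1 \ge 2$ factors and all orders bounded by $p + 1$. As $\delta \in C^k$ guarantees $\delta^{(p)} \in C^1$, differentiating $\delta^{(p)}$ term by term shows that $\delta^{(p+1)}$ is a finite sum of such terms, closing the induction and, at $p = k$, proving the claim.

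I expect no genuine obstacle here; the statement is a coordinate-free Fa\`a di Bruno computation, and everything rests on the differentiation rules \ref{iterated}, \ref{speccombo}, \ref{chainrule}, \ref{productrule}. The points requiring care are purely organizational: checking that the class of admissible terms in \eqref{lkdslkdslkdslkdsds} is closed under a single differentiation (the displayed formula, in which the number of factors grows by one and the orders by at most one), verifying that the orders never exceed $p$ so that every factor stays at least $C^1$ across the step, and confirming that the partial differentials $\partial_j\Psi$ inherit smoothness so that $\Psi$ may be legitimately replaced at each stage.
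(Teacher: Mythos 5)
Your proposal is correct and follows essentially the same route as the paper's own proof in Appendix \ref{appdiff}: reduce to an open interval via $C^k$-extensions, get the $C^k$-statement from \ref{speccombo} and \ref{chainrule}, and then induct on $p$ by differentiating a generic term $\Psi(\gamma^{(z_1)}_{i_1},\dots,\gamma^{(z_m)}_{i_m})$ with \ref{chainrule} and \ref{productrule}, noting that the partial differentials $\partial_u\Psi$ are smooth by \ref{iterated}. Your explicit check that each factor stays $C^1$ across the inductive step (since $z_j\leq p<k$) is a detail the paper leaves implicit, but the argument is the same.
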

\begin{proof}
The proof is elementary, and can be found in 
Appendix \ref{appdiff}. 
\end{proof}
\begin{corollary}
\label{bhsbsshshdkksjdhjsd}
Let $F,E$ be Hausdorff locally convex vector spaces; and suppose that $\gamma_1\colon D\rightarrow W_1\subseteq E$ is of class $C^1$, $\gamma_2\colon D\rightarrow W_2\subseteq F$ is of class $C^k$ for some $k\geq 1$, and that $\dot\gamma_1=\Omega(\gamma_1,\gamma_2)$ holds for a smooth map $\Omega\colon W_1\times W_2\rightarrow E$. Then, $\gamma_1$ is of class $C^{k+1}$.
\end{corollary}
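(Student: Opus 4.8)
The plan is to argue by induction on $k\geq 1$, using Lemma \ref{sddsdssd} to differentiate the right-hand side $\Omega(\gamma,\gamma')$ and Lemma \ref{aaasasdswewe} to upgrade the regularity of $\gamma$ by one order at the final step. The governing identity throughout is $\dot\gamma=\Omega(\gamma,\gamma')$, which expresses the top derivative of $\gamma$ as a smooth function of $\gamma$ and $\gamma'$. The whole idea is to feed the current regularity of \emph{both} curves into Lemma \ref{sddsdssd} to gain regularity of $\dot\gamma$, and thereby of $\gamma$ itself.

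For the base case $k=1$, both $\gamma$ and $\gamma'$ are of class $C^1$, so Lemma \ref{sddsdssd} (applied with $\gamma_1=\gamma$, $\gamma_2=\gamma'$, and the smooth map $\Omega$) shows that $t\mapsto \Omega(\gamma(t),\gamma'(t))=\dot\gamma(t)$ is of class $C^1$. Thus $\gamma$ is of class $C^1$ with $\gamma^{(1)}=\dot\gamma$ of class $C^1$, and Lemma \ref{aaasasdswewe} yields $\gamma\in C^2$, as desired.

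For the inductive step, suppose the statement holds for $k-1$, with $k\geq 2$. Since $\gamma'$ is of class $C^k$ it is in particular of class $C^{k-1}$; applying the induction hypothesis to the same data $(\gamma,\gamma',\Omega)$ gives $\gamma\in C^{(k-1)+1}=C^k$. Now both $\gamma$ and $\gamma'$ are of class $C^k$, so Lemma \ref{sddsdssd} shows that $\dot\gamma=\Omega(\gamma,\gamma')$ is of class $C^k$. Consequently $\gamma^{(k)}=(\dot\gamma)^{(k-1)}$ is of class $C^1$, and since $\gamma$ is already of class $C^k$, one last application of Lemma \ref{aaasasdswewe} gives $\gamma\in C^{k+1}$, completing the induction.

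I expect no genuine analytic obstacle here once the two preceding lemmas are in hand; the only point requiring care is the bookkeeping. One must first match the regularity of $\gamma$ to that of $\gamma'$ before invoking Lemma \ref{sddsdssd} (which is precisely what the induction hypothesis arranges), and one must correctly identify $\gamma^{(k)}$ with $(\dot\gamma)^{(k-1)}$ so that Lemma \ref{aaasasdswewe} applies in its stated ``$C^k$ with $\gamma^{(k)}$ of class $C^1$'' form. These two matchings are the entire substance of the proof.
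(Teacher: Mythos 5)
Your proof is correct and is exactly the induction the paper intends: its own proof consists of the single line ``this follows inductively from Lemma \ref{aaasasdswewe} and Lemma \ref{sddsdssd}'', and your write-up supplies precisely that induction with the right bookkeeping. No issues.
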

\begin{proof}
	This follows inductively from Lemma \ref{aaasasdswewe} and Lemma \ref{sddsdssd}. 
\end{proof}
\begin{corollary}
\label{fddfd}
Let $F_1,F_2,E$ be Hausdorff locally convex vector spaces; and $\gamma_i\colon D\rightarrow W_i\subseteq F_i$ be of class $C^k$ for $i=1,2$, for some $k\geq 1$. Suppose furthermore that $\Omega\colon W_1\times F_2\rightarrow E$ is smooth, as well as linear in the second argument. Then, $\delta\colon D\ni t\mapsto \Omega(\gamma_1,\gamma_2)$ is of class $C^k$; and $\delta^{(p)}$, for $1\leq p\leq k$, can be written as a finite sum of terms of the form\footnote{Evidently, $[\partial_1]^m\Omega$ is continuous, as well as  multilinear in the last $m+1$ arguments.}
\begin{align*}
	([\partial_1]^m\Omega)\big(\gamma_1,\gamma^{(z_1)}_{1},\dots,\gamma_{1}^{(z_m)},\gamma_2^{(q)}\big)\qquad\quad\text{for certain}\qquad\quad 0\leq z_1,\dots,z_m,q\leq p,\:\: m\geq 1.
\end{align*} 
\end{corollary}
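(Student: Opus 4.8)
The plan is to prove the explicit form of $\delta^{(p)}$ by induction on $p$, after first disposing of the $C^k$-claim itself. For the latter I would apply Lemma~\ref{sddsdssd} with $W_2:=F_2$, since $\Omega\colon W_1\times F_2\rightarrow E$ is smooth and $\gamma_1\colon D\rightarrow W_1$, $\gamma_2\colon D\rightarrow F_2$ are of class $C^k$; this already gives $\delta\in C^k(D,E)$. (Alternatively, the inductive formula below exhibits each $\delta^{(p)}$ with $p\leq k$ as a continuous map, which by the characterization of $C^k$-curves recalled in Sect.~\ref{sdsdsddsdssdsdaaaa} re-establishes $\delta\in C^k(D,E)$ on its own.) The entire remaining content is the bookkeeping for the derivatives.

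The structural facts I would lean on are those of the footnote: $[\partial_1]^m\Omega$ is smooth and multilinear in its last $m+1$ arguments, and $\partial_1\big([\partial_1]^m\Omega\big)=[\partial_1]^{m+1}\Omega$ by the definition of iterated partial derivatives, cf.\ \ref{productrule}. In the base case $p=1$, the product rule \ref{productrule} together with the chain rule \ref{chainrule} gives $\dot\delta=(\partial_1\Omega)\big((\gamma_1,\gamma_2),\dot\gamma_1\big)+(\partial_2\Omega)\big((\gamma_1,\gamma_2),\dot\gamma_2\big)$; since $\Omega$ is linear in its second argument, rule \ref{linear} yields $(\partial_2\Omega)\big((\gamma_1,\gamma_2),\dot\gamma_2\big)=\Omega(\gamma_1,\dot\gamma_2)$, independent of the base point in the second slot. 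Reordering the arguments so that the $F_2$-slot comes last and writing $\Omega=[\partial_1]^0\Omega$, $\partial_1\Omega=[\partial_1]^1\Omega$, this exhibits $\dot\delta=([\partial_1]^1\Omega)(\gamma_1,\gamma_1^{(1)},\gamma_2)+([\partial_1]^0\Omega)(\gamma_1,\gamma_2^{(1)})$ as a finite sum of terms of the required form.

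For the inductive step I would differentiate a single term $T=([\partial_1]^m\Omega)(\gamma_1,\gamma_1^{(z_1)},\dots,\gamma_1^{(z_m)},\gamma_2^{(q)})$ occurring in $\delta^{(p)}$, where $p\leq k-1$, so that all orders $z_1,\dots,z_m,q\leq p\leq k-1$ and hence every $\gamma_1^{(z_j)}$, $\gamma_2^{(q)}$ is at least of class $C^1$. Viewing $T$ as the composite of the $C^1$-curve $t\mapsto(\gamma_1,\gamma_1^{(z_1)},\dots,\gamma_1^{(z_m)},\gamma_2^{(q)})$ with the smooth map $[\partial_1]^m\Omega$, the chain rule \ref{chainrule} and the product rule \ref{productrule} write $\dot T$ as a sum over the slots: differentiation in the (nonlinear) base-point slot replaces $[\partial_1]^m\Omega$ by $[\partial_1]^{m+1}\Omega$ and inserts the new direction $\gamma_1^{(1)}$ — its position being immaterial by symmetry of the higher differentials; differentiation in a direction slot replaces $\gamma_1^{(z_j)}$ by $\gamma_1^{(z_j+1)}$; and differentiation in the second-argument slot replaces $\gamma_2^{(q)}$ by $\gamma_2^{(q+1)}$, each of the latter two by multilinearity in the respective slot. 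Every resulting summand is again of the stated form, of degree $p+1$, with all orders $\leq p+1$; summing over the finitely many terms of $\delta^{(p)}$ gives $\delta^{(p+1)}$, closing the induction.

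The only genuinely delicate point is the bookkeeping in the inductive step, namely verifying that differentiation in the base-point slot produces exactly $[\partial_1]^{m+1}\Omega$ (a consequence of the definition and symmetry of iterated differentials), while differentiation in each linear slot merely advances the order of the corresponding curve. I would also flag that the case $m=0$ — the pure terms $\Omega(\gamma_1,\gamma_2^{(q)})$ — does arise and persists under differentiation, so that $m$ in fact ranges over $\{0,1,\dots,p\}$; this only enlarges the admissible family of terms and does not affect the argument.
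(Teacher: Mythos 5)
Your proof is correct and follows essentially the same route as the paper, which simply invokes Lemma \ref{sddsdssd} for the $C^k$-statement and then cites \ref{linear}, \ref{chainrule}, \ref{productrule} for exactly the inductive bookkeeping you spell out. Your observation that the $m=0$ terms $\Omega(\gamma_1,\gamma_2^{(q)})$ genuinely occur (already in $\dot\delta$ via the linearity of $\Omega$ in the second slot), so that the stated restriction $m\geq 1$ should read $m\geq 0$, is accurate and harmless for the later applications of the corollary.
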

\begin{proof}
	Lemma \ref{sddsdssd} shows that $\delta$ is of class $C^k$; and the rest follows inductively from \ref{linear}, \ref{chainrule}, \ref{productrule}.
\end{proof}
\begin{lemma}
\label{oopxcxopcoxpopcx}
Let $F_1,F_2,E$ be Hausdorff locally convex vector spaces with systems of continuous seminorms $\SEMM_1,\SEMM_2,\SEM$. Suppose that $W_1\subseteq F_1$ is open; and that $\Omega\colon W_1\times F_2\rightarrow E$ is smooth, as well as linear in the second argument. Then, the following statements hold: 
\begingroup
\setlength{\leftmargini}{15pt}
{
\renewcommand{\theenumi}{{\arabic{enumi}})} 
\renewcommand{\labelenumi}{\theenumi}
\begin{enumerate}
\item
\label{oopxcxopcoxpopcx1}
	For $\pp\in \SEM$ and $\dindu\in \NN$ fixed, there exist $\mm\in \SEMM_1$ and $\qq\in \SEMM_2$, such that for each $[r,r']\in \COMP$ and $\gamma\in C^\dindu([r,r'],W_1)$ with $\mm_\infty^\dindu(\gamma)\leq 1$, we have
\begin{align*}
	\pp^\dindp(\Omega(\gamma,\psi))\leq \qq^\dindp(\psi)\qquad\quad\forall\:\psi\in C^\dindu([r,r'],F_2),\:\: 0\leq \dindp\leq \dindu.
\end{align*}
\item
\label{oopxcxopcoxpopcx2}
	For $\pp\in \SEM$, $\dindu\in \NN$, and $\gamma\in C^\dindu([r,r'],W_1)$ fixed, there exists some $\qq\in \SEMM_2$ with
\begin{align*}
	\pp^\dindp(\Omega(\gamma,\psi))\leq \qq^\dindp(\psi)\qquad\quad\forall\:\psi\in C^\dindu([r,r'],F_2),\:\: 0\leq \dindp\leq \dindu.
\end{align*} 
\end{enumerate}}
\endgroup
\end{lemma}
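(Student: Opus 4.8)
The plan is to reduce both claims to the derivative expansion of Corollary \ref{fddfd} and then estimate the arising multilinear expressions by Lemma \ref{kldskldsksdklsdl} (for 1)) and by Corollary \ref{ofdpopfdofdp} (for 2)). Fix $\pp\in\SEM$, $\dindu\in\NN$, and $0\le\dindp\le\dindu$. Applying Corollary \ref{fddfd} to $\delta\colon t\mapsto\Omega(\gamma(t),\psi(t))$ (with $\gamma_1\equiv\gamma$, $\gamma_2\equiv\psi$, using that $\Omega$ is smooth and linear in the second argument) expresses $\delta^{(\dindp)}(t)$ as a finite sum of terms
\[([\partial_1]^m\Omega)\big(\gamma(t),\gamma^{(z_1)}(t),\dots,\gamma^{(z_m)}(t),\psi^{(q)}(t)\big),\qquad 0\le z_1,\dots,z_m,q\le\dindp,\quad m\ge 0,\]
where each $[\partial_1]^m\Omega$ is continuous and multilinear in its last $m+1$ arguments (the case $\dindp=0$ being just the single term $\Omega(\gamma,\psi)$). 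Since there are only finitely many such terms, and only finitely many orders $\dindp\le\dindu$, it suffices to bound one term with seminorms that may afterwards be merged into a single pair, the systems $\SEMM_1,\SEMM_2$ being filtrating.

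For the second claim $\gamma$ is fixed, so $\compacto:=\gamma([r,r'])$ is a compact subset of $W_1$. Feeding $[\partial_1]^m\Omega$ into Corollary \ref{ofdpopfdofdp} over $\compacto$ produces seminorms $\nn_1,\dots,\nn_m\in\SEMM_1$, $\qq\in\SEMM_2$ and an open $O\supseteq\compacto$ with
\[\pp\big(([\partial_1]^m\Omega)(y,v_1,\dots,v_m,w)\big)\le\nn_1(v_1)\cdots\nn_m(v_m)\cdot\qq(w)\qquad\forall\,y\in O.\]
Evaluating at $y=\gamma(t)$, $v_i=\gamma^{(z_i)}(t)$, $w=\psi^{(q)}(t)$, the factors $\nn_i(\gamma^{(z_i)}(t))$ are bounded by the finite constants $\sup_{t\in[r,r']}\nn_i(\gamma^{(z_i)}(t))$ (finite since $\gamma\in C^\dindu$ and $[r,r']$ is compact). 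Absorbing these constants together with the finite number of terms into an enlarged $\qq$, each contribution is controlled by $\qq(\psi^{(q)}(t))$ with $q\le\dindp$, which yields $\pp^\dindp(\Omega(\gamma,\psi))\le\qq^\dindp(\psi)$.

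For the first claim the estimate must be uniform over all admissible $\gamma$ and all $[r,r']\in\COMP$, so a compact image is unavailable. Instead I would invoke Lemma \ref{kldskldsksdklsdl} for each $[\partial_1]^m\Omega$ at the base point $0\in W_1$, obtaining, for every term, seminorms $\nn_1,\dots,\nn_m\in\SEMM_1$, $\qq\in\SEMM_2$ and an open $V\ni 0$ carrying the multilinear bound. Let $V_0$ be the (finite) intersection of these neighbourhoods and pick $\nn'\in\SEMM_1$ and $\epsilon>0$ with $\B_{\nn',\epsilon}\subseteq V_0$. Choosing a single $\mm\in\SEMM_1$ dominating $(2/\epsilon)\cdot\nn'$ together with all the finitely many $\nn_i$, the normalization $\mm_\infty^\dindu(\gamma)\le 1$ forces both $\gamma(t)\in\OB_{\mm,1}\subseteq V_0$ (via the $0$-th derivative) and $\nn_i(\gamma^{(z_i)}(t))\le 1$ (via the derivatives of order $z_i\le\dindu$). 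Hence every term is bounded by $\qq(\psi^{(q)}(t))$, and summing the finitely many of them and enlarging $\qq$ gives $\pp^\dindp(\Omega(\gamma,\psi))\le\qq^\dindp(\psi)$, with $(\mm,\qq)$ depending only on $\pp$ and $\dindu$.

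The heart of the argument, and the only genuinely delicate point, is this uniformity in 1): producing one pair $(\mm,\qq)$ valid simultaneously for every curve and every interval. Two features make it work: first, only finitely many multilinear maps $[\partial_1]^m\Omega$ with $m\le\dindu$ occur, so their individually local estimates can be amalgamated into one; second, the constraint $\mm_\infty^\dindu(\gamma)\le 1$ confines $\gamma$ and all its derivatives up to order $\dindu$ into the $\mm$-unit ball, which for $\mm$ large enough is swallowed by the common neighbourhood $V_0$ of $0$ on which Lemma \ref{kldskldsksdklsdl} applies. This uses $0\in W_1$, which holds in every intended application, where $W_1$ is a chart neighbourhood of the origin; absent $0\in W_1$ one would instead need the value set $W_1\cap\OB_{\mm,1}$ to be coverable by the available local estimates. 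With these observations the merely local bounds of Lemma \ref{kldskldsksdklsdl} upgrade to the asserted estimate holding along the entire curve.
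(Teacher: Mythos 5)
Your proof is correct and follows essentially the same route as the paper's own argument in Appendix \ref{appdifff}: decompose $\Omega(\gamma,\psi)^{(\dindp)}$ via Corollary \ref{fddfd}, then estimate the resulting multilinear terms by Lemma \ref{kldskldsksdklsdl} at the base point $0$ for part 1) and by Corollary \ref{ofdpopfdofdp} over the compact image of $\gamma$ for part 2), absorbing the finitely many terms and constants into enlarged seminorms. Your observation that part 1) implicitly uses $0\in W_1$ is a point the paper's proof also relies on without comment, and your handling of it is sound.
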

\begin{proof}
The proof is elementary, and can be found in 
Appendix \ref{appdifff}. 
\end{proof}

\subsubsection{The Riemann Integral}
\label{lkdslkdslkdslkdslkdslkds}
Let $F$ be a Hausdorff locally convex vector space with system of continuous seminorms $\SEMM$, and completion $\comp{F}$. 
We denote the Riemann integral of $\gamma\in C^0([r,r'],F)$ by $\int \gamma(s) \:\dd s\in \comp{F}$; 
and define 
\begin{align}
\label{fdopfdpo}
	\textstyle\int_a^b \gamma(s)\:\dd s:= \int \gamma|_{[a,b]}(s) \:\dd s,\qquad\:\:\int_b^a \gamma(s) \:\dd s:= - \int_a^b \gamma(s) \:\dd s,\qquad\:\:
	 \int_c^c \gamma(s)\: \dd s:=0\qquad
\end{align}
for $r\leq a<b\leq r'$, $c\in [r,r']\in \COMP$. Clearly, the Riemann integral is linear, with	
\begin{align}
\label{absch1}
	\textstyle\int_a^c \gamma(s) \:\dd s&\textstyle=\int_a^b \gamma(s)\:\dd s+ \int_b^c \gamma(s)\:\dd s\qquad\qquad \forall\: r\leq a<b<c\leq r',\\[2pt]
	\label{absch2}
	\textstyle\comp{\qq}\big(\int_r^t \gamma(s) \:\dd s\big)&\textstyle\leq  \int_r^t \qq(\gamma(s))\:\dd s\quad	\hspace{79.8pt}\forall\: t\in [r,r'],\:\:\qq\in \SEMM.
\end{align}
	It is furthermore not hard to see that 
	\begin{align}
	\label{opgfgofppof}
		\textstyle\Gamma\in C^1([r,r'],\comp{F})\qquad\text{with}\qquad\dot\Gamma=\gamma\quad\qquad\text{holds for}\quad\qquad \Gamma\colon [r,r']\ni t\mapsto \int_r^t \gamma(s)\:\dd s.
	\end{align}
	More importantly, we have, cf.\ \cite{HG} 
	 \begin{align}
	\label{isdsdoisdiosd}
		\textstyle\gamma(t)-\gamma(r)=\int_r^t \dot\gamma(s)\:\dd s\in F\qquad\quad\forall\:t\in [r,r'],\:\: \gamma\in C^1([r,r'],F).
	\end{align}
	From this, we obtain 
\begin{align}
\label{substitRI}
	\textstyle\int \gamma(s)\: \dd s=\Gamma(\varrho(\ell'))-\Gamma(\varrho(\ell))\stackrel{\eqref{isdsdoisdiosd}}{=}\int \partial_t (\Gamma\cp \varrho)(s) \: \dd s\stackrel{\ref{chainrule}}{=}\int \dot\varrho(s)\cdot \gamma(\varrho(s))\:\dd s
\end{align}
for each $\gamma\in C^0([r,r'],F)$, and each $\varrho\colon [\ell,\ell'] \rightarrow [r,r']$ of class $C^1$ with $\varrho(\ell)=r$ and $\varrho(\ell')=r'$. Moreover,
	\begin{lemma}
\label{ofdpofdpopssssaaaasfffff}
For each $\gamma\in C^1([r,r'],F)$, we have
\begin{align*}
	\textstyle\qq(\gamma(t)-\gamma(r))\leq \int_r^t \qq(\dot\gamma(s))\: \dd s\qquad\quad\forall\:t\in [r,r'],\:\:\qq\in \SEMM.
\end{align*}
\end{lemma}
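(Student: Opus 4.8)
The plan is to obtain this estimate by combining the two integral facts already established in Sect.\ \ref{lkdslkdslkdslkdslkdslkds}: the fundamental theorem of calculus \eqref{isdsdoisdiosd} and the seminorm estimate \eqref{absch2} for the Riemann integral. The point is that \eqref{absch2} already bounds the seminorm of an integral by the integral of the seminorm of the \emph{integrand}; so I only need to rewrite the difference $\gamma(t)-\gamma(r)$ as such an integral, with the integrand being $\dot\gamma$, and then quote \eqref{absch2}.

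Concretely, fix $\gamma\in C^1([r,r'],F)$, $t\in [r,r']$, and $\qq\in \SEMM$. Since $\gamma$ is of class $C^1$, its derivative $\dot\gamma$ is a well-defined element of $C^0([r,r'],F)$, so the Riemann integral $\int_r^t \dot\gamma(s)\,\dd s$ exists in the completion $\comp{F}$. First I would apply \eqref{isdsdoisdiosd} to $\gamma$, which yields $\gamma(t)-\gamma(r)=\int_r^t \dot\gamma(s)\,\dd s$; the content of \eqref{isdsdoisdiosd} is precisely that this particular integral, though a priori an element of $\comp{F}$, in fact lies in $F$ and equals $\gamma(t)-\gamma(r)$. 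Next, since $\comp{\qq}$ is the (unique) extension of $\qq$ to $\comp{F}$, it restricts to $\qq$ on $F$, so $\qq(\gamma(t)-\gamma(r))=\comp{\qq}\big(\int_r^t \dot\gamma(s)\,\dd s\big)$. Finally I would apply \eqref{absch2} with the curve $\dot\gamma$ in place of $\gamma$ there, giving $\comp{\qq}\big(\int_r^t \dot\gamma(s)\,\dd s\big)\leq \int_r^t \qq(\dot\gamma(s))\,\dd s$. Chaining these three steps produces exactly the claimed inequality $\qq(\gamma(t)-\gamma(r))\leq \int_r^t \qq(\dot\gamma(s))\,\dd s$, uniformly in $t$ and $\qq$.

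There is no real obstacle here: the statement is a direct corollary of \eqref{isdsdoisdiosd} and \eqref{absch2}, and the only subtlety worth flagging explicitly is the identification of the $\comp{F}$-valued integral with an element of $F$ via \eqref{isdsdoisdiosd}, together with the compatibility $\comp{\qq}|_F=\qq$, so that applying the seminorm $\qq$ on $F$ agrees with applying its extension $\comp{\qq}$ on $\comp{F}$. Once this identification is made, the estimate is immediate and requires no further computation.
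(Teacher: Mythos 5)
Your proposal is correct and is exactly the paper's argument: the proof given there is simply ``Combine \eqref{absch2} with \eqref{isdsdoisdiosd}'', i.e.\ rewrite $\gamma(t)-\gamma(r)$ via the fundamental theorem of calculus and then apply the seminorm estimate for the Riemann integral to the integrand $\dot\gamma$. Your extra remark about the compatibility $\comp{\qq}|_F=\qq$ is a correct (and harmless) elaboration of the same step.
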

\begin{proof}
	Combine \eqref{absch2} with \eqref{isdsdoisdiosd}.  
\end{proof}
\begin{remark}[Banach Spaces]
\label{banachchchch}
Suppose that $E,F$ are Banach spaces; and that $f\colon F\supseteq U\rightarrow E$ is of class $C^{n+1}$ for some $n\geq 1$. Then, using Lemma \ref{kldskldsksdklsdl} and Lemma \ref{ofdpofdpopssssaaaasfffff}, one can show that $f$ is of class $C^n$ in the Fr\'{e}chet sense, cf.\ also \cite{KHN}. 
In particular, if $f$ is of class $C^\infty$, then $f$ is smooth in the Fr\'{e}chet sense.\hspace*{\fill}$\ddagger$
\end{remark}
\begin{lemma}
\label{sdsdds}
Let $F,E$ be Hausdorff locally convex vector spaces; and let $f\colon F\supseteq U\rightarrow E$ be of class $C^{2}$. Suppose that 
$\gamma\colon D\rightarrow F\subseteq \ovl{F}$ is continuous at $t\in D$, such that $\lim_{h\rightarrow 0} \rcf{h}\cdot (\gamma(t+h)-\gamma(t))=:X\in \ovl{F}$ exists. Then, we have
\begin{align*}
	\textstyle\lim_{h\rightarrow 0} \rcf{h}\cdot (f(\gamma(t+h))-f(\gamma(t)))=\ovl{\dd_{\gamma(t)}f}\he(X).
\end{align*}   
\end{lemma}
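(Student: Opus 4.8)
The map $\dd_{\gamma(t)}f\colon F\rightarrow E$ is linear and continuous, so by Lemma \ref{pofsdisfdodjjxcycxj} (the case $n=1$) it admits a unique continuous linear extension $\ovl{\dd_{\gamma(t)}f}\colon \comp{F}\rightarrow\comp{E}$; this is the object on the right-hand side. The engine of the proof is Taylor's formula \eqref{Taylor} with $k=2$. Writing $\Delta_h:=\gamma(t+h)-\gamma(t)\in F$ for those $h$ with $t+h\in D$, and applying \eqref{Taylor} at $x=\gamma(t)$ with increment $\Delta_h$, I obtain
\begin{align*}
	\textstyle f(\gamma(t+h))-f(\gamma(t))=\dd_{\gamma(t)}f(\Delta_h)+\int_0^1(1-s)\cdot\dd^2_{\gamma(t)+s\cdot\Delta_h}f(\Delta_h,\Delta_h)\:\dd s,
\end{align*}
valid once $h$ is small enough that the segment $\gamma(t)+[0,1]\cdot\Delta_h$ lies in $U$ — which holds eventually, since $\gamma$ is continuous at $t$ and hence $\Delta_h\rightarrow 0$. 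The plan is then to divide by $h$, let $h\rightarrow 0$, and treat the linear term and the integral remainder separately.

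For the linear term, linearity gives $\tfrac{1}{h}\dd_{\gamma(t)}f(\Delta_h)=\dd_{\gamma(t)}f(\Delta_h/h)=\ovl{\dd_{\gamma(t)}f}(\Delta_h/h)$, the last equality because the extension restricts to $\dd_{\gamma(t)}f$ on $F\subseteq\comp{F}$ and $\Delta_h/h\in F$. By hypothesis $\Delta_h/h\rightarrow X$ in $\comp{F}$, so continuity of $\ovl{\dd_{\gamma(t)}f}$ yields $\ovl{\dd_{\gamma(t)}f}(\Delta_h/h)\rightarrow\ovl{\dd_{\gamma(t)}f}(X)$ in $\comp{E}$. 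It thus remains to show that the remainder $\tfrac{1}{h}\int_0^1(1-s)\cdot\dd^2_{\gamma(t)+s\cdot\Delta_h}f(\Delta_h,\Delta_h)\:\dd s$ tends to $0$ in $\comp{E}$.

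For the remainder I would fix $\pp\in\SEM$ and bound the integrand. Since $f$ is of class $C^2$, the map $\dd^2 f\colon U\times F\times F\rightarrow E$ is continuous and bilinear in its last two arguments, so Lemma \ref{kldskldsksdklsdl} (with $X=U$, $n=2$) provides, for the point $\gamma(t)$ and the seminorm $\pp$, seminorms $\mm_1,\mm_2\in\SEMM$ and an open $V\subseteq U$ with $\gamma(t)\in V$ such that $\pp(\dd^2 f(y,v_1,v_2))\leq\mm_1(v_1)\cdot\mm_2(v_2)$ for all $y\in V$ and $v_1,v_2\in F$. As $\Delta_h\rightarrow 0$ and $V$ is open, the segment $\gamma(t)+[0,1]\cdot\Delta_h$ lies in $V$ for all small $h$; pulling $\comp{\pp}$ inside the integral via \eqref{absch2} and using this bound (which is constant in $s$) then gives
\begin{align*}
	\textstyle\comp{\pp}\big(\tfrac{1}{h}\int_0^1(1-s)\cdot\dd^2_{\gamma(t)+s\cdot\Delta_h}f(\Delta_h,\Delta_h)\:\dd s\big)\leq\tfrac{1}{|h|}\cdot\tfrac{1}{2}\cdot\mm_1(\Delta_h)\cdot\mm_2(\Delta_h)=\tfrac{|h|}{2}\cdot\mm_1(\Delta_h/h)\cdot\mm_2(\Delta_h/h).
\end{align*}
Since $\mm_i(\Delta_h/h)=\comp{\mm_i}(\Delta_h/h)\rightarrow\comp{\mm_i}(X)$ by continuity of the extended seminorms, the two factors stay bounded while $|h|\rightarrow 0$, so the right-hand side tends to $0$; as $\pp\in\SEM$ was arbitrary, the remainder vanishes in $\comp{E}$, and combining with the linear term gives the claim.

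The one point requiring care — the main (though minor) obstacle — is ensuring that the \emph{whole} segment $\gamma(t)+[0,1]\cdot\Delta_h$, not merely its endpoints, sits inside both $U$ (to license Taylor) and the neighbourhood $V$ (to license the bilinear estimate); this is exactly where continuity of $\gamma$ at $t$ enters, together with openness of $V$. The remaining work is bookkeeping in the completion, using that $\dd^2 f$ is genuinely $E$-valued so that \eqref{absch2} applies verbatim.
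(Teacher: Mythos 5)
Your proposal is correct and follows essentially the same route as the paper's proof in Appendix \ref{dkldksldkslsdlkklsdkldskl}: Taylor's formula \eqref{Taylor} with $k=2$, continuity of the extension $\ovl{\dd_{\gamma(t)}f}$ for the linear term, and Lemma \ref{kldskldsksdklsdl} combined with \eqref{absch2} for the remainder. The only cosmetic difference is that the paper absorbs the factor $1/h$ into the first slot of the bilinear map $\dd^2 f$ (yielding $\qq_1(\Delta_h/h)\cdot\qq_2(\Delta_h)\rightarrow 0$) rather than keeping $|h|$ outside as you do; both bookkeeping choices are equally valid.
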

\begin{proof}
The proof is elementary, and can be found in Appendix \ref{dkldksldkslsdlkklsdkldskl}.
\end{proof}
\noindent
We finally need to discuss the Riemann integral for piecewise continuous curves: 
\begingroup
\setlength{\leftmargini}{12pt}
\begin{itemize}
\item
We let $\CP^0([r,r'],F)$ denote the set of piecewise $C^0$-curves on $[r,r']\in \COMP$; i.e., all $\gamma\colon [r,r']\rightarrow F$  such that there exist $r=t_0<{\dots}<t_n=r'$ as well as $\gamma[p]\in C^0([t_p,t_{p+1}],F)$ for $p=0,\dots,n-1$ with
\begin{align*}
	\gamma|_{(t_p,t_{p+1})}=\gamma[p]|_{(t_p,t_{p+1})}\qquad\quad\forall\: p=0,\dots,n-1.
\end{align*}
\item
We let  
$\COP([r,r'],F)$ denote the set of piecewise constant curves on $[r,r']\in \COMP$; i.e., all $\gamma\colon [r,r']\rightarrow F$, such that there exist $r=t_0<{\dots}<t_n=r'$ as well as $X_0,\dots,X_{n-1}\in F$ with
\begin{align*}
	\gamma|_{(t_p,t_{p+1})}=X_p\qquad\quad\forall\: p=0,\dots,n-1.
\end{align*}  
\end{itemize}
\endgroup
\noindent
We clearly have $\COP([r,r'],F)\subseteq \CP^0([r,r'],F)$; and for $\gamma\in \CP^0([r,r'],F)$ as above, we define
\begin{align}
\label{opofdpopfd}
	\textstyle\int \gamma(s)\:\dd s:=\sum_{p=0}^{n-1}\int \gamma[p](s)\:\dd s. 
\end{align} 
A standard refinement argument in combination with \eqref{absch1} then shows that this is well defined; i.e., independent of any choices we have made. We define $\int_a^b\gamma(s)\: \dd s$ and $\int_c^c \gamma(s)\: \dd s$ as in \eqref{fdopfdpo}; and observe that \eqref{opofdpopfd} is linear and fulfills \eqref{absch1}.

\subsection{Some Estimates for Lie Groups}
In this subsection, we collect some elementary estimates concerning Lie group operations and coordinate changes that will be relevant for our argumentation in the main text. Let thus $G$ be an infinite dimensional Lie group (in Milnor's sense) that is modeled over the Hausdorff locally convex vector space $E$, with system of continuous seminorms $\SEM$ in the following. 
\subsubsection{Lie Group Operations}
We observe that $\Ad\colon G\times \mg\rightarrow \mg$ is smooth (continuous) by \ref{iterated}, because $\conj$ smooth with
\begin{align*}
	\Ad_g(X)= \dd_{(g,e)} \conj(0,X)\qquad\quad\forall\: g\in G,\:\:X\in \mg.
\end{align*}
 We thus obtain from Lemma \ref{kldskldsksdklsdl} and Corollary \ref{ofdpopfdofdp} 
that:
\begingroup
\setlength{\leftmargini}{12pt}
\begin{itemize}
\item
For each $\qq\in \SEM$, there exists some $\qq\leq \nn\in \SEM$, as well as $V\subseteq G$ symmetric open with $e\in V$, such that 
\begin{align}
\label{odspospodpof}
\qqq(\Ad_{g}(X))\leq \nnn(X)\qquad\quad\forall\: g\in V,\:\: X\in \mg.
\end{align} 
\item
For each $\nn\in\SEM$, and each compact $\compact\subseteq G$, there exists some $\nn\leq \mm\in \SEM$, as well as $O\subseteq G$ open with $\compact\subseteq O$, such that 
\begin{align}
\label{askasjkjksaasqwqqwasw}
	\nnn\cp\Ad_g\leq \mmm\qquad\quad\forall\: g\in O.
\end{align}
\end{itemize}
\endgroup
\noindent
Similarly, the maps
\begin{align}
\label{opopopop1}
	\dermapdiff\colon& \V\times E\rightarrow \mg,\qquad (x,X)\mapsto \dd_{\chartinv(x)}\RT_{[\chartinv(x)]^{-1}}(\dd_x\chartinv(X))\\
\label{opopopop2}
	\dermapinvdiff\colon& \V\times \mg\rightarrow E,\qquad (x,X)\mapsto \big(\dd_{\chartinv(x)}\chart\cp \dd_{e}\RT_{\chartinv(x)}\big)(X)
\end{align}
are smooth, as they can be written as
\begin{align}
\label{cmmcmcmcmcmc}
	\dermapdiff(x,X)=\dd_{(x,x)}\dermap(0,X)\qquad\quad\:\:\text{and}\qquad\quad\:\: \dermapinvdiff(x,X)=\dd_{(x,e)}\dermapinv(0,X)
\end{align}
for the smooth maps
\begin{align*}
	\dermap\colon& \V\times \V\rightarrow G,\qquad (x,y)\mapsto \mult(\chartinv(y),[\chartinv(x)]^{-1})\\
\dermapinv\colon&  \V\times \U\rightarrow E,\qquad (x,g)\mapsto (\chart\cp\mult)\big(g,\chartinv(x)\big).
\end{align*}
Thus, by Lemma \ref{kldskldsksdklsdl}, for each $\vv\in \SEM$, there exists some $\V\ggeq \ww\in \SEM$ with $\vv\leq \ww$, such that 
\begin{align}
\label{cpocpoxjdsndscxaaabc}
	\vvv(\dermapdiff(x,X))&\leq \:\ww(X)\qquad\quad\forall\:x\in \OB_{\ww,1}\hspace{2.7pt},\:\:  X\in E\\
\label{dssdsdsdsdsddsds}
	\vv(\dermapinvdiff(x,X)\hspace{0.5pt})&\leq \www(X)\qquad\quad\forall\:x\in \OB_{\www,1},\:\:  X\in \mg.
\end{align}
More generally, we obtain from \ref{iterated} and \ref{productrule} that $\dermapdiff$ is smooth with
\begin{align}
\label{omegakl}
	\dermapdiff[n]:=[\partial_1]^n\dermapdiff\colon \V\times E^{n+1}\rightarrow \mg
\end{align}
continuous as well as multilinear in the last $n+1$ arguments, for each $n\in \NN$. For each $p\in \NN$ and $\vv\in \SEM$, there thus exists some $\V\lleq \ww\in \SEM$ with $\vv\leq \ww$, such that
\begin{align}
\label{omegakll}
	(\vvv\cp\dermapdiff[q])(x,X_1,\dots,X_{q+1})\leq \ww(X_1)\cdot {\dots}\cdot \ww(X_{q+1})
\end{align} 
holds for all $x\in \OB_{\ww,1}$, $X_1,{\dots},X_{q+1}\in E$, and $0\leq q\leq p$.
\vspace{6pt}

\noindent
Finally, since $\inv\colon G\rightarrow G$ is smooth, for each $\mm\in \SEM$, there exists some $\V\lleq \nn\in \SEM$ with $\mm \leq \nn$, such that $\mm\cp\dd_x(\chart\cp\inv\cp\chartinv)\leq \nn$ holds for each $x\in \OB_{\nn,1}$. We thus obtain from Lemma \ref{ofdpofdpopssssaaaasfffff} that
\begin{align}
\label{invrel}
	\mm\cp\chart\cp\inv\cp\chartinv\leq \nn\qquad\quad\text{holds on}\qquad\quad \OB_{\nn,1},
\end{align}
just by considering the curve $\gamma_X\colon [0,1]\ni t\mapsto t\cdot X$ for each $X\in \OB_{\nn,1}$. 
\subsubsection{Coordinate Changes} 
For $h\in G$, we define $\chart_h(g):=\chart(h^{-1}\cdot g)$ for each $g\in h\cdot \U$; i.e.,
\begin{align*}
	[\chart_h]^{-1}(x)=h\cdot \chartinv(x)\qquad\quad\forall\: x\in \V.
\end{align*}
Let now $\compact\subseteq \U$ be a fixed compact:
\begingroup
\setlength{\leftmargini}{12pt}
\begin{itemize}
\item
 We choose $\wt{\compact},U\subseteq \U$ open with $\compact\subseteq \wt{\compact}$ and $e\in U$, such that $\wt{\compact}\cdot U\subseteq\U$ holds.
\item 
We let $U':=\chart(U)$, and observe that 
\begin{align*}
	\xi\colon \wt{\compact}\times U'\rightarrow \V,\qquad (\he\wt{c},u')\mapsto (\chart\cp\mult)(\wt{c}, \chartinv(u'))
\end{align*}
is defined and smooth; i.e., that 
	$\Theta\equiv\partial_2\he\xi\colon \wt{C}\times U'\times E\rightarrow E$ 
is continuous, and linear in $E$. 
\end{itemize}
\endgroup
\noindent
Let now $\pp\in \SEM$ be fixed:
\begingroup
\setlength{\leftmargini}{12pt}
\begin{itemize}
\item
Corollary \ref{ofdpopfdofdp}, applied to $\Phi\equiv \Theta$,  $X\equiv \wt{C}\times U'$, $F_1\equiv E$, and $\compacto\equiv\compact\times \{0\}$, provides us with an open subset $O\subseteq \wt{C}\times U'$ containing $\compact\times \{0\}$, as well as $\uu\equiv\qq_1\in \SEM$, such that
\begin{align*}
	(\pp\cp \Theta)(z,X)\leq \uu(X)\qquad\quad\forall\: z\in O,\:\: X\in E
\end{align*}   
holds. We fix an open neighbourhood $W\subseteq \U$ of $e$ with $\compact\cdot W\times \chart(W)\subseteq O$, and obtain   
 \begin{align}
 \label{opdfdodpof}
	(\pp\cp \Theta)(g\cdot h,\chart(q),X)\leq \uu(X)\qquad\quad\forall\: g\in \compact,\:\: h\in W,\:\: q \in W,\:\: X\in E.
\end{align} 
\item
Here, we can assume that $\chart(W)$ is convex; and choose $V\subseteq W$ symmetric open with $e\in V$ and $V\cdot V\subseteq W$.  
Moreover, since $\SEM$ is filtrating, we can additionally assume that $\OB_{\uu,1}\subseteq \chart(V)$ holds.
\end{itemize}
\endgroup
\noindent
We obtain
\begin{lemma}
\label{fhfhfhffhaaaa}
Let $\compact\subseteq \U$ be compact. Then, for each $\pp\in\SEM$, there exists some $\pp\leq \uu\in \SEM$, and a symmetric open neighbourhood $V\subseteq\U$ of $e$ with $\compact\cdot V\subseteq \U$ and $\OB_{\uu,1}\subseteq \chart(V)$, 
such that 
\begin{align*}
	\pp(\chart(q)-\chart(q'))\leq \uu(\chart_{g\cdot h}(q)-\chart_{g\cdot h}(q'))\qquad\quad \forall\: q,q'\in g\cdot V,\:\: h\in V
\end{align*} 
holds for each $g\in \compact$. 
\end{lemma}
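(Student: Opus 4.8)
The plan is to express both $\chart(q)$ and $\chart(q')$ through the single smooth map $\xi$ from the discussion preceding the lemma, and then to estimate their difference by integrating its partial derivative $\Theta=\partial_2\xi$ along the straight segment joining the two coordinate representations $\chart_{g\cdot h}(q)$ and $\chart_{g\cdot h}(q')$. The whole argument is thus a ``mean value'' estimate in disguise, the work being to keep every point inside the right domain.

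First I would fix $\pp\in\SEM$ and keep the seminorm $\uu\in\SEM$ together with the symmetric neighbourhood $V\subseteq W$ produced above, so that $V\cdot V\subseteq W$, $\chart(W)$ is convex, $\compact\cdot V\subseteq\U$, $\OB_{\uu,1}\subseteq\chart(V)$, and \eqref{opdfdodpof} holds; enlarging $\uu$ by filtratedness I may assume $\pp\leq\uu$. Now fix $g\in\compact$, $h\in V$, and $q,q'\in g\cdot V$, writing $q=g\cdot v$ and $q'=g\cdot v'$ with $v,v'\in V$. Since $V$ is symmetric with $V\cdot V\subseteq W$, one has $(g\cdot h)^{-1}\cdot q=h^{-1}\cdot v\in W\subseteq\U$, so $\chart_{g\cdot h}(q):=\chart((g\cdot h)^{-1}\cdot q)$ is well defined and lies in $\chart(W)$, and likewise for $q'$. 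Putting $x:=\chart_{g\cdot h}(q)$ and $y:=\chart_{g\cdot h}(q')$, the identity $[\chart_{g\cdot h}]^{-1}(z)=(g\cdot h)\cdot\chartinv(z)$ yields the key relations $\chart(q)=\xi(g\cdot h,x)$ and $\chart(q')=\xi(g\cdot h,y)$; these make sense because $g\cdot h\in\compact\cdot W\subseteq\wt{\compact}$ places $g\cdot h$ in the domain of $\xi$, and $x,y\in\chart(W)\subseteq U'$.

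Next I would join $y$ to $x$ by the segment $c\colon[0,1]\ni t\mapsto y+t\cdot(x-y)$, which stays in the convex set $\chart(W)\subseteq U'$; hence $\gamma\colon t\mapsto\xi(g\cdot h,c(t))\in E$ is a $C^1$-curve with $\gamma(0)=\chart(q')$, $\gamma(1)=\chart(q)$, and, by the chain rule \ref{chainrule} together with linearity of $\Theta=\partial_2\xi$ in its last slot, $\dot\gamma(t)=\Theta(g\cdot h,c(t),x-y)$. Applying Lemma \ref{ofdpofdpopssssaaaasfffff} and then the uniform bound \eqref{opdfdodpof} (legitimate since $c(t)\in\chart(W)$ for every $t$), I obtain
\[
\pp(\chart(q)-\chart(q'))=\pp(\gamma(1)-\gamma(0))\leq \int_0^1 \pp\big(\Theta(g\cdot h,c(t),x-y)\big)\,\dd t\leq \uu(x-y),
\]
which is exactly the assertion, since $x-y=\chart_{g\cdot h}(q)-\chart_{g\cdot h}(q')$.

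The analytic content is entirely routine: differentiation rules, the estimate \eqref{absch2}/Lemma \ref{ofdpofdpopssssaaaasfffff}, and the multilinear bound from Corollary \ref{ofdpopfdofdp} already encoded in \eqref{opdfdodpof}. The only genuine care is the bookkeeping of neighbourhoods ensuring that every constructed point stays in the correct domain -- that $\chart_{g\cdot h}(q)$ is defined (needing $V$ symmetric with $V\cdot V\subseteq W$), that $g\cdot h$ lies in the domain $\wt{\compact}$ of $\xi$, and that the entire segment $c([0,1])$ remains inside the convex set $\chart(W)$ on which \eqref{opdfdodpof} is available. This is precisely what the preparatory choices of $\wt{\compact}$, $U$, $W$, and $V$ were designed to guarantee, so the main obstacle is organizational rather than analytic.
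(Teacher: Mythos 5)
Your proof is correct and follows essentially the same route as the paper's: both express $\chart(q)=\xi(g\cdot h,\chart_{g\cdot h}(q))$, join the two coordinate points by a straight segment inside the convex set $\chart(W)$, and estimate the derivative $\Theta(g\cdot h,\cdot,\cdot)$ of the resulting curve via \eqref{opdfdodpof} and Lemma \ref{ofdpofdpopssssaaaasfffff}. The only difference is that you spell out the domain bookkeeping (e.g.\ $(g\cdot h)^{-1}\cdot q=h^{-1}\cdot v\in V\cdot V\subseteq W$ and $g\cdot h\in\compact\cdot W\subseteq\wt{\compact}$) slightly more explicitly than the paper does.
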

\begin{proof}
We choose $V,W$, $\uu$ as above. Then, for  
 $g\in \compact$, $q,q'\in g\cdot V$, and $h\in V$ fixed, we define 
 \begingroup
\setlength{\leftmargini}{12pt}
\begin{itemize}
\item
	$x:=\chart_{g\cdot h}(q),\:  x':=\chart_{g\cdot h}(q')\: \in \: \chart(V\cdot V)\subseteq \chart(W)$,
\item
	$\delta\colon [0,1]\rightarrow \chart(W),\quad t\mapsto x'+t\cdot (x-x')$,
\item
	$\gamma:=\xi(g\cdot h,\delta)$.
\end{itemize}
\endgroup
\noindent 
We conclude from \eqref{opdfdodpof} and Lemma \ref{ofdpofdpopssssaaaasfffff} that
\begin{align*}
	\pp(\chart(q)-\chart(q'))&= \pp(\xi(g\cdot h,\delta(1))-\xi(g\cdot h,\delta(0)))\\
	&=\pp(\gamma(1)-\gamma(0))
	=\textstyle \pp\big(\int \dot\gamma(s) \:\dd s\big)\\
	&\textstyle=\pp\big(\int \Theta(g\cdot h,\delta(s),\dot\delta(s)) \:\dd s\big)\\
   &\textstyle\leq \int \uu(\dot\delta(s))\:\dd s\textstyle=\int \uu(x-x')\:\dd s\\
  &=\uu(\chart_{g\cdot h}(q)-\chart_{g\cdot h}(q'))
\end{align*}	  
holds, which shows the claim.
\end{proof} 

\subsection{The Evolution Map}
\label{dskjdskjdskjdsdsdsdsds}
We now introduce the central object of this paper -- the evolution map -- and discuss its most important properties. 
\subsubsection{The Right Logarithmic Derivative}
The right logarithmic derivative is defined by
\begin{align*}
	\Der\colon  C^1(D,G)\rightarrow C^0(D,\mg),\quad\:\: \mu \mapsto \dd_\mu\RT_{\mu^{-1}}(\dot\mu)\qquad\quad\: \forall\: D\in \INT.  
\end{align*}
Then, for each $\mu\in C^1(D,G)$, $g\in G$,  $\INT\ni D'\subseteq D$, and each $\varrho\colon \INT\ni D''\rightarrow D$ of class $C^1$, we have
\begin{align}
\label{fgfggf}
	\Der(\mu\cdot g)=\Der(\mu)\qquad\quad\: \Der(\mu|_{D'})=\Der(\mu)|_{D'}\qquad\quad\: \Der( \mu\cp\varrho)=\dot\varrho\cdot\rcK{\Der(\mu)\cp\varrho}.\quad
\end{align}
 Moreover, for $\mu,\nu\in C^1(D,G)$, we  
	 conclude from the product rule \eqref{LGPR} that
	\begin{align}
	\label{alsalsalsa}
		\Der(\mu\cdot \nu)= \Der(\mu)+\Ad_\mu(\Der(\nu))
	\end{align}
	holds; thus, 
\begin{align}
\label{FORM1}
	0=\Der(\mu^{-1}\mu)&=\Der(\mu^{-1}) + \Ad_{\mu^{-1}}(\Der(\mu))\qquad\:\:\Longrightarrow \qquad\:\:\Der(\mu^{-1})=-\Ad_{\mu^{-1}}(\Der(\mu))\quad\\
\label{FORM2}
	\Der(\mu^{-1}\nu)&=\Der(\mu^{-1}) + \Ad_{\mu^{-1}}(\Der(\nu)).
\end{align}
Here, we denote $\mu^{-1}:=\inv\cp \mu$ for each $\mu\in C^0(D,\mg)$ in the following.  
Then, combining \eqref{FORM2} with the right side of \eqref{FORM1}, 
we obtain
	 \begin{align}
	\label{popopo}
		\Der(\mu^{-1}\nu)=\Ad_{\mu^{-1}}(\Der(\nu) -\Der(\mu))\qquad\quad\forall\: \mu,\nu\in C^1(D,G).	
	\end{align} 
	We conclude that
\begin{lemma}
\label{xckxklxc}
	Let $\mu,\nu\in C^1(D,G)$ for $D\in \INT$ be given. Then, we have
	\begin{align*}
		\Der(\mu)=\Der(\nu)\qquad\quad\:\Longleftrightarrow\qquad\quad\:  \nu=\mu\cdot g\quad\:\:\text{holds for some}\quad\:\: g\in G.
	\end{align*}	
\end{lemma}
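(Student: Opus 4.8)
The plan is to prove the equivalence by treating the two implications separately; the backward direction is immediate from the symmetries of $\Der$, while the forward direction reduces to the assertion that a $C^1$-curve in $G$ with vanishing right logarithmic derivative must be constant.

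For the direction ``$\Leftarrow$'', suppose $\nu=\mu\cdot g$ for some $g\in G$. Then the first identity in \eqref{fgfggf}, namely $\Der(\mu\cdot g)=\Der(\mu)$, gives $\Der(\nu)=\Der(\mu)$ at once.

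For the direction ``$\Rightarrow$'', assume $\Der(\mu)=\Der(\nu)$ and set $\kappa:=\mu^{-1}\nu$, i.e.\ the pointwise product $t\mapsto \mult(\inv(\mu(t)),\nu(t))$. Since $\mu,\nu$ are of class $C^1$ and $\inv,\mult$ are smooth, $\kappa$ is of class $C^1$, so $\Der(\kappa)$ is defined. By \eqref{popopo} we have $\Der(\kappa)=\Ad_{\mu^{-1}}(\Der(\nu)-\Der(\mu))$, and since $\Der(\nu)-\Der(\mu)=0$ while each $\Ad_g$ is linear, this yields $\Der(\kappa)=0$. I would next deduce $\dot\kappa\equiv 0$: by definition $\Der(\kappa)(t)=\dd_{\kappa(t)}\RT_{\kappa(t)^{-1}}(\dot\kappa(t))$, and right translation $\RT_{\kappa(t)^{-1}}$ is a diffeomorphism of $G$ (with inverse $\RT_{\kappa(t)}$), so its differential $\dd_{\kappa(t)}\RT_{\kappa(t)^{-1}}\colon T_{\kappa(t)}G\rightarrow\mg$ is a linear isomorphism. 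Hence $\Der(\kappa)(t)=0$ forces $\dot\kappa(t)=0$ for every $t\in D$.

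Finally I would upgrade $\dot\kappa\equiv 0$ to constancy of $\kappa$ by passing to charts. Fix $t_0\in D$ and put $h:=\kappa(t_0)$; since $e\in\U$, the point $\kappa(t_0)$ lies in the domain $h\cdot\U$ of the chart $\chart_h$, so by continuity there is a subinterval $J\subseteq D$ containing $t_0$ with $\kappa(J)\subseteq h\cdot\U$. The coordinate curve $c:=\chart_h\cp\kappa|_J\colon J\rightarrow E$ is then of class $C^1$ with $\dot c(t)=\dd_{\kappa(t)}\chart_h(\dot\kappa(t))=0$, whence \eqref{isdsdoisdiosd} gives $c(t)-c(t_0)=\int_{t_0}^t\dot c(s)\,\dd s=0$ on $J$; thus $\kappa$ is constant on $J$. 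As $D$ is a (connected) interval and $\kappa$ is locally constant, $\kappa$ is globally constant, say $\kappa\equiv g\in G$. Then $\mu(t)^{-1}\nu(t)=g$ for all $t\in D$, i.e.\ $\nu=\mu\cdot g$, as desired. The only genuinely technical point is this last step: because $G$ is a manifold rather than a vector space, one cannot integrate $\dot\kappa$ directly and must work in local coordinates, but this is routine once \eqref{isdsdoisdiosd} and the connectedness of $D$ are invoked.
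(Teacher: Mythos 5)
Your proof is correct and follows essentially the same route as the paper: the backward direction from the first identity in \eqref{fgfggf}, and the forward direction via $\Der(\mu^{-1}\nu)=0$ from \eqref{popopo}, bijectivity of $\dd_q\RT_{q^{-1}}$ to get a vanishing derivative, and \eqref{isdsdoisdiosd} in a chart to conclude constancy. The only (immaterial) difference is the globalization step: the paper normalizes $\mu^{-1}\nu$ so that it passes through $e$ and runs a supremum-contradiction argument in the fixed chart $\chart$, whereas you use the translated charts $\chart_h$ and the connectedness of $D$ to pass from locally constant to constant.
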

\begin{proof}
By \eqref{fgfggf}, we have $\Der(\mu)=\Der(\mu \cdot g)$ for each $g\in G$; which shows the one direction. 
For the other direction, we fix $\tau\in D$, define $\alpha:=\mu^{-1}\nu\cdot g$ for $g:=\nu^{-1}(\tau)\cdot \mu(\tau)$, and obtain 
\begin{align*}
	\Der(\alpha)\stackrel{\eqref{fgfggf}}{=}\Der(\mu^{-1}\nu)\stackrel{\eqref{popopo}}{=} 0;
\end{align*}
thus, $\dot\alpha=0$ as $\dd_q\RT_{q^{-1}}$ is bijective for each $q\in G$. 
For each $[r,r']\subseteq D$ with $\tau\in [r,r']$ and $\alpha([r,r'])\subseteq \U$, we thus obtain from \eqref{isdsdoisdiosd} that $(\chart\cp\alpha)|_{[r,r']}=0$ holds; so that the claim follows from a standard supremum-contradiction argument. 
\end{proof}
\noindent
We furthermore obtain 
\begin{lemma}
\label{evk}
Let $D\in \INT$ and $k\in \NN$ be fixed. Then,
\begingroup
\setlength{\leftmargini}{17pt}
{
\renewcommand{\theenumi}{{\arabic{enumi}})} 
\renewcommand{\labelenumi}{\theenumi}
\begin{enumerate}
\item
\label{aaaaaaaaaaaaaaaa}
	$\Der(\mu)\in C^k(D,\mg)$ holds for each $\mu\in C^{k+1}(D,G)$. 
\item
\label{aaaaaaaaaaaaaaaab}
	$\mu\in C^{k+1}(D,G)$ holds for each $\mu\in C^1(D,G)$ with $\Der(\mu)\in C^k(D,\mg)$.
\end{enumerate}}
\endgroup
\end{lemma}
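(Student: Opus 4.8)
The plan is to push $\Der(\mu)$ through the chart $\chart$ and then invoke the chain-rule lemmas of Sect.~\ref{sdsdsddsdssdsdaaaa}. Both claims are trivial for $k=0$: part~1) is then just the statement that $\Der$ lands in $C^0(D,\mg)$ (its very definition), and part~2) coincides with its own hypothesis. So I would fix $k\geq 1$ from here on.

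Since membership in $C^{k}$ or $C^{k+1}$ is a local property, I would work on a neighbourhood of an arbitrary $t_0\in D$. The point is to first right-translate into the chart domain: with $g_0:=\mu(t_0)$ and $\nu:=\mu\cdot g_0^{-1}$ one has $\nu(t_0)=e$, so $\nu(J)\subseteq\U$ on a small interval $J\ni t_0$, while $\Der(\nu)=\Der(\mu)$ by \eqref{fgfggf} and $\nu$ inherits the differentiability class of $\mu$ because $\RT_{g_0^{-1}}$ is a diffeomorphism. Writing $y:=\chart\cp\nu\colon J\rightarrow\V$ and using $\nu=\chartinv\cp y$, the definitions \eqref{opopopop1}, \eqref{opopopop2} yield on $J$ the two coordinate identities
\begin{align*}
\Der(\mu)=\Der(\nu)=\dermapdiff(y,\dot y)\qquad\text{and}\qquad \dot y=\dermapinvdiff(y,\Der(\nu)).
\end{align*}
The second follows from the first because, for each $x\in\V$, the linear map $\dermapinvdiff(x,\cdot)$ inverts $\dermapdiff(x,\cdot)$ -- indeed $\dd_e\RT_{\chartinv(x)}$ inverts $\dd_{\chartinv(x)}\RT_{[\chartinv(x)]^{-1}}$ and $\dd_{\chartinv(x)}\chart$ inverts $\dd_x\chartinv$.

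With these in hand the two parts are immediate. For 1) I would note that $\nu\in C^{k+1}$ gives $y\in C^{k+1}$, hence $y,\dot y\in C^{k}$, so Lemma~\ref{sddsdssd} applied to the smooth map $\dermapdiff$ shows $\Der(\mu)|_J=\dermapdiff(y,\dot y)\in C^{k}(J,\mg)$; letting $t_0$ vary then gives $\Der(\mu)\in C^{k}(D,\mg)$. For 2) I would observe that $y\in C^1$ satisfies $\dot y=\dermapinvdiff(y,\Der(\nu))$ with $\dermapinvdiff$ smooth and $\Der(\nu)=\Der(\mu)|_J\in C^{k}$, so Corollary~\ref{bhsbsshshdkksjdhjsd} (with $\gamma\equiv y$, $\gamma'\equiv\Der(\nu)$, and $\Omega\equiv\dermapinvdiff$) forces $y\in C^{k+1}$; consequently $\nu|_J=\chartinv\cp y$ and $\mu|_J=\nu|_J\cdot g_0$ are of class $C^{k+1}$, and varying $t_0$ yields $\mu\in C^{k+1}(D,G)$.

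I expect the only non-routine step to be the inversion identity between $\dermapdiff$ and $\dermapinvdiff$, together with the bookkeeping $\dot\nu=\dd_y\chartinv(\dot y)$ that makes $\Der(\nu)=\dermapdiff(y,\dot y)$ hold exactly. Once that is recorded, parts~1) and 2) are nothing but applications of Lemma~\ref{sddsdssd} and Corollary~\ref{bhsbsshshdkksjdhjsd}, which were set up precisely for this kind of bootstrapping; no estimate on the seminorms is needed here.
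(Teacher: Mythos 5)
Your proposal is correct and follows essentially the same route as the paper's own proof: localize via \eqref{fgfggf}, right-translate into the chart domain, write $\Der(\nu)=\dermapdiff(\gamma,\dot\gamma)$ and $\dot\gamma=\dermapinvdiff(\gamma,\Der(\nu))$, and then apply Lemma \ref{sddsdssd} for part 1) and Corollary \ref{bhsbsshshdkksjdhjsd} for part 2). The extra remarks (the $k=0$ case, the explicit inversion of $\dermapdiff(x,\cdot)$ by $\dermapinvdiff(x,\cdot)$) are correct elaborations of steps the paper leaves implicit.
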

\begin{proof}
By the second identity in \eqref{fgfggf}, in both situations it suffices to show that for each $t\in D$ there exists an open interval $J\subseteq \RR$ containing $t$, such that the claim holds for $\nu:=\mu|_{D\cap J}$. 
Moreover, by the first identity in \eqref{fgfggf}, we can additionally assume that $\im[\nu]\subseteq \U$ holds, just by shrinking $J$ if necessary.    
We let $\gamma:=\chart\cp\nu$, and obtain 
\begingroup
\setlength{\leftmargini}{12pt}
\begin{itemize}
\item
	$\Der(\nu)=\dermapdiff(\gamma,\dot\gamma)$    
for $\dermapdiff$ defined by \eqref{opopopop1}; so that \ref{aaaaaaaaaaaaaaaa} is clear from Lemma \ref{sddsdssd}. 
\item
	$\dot\gamma=\dermapinvdiff(\gamma,\Der(\nu))$    
for $\dermapinvdiff$ defined by \eqref{opopopop2}; so that \ref{aaaaaaaaaaaaaaaab} 
is clear from Corollary  \ref{bhsbsshshdkksjdhjsd}.
\end{itemize}
\endgroup
\noindent
This establishes the proof.
\end{proof}
\noindent
Finally, if $H$ is a Lie group, and  $\Psi\colon G\rightarrow H$ a $C^1$-Lie group homomorphism, we immediately obtain
	\begin{align}
	\label{asssadxfgg}
		\Der(\Psi\cp\mu)=\dd_e\Psi\cp \Der(\mu)\qquad\quad\forall\: \mu\in C^1(D,H),\:\: D\in \INT.	
	\end{align}

\subsubsection{The Product Integral}
\label{opsopdsospdosdpdsoppods}
We define 
\begin{align*}
	\textstyle\DIDE:=\bigsqcup_{[r,r']\in \COMP}\DIDE_{[r,r']}\qquad\text{with}\qquad \DIDE_{[r,r']}:=\Der(C^1([r,r'],G))\qquad\text{for each}\qquad [r,r']\in \COMP.
\end{align*}
Then, Lemma \ref{xckxklxc} shows that $\EV_{[r,r']}\colon \DIDE_{[r,r']} \rightarrow C^1([r,r'],G)$ given by 
\begin{align*}
	\EV_{[r,r']}(\Der(\mu)):=\mu\cdot \mu^{-1}(r)\qquad\quad 
	\forall\:\mu\in C^1([r,r'],G),\:\:
	[r,r']\in \COMP\hspace{8.25pt}
\end{align*}
is well defined; and we let  
\begin{align*}
	\EV_{[r,r']}^k\equiv \EV|_{\DIDE_{[r,r']}^k}\qquad\quad\text{for}\qquad\quad \DIDE_{[r,r']}^k:=\DIDE_{[r,r']}\cap C^k([r,r'],\mg)\\[5pt]
	\EVE_{[r,r']}^k\colon \DIDE_{[r,r']}^k\ni\phi\mapsto \EV_{[r,r']}(\phi)(r')\in G\hspace{75pt}
\end{align*}
for $[r,r']\in \COMP$ and $k\in \NN\sqcup\{\lip,\infty\}$. Moreover, for $k\in \NN\sqcup\{\infty\}$ and $[r,r']\in \COMP$, we define
\begin{align*}
	C_*^{k}([r,r'],G):=\{\mu\in C^{k}([r,r'],G)\: |\: \mu(r)=e\};\hspace{25pt}
\end{align*}
and obtain that
\begin{corollary}
\label{ofgogpfopg}
	For each $k\in \NN\sqcup\{\lip,\infty\}$ and $[r,r']\in \COMP$, we have  
\begin{align*}
	\EV_{[r,r']}^k\colon \DIDE_{[r,r']}^k\rightarrow C_*^{k+1}([r,r'],G).
\end{align*}
\end{corollary}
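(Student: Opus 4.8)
The plan is to unwind the definition of $\EV_{[r,r']}^k$ on a given $\phi\in\DIDE_{[r,r']}^k$ and then feed the result into the regularity bootstrap already established in Lemma \ref{evk}. By definition, $\DIDE_{[r,r']}^k=\DIDE_{[r,r']}\cap C^k([r,r'],\mg)$, so any such $\phi$ may be written as $\phi=\Der(\mu)$ for some $\mu\in C^1([r,r'],G)$, with the extra information that $\phi\in C^k([r,r'],\mg)$; and well-definedness of $\EV_{[r,r']}$ (Lemma \ref{xckxklxc}) ensures that $\nu:=\EV_{[r,r']}(\phi)=\mu\cdot\mu^{-1}(r)$ is independent of the chosen representative $\mu$. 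First I would record two elementary features of $\nu$: being the composition $\nu=\RT_{\mu^{-1}(r)}\cp\mu$ of the $C^1$-curve $\mu$ with the smooth right translation $\RT_{\mu^{-1}(r)}$, it is of class $C^1$; and $\nu(r)=\mu(r)\cdot\mu^{-1}(r)=e$, so that the base-point condition defining $C_*^{k+1}$ holds automatically.

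The crux is that $\nu$ has the same right logarithmic derivative as $\mu$. Indeed, since $\mu^{-1}(r)$ is a fixed group element, the first identity in \eqref{fgfggf} gives $\Der(\nu)=\Der(\mu\cdot\mu^{-1}(r))=\Der(\mu)=\phi$. Hence $\nu$ is a $C^1$-curve whose logarithmic derivative lies in $C^k([r,r'],\mg)$ -- precisely the hypothesis of Lemma \ref{evk}.\ref{aaaaaaaaaaaaaaaab}.

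For $k\in\NN$ I would then simply invoke that lemma to obtain $\nu\in C^{k+1}([r,r'],G)$, which together with $\nu(r)=e$ yields $\nu\in C_*^{k+1}([r,r'],G)$. The two remaining conventions reduce to this case: for $k\equiv\infty$ the hypothesis $\phi\in C^\infty$ means $\phi\in C^n$ for every $n\in\NN$, so the $\NN$-case gives $\nu\in C^{n+1}$ for all $n$, whence $\nu\in C^\infty=C^{\infty+1}$; and for $k\equiv\lip$ the convention $\lip+1=1$ turns the claim into $\nu\in C_*^1([r,r'],G)$, which is exactly the elementary observation of the first paragraph (no bootstrap is available nor needed, a Lipschitz $\phi$ improving $\nu$ only to $C^1$).

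I do not anticipate a real obstacle: all the analytic content has been absorbed into Lemma \ref{evk}, and what remains is the bookkeeping of the three regularity conventions together with the observation that $\mu^{-1}(r)$ is constant, so that \eqref{fgfggf} applies verbatim. The one conceptual point worth stating clearly is that $\EV_{[r,r']}$ is nothing but the passage from $\phi$ to the unique integral curve $\nu$ normalized by $\nu(r)=e$, which then automatically inherits one extra degree of smoothness from its derivative $\phi$.
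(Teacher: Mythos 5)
Your proof is correct and follows exactly the route the paper intends: the paper's own proof is the one-line remark that the claim is clear from Lemma \ref{evk}.\ref{aaaaaaaaaaaaaaaab}, and you have simply filled in the (correct) bookkeeping — $\Der(\mu\cdot\mu^{-1}(r))=\Der(\mu)$ via \eqref{fgfggf}, the normalization $\nu(r)=e$, and the three conventions for $k\in\NN$, $k\equiv\infty$, $k\equiv\lip$. Nothing to add.
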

\begin{proof}
	The claim is clear from Lemma \ref{evk}.\ref{aaaaaaaaaaaaaaaab}.
\end{proof}
\noindent
The product integral is given by\footnote{Observe that the first expression is defined by the second equality in \eqref{fgfggf} as well as Lemma \ref{evk}.}
		\begin{align*}
		\textstyle\innt_a^b \phi := \EV_{[a,b]}^k(\phi|_{[a,b]})(b),\qquad\innt\phi:=\innt_r^{r'}\phi,\qquad\innt_c^c\phi:=e,\qquad \innt_r^\bullet\phi\colon [r,r']\ni t\mapsto \innt_r^t\phi
	\end{align*} 
	for each $\phi\in \DIDE_{[r,r']}^k$ with $k\in \NN\sqcup\{\lip,\infty\}$, $r\leq a<b\leq r'$, and $c\in [r,r']$. Then, 
\begingroup
\setlength{\leftmargini}{16pt}
{
\renewcommand{\theenumi}{\emph{\alph{enumi})}} 
\renewcommand{\labelenumi}{\theenumi}
\begin{enumerate}
\item
\label{kdsasaasassaas}
We conclude from \eqref{alsalsalsa} that
\begin{align*}
	\textstyle\innt_r^t \phi \cdot \innt_r^t\psi=\innt_r^t \phi+\Ad_{\innt_r^\bullet\phi}(\psi)\qquad\quad\hspace{40pt}\forall\: \phi,\psi\in \DIDE_{[r,r']},\:\: t\in [r,r'].
\end{align*} 
\item
\label{kdskdsdkdslkds}
We conclude from \eqref{popopo} that
\begin{align*}
	\textstyle\big[\innt_r^t \phi\big]^{-1} \big[\innt_r^t\psi\big]=\innt_r^t\Ad_{[\innt_r^\bullet\phi]^{-1}}(\psi-\phi)\qquad\quad\forall\: \phi,\psi\in \DIDE_{[r,r']},\:\: t\in [r,r'].
\end{align*} 
\item
\label{pogfpogfaaa}
We conclude from \eqref{FORM1} that
\begin{align*}
	\textstyle \big[\innt_r^t\phi\big]^{-1}=\innt_r^t -\Ad_{[\innt_r^\bullet\phi]^{-1}}(\phi)\qquad\quad\hspace{32pt}\forall\: \phi\in \DIDE_{[r,r']},\:\: t\in [r,r'].
\end{align*}
\item
\label{pogfpogf}
For $r=t_0<{\dots}<t_n=r'$ and $\phi\in \DIDE_{[r,r']}$, we conclude from the first two identities in \eqref{fgfggf} that
	\begin{align*}
		\textstyle\innt_r^t\phi=\innt_{t_{p}}^t \phi\cdot \innt_{t_{p-1}}^{t_{p}} \phi \cdot {\dots} \cdot \innt_{t_0}^{t_1}\phi\qquad\quad\forall\:t\in (t_p,t_{p+1}],\:\: p=0,\dots,n-1.
	\end{align*}
\item
\label{subst}
	For $\varrho\colon [\ell,\ell']\rightarrow [r,r']$ 
of class $C^1$, we conclude from the last identity in \eqref{fgfggf} that
\begin{align*}
	 \textstyle\innt_r^{\varrho}\phi=\big[\innt_\ell^\bullet\dot\varrho\cdot \rcK{\phi\cp\varrho}\he\big]\cdot \big[\innt_r^{\varrho(\ell)}\phi\he\big]\qquad\quad\forall\:\phi\in \DIDE_{[r,r']}.
\end{align*} 
\item
\label{homtausch}
We conclude from \eqref{asssadxfgg} that for each $C^1$-Lie group homomorphism $\Psi\colon G\rightarrow H$, we have
\begin{align*}
	\textstyle\Psi\cp \innt_r^\bullet \phi = \innt_r^\bullet \dd_e\Psi\cp\phi\qquad\quad\forall\:\phi\in \DIDE_{[r,r']}.
\end{align*}
\end{enumerate}}
\endgroup	
\begin{example}
\label{fdpofdopdpof}
For $[r,r']\in \COMP$ fixed, we let 
	$\varrho\colon [r,r']\rightarrow [r,r'],\:\: t\mapsto r +r' -t$; and define   
\begin{align*}
	\textstyle \DIDE_{[r,r']}\ni \inverse{\phi}:=\dot\varrho\cdot \rcK{\phi\cp\varrho} \colon [r,r']\ni t\mapsto - \phi(r+r'-t)\qquad\quad\forall\:\phi\in \DIDE_{[r,r']}. 
\end{align*}
We let $[\ell,\ell']\equiv[r,r']$; and obtain from \emph{\ref{subst}} that
\begin{align*}
	\textstyle e=\innt_r^{\varrho(r')}\phi\stackrel{\emph{\ref{subst}}}{=}\big[\innt_{r}^{r'} \inverse{\phi}\big] \cdot \big[\innt_r^{r'}\phi  \big]\qquad\quad\text{holds, thus}\qquad\quad [\innt \phi]^{-1}=\innt \inverse{\phi},
\end{align*}
which will be useful for our argumentation in Sect.\ \ref{pofdofdpofdpofdpofdfdofdpodf}.
\hspace*{\fill}$\ddagger$
\end{example}
\begin{lemma}
\label{sddssdsdsd}
Let $[r,r']\in \COMP$, and $k\in \NN\sqcup\{\lip, \infty\}$ be fixed; and suppose that we are given $\phi\in C^k([r,r'],\mg)$ and 
$r=t_0<{\dots}<t_n=r'$, such that $\phi|_{[t_p,t_{p+1}]}\in \DIDE^k_{[t_p,t_{p+1}]}$ holds for $p=0,\dots,n-1$. 
Then, we have $\phi\in \DIDE^k_{[r,r']}$  with
	\begin{align*}
		\textstyle\innt_r^t\phi=\innt_{t_{p}}^t \phi\cdot \innt^{t_p}_{t_{p-1}} \phi \cdot {\dots} \cdot \innt_{t_0}^{t_1}\phi\qquad\quad\forall\:t\in (t_p,t_{p+1}],\:\: p=0,\dots,n-1.
	\end{align*}
\end{lemma}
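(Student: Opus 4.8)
The plan is to build a single curve $\mu\in C^1([r,r'],G)$ with $\Der(\mu)=\phi$ by glueing the local evolutions together, since then $\phi\in\DIDE_{[r,r']}\cap C^k([r,r'],\mg)=\DIDE^k_{[r,r']}$, and the displayed product formula follows \emph{for free} from item \ref{pogfpogf} applied to $\phi\in\DIDE_{[r,r']}$. As the restriction of a $C^k$-curve to a compact subinterval is again of class $C^k$, an induction on $n$ reduces the whole statement to the case $n=2$: once I know that two integrable adjacent pieces glue to an integrable curve, I can collapse the partition one junction at a time. So I fix $t_0<t_1<t_2$, set $\nu_0:=\EV_{[t_0,t_1]}^k(\phi|_{[t_0,t_1]})$ and $\nu_1:=\EV_{[t_1,t_2]}^k(\phi|_{[t_1,t_2]})$ -- which are of class $C^{k+1}$ with $\nu_0(t_0)=\nu_1(t_1)=e$ and right logarithmic derivative $\phi$ on their respective pieces by Corollary \ref{ofgogpfopg} -- and define $\mu:=\nu_0$ on $[t_0,t_1]$ and $\mu:=\nu_1\cdot\nu_0(t_1)$ on $[t_1,t_2]$.

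This $\mu$ is continuous (both branches equal $\nu_0(t_1)$ at $t_1$), and on each of the open intervals $(t_0,t_1)$ and $(t_1,t_2)$ it is of class $C^{k+1}$ with $\Der(\mu)=\phi$, by the first two identities in \eqref{fgfggf}. The one remaining -- and only nontrivial -- point is that $\mu$ is of class $C^1$ at the junction $t_1$. To analyse it I first normalise: replacing $\mu$ by $\mu\cdot\nu_0(t_1)^{-1}$ (a right translation, which is a diffeomorphism preserving $\Der$ by \eqref{fgfggf}) I may assume $\mu(t_1)=e$, so that $\mu$ takes values in $\U$ on some $[t_1-\epsilon,t_1+\epsilon]$ and $\gamma:=\chart\cp\mu$ is defined there. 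On each side of $t_1$ this curve satisfies $\dot\gamma=\dermapinvdiff(\gamma,\phi)$, exactly the relation exploited in the proof of Lemma \ref{evk}, with $\dermapinvdiff$ the smooth map from \eqref{opopopop2}.

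Hence $\beta:=\dermapinvdiff(\gamma(\cdot),\phi(\cdot))$ is continuous on all of $[t_1-\epsilon,t_1+\epsilon]$, being a composite of continuous maps. For $t_1<a<t<t_1+\epsilon$ the fundamental theorem \eqref{isdsdoisdiosd} gives $\gamma(t)-\gamma(a)=\int_a^t\beta(s)\,\dd s$; letting $a\to t_1^+$ and using continuity of $\gamma$ together with continuity of the Riemann integral in its endpoints yields $\gamma(t)-\gamma(t_1)=\int_{t_1}^t\beta(s)\,\dd s$ for all such $t$. By \eqref{opgfgofppof} the right-hand side is $C^1$ in $t$ with derivative $\beta$, so $\gamma$ is right-differentiable at $t_1$ with derivative $\beta(t_1)$; the symmetric argument on $(t_1-\epsilon,t_1]$ gives the same left derivative, so $\gamma$, and therefore $\mu$, is of class $C^1$ at $t_1$ with $\Der(\mu)(t_1)=\phi(t_1)$. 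Thus $\mu\in C^1([t_0,t_2],G)$ with $\Der(\mu)=\phi$, establishing $\phi\in\DIDE^k_{[t_0,t_2]}$ and, after the induction, $\phi\in\DIDE^k_{[r,r']}$; the displayed identity is then item \ref{pogfpogf} on $[r,r']$.

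The main obstacle is precisely the differentiability of $\mu$ at the junction. In a Banach (or finite-dimensional) setting one would simply invoke the elementary ``derivative-limit'' theorem -- a continuous function that is differentiable off one point, with a limiting derivative there, is differentiable at that point -- but in the general locally convex category this is not available off the shelf. The argument above substitutes for it by grounding the claim in the two tools that \emph{are} available, namely the fundamental theorem of calculus \eqref{isdsdoisdiosd} and the continuity of the Riemann integral in its limits of integration; once $C^1$-ness is in hand, the higher regularity $\mu\in C^{k+1}$ (not even needed for the statement) would follow from Lemma \ref{evk} or Corollary \ref{bhsbsshshdkksjdhjsd}.
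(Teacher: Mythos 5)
Your proof is correct. The overall strategy coincides with the paper's (Appendix \ref{ceinseig}): glue the local evolutions together by right translations so that they match at the junctions, observe that everything away from the junctions is already fine by \eqref{fgfggf}, and reduce the lemma to $C^1$-regularity of the glued curve $\mu$ at the junction points, after which membership in $\DIDE^k_{[r,r']}$ and the displayed product formula follow from \ref{pogfpogf} (and Lemma \ref{evk}). Where you genuinely diverge is in how that junction step is carried out. The paper keeps all $n$ pieces at once, extends $\mu$ to an open interval $I\supseteq[r,r']$, first shows that the ``logarithmic'' difference quotient $\lim_{h\to 0}1/h\cdot\chart(\mu(t+h)\cdot\mu(t)^{-1})$ exists everywhere, and then transports this to an arbitrary chart around $\mu(\tau)$ via the pointwise chain rule of Lemma \ref{sdsdds}. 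You instead induct on the number of pieces, normalize so that $\mu(t_1)=e$, and use the ODE identity $\dot\gamma=\dermapinvdiff(\gamma,\phi)$ together with \eqref{isdsdoisdiosd}, the endpoint-continuity of the Riemann integral (via \eqref{absch2}), and \eqref{opgfgofppof} to obtain the representation $\gamma(t)=\gamma(t_1)+\int_{t_1}^t\dermapinvdiff(\gamma(s),\phi(s))\:\dd s$ on either side of the junction, from which two-sided differentiability with continuous derivative $\beta=\dermapinvdiff(\gamma,\phi)$ is immediate (and $\Der(\mu)(t_1)=\phi(t_1)$ follows since $\dermapdiff(x,\cdot)$ inverts $\dermapinvdiff(x,\cdot)$). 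Your route buys a more elementary argument that avoids the auxiliary chart change and Lemma \ref{sdsdds} entirely, at the cost of the normalization step and an induction; the paper's buys a single uniform computation valid at every $\tau\in I$ in one pass. The only cosmetic point worth adding is that, with the paper's convention for $C^1$-curves on compact intervals, one should note that $\mu$ extends to an open interval containing $[t_0,t_2]$ -- which is automatic since the $\nu_p$ do.
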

\begin{proof}
The proof is elementary, and can be found in Appendix \ref{ceinseig}.
\end{proof}
\subsubsection{Semiregularity}
We  say that $G$ is  {\bf $\boldsymbol{C^k}$-semiregular} for $k\in \NN\sqcup\{\lip,\infty\}$ \defff $\DIDE^k_{[0,1]}=C^k([0,1],\mg)$ holds. Then,
\begin{lemma}
\label{assasaasas}
$G$ is $C^k$-semiregular \deff 
\begin{align*}
	\DIDE^k_{[r,r']}=C^k([r,r'],\mg)\qquad\text{holds for each}\qquad [r,r']\in \COMP.
\end{align*}
\end{lemma}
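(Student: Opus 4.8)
The plan is to reduce the general interval $[r,r']$ to the unit interval $[0,1]$ by an affine reparametrization, exploiting the substitution identity \eqref{substi}. One direction is immediate: if $\DIDE^k_{[r,r']}=C^k([r,r'],\mg)$ holds for every $[r,r']\in\COMP$, then specializing to $[r,r']=[0,1]$ is exactly the definition of $C^k$-semiregularity. So the content is the converse, and I would fix $[r,r']\in\COMP$ and prove the inclusion $C^k([r,r'],\mg)\subseteq\DIDE^k_{[r,r']}$; the reverse inclusion $\DIDE^k_{[r,r']}\subseteq C^k([r,r'],\mg)$ is just the definition $\DIDE^k_{[r,r']}=\DIDE_{[r,r']}\cap C^k([r,r'],\mg)$.

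For the construction, introduce the affine diffeomorphism $\varrho\colon[0,1]\to[r,r']$, $t\mapsto r+t\cdot(r'-r)$, with smooth inverse $\varrho^{-1}\colon[r,r']\to[0,1]$, $s\mapsto(s-r)/(r'-r)$. Given $\phi\in C^k([r,r'],\mg)$, set $\psi:=\dot\varrho\cdot(\phi\cp\varrho)=(r'-r)\cdot(\phi\cp\varrho)\in C^k([0,1],\mg)$. By the assumed $C^k$-semiregularity, $\psi\in\DIDE^k_{[0,1]}$, so $\psi=\Der(\nu)$ for some $\nu\in C^1([0,1],G)$. Now put $\mu:=\nu\cp\varrho^{-1}\in C^1([r,r'],G)$ and compute, using \eqref{substi},
\[
\Der(\mu)=\Der(\nu\cp\varrho^{-1})=\dot{(\varrho^{-1})}\cdot\big(\Der(\nu)\cp\varrho^{-1}\big)=\tfrac{1}{r'-r}\cdot\big(\psi\cp\varrho^{-1}\big)=\tfrac{1}{r'-r}\cdot(r'-r)\cdot\phi=\phi,
\]
where $\psi\cp\varrho^{-1}=(r'-r)\cdot(\phi\cp\varrho\cp\varrho^{-1})=(r'-r)\cdot\phi$ because $\varrho\cp\varrho^{-1}=\id_{[r,r']}$. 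Hence $\phi=\Der(\mu)\in\DIDE_{[r,r']}$, and since $\phi\in C^k([r,r'],\mg)$ we conclude $\phi\in\DIDE^k_{[r,r']}$, as desired.

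It remains only to check that $\psi$ genuinely lies in $C^k([0,1],\mg)$ and that $\mu$ is of class $C^1$; this is where the (minor) care is needed. Since $\varrho$ and $\varrho^{-1}$ are affine, composition with them preserves each regularity class $k\in\NN\sqcup\{\lip,\infty\}$ -- for $k\in\NN\sqcup\{\infty\}$ by the chain rule \ref{chainrule}, and for $k\equiv\lip$ because composing a Lipschitz curve with an affine map again yields a Lipschitz curve (with rescaled constant); multiplication by the scalar $(r'-r)$ is harmless. Likewise $\nu\in C^1$ composed with the smooth $\varrho^{-1}$ stays $C^1$. I do not expect a genuine obstacle here: the whole argument is driven by the single structural identity \eqref{substi}, and the affine choice of $\varrho$ makes every regularity statement transparent. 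The only point requiring attention is the bookkeeping of the constants $\dot\varrho=r'-r$ and $\dot{(\varrho^{-1})}=1/(r'-r)$, which cancel exactly, together with the uniform treatment of the Lipschitz case alongside the $C^k$ and $C^\infty$ cases.
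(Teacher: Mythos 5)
Your proposal is correct and follows essentially the same route as the paper: an affine reparametrization between $[0,1]$ and $[r,r']$ combined with the substitution identity \eqref{substi}, with your $\varrho$ being exactly the inverse of the map the paper calls $\varrho$. The extra care you take with the Lipschitz case and the $C^1$-regularity of $\mu=\nu\cp\varrho^{-1}$ is sound but not needed beyond what the paper's one-line computation already implies.
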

\begin{proof}
The one direction is evident. For the other direction, we fix $[r,r']\in \COMP$, and let 
\begin{align*}
	\varrho\colon [r,r']\rightarrow [0,1],\qquad t\mapsto  |t-r|/|r'-r|.
\end{align*}
Then, for $\phi\in C^k([r,r'],\mg)$ given, we define  
	$\psi:= |r'-r|\cdot \phi\cp \varrho^{-1}\in C^k([0,1],\mg)$,
 and choose $\nu\in C^{k+1}([0,1],\mg)$ with $\Der(\nu)=\psi$. Then, the last identity in \eqref{fgfggf} gives
\begin{align*}
	\Der(\nu\cp\varrho)=|r'-r|^{-1} \cdot \rcK{\psi\cp\varrho}=\phi,
\end{align*}   
which proves the claim.
\end{proof}
\noindent
We say that $G$ {\bf admits an exponential map} \defff\he $\phi_X|_{[0,1]}\in \DIDE_{[0,1]}$ holds for each constant curve $\phi_X\colon \RR\ni t\mapsto X\in \mg$; i.e., \defff 
\begin{align*}
	\textstyle\exp\colon \mg\ni X\mapsto \innt_0^1 \phi_X\in G
\end{align*} 
is defined. 

\subsubsection{Continuity}
We say that $\EVE_{[r,r']}^k$ is $C^p$-continuous for $p\leq k\in \NN\sqcup\{\lip,\infty\}$ (we let $0\leq\lip\leq\lip\leq 1$) and $[r,r']\in \COMP$ \deff it is continuous w.r.t.\ the seminorms $\{\ppp_\infty^\dind\}_{\pp\in \SEM,\:\dind\llleq p}$. We say that $G$ is 
\begingroup
\setlength{\leftmargini}{12pt}
\begin{itemize}
\item
	{\bf p$\boldsymbol{\pkt}$k-continuous}\:\: for $p\llleq k$\: \defff\:$\EVE_{[r,r']}^k$ is $C^p$-continuous for each $[r,r']\in \COMP$,
\item
	\hspace{10.5pt}{\bf k-continuous}\hspace{46.5pt}\: \:\:\defff\:$G$ is k$\pkt$k-continuous.
\end{itemize}
\endgroup
\noindent
Then,
\begin{lemma}
\label{Adlip}
We have $\Ad_\mu(\phi)\in C^k([r,r'],\mg)$ for each $\mu\in C^{k+1}([r,r'],G)$, $\phi\in C^k([r,r'],\mg)$, and $k\in \NN\sqcup\{\lip,\infty\}$.
\end{lemma}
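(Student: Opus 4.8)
The plan is to argue locally in the time variable and then patch. Since being of class $C^k$ is a local property and $[r,r']$ is compact, it suffices to show that $t\mapsto \Ad_{\mu(t)}(\phi(t))$ is of class $C^k$ on some neighbourhood $[a,b]\subseteq[r,r']$ of each $t_0\in[r,r']$. I fix $g_0:=\mu(t_0)$; shrinking $[a,b]$ so that $\mu([a,b])\subseteq g_0\cdot\U$ (possible by continuity, as $e\in\U$), the curve $x:=\chart_{g_0}\cp\mu\colon[a,b]\rightarrow\V$ is well defined with $\mu=g_0\cdot(\chartinv\cp x)$, and $x$ is of class $C^{k+1}$ because $\mu$ is and $\chart_{g_0}$ is smooth. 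Using $\Ad_{g\cdot h}=\Ad_g\cp\Ad_h$ I then write
\begin{align*}
	\Ad_{\mu(t)}(\phi(t))=\Ad_{g_0}\big(\Omega(x(t),\phi(t))\big)\qquad\text{with}\qquad\Omega\colon\V\times\mg\ni(y,X)\mapsto\Ad_{\chartinv(y)}(X),
\end{align*}
where $\Omega$ is smooth (a composition of smooth maps) and linear in its second argument. As $\Ad_{g_0}\colon\mg\rightarrow\mg$ is continuous and linear, hence smooth by \ref{linear} and thus $C^k$-preserving, it remains to show that $t\mapsto\Omega(x(t),\phi(t))$ is of class $C^k$ on $[a,b]$.

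For $k\in\NN$ this is immediate from Corollary \ref{fddfd}, applied to $\gamma_1:=x$ (of class $C^{k+1}$, hence $C^k$), $\gamma_2:=\phi$, and the map $\Omega$; and the case $k\equiv\infty$ follows by letting $k$ range over $\NN$.

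The main obstacle is the Lipschitz case $k\equiv\lip$, where Corollary \ref{fddfd} is not available and the hypothesis $\mu\in C^{k+1}=C^1$ is genuinely used: it ensures that $x$ is of class $C^1$, hence Lipschitz on the compact interval $[a,b]$ (via Lemma \ref{ofdpofdpopssssaaaasfffff}). Fixing $\qqq\in\SEML$, I would estimate, for $t,t'\in[a,b]$,
\begin{align*}
	\Omega(x(t),\phi(t))-\Omega(x(t'),\phi(t'))=\big[\Omega(x(t),\phi(t))-\Omega(x(t'),\phi(t))\big]+\Omega\big(x(t'),\phi(t)-\phi(t')\big),
\end{align*}
using linearity of $\Omega$ in the second argument for the last summand. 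The last summand is bounded, via Corollary \ref{ofdpopfdofdp} applied to the $\mg$-linear map $\Omega$ over the compact set $x([a,b])\subseteq\V$, by $\www(\phi(t)-\phi(t'))$ for some $\www\in\SEML$, hence by a constant times $|t-t'|$ since $\phi$ is Lipschitz. For the first summand I would apply Lemma \ref{ofdpofdpopssssaaaasfffff} to the $C^1$-curve $s\mapsto\Omega(x(t')+s\cdot(x(t)-x(t')),\phi(t))$ (whose defining segment lies in the convex set $\V$), whose derivative at $s$ equals $(\partial_1\Omega)(x(t')+s\cdot(x(t)-x(t')),x(t)-x(t'),\phi(t))$; since $\partial_1\Omega$ is continuous and bilinear in its last two arguments, Corollary \ref{ofdpopfdofdp} over the (compact) set of all base points occurring bounds this by $\mm(x(t)-x(t'))\cdot\www(\phi(t))$ for suitable $\mm\in\SEM$, $\www\in\SEML$. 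Lipschitz continuity of $x$ together with boundedness of $\www\cp\phi$ on $[a,b]$ then yields a bound of the form $\const\cdot|t-t'|$ for the first summand as well. Adding the two estimates shows that $t\mapsto\Omega(x(t),\phi(t))$ is Lipschitz on $[a,b]$; applying the continuous linear map $\Ad_{g_0}$ and patching the finitely many intervals $[a,b]$ covering the compact interval $[r,r']$ then completes the proof.
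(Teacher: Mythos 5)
Your proof is correct and, for the essential (Lipschitz) case, follows the same route as the paper: the identical two-term decomposition of $\Ad_{\mu(t)}(\phi(t))-\Ad_{\mu(t')}(\phi(t'))$, with the $\phi$-variation controlled by Corollary \ref{ofdpopfdofdp} over a compact set and the $\mu$-variation controlled by Lemma \ref{ofdpofdpopssssaaaasfffff} applied to a $C^1$-path (the paper integrates $s\mapsto\partial_s\Ad_{\mu(s)}(\phi(t'))$ along $\mu$ itself rather than along a chord in a chart, which spares it your localization step). The chart localization and the appeal to Corollary \ref{fddfd} for $k\in\NN$ are harmless detours; the paper handles those cases directly from smoothness of $\Ad\colon G\times\mg\rightarrow\mg$.
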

\begin{proof}
Since $\Ad\colon G\times \mg\rightarrow \mg$ is smooth, the claim is clear for $k\in \NN\sqcup \{\infty\}$. The case where $k=\lip$ holds is proven in Appendix \ref{appLip}.
\end{proof}
\begin{lemma}
\label{opopsopsdopds}
	Let $[r,r']\in \COMP$, $k\in \NN\sqcup\{\lip,\infty\}$, and $\phi\in \DIDE_{[r,r']}^k$ be fixed. Then, 
	for each $\pp\in \SEM$ and $\dind\llleq k$, there exists some $\pp\leq \qq\in \SEM$ with
		\begin{align*}
		\ppp^\dindp\big(\Ad_{[\innt_r^\bullet\phi]^{-1}}(\psi)\big)\leq \qqq^\dindp(\psi)\qquad\quad\forall\: \psi\in C^k([r,r'],\mg),\:\: 0\leq \dindp\leq \dind.
	\end{align*}
\end{lemma}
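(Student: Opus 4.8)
The plan is to reduce the estimate to the vector-space inequality of Lemma~\ref{oopxcxopcoxpopcx}.\ref{oopxcxopcoxpopcx2} by localizing the curve $\mu^{-1}$ in charts. First I set $\mu:=\innt_r^\bullet\phi=\EV_{[r,r']}(\phi)$; by Corollary~\ref{ofgogpfopg} we have $\mu\in C^{k+1}([r,r'],G)$, hence $\mu^{-1}=\inv\cp\mu\in C^{k+1}([r,r'],G)$ as well, since $\inv$ is smooth. By Lemma~\ref{Adlip}, the curve $\Ad_{[\innt_r^\bullet\phi]^{-1}}(\psi)=\Ad_{\mu^{-1}}(\psi)$ lies in $C^k([r,r'],\mg)\subseteq C^\dind([r,r'],\mg)$ for every $\psi\in C^k([r,r'],\mg)$; in particular, for $0\le\dindp\le\dind$ its $\dindp$-th derivative is continuous on the whole interval $[r,r']$, so it suffices to establish the claimed pointwise bound on each member of a finite partition.

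Since $\mu^{-1}$ is continuous and $[r,r']$ is compact, continuity gives, for each $t\in[r,r']$, an open interval $J_t\ni t$ with $\mu^{-1}(J_t\cap[r,r'])\subseteq\mu^{-1}(t)\cdot\U$ (note $\mu^{-1}(t)\in\mu^{-1}(t)\cdot\U$, as $e\in\U$). A Lebesgue-number argument then yields a partition $r=s_0<\dots<s_N=r'$ and points $t_0,\dots,t_{N-1}$ with $[s_j,s_{j+1}]\subseteq J_{t_j}$, so that $\mu^{-1}([s_j,s_{j+1}])\subseteq h_j\cdot\U$ for $h_j:=\mu^{-1}(t_j)$. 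On the $j$-th piece I put $\gamma_j:=\chart_{h_j}\cp(\mu^{-1}|_{[s_j,s_{j+1}]})\in C^{k+1}([s_j,s_{j+1}],\V)$ and define
\[
	\Omega_j\colon \V\times\mg\to\mg,\qquad \Omega_j(x,X):=\Ad_{h_j\cdot\chartinv(x)}(X),
\]
which is smooth and linear in $X$, being the composite of $\chartinv$, $\LT_{h_j}$ and $\Ad$. Because $h_j\cdot\chartinv(\gamma_j(t))=\mu^{-1}(t)$, we have $\Omega_j(\gamma_j(t),\psi(t))=\Ad_{[\innt_r^t\phi]^{-1}}(\psi(t))$ for all $t\in[s_j,s_{j+1}]$.

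Now I apply Lemma~\ref{oopxcxopcoxpopcx}.\ref{oopxcxopcoxpopcx2} on $[s_j,s_{j+1}]$, with $F_1\equiv E\supseteq\V\equiv W_1$, $F_2\equiv\mg$, $\Omega\equiv\Omega_j$, the fixed curve $\gamma\equiv\gamma_j$, and $\dindu\equiv\dind$: it produces a seminorm $\qq_j\in\SEM$ with $\ppp^\dindp(\Omega_j(\gamma_j,\psi))\le\qqq_j^\dindp(\psi)$ on $[s_j,s_{j+1}]$ for all $\psi$ and all $0\le\dindp\le\dind$ simultaneously, which by the identity above reads $\ppp^\dindp(\Ad_{[\innt_r^\bullet\phi]^{-1}}(\psi))\le\qqq_j^\dindp(\psi)$ on $[s_j,s_{j+1}]$. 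Since $\SEM$ is filtrating, I finally choose $\qq\in\SEM$ with $\pp\le\qq$ and $\qq_0,\dots,\qq_{N-1}\le\qq$; then $\qqq_j^\dindp(\psi)\le\qqq^\dindp(\psi)$, so the desired bound holds pointwise on each $[s_j,s_{j+1}]$, and as these cover $[r,r']$ it holds for every $t\in[r,r']$, which is the claim.

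The only genuine obstacle is that $\mu^{-1}$ is a curve in the manifold $G$, not in a locally convex space, so Lemma~\ref{oopxcxopcoxpopcx} cannot be invoked globally; the localization into chart-sized pieces $[s_j,s_{j+1}]$ — with the translated charts $\chart_{h_j}$ turning $\mu^{-1}$ into an honest vector-valued curve $\gamma_j$ — together with the gluing via the filtrating property is exactly what circumvents this. (For $\dind=0$, e.g.\ the whole Lipschitz case, one may bypass the partition and read the estimate directly off \eqref{askasjkjksaasqwqqwasw} applied to the compact set $\mu^{-1}([r,r'])$.)
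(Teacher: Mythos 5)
Your proof is correct and follows essentially the same route as the paper: decompose $[r,r']$ so that the relevant curve lies in chart domains, then invoke Lemma~\ref{oopxcxopcoxpopcx}.\ref{oopxcxopcoxpopcx2} on each piece and combine via the filtrating property of $\SEM$. The only (cosmetic) difference is that the paper pulls back $\mu=\innt_r^\bullet\phi$ itself through a fixed chart $\wt{\chart}$ and absorbs the inversion into $\Omega\equiv\Ad(\inv\cp\wt{\chart}^{-1}(\cdot),\cdot)$, whereas you pull back $\mu^{-1}$ directly through the translated charts $\chart_{h_j}$; your write-up just makes explicit the decomposition and gluing that the paper leaves implicit in ``decomposing $[r,r']$ if necessary''.
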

\begin{proof}
Decomposing $[r,r']$ if necessary, we can assume that $\im[\innt_r^\bullet\phi]$ is contained in the domain of a fixed chart $\wt{\chart}$. The claim then follows from Lemma \ref{oopxcxopcoxpopcx}.\ref{oopxcxopcoxpopcx2}, applied to $\Omega\equiv\Ad(\inv\cp\wt{\chart}^{-1}(\cdot),\cdot)$, $\dindu\equiv \dind$, and the $C^\dind$-curve $\gamma\equiv\wt{\chart}\cp\innt_r^\bullet\phi$.
\end{proof}
\noindent
We obtain that
\begin{lemma}
\label{klklllkjlaaa}
$G$ is {\rm p$\pkt$k}-continuous \deff $\EVE^k_{[0,1]}$ is $C^p$-continuous at zero.
\end{lemma}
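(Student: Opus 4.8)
The forward implication is immediate: if $G$ is p$\pkt$k-continuous, then by definition $\EVE^k_{[r,r']}$ is $C^p$-continuous (hence in particular $C^p$-continuous at zero) for \emph{every} $[r,r']\in\COMP$, so the case $[r,r']=[0,1]$ gives the claim. For the converse, the plan is to first upgrade ``$C^p$-continuity of $\EVE^k_{[0,1]}$ at zero'' to ``$C^p$-continuity of $\EVE^k_{[0,1]}$ everywhere'' by a left-translation trick built from the group identity \ref{kdskdsdkdslkds}, and then to transport everywhere-continuity to an arbitrary compact interval by affine reparametrization, exactly as in the sketch of the proof of Theorem \ref{F}.

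For the first step, fix $\phi_0\in\DIDE^k_{[0,1]}$ and define the affine map
\begin{align*}
	T_{\phi_0}\colon C^k([0,1],\mg)\rightarrow C^k([0,1],\mg),\qquad \psi\mapsto \Ad_{[\innt_0^\bullet\phi_0]^{-1}}(\psi-\phi_0).
\end{align*}
I claim that $T_{\phi_0}$ restricts to a $C^p$-continuous self-map of $\DIDE^k_{[0,1]}$ with $T_{\phi_0}(\phi_0)=0$, and that
\begin{align*}
	\EVE^k_{[0,1]}(\psi)=\LT_{\innt\phi_0}\big(\EVE^k_{[0,1]}(T_{\phi_0}(\psi))\big)\qquad\quad\forall\:\psi\in\DIDE^k_{[0,1]}.
\end{align*}
Indeed, evaluating \ref{kdskdsdkdslkds} at $t=1$ shows $T_{\phi_0}(\psi)\in\DIDE_{[0,1]}$ with $\innt T_{\phi_0}(\psi)=[\innt\phi_0]^{-1}[\innt\psi]$, which is precisely the displayed identity; membership in $C^k$ follows from Lemma \ref{Adlip} applied to $\mu\equiv\EV_{[0,1]}(\phi_0)\in C_*^{k+1}([0,1],G)$ (Corollary \ref{ofgogpfopg}) and $\psi-\phi_0\in C^k([0,1],\mg)$, so $T_{\phi_0}(\psi)\in\DIDE^k_{[0,1]}$. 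Finally, $C^p$-continuity of $T_{\phi_0}$ is the content of Lemma \ref{opopsopsdopds}: since $p\leq k$, choosing $\dind\llleq p$ ($\dind\llleq k$) it furnishes, for each $\pp\in\SEM$, some $\pp\leq\qq\in\SEM$ with $\ppp^\dindp(\Ad_{[\innt_0^\bullet\phi_0]^{-1}}(\chi))\leq\qqq^\dindp(\chi)$ for all $0\leq\dindp\leq\dind$ and $\chi\in C^k([0,1],\mg)$; taking $\chi=\psi-\psi'$ and the maximum over $\dindp\leq\dind$ gives $\ppp^\dind_\infty(T_{\phi_0}(\psi)-T_{\phi_0}(\psi'))\leq\qqq^\dind_\infty(\psi-\psi')$.

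Now $\LT_{\innt\phi_0}=\mult(\innt\phi_0,\cdot)$ is smooth, $T_{\phi_0}$ is $C^p$-continuous with $T_{\phi_0}(\phi_0)=0$, and by hypothesis $\EVE^k_{[0,1]}$ is $C^p$-continuous at $0$; hence the composite in the displayed factorization is $C^p$-continuous at $\phi_0$. As $\phi_0\in\DIDE^k_{[0,1]}$ was arbitrary, $\EVE^k_{[0,1]}$ is $C^p$-continuous. To transfer this to a general $[r,r']\in\COMP$, put $\varrho\colon[0,1]\rightarrow[r,r']$, $t\mapsto r+t\cdot|r'-r|$, and $\eta(\phi):=\dot\varrho\cdot\phi\cp\varrho=|r'-r|\cdot\phi\cp\varrho$. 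By \eqref{substi} one has $\eta(\Der(\mu))=\Der(\mu\cp\varrho)$, so $\eta(\DIDE^k_{[r,r']})\subseteq\DIDE^k_{[0,1]}$ and $\EVE^k_{[r,r']}=\EVE^k_{[0,1]}\cp\eta$; moreover $\eta$ is linear with $\eta(\phi)^{(m)}=|r'-r|^{m+1}\cdot\phi^{(m)}\cp\varrho$, whence $\ppp^\dind_\infty(\eta(\phi))\leq C\cdot\ppp^\dind_\infty(\phi)$ for a constant $C=C(\dind,r,r')$, so $\eta$ is $C^p$-continuous. Thus $\EVE^k_{[r,r']}$ is $C^p$-continuous for every $[r,r']\in\COMP$, i.e.\ $G$ is p$\pkt$k-continuous.

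The only real content is the middle paragraph: the main obstacle is to confirm that $T_{\phi_0}$ genuinely maps $\DIDE^k_{[0,1]}$ back into itself (so that $\EVE^k_{[0,1]}$ may be re-applied to $T_{\phi_0}(\psi)$) and that it is continuous already in the coarser $C^p$-topology rather than merely the $C^k$-topology. Both are supplied by the algebraic identity \ref{kdskdsdkdslkds} together with the uniform $\Ad$-estimate of Lemma \ref{opopsopsdopds} and the regularity statement of Lemma \ref{Adlip}; once these are in hand the argument is purely formal.
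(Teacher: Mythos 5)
Your proof is correct and follows essentially the same route as the paper's: the paper likewise reduces everything to the factorization $\EVE^k_{[0,1]}=\LT_{\innt\phi_0}\cp\EVE^k_{[0,1]}\cp\he T_{\phi_0}$ via \ref{kdskdsdkdslkds}, Lemma \ref{Adlip}, and Lemma \ref{opopsopsdopds}, together with the affine reparametrization $\eta$ from \ref{subst} to pass between $[0,1]$ and $[r,r']$. The only (immaterial) difference is the order of the two steps -- you upgrade to continuity everywhere on $[0,1]$ first and then reparametrize, whereas the paper transfers continuity at zero to $[r,r']$ first; your write-up simply makes explicit the details the paper leaves to the cited lemmas.
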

\begin{proof}
The one direction is evident; and the other direction follows from Lemma \ref{Adlip}, Lemma \ref{opopsopsdopds}, and 
\ref{kdskdsdkdslkds} once we have shown 
that $\EVE_{[r,r']}^k$ is $C^p$-continuous at zero if $\EVE_{[r,r']}^k$ is $C^p$-continuous at zero. 
For this, we apply \ref{subst} to 
\begin{align*}
	\varrho\colon [0,1]\rightarrow [r,r'],\qquad t\mapsto r+ t\cdot |r'-r|; 
\end{align*}
and conclude that $\EVE_{[r,r']}^k=\EVE_{[0,1]}^k\cp \:\eta$ holds, for the $C^p$-continuous map (use \ref{chainrule})
 	\begin{align*}
		\textstyle  
		\eta\colon C^k([r,r'],\mg)\rightarrow C^k([0,1],\mg),\qquad \phi\mapsto \dot\varrho\cdot \rcK{\phi\cp\varrho}\equiv |r'-r|\cdot \rcK{\phi\cp\varrho}.
	\end{align*}
	From this, the claim is clear.
\end{proof}
\subsection{Supplementary Material}
\label{poigfgfoogfigfoigfoioigf}
In this subsection, we provide the proofs of the supplementary statements made but not verified in Sect.\ \ref{pofpofdpofdpofdpfdfd}. First,
\begin{lemma}
\label{ssssss}
Suppose that $G$ is abelian; and let $k\in \NN\sqcup\{\lip, \infty\}$ be fixed. Then, $G$ is {\rm k}-continuous \deff $G$ is $\mathrm{0\pkt k}$-continuous.
 \end{lemma}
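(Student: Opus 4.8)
The implication that $0\pkt k$-continuity entails $k$-continuity needs no abelianness: on $C^k([r,r'],\mg)$ the $C^0$-topology is coarser than the $C^k$-topology, so continuity with respect to the former forces continuity with respect to the latter. The plan is therefore to prove the reverse implication, $k$-continuity $\Rightarrow$ $0\pkt k$-continuity, for abelian $G$. By Lemma \ref{klklllkjlaaa} it suffices to compare the two notions of continuity \emph{at zero} for the single map $\EVE^k_{[0,1]}$; i.e.\ to upgrade $C^k$-continuity at zero to $C^0$-continuity at zero.

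First I would record the simplifications produced by abelianness. Since $\conj_g=\id$ for all $g$, we have $\Ad_g=\id_\mg$; hence \eqref{alsalsalsa} (equivalently item \ref{kdsasaasassaas}) collapses to $\innt_r^t\phi\cdot\innt_r^t\psi=\innt_r^t(\phi+\psi)$, and item \ref{kdskdsdkdslkds} to $[\innt_r^t\phi]^{-1}[\innt_r^t\psi]=\innt_r^t(\psi-\phi)$. In particular $\DIDE^k_{[0,1]}$ is a group under pointwise addition and $\EVE^k_{[0,1]}\colon(\DIDE^k_{[0,1]},+)\to G$ is a homomorphism, so that every comparison of two product integrals reduces to the single product integral of their difference. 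This is exactly why, for abelian $G$, continuity of $\EVE^k_{[0,1]}$ is genuinely governed by its behaviour near the zero curve.

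The heart of the argument is then to show that for each $\pp\in\SEM$ there is some $\qq\in\SEM$ so that $(\pp\cp\chart)(\innt\phi)$ is controlled by $\qqq_\infty(\phi)$ \emph{alone}, given only that it is controlled (by hypothesis) through the finitely many seminorms $\qqq^\dind_\infty(\phi)$, $\dind\llleq k$. My plan is to produce, for a given $C^0$-small integrable $\phi$, an integrable curve with the same product integral but small $C^k$-seminorms, to which the hypothesis then applies. Concretely I would subdivide $[0,1]$, use the concatenation identity \ref{pogfpogf} together with the reparametrisation identity \eqref{substi}/\ref{subst} to rewrite $\innt\phi$ as a product of integrals over short subintervals reparametrised back to $[0,1]$, and exploit additivity (trivial adjoint) to recombine the pieces without the adjoint distortion that obstructs the non-abelian case. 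The quantity that is robustly $C^0$-controlled along the way is the completion-valued Riemann integral, via $\comp{\qq}\big(\int\phi(s)\,\dd s\big)\leq\qqq_\infty(\phi)$ from \eqref{absch2}, and it is this that I expect to feed the final estimate.

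The main obstacle I anticipate is precisely the control of the higher-order seminorms: a reparametrisation $\dot\varrho\cdot\phi\cp\varrho$ preserves the product integral but its $m$-th derivative unavoidably contains $\phi^{(m)}$, so no reparametrisation can, by itself, convert $C^0$-smallness of $\phi$ into $C^k$-smallness. Overcoming this is where abelianness must do the real work: because $\EVE^k_{[0,1]}$ is a homomorphism whose directional derivative at zero is the Riemann integral (as reflected by Proposition \ref{pcvcvcvvccvopodfopdfopdfoppfdo}), the product integral should depend on $\phi$ only through $\int\phi\in\comp{\mg}$, and it is this factorisation through the $C^0$-continuous map $\phi\mapsto\int\phi$ — rather than any pointwise bound on the derivatives of $\phi$ — that I expect to deliver $C^0$-continuity. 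Making the factorisation and the attendant estimate precise, while staying inside the domain $\DIDE^k_{[0,1]}$ of integrable curves (so that the constructed comparison curve is again a genuine logarithmic derivative), is the delicate point of the proof.
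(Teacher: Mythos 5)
Your first plan --- subdivide $[0,1]$, use \ref{pogfpogf} and \ref{subst} to move each piece, and recombine by additivity of the product integral in the abelian case --- is precisely the paper's proof, and it works; the problem is that you then talk yourself out of it. The paper takes $\phi\in\DIDE^k_{[0,1]}$ with $\qqq_\infty(\phi)\leq 1$, picks $n\geq \qqq_\infty^\dind(\phi)$ (allowed to depend on $\phi$), translates the $n$ restrictions $\phi|_{[p/n,(p+1)/n]}$ to $[0,1/n]$ via $\varrho_p\colon t\mapsto p/n+t$, sums the resulting curves $\psi_p$ (trivial adjoint), and pulls the sum back to $[0,1]$ via $\varrho\colon t\mapsto t/n$. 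The comparison curve $\psi=\tfrac{1}{n}\cdot(\psi_0+\dots+\psi_{n-1})\cp\varrho$ has the same product integral as $\phi$, and its $m$-th derivative carries a factor $n^{-1-m}$ against a sum of $n$ terms, so $\qqq\big(\psi^{(m)}\big)\leq n^{1-m}\cdot \qqq_\infty^\dind(\phi)\leq 1$ for $1\leq m\leq \dind$, while $\qqq_\infty(\psi)\leq \qqq_\infty(\phi)\leq 1$. Your objection that no reparametrisation can convert $C^0$-smallness into $C^k$-smallness misses two points: one does not need the $C^k$-seminorms of the comparison curve to be \emph{small}, only $\leq 1$ (which is all that $C^k$-continuity at zero requires), and the comparison curve is not a reparametrisation of $\phi$ but an average of $n$ translates composed with a contraction --- it is exactly this averaging that beats the derivatives.

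The route you pivot to instead has a genuine gap. The claim that $\innt\phi$ factors through $\int\phi(s)\:\dd s\in\comp{\mg}$ is not available at this level of generality: it holds when $G$ is abelian with a $C^1$ exponential map (Remark \ref{exponentialmap}.\ref{exponentialmap3}), but the lemma assumes neither an exponential map nor any semiregularity, and $\int\phi(s)\:\dd s$ need not even lie in $\mg$. Proposition \ref{pcvcvcvvccvopodfopdfopdfoppfdo} only identifies the directional derivative of $\EVE^k_{[0,1]}$ at zero; it does not yield a factorisation. And even if a factorisation $\innt\phi=F\big(\int\phi(s)\:\dd s\big)$ were established, $C^0$-continuity of $\innt$ at zero would require continuity of $F$ at $0$ on the image of the Riemann integral, which is essentially the statement to be proved. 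Return to your first plan and carry out the averaging estimate; nothing further is needed.
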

\begin{proof}
	The one directions is evident. For the other direction, we suppose that $G$ is k-continuous. Then, $\pp\in \SEM$ given, there exist $\qq\in\SEM$ and $\dind\llleq k$, such that 
\begin{align}
\label{podopfdpofd}
	\textstyle\qqq_\infty^\dind(\psi)\leq 1\quad\:\:\text{for}\quad\:\: \psi\in \DIDE_{[0,1]}^k\qquad\quad\Longrightarrow\qquad\quad (\pp\cp\chart)(\innt\psi)\leq 1.
\end{align}	
Then, for $\phi\in \DIDE_{[0,1]}^k$ with $\qqq_\infty(\phi)\leq 1$, we choose $n\geq 1$ such large that $\qqq_\infty^\dind(\phi)\leq n$ holds; and define 
\begin{align*}
	\psi_p:= \phi\cp \varrho_p\qquad\text{for}\qquad \varrho_p\colon [0,1/n]\ni t\mapsto p/n+t\in [p/n,(p+1)/n]\qquad\quad\:\:\forall\: p=0,\dots,n-1.  
\end{align*}
By \ref{subst}, we have $\innt \phi|_{[p/n,(p+1)/n]}=\innt \psi_p$ for $p=0,\dots n-1$; and obtain from \ref{kdsasaasassaas}, \ref{pogfpogf}, and \ref{subst} that\footnote{It is obvious from the definitions that $\Ad_g=\id_\mg$ holds for each $g\in G$ if $G$ is abelian.}
\begin{align}
\label{fdpofopfdpofdpopofdassa}
	\textstyle\innt \phi=\innt\psi_{n-1}\cdot {\dots}\cdot \innt\psi_{0}=\innt_0^{1/n} \psi_{n-1}+{\dots}+\psi_{0}=\innt_0^1 \underbrace{1/n\cdot (\psi_0+{\dots}+\psi_{n-1})\cp \varrho}_{\psi\in \DIDE_{[0,1]}^k}
\end{align}
holds, for $\varrho\colon [0,1]\ni t\mapsto t/n \in [0,1/n]$. Then, \ref{chainrule} gives $\qq_\infty^\dind(\psi)\leq 1$; so that \eqref{podopfdpofd} provides us with   
\begin{align*}
	\textstyle(\pp\cp\chart)\big(\innt \phi\big)\stackrel{\eqref{fdpofopfdpofdpopofdassa}}{=}(\pp\cp\chart)\big(\innt \psi\big)\leq 1. 
\end{align*}
The rest is clear from Lemma \ref{klklllkjlaaa}. 
\end{proof}
Second, let us say that $G$ is $\rm L^1$-continuous \defff $\EVE_{[r,r']}^0$ is continuous w.r.t.\ the seminorms \eqref{ofdpofdpofdpofd} for each $[r,r']\in \COMP$. Then,  
\begin{lemma}
\label{sdsddsdsdsdsds}
$G$ is {\rm 0}-continuous \deff $G$ is $\rm L^1$-continuous.
\end{lemma}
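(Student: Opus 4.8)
The plan is to prove the two implications separately; the implication ``$L^1$-continuous $\Rightarrow$ $0$-continuous'' is immediate, while its converse is where \eqref{substi} enters through a reparametrisation argument. For the first implication, observe that for every $[r,r']\in\COMP$, $\pp\in\SEM$ and $\phi\in C^0([r,r'],\mg)$ one has $\ppp_{\int}(\phi)=\int\ppp(\phi(s))\:\dd s\leq |r'-r|\cdot\ppp_\infty(\phi)$; hence the $L^1$-topology is coarser than the $C^0$-topology on each $C^0([r,r'],\mg)$, and $L^1$-continuity of $\EVE^0_{[r,r']}$ forces its $C^0$-continuity (replacing the $\delta$ provided by $L^1$-continuity by $\delta/|r'-r|$). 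Thus $G$ is $0$-continuous.

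For the converse, I would assume $G$ is $0$-continuous and first reduce $L^1$-continuity of $\EVE^0_{[r,r']}$ to $L^1$-continuity at zero, following the pattern of Lemma \ref{klklllkjlaaa}: property \ref{kdskdsdkdslkds} gives $[\innt_r^t\phi]^{-1}[\innt_r^t\psi]=\innt_r^t\Ad_{[\innt_r^\bullet\phi]^{-1}}(\psi-\phi)$, while the $\dind=0$ instance of Lemma \ref{opopsopsdopds} yields the pointwise bound $\ppp(\Ad_{[\innt_r^s\phi]^{-1}}(\psi(s)-\phi(s)))\leq\qqq(\psi(s)-\phi(s))$, which integrates to $\ppp_{\int}(\Ad_{[\innt_r^\bullet\phi]^{-1}}(\psi-\phi))\leq\qqq_{\int}(\psi-\phi)$; together with Lemma \ref{Adlip} and continuity of the group multiplication, $L^1$-continuity at zero then propagates to every $\phi$. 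To prove $L^1$-continuity at zero, fix $\pp\in\SEM$ and $\epsilon>0$, and let $\qq\in\SEM$, $\delta>0$ be the data furnished by $C^0$-continuity of $\EVE^0_{[0,1]}$ at zero, so that $\qqq_\infty(\tilde\phi)<\delta$ entails $\innt\tilde\phi\in\U$ and $(\pp\cp\chart)(\innt\tilde\phi)<\epsilon$. Given $\phi\in\DIDE^0_{[r,r']}$ with $\qqq_{\int}(\phi)<\delta$, I would pass to the arc-length reparametrisation of $\phi$ relative to $\qqq$: pick $\eta>0$ with $\eta|r'-r|<\delta-\qqq_{\int}(\phi)$, set $\sigma(s):=\int_r^s(\qqq(\phi(u))+\eta)\:\dd u$, a $C^1$-diffeomorphism of $[r,r']$ onto $[0,M']$ with $M'=\qqq_{\int}(\phi)+\eta|r'-r|<\delta$, and let $\beta\colon[0,1]\rightarrow[r,r'],\ t\mapsto\sigma^{-1}(M't)$. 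Then $\tilde\phi:=\dot\beta\cdot\phi\cp\beta\in\DIDE^0_{[0,1]}$ by \eqref{substi}, and since $\dot\beta(t)=M'/(\qqq(\phi(\beta(t)))+\eta)>0$ one computes $\qqq(\tilde\phi(t))=\dot\beta(t)\cdot\qqq(\phi(\beta(t)))=M'\cdot\qqq(\phi(\beta(t)))/(\qqq(\phi(\beta(t)))+\eta)\leq M'<\delta$, whence $\qqq_\infty(\tilde\phi)<\delta$. As $\beta$ carries the endpoints of $[0,1]$ onto those of $[r,r']$, property \ref{subst} gives $\innt\phi=\innt\tilde\phi$, so $(\pp\cp\chart)(\innt\phi)=(\pp\cp\chart)(\innt\tilde\phi)<\epsilon$, as required.

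The conceptual obstacle is that $L^1$-smallness of $\phi$ does not entail sup-smallness — a tall spike has small integral but large supremum — so a naive comparison of seminorms cannot work in this direction. The resolution is the reparametrisation $\beta$, which trades the $L^1$-mass of $\phi$ for a uniformly small sup-norm of $\tilde\phi$ while, crucially, leaving the product integral $\innt\phi$ unchanged; and it is decisive that one may use the single seminorm $\qq$ already selected by $0$-continuity, so that no simultaneous control over all seminorms is needed. The only technical delicacy is that the pure arc-length reparametrisation is singular wherever $\qqq\cp\phi$ vanishes; this is exactly what the regulariser $\eta>0$ circumvents, keeping $\dot\sigma\geq\eta>0$ so that $\sigma$ is a genuine $C^1$-diffeomorphism and the substitution identities \eqref{substi} and \ref{subst} remain applicable.
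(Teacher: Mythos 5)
Your proof is correct and follows essentially the same route as the paper: the paper also reduces to the regularised arc-length reparametrisation, defining $\lambda(t)=\frac{t-r}{r'-r}(2-\qqq_{\int}(\phi))+\int_0^t\qqq(\phi(s))\,\dd s$ onto $[0,2]$ and inverting it, where the affine padding term plays exactly the role of your $\eta$, so that $\dot\varrho\leq\qqq(\phi\cp\varrho)^{-1}$ yields $\qqq_\infty(\psi)\leq 1$ while \ref{subst} preserves $\innt\phi$. Your version merely targets $[0,1]$ with an explicit regulariser and spells out the propagation from continuity at zero, which the paper leaves implicit.
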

\begin{proof}
The one direction is evident. Let thus $G$ be 0-continuous, fix $\pp\in \SEM$; and choose $\qq\in \SEM$ with
\begin{align}
\label{oapoapoapoapoapoapoapa}
	\textstyle\qqq_\infty(\psi)\leq 1\quad\:\:\text{for}\quad\:\: \psi\in \DIDE_{[0,2]}^0\qquad\quad\Longrightarrow\qquad\quad (\pp\cp\chart)(\innt \psi)\leq 1.
\end{align} 
Then, for $\phi\in \DIDE_{[r,r']}^0$ with $\qqq_{\int}(\phi)\leq 1$, we define
\begin{align*}
	\textstyle \lambda\colon [r,r']\rightarrow [0,2],\qquad t\mapsto  \frac{t-r}{r'-r}\cdot (2-\qqq_{\int}(\phi)) + \int_0^t \qqq(\phi(s))\:\dd s; 
\end{align*}
and consider the $C^1$-diffeomorphism $\varrho:=\lambda^{-1}\colon [0,2]\rightarrow [r,r']$. 
Then, $\innt\phi=\innt \psi$ holds for $\psi:= \dot\varrho\cdot \rcK{\phi\cp\varrho}\in \DIDE^0_{[0,2]}$ by \ref{subst}, with 
\begin{align*}
	\dot\varrho=(\dot\lambda\cp\varrho)^{-1}=(2-\qqq_{\int}(\phi))/|r'-r|+\qqq(\phi\cp\varrho))^{-1}\leq \qqq(\phi\cp\varrho)^{-1}.
\end{align*}
We thus have $\qqq_\infty(\psi)\leq 1$; so that the claim is clear from \eqref{oapoapoapoapoapoapoapa}.
\end{proof}
Finally, let us collect some properties of the exponential map.
\begin{remark}
\label{exponentialmap}
\noindent
\vspace{-5pt}
\begingroup
\setlength{\leftmargini}{15pt}
\begin{enumerate}
\item
\label{exponentialmap1}
Suppose we have $\phi_X|_{[0,1]}\in \DIDE_{[0,1]}$ for some $X\in \mg$. Then, Lemma \ref{sddssdsdsd} (and \ref{subst}) shows that $\phi_X|_{[0,n]}\in \DIDE_{[0,n]}$ holds for each $n \geq 1$; and, \ref{subst} applied to $\varrho\colon [0,1]\rightarrow [0,s\cdot n],\quad t\mapsto s\cdot n\cdot t$ for $0<s\leq 1$, gives
\begin{align*}
	\textstyle\innt_0^{s\cdot n} \phi_{X}\stackrel{\ref{subst}}{=}\innt_0^1 \phi_{s\cdot n\cdot X}\equiv\exp(s\cdot n\cdot X)
	\qquad\quad\forall\: 0<s\leq 1.
\end{align*} 
We thus have $\RR_{\geq 0}\cdot X\subseteq \dom[\exp]$ with
\begin{align}
\label{lkdsklsdkdl}
	\textstyle\exp(t\cdot X)=\innt_0^t \phi_{X}
	\qquad\quad\forall\: t\geq 0.
\end{align}  
It follows that $\RR\ni t\mapsto \exp(t\cdot X)$ is a smooth Lie group homomorphism, cf.\ Appendix \ref{ceinseigaaaa}.
\item
\label{exponentialmap2}
Suppose that $G$ is $C^\infty$-semiregular; and that $\EVE_{[0,1]}^\infty$ is of class $C^p$ w.r.t.\ the $C^\infty$-topology, for some $p\in \NN\sqcup\{\infty\}$.  Then, $\exp$ is of class $C^p$, because 
\begin{align*}
	\ppp_\infty^\dind(\phi_X)=\ppp(X) \qquad\quad\forall\: \pp\in \SEM,\:\:\dind\in \NN,\:\:X\in \mg
\end{align*} 
shows that $\mg\ni X\mapsto \phi_X\in C^\infty([0,1],\mg)$ is smooth.
\item
\label{exponentialmap3}
If $G$ is abelian with $\exp\colon \mg\rightarrow G$ of class $C^1$, then we have, cf.\ Appendix \ref{ceinseigaaaaa}
	\begin{align*}
		\textstyle\innt \phi=\exp(\int \phi(s)\: \dd s)\quad\:\:\:\text{for each}\quad\:\:\: \phi\in C^0([0,1],\mg)\quad\:\:\:\text{with}\quad\:\:\:\int_0^t \phi(s)\: \dd s\in \mg\quad\:\forall\: t\in [0,1];   
	\end{align*}		
	which is obviously continuous w.r.t.\ the seminorms $\ppp_\infty,\ppp_{\int}$ for $\pp\in \SEM$.\hspace*{\fill}$\ddagger$
\end{enumerate}
\endgroup
\end{remark}

\section{Auxiliary Results}
\label{dsjshdhkjshkhjsd}
In this section, we prove further continuity statements for the evolution map; and discuss piecewise integrable curves. 
\subsection{Continuity of the Evolution Map}
\label{CPOF}
\begin{lemma}
\label{posdpospodspoaaaa}
Suppose that $G$ is \rm{p$\pkt$k}-continuous; and let $[r,r']\in \COMP$ be fixed. Then, for each $\pp\in \SEM$, there exist $\pp\leq \qq\in \SEM$  and $\dind\llleq p$, such that
\begin{align*}
\textstyle\qqq^\dind_\infty(\phi)\leq 1\quad\:\:\text{for}\quad\:\: \phi\in \DIDE^k_{[r,r']}\qquad\quad\:\:\Longrightarrow\qquad\quad\:\:
\textstyle(\pp\cp\chart)(\innt_r^{\bullet}\phi)\leq 1. 
\end{align*}
\end{lemma}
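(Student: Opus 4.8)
The plan is to reduce every partial product integral $\innt_r^t\phi$ to a single evolution on the fixed interval $[0,1]$, so that one continuity estimate does the job uniformly in $t$. Since $G$ is {\rm p$\pkt$k}-continuous, $\EVE^k_{[0,1]}$ is $C^p$-continuous, in particular continuous at $0\in\DIDE^k_{[0,1]}$, where $\EVE^k_{[0,1]}(0)=e$. For the given $\pp\in\SEM$ the set $N:=\{g\in\U\mid (\pp\cp\chart)(g)<1\}$ is an open neighbourhood of $e$, as $\chart(e)=0$ and $\pp\cp\chart$ is continuous on the open set $\U$. Because $\SEM$ is filtrating and finitely many orders $\dind$ may be merged by passing to their maximum (which still satisfies $\dind\llleq p$), continuity at zero yields a single $\pp\leq\mm\in\SEM$, a single $\dind\llleq p$, and some $\epsilon\in(0,1]$ such that
\[
\mmm^\dind_\infty(\psi)\leq\epsilon,\ \psi\in\DIDE^k_{[0,1]}\quad\Longrightarrow\quad (\pp\cp\chart)\big(\innt_0^1\psi\big)\leq 1 .
\]

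Next I would carry out the reduction. For $t\in[r,r']$ I set $\varrho_t\colon[0,1]\to[r,t]$, $s\mapsto r+s\cdot(t-r)$, and $\psi_t:=\dot\varrho_t\cdot\phi\cp\varrho_t$. By the substitution property \ref{subst} applied on $[0,1]$ (note $\innt_r^r\phi=e$), and since $\phi|_{[r,t]}\in\DIDE^k_{[r,t]}$ by the restriction identity in \eqref{fgfggf}, one gets $\psi_t\in\DIDE^k_{[0,1]}$ together with $\innt_0^1\psi_t=\innt_r^t\phi$. A direct computation gives $\psi_t^{(m)}(s)=(t-r)^{m+1}\cdot\phi^{(m)}(r+s\cdot(t-r))$, so that, uniformly in $t\in[r,r']$,
\[
\mmm^\dind_\infty(\psi_t)\leq M\cdot\mmm^\dind_\infty(\phi)\qquad\text{with}\qquad M:=\max\{1,|r'-r|^{\dind+1}\},
\]
since $|t-r|^{m+1}\leq M$ for all $0\leq m\leq\dind$.

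Finally I would combine the two displays: choosing $\qq\in\SEM$ with $\qq\geq (M/\epsilon)\cdot\mm$ (possible as $M/\epsilon\geq 1$, which also forces $\qq\geq\mm\geq\pp$), the hypothesis $\qqq^\dind_\infty(\phi)\leq 1$ gives $\mmm^\dind_\infty(\phi)\leq\epsilon/M$, whence $\mmm^\dind_\infty(\psi_t)\leq\epsilon$ and therefore $(\pp\cp\chart)(\innt_r^t\phi)=(\pp\cp\chart)(\innt_0^1\psi_t)\leq 1$ for every $t\in[r,r']$. This is precisely the asserted $(\pp\cp\chart)(\innt_r^\bullet\phi)\leq 1$.

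The main obstacle is the uniformity in $t$: the hypothesis only supplies $C^p$-continuity of each $\EVE^k_{[a,b]}$ at zero, and a naive application to the intervals $[r,t]$ would produce a threshold depending on $t$ that could degenerate as $t\to r$ or $t\to r'$. The device that removes this difficulty is the substitution identity \ref{subst}, which rewrites all partial integrals $\innt_r^t\phi$ as evolutions of the rescaled curves $\psi_t$ on one and the same interval $[0,1]$; the only price is the scalar factor $(t-r)^{m+1}$, which is harmless because it is bounded by the constant $M$ depending solely on $|r'-r|$ and $\dind$.
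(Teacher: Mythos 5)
Your proof is correct and follows essentially the same route as the paper: both use the substitution identity \ref{subst} to rewrite every partial integral $\innt_r^t\phi$ as an evolution of a rescaled curve on one fixed interval, so that a single continuity-at-zero estimate applies uniformly in $t$. The only cosmetic difference is that the paper contracts $[r,\tau]$ onto $[r,r']$ (so the reparametrization has slope $c\leq 1$ and no constant is needed), whereas you rescale onto $[0,1]$ and absorb the resulting factor $M$ into the choice of $\qq$.
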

\begin{proof}
By continuity, there exist $\pp\leq \qq\in \SEM$ and $\dind\llleq p$, such that  
\begin{align}
\label{sdsjdahhsdhjdaas}
\textstyle\qqq^\dind_\infty(\psi)\leq 1\quad\:\:\text{for}\quad\:\: \psi\in \DIDE^k_{[r,r']}\qquad\quad\:\:\Longrightarrow\qquad\quad\:\:
\textstyle(\pp\cp\chart)(\innt_r^{\bullet}\psi)\leq 1. 
\end{align}
Let now $\phi\in \DIDE_{[r,r']}^k$ with $\qqq_\infty^\dind(\phi)\leq 1$, and $r<\tau\leq r'$ be fixed.  
We define $\psi:=\phi|_{[r,\tau]}$ as well as 
\begin{align*}
	\textstyle\varrho\colon [r,r']\rightarrow [r,\tau],\qquad t\mapsto r+  |t-r|\cdot c \qquad\qquad\:\:\text{for}\qquad\qquad\:\: c:=\frac{\tau-r}{r'-r}\leq 1.
	\end{align*}
Then, $\innt_r^{\tau}\phi\equiv \innt \psi=\innt \dot\varrho\cdot \rcK{\psi\cp\varrho}$ holds by \ref{subst}, with $\dot\varrho\cdot \rcK{\psi\cp\varrho}\in \DIDE^k_{[r,r']}$ as well as 
\vspace{-4pt}
\begin{align*}
	\textstyle
	\qqq^\dind_\infty(\dot\varrho\cdot \rcK{\psi\cp\varrho})=\qqq^\dind_\infty(c\cdot \rcK{\psi\cp\varrho})\stackrel{\ref{chainrule}}{\leq} \qqq^\dind_\infty(\phi)\leq 1.
\end{align*}
We thus obtain from \eqref{sdsjdahhsdhjdaas} that
\begin{align*}
	\textstyle(\pp\cp\chart)(\innt_r^{\tau}\phi)= (\pp\cp\chart)(\innt\psi)= (\pp\cp\chart)(\innt \dot\varrho\cdot \psi\cp \varrho)\leq 1
\end{align*}
holds, from which the claim is clear.
\end{proof}
We inductively obtain
\begin{lemma}
\label{jlkfdsjlkfdsjklfdsjklfsdjkl}
Suppose that $G$ is {\rm k}-continuous; and let $[r,r']\in \COMP$ be fixed. Then, for each $\pp\in \SEM$ 
and $\dindu\llleq k$, there exist $\pp\leq \qq\in \SEM$ and $\dind\llleq k$, such that
\begin{align*}
	\textstyle\qqq_\infty^\dind(\phi)\leq 1\quad\:\:\text{for}\quad\:\: \phi\in \DIDE^k_{[r,r']}\qquad\quad\:\:\Longrightarrow\qquad\quad\:\:
		\pp_\infty^\dindu\big(\chart\cp\innt_r^\bullet\phi\big)\leq 1.
	\end{align*}
\end{lemma}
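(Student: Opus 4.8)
The plan is to induct on $\dindu$, exploiting the fact that the product integral solves the defining differential equation of $\Der$, so that the top-order derivative of the coordinate curve is controlled by lower-order data. Write $\mu:=\innt_r^\bullet\phi=\EV_{[r,r']}(\phi)$, so that $\Der(\mu)=\phi$ and $\mu(r)=e$, and set $\gamma:=\chart\cp\mu$. Exactly as in the proof of Lemma \ref{evk}, on the part of $[r,r']$ where $\mu$ has image in $\U$ the curve $\gamma$ satisfies $\dot\gamma=\dermapinvdiff(\gamma,\phi)$, where $\dermapinvdiff$ is the smooth map \eqref{opopopop2}, which is linear in its second argument. The base case $\dindu=0$ is precisely Lemma \ref{posdpospodspoaaaa} applied with $p\equiv k$ (recall that $k$-continuity means $k\pkt k$-continuity): it yields $\pp_\infty^0(\gamma)=(\pp\cp\chart)(\innt_r^\bullet\phi)\leq 1$ under a hypothesis $\qqq_\infty^\dind(\phi)\leq 1$ with $\dind\llleq k$, and in particular keeps $\mu$ inside the chart domain.

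For the inductive step I would assume the statement for $\dindu-1$ and every seminorm, fix $\pp\in\SEM$, and note that $\gamma^{(\dindu)}=\big(\dermapinvdiff(\gamma,\phi)\big)^{(\dindu-1)}$. Applying Lemma \ref{oopxcxopcoxpopcx}.\ref{oopxcxopcoxpopcx1} to $\Omega\equiv\dermapinvdiff$ at order $\dindu-1$ then provides $\mm\in\SEM$ and a seminorm $\ww\in\SEM$ such that $\mm_\infty^{\dindu-1}(\gamma)\leq 1$ forces $\pp(\gamma^{(\dindu)}(t))\leq\www_\infty^{\dindu-1}(\phi)$ for all $t$. Enlarging $\mm$ if necessary (using that $\SEM$ is filtrating) I arrange $\OB_{\mm,1}\subseteq\V$, so that $\mm_\infty^{\dindu-1}(\gamma)\leq 1$ also guarantees $\gamma([r,r'])\subseteq\V$, keeping $\mu$ inside $\U$ and the differential equation meaningful. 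The three conditions I must now secure are: $\mm_\infty^{\dindu-1}(\gamma)\leq 1$, the lower-order bound $\pp_\infty^{\dindu-1}(\gamma)\leq 1$, and $\www_\infty^{\dindu-1}(\phi)\leq 1$. The first two I obtain by invoking the inductive hypothesis once for the seminorm $\mm$ (giving some $\vv\in\SEM$ and $\dind_1\llleq k$) and once for $\pp$ (giving some $\oo\in\SEM$ and $\dind_2\llleq k$); the third I obtain by carrying $\ww$ at order $\dindu-1$.

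Finally, I would choose $\qq\in\SEM$ dominating $\pp,\vv,\ww,\oo$ and $\dind\llleq k$ dominating $\dind_1,\dind_2,\dindu-1$ (possible since $\dindu\llleq k$ entails $\dindu-1\llleq k$). By monotonicity of the seminorms in both the underlying seminorm and the differentiation order, $\qqq_\infty^\dind(\phi)\leq 1$ then simultaneously yields $\vvv_\infty^{\dind_1}(\phi)\leq 1$, $\ooo_\infty^{\dind_2}(\phi)\leq 1$, and $\www_\infty^{\dindu-1}(\phi)\leq 1$; hence $\pp_\infty^{\dindu-1}(\gamma)\leq 1$ (lower orders) together with $\pp(\gamma^{(\dindu)}(t))\leq 1$ (top order), so that $\pp_\infty^\dindu(\gamma)\leq 1$ and the induction closes. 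The genuinely delicate point throughout is the domain issue: one must know that $\gamma=\chart\cp\mu$ is defined on all of $[r,r']$ before differentiating it $\dindu$ times, and this is exactly what forces the choice $\OB_{\mm,1}\subseteq\V$ together with the order-$0$ estimate from Lemma \ref{posdpospodspoaaaa}; this is the step I would be most careful to justify cleanly.
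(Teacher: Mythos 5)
Your proposal is correct and follows essentially the same route as the paper: induction on $\dindu$ with base case Lemma \ref{posdpospodspoaaaa}, control of $\gamma^{(\dindu)}=\big(\dermapinvdiff(\gamma,\phi)\big)^{(\dindu-1)}$ via the multilinear estimates for the partial derivatives of $\dermapinvdiff$, and an application of the inductive hypothesis to bound the lower-order derivatives of $\gamma$ before dominating all the resulting seminorms and orders at the end. The only cosmetic difference is that you invoke the prepackaged Lemma \ref{oopxcxopcoxpopcx}.\ref{oopxcxopcoxpopcx1}, whereas the paper unpacks the same computation directly via Corollary \ref{fddfd} and Lemma \ref{kldskldsksdklsdl}; your attention to the domain issue ($\OB_{\mm,1}\subseteq\V$ so that $\gamma$ stays in the chart) matches the paper's requirement that $\bg$ take values in $C_*^{k+1}([r,r'],\V)$.
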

Confer \cite{HGGG} for the case that $G$ is $C^k$-semiregular .
\begin{proof}
By Lemma \ref{posdpospodspoaaaa}, we can assume that the claim is proven for some  $0\leq \dindu< k$. In particular, there exist $\mm\in \SEM$ and $\dindo\llleq k$, such that
\begin{align*}
	\textstyle\bg:=\chart\cp\EV_{[r,r']}^k\colon \{\phi\in \DIDE_{[r,r']}^k\:|\: \mmm_\infty^\dindo(\phi)\leq 1\}\rightarrow C_*^{k+1}([r,r'],\V),\qquad \phi\mapsto \chart\cp\innt_r^\bullet \phi
\end{align*}
is defined. Let thus $\phi\in \DIDE_{[r,r']}^k$ with $\mmm_\infty^\dindo(\phi)\leq 1$ be given. Then,
\begingroup
\setlength{\leftmargini}{12pt}
\begin{itemize}
\item
We have $\bg(\phi)^{(1)}=\dermapinvdiff(\bg(\phi),\phi)$, for $\dermapinvdiff$ defined by \eqref{opopopop2}; so that      
 Corollary \ref{fddfd} shows that $\bg(\phi)^{(\dindu+1)}=\sum_{i=1}^d\alpha_i(\phi)$ holds, with
\begin{align*}
	\alpha_i\colon \phi\mapsto([\partial_1]^{m_i}\dermapinvdiff)\big(\bg(\phi),\bg(\phi)^{(z[i]_{1})},\dots,\bg(\phi)^{(z[i]_{m_i})},\phi^{(q_i)}\big)
\end{align*}
for certain $0\leq z[i]_{1},\dots,z[i]_{m_i},q_i  \leq \dindu$ and $m_i\geq 1$, for $i=1,\dots ,d$.
\item
For $\pp\in\SEM$ fixed,  
Lemma \ref{kldskldsksdklsdl} provides us with an open neighbourhood $V\subseteq \V$ of $0$, as well as $\ww\in \SEM$, such that
\begin{align}
\label{opopsdopds}
\begin{split}
	(\pp\cp[\partial_1]^{m_i}\dermapinvdiff)\big(x,\bg(\phi)^{(z[i]_{1})},&\dots,\bg(\phi)^{(z[i]_{m_i})},\phi^{(q_i)}\big)\\
	&\leq \ww\big(\bg(\phi)^{(z[i]_1)}\big)\cdot{\dots}\cdot \ww\big(\bg(\phi)^{(z[i]_{m_i})}\big)\cdot \www\big(\phi^{(q_i)}\big)\\
	&\leq  \big[\ww_\infty^\dindu(\bg(\phi))\big]^{m_i}\cdot \www_\infty^\dindu(\phi)
	\end{split}
\end{align}
holds, for each $x\in V$ and $i=1,\dots,d$.
\end{itemize}
\endgroup
\noindent
 We choose $V\lleq\vv\in \SEM$ with $d\cdot \ww,\pp,\mm\leq\vv$; and apply the induction hypotheses in order to fix $\vv\leq \qq\in \SEM$ and $\dindo\llleq \dind\llleq k$, such that
\begin{align*}
	\textstyle\qqq_\infty^\dind(\phi)\leq 1\quad\:\text{for}\quad\: \phi\in \DIDE^k_{[r,r']}\qquad\quad\:\:\Longrightarrow\qquad\quad\:\:
		\vv^\dindu_\infty(\bg(\phi))\equiv\vv^\dindu_\infty\big(\chart\cp\innt_r^\bullet\phi\big)\leq 1.
	\end{align*}
In particular, then for $\phi\in \DIDE_{[r,r']}^k$ with $\qqq_\infty^\dind(\phi)\leq 1$, we have 
\begingroup
\setlength{\leftmargini}{12pt}
\begin{itemize}
\item
$\im[\bg(\phi)]\subseteq V$ and $\qqq_\infty^\dindo(\phi)\leq 1$; so that \eqref{opopsdopds} gives 
\begin{align*}
	\pp\big(\bg(\phi)^{(\dindu+1)}\big)\leq d\cdot  \www^\dindu_\infty(\phi)\leq \vvv^\dindu_\infty(\phi) \leq \qqq^\dindu_\infty(\phi).
\end{align*}
\item
$\pp_\infty^\dindu(\bg(\phi))\leq \vv_\infty^\dindu(\bg(\phi))\leq 1$.
\end{itemize}
\endgroup
\noindent
For $\wt{\dind}:=\max(\dind,\dindu)$ and $\phi\in \DIDE_{[r,r']}^k$ with $\qqq_\infty^{\wt{\dind}}(\phi)\leq 1$, we thus have
\begin{align*}
		\textstyle\pp\big(\bg(\phi)^{(\dindu+1)}\big)\leq \qqq^{\dindu}_\infty(\phi)\leq\qqq^{\wt{\dind}}_\infty(\phi)\leq 1\qquad\quad\text{and}\qquad\quad
			\pp_\infty^\dindu(\bg(\phi))\leq 1;
\end{align*}
thus, $\pp_\infty^{\dindu+1}(\bg(\phi))\leq 1$. The claim thus follows inductively.  
\end{proof}
\noindent
We furthermore obtain that
\begin{lemma}
\label{opdfopfdopfdpoas}
Suppose that $G$ is {\rm p$\pkt$k}-continuous; and let $[r,r']\in \COMP$ be fixed. Then, for each $\pp\in \SEM$, there exist $\pp\leq\qq\in \SEM$ and $\dind\llleq p$, such that 
\begin{align*}
	\textstyle\qqq^\dind_\infty(\phi)\leq 1\quad\:\:\text{for}\quad\:\: \phi\in \DIDE^k_{[r,r']}\qquad\quad\Longrightarrow\qquad\quad (\pp\cp\chart)\big(\innt_r^\bullet\phi\big)\leq \int_r^\bullet \qqq(\phi(s))\:\dd s. 
\end{align*}
\end{lemma}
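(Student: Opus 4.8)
The plan is to reduce the product‑integral estimate to a pointwise bound on the derivative of the coordinate curve $\bg:=\chart\cp\innt_r^\bullet\phi$, and then integrate. Since $\bg(r)=\chart(e)=0$, and since $\bg$ is of class $C^1$ (indeed $\innt_r^\bullet\phi=\EV_{[r,r']}(\phi)$ is of class $C^{k+1}$ by Corollary \ref{ofgogpfopg}), Lemma \ref{ofdpofdpopssssaaaasfffff} gives, for every $t\in[r,r']$,
\[
	(\pp\cp\chart)\big(\innt_r^t\phi\big)=\pp(\bg(t)-\bg(r))\leq \textstyle\int_r^t \pp(\dot\bg(s))\:\dd s .
\]
Thus it suffices to produce $\qq$ and $\dind$ so that the sup‑hypothesis $\qqq^\dind_\infty(\phi)\leq 1$ forces $\pp(\dot\bg(s))\leq \qqq(\phi(s))$ for all $s$; the asserted $\bullet$‑statement then follows immediately.

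The pointwise bound on $\dot\bg$ comes from the defining differential equation of the product integral. Exactly as in the proofs of Lemma \ref{evk} and Lemma \ref{jlkfdsjlkfdsjklfdsjklfsdjkl}, one has $\dot\bg(s)=\dermapinvdiff(\bg(s),\phi(s))$ with $\dermapinvdiff$ as in \eqref{opopopop2}, which is smooth and linear in its second argument. Applying the estimate \eqref{dssdsdsdsdsddsds} with $\vv\equiv\pp$, I would fix some $\pp\leq\ww\in\SEM$ with $\OB_{\ww,1}\subseteq\V$ such that $\pp(\dermapinvdiff(x,X))\leq\www(X)$ holds whenever $x\in\OB_{\ww,1}$ and $X\in\mg$. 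Consequently, as soon as $\bg(s)=\chart(\innt_r^s\phi)$ lies in $\OB_{\ww,1}$ for every $s$, the integrand is controlled by $\pp(\dot\bg(s))\leq\www(\phi(s))$.

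To secure the constraint $\bg(s)\in\OB_{\ww,1}$ I would invoke Lemma \ref{posdpospodspoaaaa}, but applied with $\ww$ in place of $\pp$: this yields $\ww\leq\qq\in\SEM$ and $\dind\llleq p$ such that $\qqq^\dind_\infty(\phi)\leq 1$ implies $(\ww\cp\chart)(\innt_r^\bullet\phi)\leq 1$, i.e.\ precisely $\bg(s)\in\OB_{\ww,1}$ for all $s$ (and, via $\OB_{\ww,1}\subseteq\V$, that $\innt_r^\bullet\phi$ stays in the chart domain $\U$). Taking this $\qq$ and $\dind$ as the output, the chain above gives $\pp(\dot\bg(s))\leq\www(\phi(s))\leq\qqq(\phi(s))$, using $\www\leq\qqq$ because $\ww\leq\qq$; integrating produces the claim for every $t$. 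The one genuinely delicate point — and the reason the hypothesis is a sup‑norm bound rather than an integral one — is that the linear estimate \eqref{dssdsdsdsdsddsds} is only valid while $\bg$ remains inside the coordinate ball $\OB_{\ww,1}$, and it is exactly Lemma \ref{posdpospodspoaaaa} that converts the condition $\qqq^\dind_\infty(\phi)\leq 1$ into this confinement.
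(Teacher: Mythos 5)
Your proof is correct and is essentially identical to the paper's: both choose $\ww$ via \eqref{dssdsdsdsdsddsds} for $\vv\equiv\pp$, then apply Lemma \ref{posdpospodspoaaaa} with $\ww$ in place of $\pp$ to confine $\chart\cp\innt_r^\bullet\phi$ to $\OB_{\ww,1}$, and conclude from $\dot\gamma=\dermapinvdiff(\gamma,\phi)$ together with Lemma \ref{ofdpofdpopssssaaaasfffff}. The only difference is expository ordering.
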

\begin{proof}
We choose $\ww$ as in \eqref{dssdsdsdsdsddsds} for $\vv\equiv \pp$ there; and let $\qq$, $\dind$ be as in Lemma \ref{posdpospodspoaaaa} for $\pp\equiv \ww$ there (i.e., we have  $\pp\leq \ww\leq \qq$). 
Then, for $\phi\in \DIDE^k([r,r'],\mg)$ with $\qqq^\dind_\infty(\phi)\leq 1$, we have $(\ww\cp\chart)\big(\innt_r^\bullet\phi\big)\leq 1$ 
by Lemma \ref{posdpospodspoaaaa}; and obtain from \eqref{dssdsdsdsdsddsds} that for $\gamma:= \chart\cp \mu$ with $\mu:=\innt_r^\bullet\phi$ we have
\begin{align*}
	\textstyle\pp(\dot \gamma) =\pp\big(\dermapinvdiff(\gamma,\Der(\mu)))\big)\leq \www(\phi)\leq \qqq(\phi).
\end{align*}
The claim thus follows from Lemma \ref{ofdpofdpopssssaaaasfffff}.
\end{proof}
\subsection{Estimates in Charts}
In this subsection, we prove certain statements that we will need for our differentiability discussions in Sect.\ \ref{asopsopdsopsdpoosdp}. 
We start with a variation of Lemma \ref{opopsopsdopds}.
\begin{lemma}
\label{jlkfdsjasasslkfdsjklfdsjklfsdjkl}
Suppose that $G$ is {\rm k}-continuous; and let $[r,r']\in \COMP$ be fixed. Then, for each $\pp\in \SEM$ and $\dindu\llleq k$, there exist $\pp\leq \mm\in \SEM$ and $\dind\llleq k$, such that	
\begin{align*}
		\ppp^\dindp\big(\Ad_{[\innt_r^\bullet \phi]^{-1}}(\psi)\big)\leq \mmm^\dindp(\psi) \qquad\quad\forall\:\psi\in C^k([r,r'],\mg),\:\: 0\leq \dindp\leq \dindu 
	\end{align*}
	holds for each $\phi\in \DIDE_{[r,r']}^k$ with $\mmm_\infty^\dind(\phi)\leq 1$.
\end{lemma}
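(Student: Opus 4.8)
The plan is to rerun the proof of Lemma \ref{opopsopsdopds}, but to feed it the two uniform ingredients that make the output seminorm and order independent of the particular curve $\phi$. Fix $\pp\in\SEM$ and $\dindu\llleq k$ (so $\dindu\in\NN$), and consider the smooth map
\[
	\Omega\colon \V\times\mg\rightarrow\mg,\qquad (x,X)\mapsto \Ad\big(\inv(\chartinv(x)),X\big)=\Ad_{[\chartinv(x)]^{-1}}(X),
\]
which is linear in its second argument. First I would apply the \emph{uniform} multilinear estimate Lemma \ref{oopxcxopcoxpopcx}.\ref{oopxcxopcoxpopcx1} to $\Omega$ (with $W_1\equiv\V$, $F_2\equiv\mg$, target seminorm $\ppp$, and the given $\dindu$); this produces seminorms $\nn\in\SEM$ and $\qq\in\SEM$, \emph{once and for all}, such that for every $[r,r']\in\COMP$ and every $\gamma\in C^\dindu([r,r'],\V)$ with $\nn_\infty^\dindu(\gamma)\leq 1$ one has $\ppp^\dindp(\Omega(\gamma,\psi))\leq\qqq^\dindp(\psi)$ for all $\psi$ and $0\leq\dindp\leq\dindu$.

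The curve I want to substitute is $\gamma:=\chart\cp\innt_r^\bullet\phi$, for which $\Omega(\gamma(t),\psi(t))=\Ad_{[\innt_r^t\phi]^{-1}}(\psi(t))$, since $\chartinv(\gamma(t))=\innt_r^t\phi$. To license this I need $\gamma$ to be a well-defined $C^\dindu$-curve into $\V$ with $\nn_\infty^\dindu(\gamma)\leq 1$, and here is where $k$-continuity enters: applying Lemma \ref{jlkfdsjlkfdsjklfdsjklfsdjkl} with $\pp\equiv\nn$ and the same $\dindu$ yields $\nn\leq\vv\in\SEM$ and $\dind\llleq k$ such that $\vvv_\infty^\dind(\phi)\leq 1\Rightarrow\nn_\infty^\dindu(\chart\cp\innt_r^\bullet\phi)\leq 1$. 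The very meaningfulness of this conclusion forces $\innt_r^t\phi\in\U$ for all $t$, so $\gamma$ is defined and takes values in $\V$; moreover $\gamma$ is of class $C^{k+1}$ by Corollary \ref{ofgogpfopg}, hence certainly $C^\dindu$. Choosing, by filtration of $\SEM$, a single $\mm\in\SEM$ with $\pp,\qq,\vv\leq\mm$, the hypothesis $\mmm_\infty^\dind(\phi)\leq 1$ gives $\vvv_\infty^\dind(\phi)\leq 1$, hence $\nn_\infty^\dindu(\gamma)\leq 1$, and the uniform estimate then delivers
\[
	\ppp^\dindp\big(\Ad_{[\innt_r^\bullet\phi]^{-1}}(\psi)\big)=\ppp^\dindp(\Omega(\gamma,\psi))\leq\qqq^\dindp(\psi)\leq\mmm^\dindp(\psi)
\]
for all $\psi\in C^k([r,r'],\mg)\subseteq C^\dindu([r,r'],\mg)$ and $0\leq\dindp\leq\dindu$; this is the assertion, with $\pp\leq\mm$ as required.

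The hard part is precisely what distinguishes this from Lemma \ref{opopsopsdopds}: there one first decomposes $[r,r']$ so that $\im[\innt_r^\bullet\phi]$ lies in a single chart, but that decomposition depends on how far the fixed curve travels and so cannot yield a $\phi$-independent $\mm$ and $\dind$. The new device that removes the decomposition is $k$-continuity in the form of Lemma \ref{jlkfdsjlkfdsjklfdsjklfsdjkl}: the smallness bound $\mmm_\infty^\dind(\phi)\leq 1$ both confines the whole integral curve to the fixed chart $\chart$ and controls all of its derivatives up to order $\dindu$ -- exactly the hypothesis the uniform estimate Lemma \ref{oopxcxopcoxpopcx}.\ref{oopxcxopcoxpopcx1} requires. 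The only remaining step, merging $\qq$ and $\vv$ into one seminorm $\mm\geq\pp$, is routine bookkeeping.
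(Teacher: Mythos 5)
Your proof is correct and follows essentially the same route as the paper's: apply the uniform estimate Lemma \ref{oopxcxopcoxpopcx}.\ref{oopxcxopcoxpopcx1} to $\Omega=\Ad(\inv\cp\chartinv(\cdot),\cdot)$, then invoke Lemma \ref{jlkfdsjlkfdsjklfdsjklfsdjkl} to bound $\nn_\infty^\dindu(\chart\cp\innt_r^\bullet\phi)$ uniformly in $\phi$, and merge the seminorms by filtration. The only cosmetic difference is that the paper merges the two seminorms produced by Lemma \ref{oopxcxopcoxpopcx}.\ref{oopxcxopcoxpopcx1} into a single $\qq$ immediately rather than at the end.
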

\begin{proof}
Since $\SEM$ is filtrating, Lemma \ref{oopxcxopcoxpopcx}.\ref{oopxcxopcoxpopcx1} applied to  
\begin{align*}
	\Omega\colon \V\times \mg\rightarrow \mg,\qquad (x,X)\mapsto \Ad((\inv\cp\chartinv)(x),X)
\end{align*}
provides us with some $\pp\leq \qq\in \SEM$, such that for each $\phi\in \DIDE_{[r,r']}^k$ with $\qqq_\infty^\dindu\big(\chart\cp \innt_r^\bullet\phi\big)\leq 1$, we have
\begin{align*}
	\ppp^\dindp\big(\Ad_{[\innt_r^\bullet \phi]^{-1}}(\psi)\big)\leq \qqq^\dindp(\psi) \qquad\quad\forall\:\psi\in C^k([r,r'],\mg),\:\: 0\leq\dindp\leq\dindu.
\end{align*}
  By Lemma \ref{jlkfdsjlkfdsjklfdsjklfsdjkl}, there exist $\qq\leq \mm\in \SEM$ and $\dind\llleq k$, such that $\qqq_\infty^\dindu\big(\chart\cp\innt_r^\bullet\phi\big)\leq 1$ holds for each $\phi\in \DIDE_{[r,r']}^k$ with $\mmm_\infty^\dind(\phi)\leq 1$; from which the claim is clear. 
\end{proof}
Together with Lemma \ref{jlkfdsjlkfdsjklfdsjklfsdjkl}, this shows 
\begin{lemma}
\label{jlkfdsjlkfdsjklfdsjklfsdjskl}
Suppose that $G$ is {\rm k}-continuous; and let $[r,r']\in \COMP$ be fixed. Then, for each $\pp\in \SEM$, there exist $\pp\leq \mm\in \SEM$ and $\dindu\llleq k$, such that	
\begin{align*}
	\textstyle(\pp\cp\chart)\big([\innt_r^\bullet\phi]^{-1}[\innt_r^\bullet\psi] \he\big)\leq \int_r^{\bullet}\mmm(\psi(s)-\phi(s))\:\dd s 
\end{align*}
holds for all $\phi,\psi \in \DIDE_{[r,r']}^k$ with $\mmm_\infty^\dindu(\phi), \:\mmm^\dindu_\infty(\psi-\phi)\leq 1$. 
\end{lemma}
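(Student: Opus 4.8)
The plan is to reduce the two-curve estimate to the single-curve estimate of Lemma~\ref{opdfopfdopfdpoas} by collapsing the ``quotient'' $[\innt_r^\bullet\phi]^{-1}[\innt_r^\bullet\psi]$ into one product integral. First I would apply identity~\ref{kdskdsdkdslkds}, which gives
\begin{align*}
	[\innt_r^\bullet\phi]^{-1}[\innt_r^\bullet\psi]=\innt_r^\bullet\chi\qquad\quad\text{for}\qquad\quad\chi:=\Ad_{[\innt_r^\bullet\phi]^{-1}}(\psi-\phi).
\end{align*}
I would then record that $\chi\in \DIDE_{[r,r']}^k$: by Corollary~\ref{ofgogpfopg} we have $\innt_r^\bullet\phi\in C_*^{k+1}([r,r'],G)$, so $[\innt_r^\bullet\phi]^{-1}\in C^{k+1}([r,r'],G)$ by smoothness of $\inv$, whence $\chi\in C^k([r,r'],\mg)$ by Lemma~\ref{Adlip}; and $\chi\in \DIDE_{[r,r']}$ holds by its very construction. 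Consequently the left-hand side of the claim equals $(\pp\cp\chart)(\innt_r^\bullet\chi)$, and everything is reduced to bounding this single expression.

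Next I would feed $\chi$ into Lemma~\ref{opdfopfdopfdpoas} (legitimate since $k$-continuity is in particular $k\pkt k$-continuity): for the given $\pp\in \SEM$ this yields $\pp\leq \qq\in \SEM$ and $\dind\llleq k$ such that the smallness condition $\qqq_\infty^\dind(\chi)\leq 1$ forces
\begin{align*}
	(\pp\cp\chart)(\innt_r^\bullet\chi)\leq \int_r^\bullet\qqq(\chi(s))\:\dd s.
\end{align*}
Two things then remain: (a) to guarantee $\qqq_\infty^\dind(\chi)\leq 1$, and (b) to replace the integrand $\qqq(\chi(s))$ by $\mmm(\psi(s)-\phi(s))$. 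Both are adjoint estimates. For~(a) I would invoke Lemma~\ref{jlkfdsjasasslkfdsjklfdsjklfsdjkl} with $\pp\equiv \qq$ and $\dindu\equiv \dind$, obtaining $\qq\leq \mm\in \SEM$ and $\dind'\llleq k$ so that $\mmm_\infty^{\dind'}(\phi)\leq 1$ entails the supremal bound $\qqq_\infty^\dind(\chi)\leq \mmm_\infty^\dind(\psi-\phi)$.

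For~(b) I would draw the required \emph{pointwise} bound from the pointwise adjoint relation~\eqref{odspospodpof}: there are $\qq\leq \nn\in \SEM$ and a symmetric open neighbourhood $V$ of $e$ with $\qqq(\Ad_g(X))\leq \nnn(X)$ for all $g\in V$. Enlarging the seminorm so that $\OB_{\mm,1}\subseteq \chart(V)$ and applying Lemma~\ref{jlkfdsjlkfdsjklfdsjklfsdjkl} (with differentiation order $0$) to keep $\chart\cp\innt_r^\bullet\phi$ inside $\OB_{\mm,1}$, one obtains $\innt_r^s\phi\in V$ for every $s$, whence $\qqq(\chi(s))\leq \nnn(\psi(s)-\phi(s))\leq \mmm(\psi(s)-\phi(s))$ pointwise, provided $\nn\leq \mm$. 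I would finally take $\dindu$ to be the largest of the differentiation orders and $\mm$ a common upper bound of the seminorms produced above (so $\pp\leq \mm$, $\dindu\llleq k$), and check that $\mmm_\infty^\dindu(\phi)\leq 1$ and $\mmm_\infty^\dindu(\psi-\phi)\leq 1$ simultaneously trigger all the smallness hypotheses (each needed inequality being monotone in both the seminorm and the order). Chaining the two displays with the pointwise integrand estimate yields
\begin{align*}
	(\pp\cp\chart)\big([\innt_r^\bullet\phi]^{-1}[\innt_r^\bullet\psi]\big)\leq \int_r^\bullet\qqq(\chi(s))\:\dd s\leq \int_r^\bullet\mmm(\psi(s)-\phi(s))\:\dd s,
\end{align*}
which is the claim.

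I expect the main difficulty to be purely organizational: coordinating the several seminorm/order pairs coming out of Lemmata~\ref{opdfopfdopfdpoas}, \ref{jlkfdsjasasslkfdsjklfdsjklfsdjkl} and~\ref{jlkfdsjlkfdsjklfdsjklfsdjkl} into one pair $(\mm,\dindu)$. The one genuinely delicate point is that step~(b) must hold \emph{pointwise} in $s$ so as to survive the integration, which is precisely why I would extract it from the pointwise relation~\eqref{odspospodpof} (together with the chart containment supplied by Lemma~\ref{jlkfdsjlkfdsjklfdsjklfsdjkl}) rather than from the supremal adjoint estimate of Lemma~\ref{jlkfdsjasasslkfdsjklfdsjklfsdjkl}, which is only needed for the $C^\dind$-sup hypothesis in~(a).
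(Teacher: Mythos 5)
Your proof is correct, and its skeleton coincides with the paper's: reduce via \ref{kdskdsdkdslkds} to the single curve $\chi=\Ad_{[\innt_r^\bullet\phi]^{-1}}(\psi-\phi)$, apply Lemma~\ref{opdfopfdopfdpoas} to $\chi$, and use Lemma~\ref{jlkfdsjasasslkfdsjklfdsjklfsdjkl} to verify its smallness hypothesis, finally merging all seminorm/order pairs by filtration. The one place you diverge is step~(b), and your stated reason for the detour rests on a misreading: the conclusion of Lemma~\ref{jlkfdsjasasslkfdsjklfdsjklfsdjkl} is \emph{not} merely a supremal estimate. In the paper's conventions $\qq^{\dindp}(\gamma)$ denotes the function $t\mapsto \qq\big(\gamma^{(\dindp)}(t)\big)$, so the displayed inequality $\ppp^{\dindp}\big(\Ad_{[\innt_r^\bullet\phi]^{-1}}(\psi)\big)\leq \mmm^{\dindp}(\psi)$ is an inequality of functions, i.e.\ it already holds pointwise in $t$ for each $0\leq\dindp\leq\dindu$. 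The paper therefore extracts the integrand bound $\qqq(\chi(s))\leq\mmm(\psi(s)-\phi(s))$ simply as the $\dindp\equiv 0$ instance of \eqref{spodsopodspodsaaaaa}, with no further apparatus. Your substitute --- combining the pointwise adjoint estimate \eqref{odspospodpof} with Lemma~\ref{posdpospodspoaaaa} to confine $\innt_r^\bullet\phi$ to the symmetric neighbourhood $V$ --- is also valid (it is essentially how Lemma~\ref{jlkfdsjasasslkfdsjklfdsjklfsdjkl} is proved in the first place, via Lemma~\ref{oopxcxopcoxpopcx}.\ref{oopxcxopcoxpopcx1} and Lemma~\ref{jlkfdsjlkfdsjklfdsjklfsdjkl}), but it costs you one extra seminorm and one extra containment argument that the paper gets for free. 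Everything else, including the bookkeeping that $\chi\in\DIDE^k_{[r,r']}$ and the final monotonicity checks, is fine.
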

\begin{proof}
We choose $\pp\leq \qq\in \SEM$ and $\dind\llleq k$ as in Lemma \ref{opdfopfdopfdpoas}. Then, Lemma \ref{jlkfdsjasasslkfdsjklfdsjklfsdjkl} provides us with some $\qq\leq \mm\in \SEM$ and $\dindo\llleq k$, such that for each $\phi\in \DIDE_{[r,r']}^k$ with $\mmm_\infty^\dindo(\phi)\leq 1$, we have
\begin{align}
\label{spodsopodspodsaaaaa}
		\qqq^\dindp\big(\Ad_{[\innt_r^\bullet \phi]^{-1}}(\chi)\big)\leq \mmm^\dindp(\chi) \qquad\quad\forall\:\chi\in C^k([r,r'],\mg),\:\: 0\leq \dindp\leq \dind. 
	\end{align} 
	We let $\dindu:=\max(\dindo,\dind)$; and recall that, cf.\ \ref{kdskdsdkdslkds} 
\begin{align} 
\label{spodsopodspods}
	\textstyle [\innt_r^t\phi]^{-1} [\innt_r^t\psi] =\innt_r^t \Ad_{[\innt_r^\bullet\phi]^{-1}}(\psi-\phi) \qquad\quad\forall\: t\in [r,r'],\:\: \phi,\psi\in \DIDE^k_{[r,r']}
\end{align} 
holds. For $\phi,\psi \in \DIDE_{[r,r']}^k$ with $\mmm_\infty^\dindu(\phi), \:\mmm^\dindu_\infty(\psi-\phi)\leq 1$, we thus have 
\begin{align*}
	\qqq_\infty^\dind\big(\Ad_{[\innt_r^\bullet\phi]^{-1}}(\psi-\phi)\big)\stackrel{\eqref{spodsopodspodsaaaaa}}{\leq} \mm_\infty^\dind(\psi-\phi)\leq \mm_\infty^\dindu(\psi-\phi)\leq 1;
\end{align*}
so that the claim is clear from Lemma \ref{opdfopfdopfdpoas}, \eqref{spodsopodspods}, and \eqref{spodsopodspodsaaaaa} for $\dindp\equiv 0$ there.
\end{proof}
We conclude that
	\begin{proposition}
	\label{hghghggh}
	Suppose that $G$ is {\rm k}-continuous; and let $[r,r']\in \COMP$ be fixed. Then, for each $\pp\in \SEM$, there exist $\pp\leq\mm\in \SEM$ and $\dind\llleq k$, such that 
		\begin{align*}
		\textstyle\pp\big(\chart\big(\innt_r^\bullet\phi\big)-\chart\big(\innt_r^\bullet\psi\big)\big)\leq \int_r^\bullet\mmm(\psi(s)-\phi(s))\:\dd s
	\end{align*}
	holds for all $\phi,\psi\in \DIDE^k_{[r,r']}$ with $\mmm_\infty^\dind(\phi),\: \mmm_\infty^\dind(\psi),\:\mmm_\infty^\dind(\psi-\phi)\leq 1$.
	\end{proposition}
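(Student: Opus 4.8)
The plan is to reduce the assertion, by a single coordinate change, to the bound already obtained in Lemma \ref{jlkfdsjlkfdsjklfdsjklfsdjskl} on the relative product integral $[\innt_r^\bullet\phi]^{-1}[\innt_r^\bullet\psi]$. First I would apply Lemma \ref{fhfhfhffhaaaa} to the one-point compact set $\compact\equiv\{e\}\subseteq\U$. This yields $\pp\leq\uu\in\SEM$ together with a symmetric open neighbourhood $V\subseteq\U$ of $e$ satisfying $\OB_{\uu,1}\subseteq\chart(V)$, such that
\[
\pp\big(\chart(q)-\chart(q')\big)\leq \uu\big(\chart(h^{-1}q)-\chart(h^{-1}q')\big)\qquad\forall\: q,q',h\in V
\]
(using $\chart_{e\cdot h}=\chart(h^{-1}\cdot)$). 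Fixing $t\in[r,r']$ and abbreviating $\mu:=\innt_r^\bullet\phi$, $\nu:=\innt_r^\bullet\psi$, I would take $q:=\nu(t)$ and $q'=h:=\mu(t)$; then $h^{-1}q'=e$ gives $\chart(h^{-1}q')=0$, while $h^{-1}q=[\innt_r^t\phi]^{-1}[\innt_r^t\psi]$, so the displayed inequality collapses to
\[
\pp\big(\chart(\mu(t))-\chart(\nu(t))\big)\leq (\uu\cp\chart)\big([\innt_r^t\phi]^{-1}[\innt_r^t\psi]\big),
\]
valid as soon as $\mu(t),\nu(t)\in V$.

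Next I would estimate the right-hand side by Lemma \ref{jlkfdsjlkfdsjklfdsjklfsdjskl}, applied with $\uu$ in place of $\pp$: this provides $\uu\leq\nn\in\SEM$ and $\dindu\llleq k$ with
\[
(\uu\cp\chart)\big([\innt_r^\bullet\phi]^{-1}[\innt_r^\bullet\psi]\big)\leq \int_r^\bullet \nnn(\psi(s)-\phi(s))\:\dd s
\]
whenever $\nnn_\infty^\dindu(\phi),\nnn_\infty^\dindu(\psi-\phi)\leq 1$. To guarantee the side condition $\mu(t),\nu(t)\in V$ I would invoke the $\rm k$-continuity of $G$ through Lemma \ref{posdpospodspoaaaa} (with $\uu$ there): there exist $\uu\leq\qq\in\SEM$ and $\dindo\llleq k$ so that $\qqq_\infty^\dindo(\cdot)\leq 1$ forces $(\uu\cp\chart)(\innt_r^\bullet(\cdot))\leq 1$, hence $\im[\innt_r^\bullet(\cdot)]\subseteq\chartinv(\OB_{\uu,1})\subseteq V$.

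Finally, since $\SEM$ is filtrating I would pick $\mm\in\SEM$ with $\nn,\qq\leq\mm$ and set $\dind:=\max(\dindu,\dindo)\llleq k$. For $\phi,\psi\in\DIDE^k_{[r,r']}$ subject to $\mmm_\infty^\dind(\phi),\mmm_\infty^\dind(\psi),\mmm_\infty^\dind(\psi-\phi)\leq 1$, monotonicity of the seminorms shows simultaneously that $\im[\mu],\im[\nu]\subseteq V$ (so the first-step inequality applies at every $t$) and that the hypotheses of Lemma \ref{jlkfdsjlkfdsjklfdsjklfsdjskl} are met. Chaining the two displayed bounds, and using that seminorms are symmetric (so that the order of $\phi,\psi$ is immaterial), then gives $\pp(\chart(\innt_r^t\phi)-\chart(\innt_r^t\psi))\leq\int_r^t\mmm(\psi(s)-\phi(s))\:\dd s$ for every $t\in[r,r']$, i.e.\ the claim pointwise in $\bullet$.

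The seminorm bookkeeping and the passage to a common filtrating upper bound are routine. The only genuine point is the first step: the quantity we actually control (via Lemma \ref{jlkfdsjlkfdsjklfdsjklfsdjskl}) is the relative product integral $\mu^{-1}\nu$ read off near $e$, whereas the assertion compares $\mu$ and $\nu$ in the fixed chart $\chart$ centred at $e$. Forcing both curves into a single chart neighbourhood $V$ of $e$ -- so that the coordinate change of Lemma \ref{fhfhfhffhaaaa} applies with $g=e$ -- is precisely where $\rm k$-continuity enters, and is the step I would set up most carefully; note in particular that taking $\compact\equiv\{e\}$ sidesteps any need for the (generally non-compact) sublevel set $\OB_{\uu,1}$ to be compact.
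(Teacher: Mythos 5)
Your proposal is correct and follows essentially the same route as the paper's own proof: Lemma \ref{fhfhfhffhaaaa} with $\compact\equiv\{e\}$ to convert the chart difference into $(\uu\cp\chart)\big([\innt_r^\bullet\phi]^{-1}[\innt_r^\bullet\psi]\big)$, Lemma \ref{posdpospodspoaaaa} to force both product integrals into the neighbourhood $V$, Lemma \ref{jlkfdsjlkfdsjklfdsjklfsdjskl} to bound the relative product integral by the $L^1$-type expression, and a filtrating common upper bound $\mm$ with $\dind$ the maximum of the two orders. The minor differences (swapping the roles of $q,q'$ and the order in which the auxiliary lemmas are invoked) are immaterial.
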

	\begin{proof}
	We let $\pp\leq\uu\in \SEM$ and $V$ be as in Lemma \ref{fhfhfhffhaaaa} for $\compact\equiv \{e\}$ there, i.e., we have $\OB_{\uu,1}\subseteq \chart(V)$. By Lemma \ref{posdpospodspoaaaa}, there exist $\uu\leq \qq\in \SEM$ and $\dindo\llleq k$, such that 
\begin{align*}
	\textstyle\qqq_\infty^\dindo(\chi)\leq 1\quad\:\:\text{for}\quad\:\:\chi\in \DIDE_{[r,r']}^k \qquad\quad\Longrightarrow\qquad\quad (\uu\cp\chart)\big(\innt_r^\bullet \chi\big) \leq 1\qquad\quad\Longrightarrow\qquad\quad\innt_r^\bullet \chi\in V.
\end{align*}	 
Then, for $\phi,\psi$ with $\qqq^\dindo_\infty(\phi)\leq 1,\: \qqq^\dindo_\infty(\psi)\leq 1$, Lemma \ref{fhfhfhffhaaaa} applied to $q\equiv\innt_r^\bullet\phi,\:q'\equiv\innt_r^\bullet\psi,\: h\equiv\innt_r^\bullet\phi\in V$, and $g\equiv e$ gives 
	\begin{align*}
		\textstyle\pp\big(\chart\big(\innt_r^\bullet\phi\big)-\chart\big(\innt_r^\bullet\psi\big)\big)\leq
		\textstyle\uu\big(\chart_{\innt_r^\bullet\phi}\big(\innt_r^\bullet\phi\big)-\chart_{\innt_r^\bullet\phi}\big(\innt_r^\bullet\psi\big)\big)= \textstyle(\uu\cp \chart)\big([\innt_r^\bullet\phi]^{-1}[\innt_r^\bullet\psi]\big).
	\end{align*} 
We choose $\uu\leq \mm\in \SEM$ and $\dindu\llleq k$ as in Lemma \ref{jlkfdsjlkfdsjklfdsjklfsdjskl} for $\pp\equiv \uu$ there, define $\dind:=\max(\dindo,\dindu)$; and can additionally assume that $\pp\leq \qq\leq \mm$ holds. 
Then, for $\phi,\psi\in \DIDE_{[r,r']}^k$ with $\mmm^\dind_\infty(\phi)\leq 1,\: \mmm^\dind_\infty(\psi),\: \mmm^\dind_\infty(\psi-\phi)\leq 1$, we have
\begin{align*}
	\textstyle\pp\big(\chart\big(\innt_r^\bullet\phi\big)-\chart\big(\innt_r^\bullet\psi\big)\big)\leq\textstyle(\uu\cp \chart)\big([\innt_r^\bullet\phi]^{-1}[\innt_r^\bullet\psi]\big)\leq \int_r^{\bullet}\mmm(\psi(s)-\phi(s))\:\dd s 
\end{align*}  
by Lemma \ref{jlkfdsjlkfdsjklfdsjklfsdjskl}.
\end{proof}
We finally observe that
\begin{lemma}
\label{fddfxxxxfd}
Suppose that $G$ is \emph{k-continuous};   
and let $\phi\in \DIDE_{[r,r']}$ be fixed. Then, for each open neighbourhood $V\subseteq G$ of $e$,  
there exist $\mm\in \SEM$ and $\dind\llleq k$, such that 
\begin{align*}
	\textstyle\mmm^\dind_\infty(\psi-\phi)\leq 1\quad\:\:\text{for}\quad\:\: \psi\in \DIDE^k_{[r,r']}\qquad\quad\Longrightarrow\qquad\quad \innt_r^\bullet \psi \in \innt_r^\bullet \phi\cdot V.
\end{align*} 
\end{lemma}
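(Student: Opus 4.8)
The plan is to pass to the ``difference'' curve and reduce the claim to a single chart estimate that the $k$-continuity machinery already controls. Fix $\phi\in\DIDE_{[r,r']}$, write $\mu_0:=\innt_r^\bullet\phi\in C^1([r,r'],G)$, and note that $K:=\im[\mu_0]$ is a fixed compact subset of $G$. Since $V$ is an open neighbourhood of $e$ I may first replace it by a symmetric one contained in $\U$, and then choose $\pp\in\SEM$ with $\chart^{-1}(\OB_{\pp,1})\subseteq V$ (possible because $\chart$ is a homeomorphism near $e$ and $\SEM$ is filtrating). Because for each $t$ one has $\innt_r^t\psi\in(\innt_r^t\phi)\cdot V$ \emph{iff} $\beta(t):=[\innt_r^t\phi]^{-1}[\innt_r^t\psi]\in V$, it suffices to establish $(\pp\cp\chart)(\beta(t))\leq 1$ for all $t$. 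Here the decisive simplification is \ref{kdskdsdkdslkds}, which gives $\beta=\innt_r^\bullet\chi$ with $\chi:=\Ad_{[\innt_r^\bullet\phi]^{-1}}(\psi-\phi)\in\DIDE_{[r,r']}$, so the whole problem is reduced to bounding the image of a single product integral.

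Next I would estimate $\chi$ \emph{via compactness} and then bound $\innt_r^\bullet\chi$ \emph{via $k$-continuity}. The point of keeping $\phi$ fixed is that $\inv(K)$ is a fixed compact set, so \eqref{askasjkjksaasqwqqwasw} supplies, for any prescribed seminorm, some $\mm\in\SEM$ with $(\,\cdot\,)\cp\Ad_{[\innt_r^s\phi]^{-1}}\leq\mmm$ uniformly in $s$; this controls $\chi$ by $\psi-\phi$ and is exactly what replaces the ``smallness of $\phi$'' hypothesis present in Lemma \ref{jlkfdsjlkfdsjklfdsjklfsdjskl}. I would then feed this into the $k$-continuity estimate of Lemma \ref{opdfopfdopfdpoas} (equivalently Lemma \ref{posdpospodspoaaaa}), obtaining $(\pp\cp\chart)(\innt_r^\bullet\chi)\leq\int_r^\bullet\qqq(\chi(s))\,\dd s\leq |r'-r|\cdot\mmm_\infty(\psi-\phi)$. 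Absorbing the constant $|r'-r|$ and the compactness constants into a single, sufficiently large $\mm\in\SEM$ and choosing $\dind:=0\llleq k$, the hypothesis $\mmm^\dind_\infty(\psi-\phi)\leq 1$ forces the right-hand side below $1$, whence $\beta(t)\in\chart^{-1}(\OB_{\pp,1})\subseteq V$, as desired.

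The hard part is that the $k$-continuity estimate is formulated for curves lying in $\DIDE^k_{[r,r']}$, whereas a fixed $\phi\in\DIDE_{[r,r']}$ is only of class $C^0$, so $\mu_0=\innt_r^\bullet\phi$ is merely $C^1$ and $\chi$ inherits no better regularity; one cannot simply apply the estimate to $\chi$ as if it were $C^k$. The resolution I would pursue is to run the argument so that it only ever reads the $C^\dind$-seminorm of the \emph{difference} $\psi-\phi$ (which is controlled by hypothesis) together with the \emph{image} of the fixed curve $\mu_0$ (controlled by compactness), never the derivatives of $\mu_0$ itself -- concretely by repeating the proof of Proposition \ref{hghghggh} with the single point $e$ replaced by the compact set $K$, i.e.\ invoking the coordinate-change Lemma \ref{fhfhfhffhaaaa} with $\compact\equiv K$ to transfer the near-$e$ estimates along the fixed curve $\mu_0$. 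This detour is unavoidable precisely for the reason emphasized in the introduction: in the generic locally convex setting one cannot close a naive Gr\"onwall/bootstrap argument (seminorms cannot be estimated against themselves), so the \emph{global} $k$-continuity estimate must be used in place of a differential one.
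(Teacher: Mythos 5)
Your overall strategy coincides with the paper's: reduce via \ref{kdskdsdkdslkds} to the curve $\chi:=\Ad_{[\innt_r^\bullet\phi]^{-1}}(\psi-\phi)$ with $[\innt_r^\bullet\phi]^{-1}[\innt_r^\bullet\psi]=\innt_r^\bullet\chi$, pick $\pp$ with $\OB_{\pp,1}\subseteq\chart(V)$, and then apply the continuity estimate of Lemma \ref{posdpospodspoaaaa}. The genuine gap is your handling of the differentiation order. Since $G$ is only assumed $k$-continuous (not $0\pkt k$-continuous), Lemma \ref{posdpospodspoaaaa}\slash Lemma \ref{opdfopfdopfdpoas} returns some $\dind\llleq k$ that may be strictly positive --- if one could always take $\dind=0$ there, $G$ would be $0\pkt k$-continuous and hence locally $\mu$-convex, which is strictly stronger than $k$-continuity. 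The precondition you must therefore verify before applying that estimate to $\chi$ is $\qqq^\dind_\infty(\chi)\leq 1$ for \emph{that} $\dind$, i.e.\ a bound on the first $\dind$ derivatives of $\Ad_{[\innt_r^\bullet\phi]^{-1}}(\psi-\phi)$. Your pointwise compactness estimate \eqref{askasjkjksaasqwqqwasw} only controls $\qqq(\chi(s))$, and the hypothesis $\mmm_\infty(\psi-\phi)\leq 1$ with your choice $\dind:=0$ in the conclusion cannot control these higher derivatives. The paper closes exactly this step with Lemma \ref{opopsopsdopds} (proved via Lemma \ref{oopxcxopcoxpopcx}.\ref{oopxcxopcoxpopcx2} along the fixed curve $\chart\cp\innt_r^\bullet\phi$), which yields $\mm$ with $\qqq^{\dindp}\big(\Ad_{[\innt_r^\bullet\phi]^{-1}}(\chi)\big)\leq\mmm^{\dindp}(\chi)$ for all $0\leq\dindp\leq\dind$; the $\dind$ appearing in the lemma's conclusion is then the one produced by the continuity estimate, not $0$.

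Two smaller remarks. Your ``hard part'' is a non-issue once this is set up correctly: in the setting where the lemma is used (cf.\ the proof of Lemma \ref{fdfdfdf}) one has $\phi,\psi\in\DIDE^k_{[r,r']}$, so $\innt_r^\bullet\phi$ is of class $C^{k+1}$ and $\chi$ is of class $C^k$ by Lemma \ref{Adlip}; the continuity estimate applies to $\chi$ directly, and no re-run of Proposition \ref{hghghggh} with $\compact\equiv\im[\innt_r^\bullet\phi]$ via Lemma \ref{fhfhfhffhaaaa} is needed. Also, the integral bound from Lemma \ref{opdfopfdopfdpoas} is more than required here: the plain sup-bound of Lemma \ref{posdpospodspoaaaa} already gives $(\pp\cp\chart)(\innt_r^t\chi)\leq 1$ for all $t\in[r,r']$, which is all the conclusion needs.
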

\begin{proof}
	We fix $V\lleq \pp\in \SEM$; and choose $\qq\in \SEM$, $\dind\llleq k$ as in Lemma \ref{posdpospodspoaaaa}. Then, Lemma \eqref{opopsopsdopds} provides us with some $\mm\in \SEM$, such that 
	\begin{align*}
		\qqq^\dind_\infty\big(\Ad_{[\innt_r^\bullet \phi]^{-1}}(\chi)\big)\leq \mmm^\dind_\infty(\chi)\qquad\quad\forall\: \chi\in \DIDE^k_{[r,r']}
	\end{align*}
	holds. 
   Then, for each $\psi\in \DIDE_{[r,r']}^k$ with $\mmm^\dind_\infty(\psi-\phi)\leq 1$, we obtain from \ref{kdskdsdkdslkds} and Lemma \ref{posdpospodspoaaaa} that
	\begin{align*}
	\textstyle(\pp\cp\chart)\big(\big[\innt_r^t \phi\big]^{-1} \big[\innt_r^t\psi\big]\big)&\textstyle=(\pp\cp\chart)\big(\innt_r^t\Ad_{[\innt_r^\bullet\phi]^{-1}}(\psi-\phi)\big)\leq 1\qquad\quad\forall\: t\in [r,r'] 
\end{align*}
holds; thus, $[\innt_r^\bullet\phi]^{-1} [\innt_r^\bullet\psi] \in V$, implying $\innt_r^\bullet\psi\in \innt_r^\bullet \phi\cdot V$. 
\end{proof}
\subsection{Piecewise Integrable Curves}
\label{opsdpods}
For $k\in \NN\sqcup\{\lip,\infty\}$ and $[r,r']\in \COMP$, we let $\DP^k([r,r'],\mg)$ denote the set of all $\phi\colon [r,r']\rightarrow \mg$, such that there exist $r=t_0<{\dots}<t_n=r'$ and $\phi[p]\in \DIDE^k_{[t_p,t_{p+1}]}$ with
\begin{align}
\label{opopooppo}
	\phi|_{(t_p,t_{p+1})}=\phi[p]|_{(t_p,t_{p+1})}\qquad\quad\forall\: p=0,\dots,n-1.
\end{align}  
In this situation, we define $\innt_r^r\phi:=e$, as well as 
\begin{align}
\label{defpio}
	\textstyle\innt_r^t\phi&\textstyle:=\innt_{t_{p}}^t \phi[p] \cdot \innt_{t_{p-1}}^{t_p} \phi[p-1]\cdot {\dots} \cdot \innt_{t_0}^{t_1}\phi[0]\qquad\quad \forall\: t\in (t_{p}, t_{p+1}].
\end{align} 
A standard refinement argument in combination with \ref{pogfpogf} then shows that this is well defined; i.e., independent of any choices we have made. 
It is furthermore not hard to see that (cf.\ Appendix \ref{appB}) for $\phi,\psi\in \DP^k([r,r'],\mg)$, we have 
 $\Ad_{[\innt_r^\bullet\phi]^{-1}}(\psi -\phi)\in \DP^k([r,r'],\mg)$ with 
\begin{align}
\label{dfdssfdsfd}
	\textstyle\big[\innt_r^t \phi\big]^{-1}\big[\innt_r^t \psi\big]=\innt_r^t \Ad_{[\innt_r^\bullet\phi]^{-1}}(\psi -\phi)
	\qquad\quad\forall\: t\in [r,r'].
\end{align} 
We now finally will extend Lemma \ref{opdfopfdopfdpoas} for the {\rm 0$\pkt$k}-continuous case to the piecewise integrable setting. 
For this, we fix (a bump function) $\rhon\colon [0,1]\rightarrow [0,2]$ smooth with 
\begin{align}
\label{odsposdposdpopospoadsasa}
	\textstyle\rhon|_{(0,1)}>0,\:\:\int_0^1 \rhon(s)\:\dd s=1\qquad\quad\:\:\text{as well as}\qquad\quad\:\: \rhon^{(k)}(0)=0=\rhon^{(k)}(1)\qquad\forall\: k\in \NN.
\end{align} 
Then, $[r,r']\in \COMP$ and $r=t_0<{\dots}<t_n=r'$ given, we let
\begin{align*}
	\rho_p\colon [t_p,t_{p+1}]\rightarrow [0,2],\qquad t\mapsto \rhon(|t-t_p|\slash|t_{p+1}-t_p|)\qquad\qquad\forall\: p=0,\dots,n-1;
\end{align*}
and define $\rho\colon [r,r']\rightarrow [0,2]$ by 
\begin{align*}
	\rho|_{[t_p,t_{p+1}]}:=\rho_p\qquad\quad \forall\: p=0,\dots,n-1. 
\end{align*}
Then, $\rho$ is smooth with $\rho^{(k)}(t_p)=0$ for each $k\in \NN$, $p=0,\dots,n$; and \eqref{substitRI} shows that
\begin{align*}
	\textstyle\varrho\colon [r,r']\rightarrow [r,r'],\qquad t\mapsto r+\int_r^t \rho(s)\:\dd s 
\end{align*}
holds, with $\varrho(t_p)=t_p$ for $p=0,\dots,n-1$.  
We are ready for
\begin{lemma}
\label{pofdspospods}
	Suppose that $G$ is {\rm 0$\pkt$k}-continuous for $k\in \NN\sqcup \{\lip, \infty\}$; and let $[r,r']\in \COMP$ be fixed. Then, for each $\pp\in \SEM$, there exists some $\pp\leq \mm\in \SEM$, such that 
\begin{align*}
	\textstyle\mmm_\infty(\phi)\leq 1\quad\:\:\text{for}\quad\:\: \phi\in \DP^k([r,r'],\mg)\qquad\quad\Longrightarrow\qquad\quad (\pp\cp\chart)(\innt_r^\bullet\phi)\leq \int_r^\bullet \mmm(\phi(s))\:\dd s.  
\end{align*}
\end{lemma}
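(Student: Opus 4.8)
The plan is to reduce the piecewise-integrable assertion to the genuinely integrable case already handled in Lemma~\ref{opdfopfdopfdpoas}, by absorbing the decomposition of $\phi$ into the bump-function reparametrization $\varrho$ constructed just above the statement. Concretely, given $\phi\in\DP^k([r,r'],\mg)$ with breakpoints $r=t_0<\dots<t_n=r'$ and pieces $\phi[p]\in\DIDE^k_{[t_p,t_{p+1}]}$, I form the reparametrized curve $\wt{\phi}:=\dot\varrho\cdot\phi\cp\varrho$ associated with these very breakpoints, and show that it lies in $\DIDE^k_{[r,r']}$ (not merely in $\DP^k$), with the same product integral up to the substitution $t\mapsto\varrho(t)$. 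Applying Lemma~\ref{opdfopfdopfdpoas} to $\wt{\phi}$ and undoing the substitution in the resulting integral estimate then yields the claim.

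First I would verify that $\wt{\phi}\in\DIDE^k_{[r,r']}$. Since $\varrho$ restricts to a smooth self-map of each $[t_p,t_{p+1}]$ fixing the endpoints (indeed $\int_{t_p}^{t_{p+1}}\rho=|t_{p+1}-t_p|$ gives $\varrho(t_p)=t_p$), property~\ref{subst} gives $\wt{\phi}|_{[t_p,t_{p+1}]}=\dot\varrho\cdot\phi[p]\cp\varrho\in\DIDE^k_{[t_p,t_{p+1}]}$. Moreover, because $\rho=\dot\varrho$ vanishes together with all its derivatives at each $t_p$, the Leibniz rule shows that every one-sided derivative $\wt{\phi}^{(m)}(t_p)$ for $0\le m\le k$ equals $0$ from both sides; hence $\wt{\phi}\in C^k([r,r'],\mg)$, and Lemma~\ref{sddssdsdsd} upgrades the piecewise statement to $\wt{\phi}\in\DIDE^k_{[r,r']}$. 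The same endpoint-fixing property, combined with \ref{subst} on each piece, the decomposition \ref{pogfpogf}, and the definition \eqref{defpio}, then yields $\innt_r^{\varrho(t)}\phi=\innt_r^t\wt{\phi}$ for all $t\in[r,r']$.

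With these facts in hand, I would choose, for the given $\pp$, a seminorm $\qq\ge\pp$ as provided by Lemma~\ref{opdfopfdopfdpoas} in the $\dind=0$ case (which is exactly what 0$\pkt$k-continuity furnishes), and fix $\mm\in\SEM$ with $\mm\ge 2\qq$. Then $\mmm_\infty(\phi)\le 1$ forces $\qqq_\infty(\phi)\le 1/2$, whence $\qqq_\infty(\wt{\phi})\le 2\,\qqq_\infty(\phi)\le 1$ because $0\le\dot\varrho\le 2$. Fixing $t\in[r,r']$ and writing $\tau:=\varrho^{-1}(t)$ (note that $\varrho$ is a strictly increasing continuous bijection of $[r,r']$), Lemma~\ref{opdfopfdopfdpoas} applied to $\wt{\phi}$, together with the substitution $u=\varrho(s)$ and $\qqq(\dot\varrho(s)\cdot\phi(\varrho(s)))=\dot\varrho(s)\,\qqq(\phi(\varrho(s)))$, gives
\[
	(\pp\cp\chart)\big(\innt_r^t\phi\big)=(\pp\cp\chart)\big(\innt_r^\tau\wt{\phi}\big)\le\int_r^\tau\qqq(\wt{\phi}(s))\,\dd s=\int_r^\tau\dot\varrho(s)\,\qqq(\phi(\varrho(s)))\,\dd s=\int_r^{t}\qqq(\phi(u))\,\dd u\le\int_r^t\mmm(\phi(u))\,\dd u,
\]
where the last inequality uses $\qqq\le\mmm$. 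This is the asserted estimate.

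The main obstacle — and the reason the argument is confined to the $C^0$-topology — is establishing $\wt{\phi}\in\DIDE^k_{[r,r']}$ with a control on $\qqq_\infty(\wt{\phi})$ that is uniform in the decomposition: the flatness of the bump function $\rho$ at the breakpoints is precisely what makes the reparametrized pieces glue into a single $C^k$-curve, while the bound $\qqq_\infty(\wt{\phi})\le 2\,\qqq_\infty(\phi)$ is independent of how fine the partition of $[r,r']$ is chosen. Any attempt to run the same scheme with a $C^\dind$-seminorm for $\dind\ge 1$ would instead require controlling the higher derivatives of $\varrho$, which blow up as the mesh refines (cf.\ Remark~\ref{khdshdshsdjhsdhjsd}); genuinely different input would be needed there, and this is exactly why the hypothesis here is 0$\pkt$k-continuity rather than k-continuity.
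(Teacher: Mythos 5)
Your proposal is correct and follows essentially the same route as the paper: reparametrize $\phi$ by the bump-function primitive $\varrho$ adapted to the breakpoints, use the flatness of $\dot\varrho$ at the $t_p$ together with Lemma~\ref{sddssdsdsd} to land in $\DIDE^k_{[r,r']}$ with $\qqq_\infty(\dot\varrho\cdot\phi\cp\varrho)\leq 2\,\qqq_\infty(\phi)$, apply Lemma~\ref{opdfopfdopfdpoas}, and undo the substitution in the integral. The paper's version is identical in substance (with $\mm:=2\cdot\qq$ and the estimate stated at $\varrho(t)$ rather than at $t=\varrho(\tau)$), the gluing argument you sketch via Leibniz being carried out in detail in Appendix~\ref{appAaaaa}.
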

\begin{proof}
We let $\pp\leq\qq\in \SEM$ be as in Lemma \ref{opdfopfdopfdpoas} ($s\equiv 0$); and define $\mm:=2\cdot \qq$. 
Then, for $\phi\in \DP^k([r,r'],\mg)$ with $\mmm_\infty(\phi)\leq 1$ given, we choose $r=t_0<{\dots}<t_n=r'$ and $\phi[0],\dots,\phi[n-1]$ as in \eqref{opopooppo},  and fix $\mu[p]\colon  I_p\rightarrow G$ 
of class $C^{k+1}$ ($I_p\subseteq \RR$ an open interval containing $[t_p,t_{p+1}]$) with 
\begin{align*}
	\phi[p]=\wt{\phi}[p]|_{[t_p,t_{p+1}]}\qquad\text{for}\qquad \wt{\phi}[p]\equiv \Der(\mu[p])\qquad\qquad\forall\: 0\leq p\leq n-1.
\end{align*}
We construct $\rho$ and $\varrho$ as described above; and define $\varrho[p]\in C^\infty(I_p,\RR)$ by 
\begin{align*}
	\varrho[p]|_{(-\infty,t_p)\cap I_p}:=\varrho(t_p)\qquad\quad\varrho[p]|_{[t_p,t_{p+1}]} :=\varrho|_{[t_p,t_{p+1}]}\qquad\quad  \varrho[p]|_{(t_{p+1},\infty)\cap I_p}:=\varrho(t_{p+1})
\end{align*}
for $p=0,\dots,n-1$. It follows that (cf.\ Appendix \ref{appAaaaa}) $\psi:=\rho\cdot \rcK{\phi \cp\varrho}\in C^k([r,r'],\mg)$ holds, with 
\begin{align}
\label{zsazusazuzusazuzusazusazusazusa}
	\psi|_{[t_p,t_{p+1}]}=&\:\Der(\mu[p]\cp\varrho[p]|_{[t_p,t_{p+1}]})\in \DIDE^k_{[t_p,t_{p+1}]}\qquad\quad\forall\:p=0,\dots,n-1.
\end{align}
Then, Lemma \ref{sddssdsdsd} shows that $\psi\in \DIDE^k_{[r,r']}$ holds, with
\begin{align}
\label{podspodpodspods}
\begin{split}
	\textstyle\innt_r^t \psi &=\textstyle\innt_{t_{p}}^{t} \rho\cdot \rcK{\phi[p] \cp\varrho}\cdot   \innt_{t_{p-1}}^{t_{p}} \rho\cdot \rcK{\phi[p-1] \cp\varrho}  \cdot {\dots}\cdot \innt_{t_{0}}^{t_1} \rho\cdot \rcK{\phi[0] \cp\varrho}\\
	&\stackrel{\ref{subst}}{=}\textstyle\innt_{t_{p}}^{\varrho(t)} \phi[p] \cdot \innt_{t_{p-1}}^{t_p} \phi[p-1] \cdot {\dots} \cdot \innt_{t_0}^{t_1}\phi[0]\\
	&\textstyle=\innt_r^{\varrho(t)}\phi
\end{split}
\end{align} 
for each $t\in (t_p,t_{p+1}]$ and $0\leq p\leq n-1$. 
Since  
\begin{align*}
	\qqq_\infty(\psi)= 1/2\cdot \mmm_\infty(\psi)\leq \mmm_\infty(\phi)\leq  1
\end{align*}
holds by construction, Lemma \ref{opdfopfdopfdpoas} provides us with 
\begin{align*}
	\textstyle(\pp\cp\chart)(\innt_r^{\varrho(t)} \phi)\stackrel{\eqref{podspodpodspods}}{=}(\pp\cp\chart)(\innt_r^t \psi)\leq\int_r^t \qqq(\psi(s))\: \dd s\leq \int_r^t \mmm(\psi(s))\: \dd s = \int_r^{\varrho(t)} \mmm(\phi(s))\: \dd s;
\end{align*}
whereby the last step is due to \eqref{substitRI} and \eqref{opofdpopfd}.
\end{proof}

\section{Local $\boldsymbol{\mu}$-convexity}
\label{podposddpospodpods}
In this section, we show that 0-continuity can be encoded in a property of the Lie group multiplication. 
More specifically, we will show that
\begin{theorem}
\label{lfdskfddflkfdfd}
$G$ is {\rm 0}-continuous \deff $G$ is locally $\mu$-convex \deff $G$ is $\rm0\pkt\infty$-continuous.
\end{theorem}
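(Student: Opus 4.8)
The plan is to establish the three stated conditions as equivalent by closing the cycle
\[
\text{locally $\mu$-convex}\ \Longrightarrow\ \text{$0$-continuous}\ \Longrightarrow\ \text{$0\pkt\infty$-continuous}\ \Longrightarrow\ \text{locally $\mu$-convex}.
\]
The first implication is essentially free from Proposition \ref{aaapofdpofdpofdpofd}: given $\pp\in\SEM$, choose $\pp\leq\qq\in\SEM$ as there. For $\phi\in\DIDE^0_{[r,r']}$ (a special case of $\DP^0$, with the trivial partition) satisfying $\qqq_\infty(\phi)\leq\epsilon$ with $\epsilon\cdot|r'-r|\leq 1$, one has $\int_r^{r'}\qqq(\phi(s))\:\dd s\leq\epsilon\cdot|r'-r|\leq 1$, whence the proposition gives $(\pp\cp\chart)(\innt\phi)\leq\epsilon\cdot|r'-r|$. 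Letting $\epsilon\to 0$ shows that $\EVE^0_{[r,r']}$ is $C^0$-continuous at zero, so $G$ is $0$-continuous by Lemma \ref{klklllkjlaaa}. The second implication is trivial, since $\EVE^\infty_{[r,r']}=\EVE^0_{[r,r']}|_{\DIDE^\infty_{[r,r']}}$ and restriction preserves continuity.

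The substance is the last implication, which I would obtain from Lemma \ref{pofdspospods} (with $k\equiv\infty$ and the fixed interval $[0,1]$) by exhibiting a single piecewise integrable curve whose product integral realizes a prescribed product of chart preimages. Given $\uu\in\SEM$, let $\uu\leq\mm\in\SEM$ be as in Lemma \ref{pofdspospods} for $\pp\equiv\uu$, and let $\V\lleq\oo\in\SEM$ with $\mm\leq\oo$ be as in \eqref{cpocpoxjdsndscxaaabc} for $\vv\equiv\mm$, so that $\mmm(\dermapdiff(x,X))\leq\oo(X)$ whenever $\oo(x)\leq 1$. Fix $X_1,\dots,X_n\in E$ with $\oo(X_1)+{\dots}+\oo(X_n)\leq 1$. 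For each $i$ put $c_i\colon[0,1]\ni t\mapsto\chartinv(t\cdot X_i)$, which is well defined and smooth because $\oo(X_i)\leq 1$ forces $X_i\in\OB_{\oo,1}\subseteq\V$; by \eqref{opopopop1} its right logarithmic derivative is $\Der(c_i)(t)=\dermapdiff(t\cdot X_i,X_i)$, so $\mmm(\Der(c_i)(t))\leq\oo(X_i)$ for all $t$ (as $\oo(t\cdot X_i)\leq\oo(X_i)\leq 1$), while $\innt_0^1\Der(c_i)=c_i(1)\cdot c_i(0)^{-1}=\chartinv(X_i)$.

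I would then glue reparametrized copies of the $c_i$ over $[0,1]$. Choose partition points $0=s_0<s_1<{\dots}<s_n$ with $s_i-s_{i-1}=\oo(X_i)$, so that $s_n=\oo(X_1)+{\dots}+\oo(X_n)\leq 1$; on $[s_{i-1},s_i]$ place the piece $\Der(c_i\cp\varrho_i)=\dot\varrho_i\cdot\Der(c_i)\cp\varrho_i$ for the increasing affine $\varrho_i\colon[s_{i-1},s_i]\to[0,1]$ (using \eqref{fgfggf}), and set the curve to $0$ on $[s_n,1]$. Each piece lies in $\DIDE^\infty$ by Lemma \ref{evk}, so the resulting $\phi$ is in $\DP^\infty([0,1],\mg)$. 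Since $\dot\varrho_i=1/\oo(X_i)$, on the $i$-th piece $\mmm(\phi(t))\leq\dot\varrho_i\cdot\oo(X_i)=1$, hence $\mmm_\infty(\phi)\leq 1$; the substitution rule gives $\int_{s_{i-1}}^{s_i}\mmm(\phi(s))\:\dd s=\int_0^1\mmm(\Der(c_i)(u))\:\dd u\leq\oo(X_i)$, so $\int_0^1\mmm(\phi(s))\:\dd s\leq\oo(X_1)+{\dots}+\oo(X_n)$; and \eqref{defpio} evaluates $\innt_0^1\phi=\chartinv(X_n)\cdot{\dots}\cdot\chartinv(X_1)$. Lemma \ref{pofdspospods} now yields $(\uu\cp\chart)(\chartinv(X_n)\cdot{\dots}\cdot\chartinv(X_1))\leq\oo(X_1)+{\dots}+\oo(X_n)$, which after relabeling (the right-hand side is symmetric in the $X_i$) is precisely \eqref{aaajjhguoiuouoaaa}, so $G$ is locally $\mu$-convex with $\uu\leq\oo$.

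The main obstacle is exactly the balance in this construction: Lemma \ref{pofdspospods} demands the uniform bound $\mmm_\infty(\phi)\leq 1$, while I simultaneously need $\int_0^1\mmm(\phi)\:\dd s$ bounded by $\oo(X_1)+{\dots}+\oo(X_n)$ and the whole partition confined to one \emph{fixed} interval (otherwise the seminorm $\mm$ furnished by the lemma would deteriorate as $n$ grows). These are reconcilable precisely because the hypothesis $\oo(X_1)+{\dots}+\oo(X_n)\leq 1$ supplies exactly the ``time budget'' $s_n\leq 1$ needed to traverse each $c_i$ at unit speed. The only genuine technical nuisance is the degenerate case $\oo(X_i)=0$ (where $\dot\varrho_i$ is undefined): such a piece has $\mmm(\Der(c_i))\equiv 0$, so it may be assigned an arbitrarily short subinterval, and the boundary case $\sum_i\oo(X_i)=1$ with such pieces present is disposed of by a harmless scaling $X_i\mapsto(1-\eta)X_i$ together with continuity as $\eta\to 0$.
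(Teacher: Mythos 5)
Your proposal is correct and follows essentially the same route as the paper: the same cycle of three implications, with Proposition \ref{aaapofdpofdpofdpofd} and Lemma \ref{klklllkjlaaa} giving local $\mu$-convexity $\Rightarrow$ $0$-continuity, the trivial restriction for the second arrow, and Lemma \ref{pofdspospods} applied to a piecewise integrable curve glued from reparametrized segments $t\mapsto\chartinv(t\cdot X_i)$ traversed at unit $\mm$-speed for the converse (this is exactly the paper's Lemma \ref{hgffhhfhg}). The only divergence is your handling of the degenerate indices with $\oo(X_i)=0$: the paper avoids both them and the boundary case $\sum_i\oo(X_i)=1$ at once by working on $[0,2]$ and reserving a full extra unit of time for the zero pieces, which sidesteps the (fixable) subtlety in your scaling-and-limit argument of ensuring that the limiting product still lies in the chart domain $\U$.
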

\noindent
Here, 
$G$ is said to be {\bf locally $\boldsymbol{\mu}$-convex} \defff for each $\uu\in \SEM$, there exists some $\uu\leq \oo\in \SEM$, such that
\begin{align}
\label{aaajjhguoiuouo}
\begin{split}
	(\uu\cp\chart)(\chart^{-1}(X_1)\cdot {\dots}\cdot \chart^{-1}(X_n))\leq \oo(X_1)+{\dots}+\oo(X_n)
\end{split}
\end{align} 
holds for each $X_1,\dots,X_n\in E$ with $\oo(X_1)+{\dots}+\oo(X_n) \leq 1$. Due to the above Theorem \ref{lfdskfddflkfdfd}, this definition does not depend on the explicit choice of $\chart$.
\vspace{6pt}

\noindent
For instance,
\begin{example}
\label{exxxcon}
\begingroup
\setlength{\leftmargini}{17pt}
\begin{enumerate}
\item[]
\item
\label{exxxcon0}
$E\slash\Gamma$ is locally $\mu$-convex, for each discrete subgroup $\Gamma\subseteq E$, cf.\ Appendix \ref{QuotConstrained}. 
\item
\label{exxxcon1}
Banach-Lie groups are locally $\mu$-convex, cf.\ Proposition 14.6 in \cite{HGGG}, or Appendix \ref{BanachConstrained}.
\item
\label{exxxcon2}
The unit group\footnote{Confer \cite{HGWA} for a proof of the fact that $\MAU$ is a Lie group.} $\MAU$ of a 
continuous inverse algebra $\MA$ fulfilling the condition $(*)$ from \cite{HGIA} is locally $\mu$-convex, cf.\ Appendix \ref{HGConstrained}. We recall that the condition $(*)$ imposed on the algebra multiplication in \cite{HGIA} states that for each $\vv\in \SEM$, there exists some $\vv\leq \ww\in \SEM$ with
\begin{align}
\label{invACond}
	\vv(a_1\cdot {\dots} \cdot a_n)\leq \ww(a_1)\cdot{\dots}\cdot \ww(a_n)\qquad\quad\forall\: a_1,\dots,a_n\in \MA
\end{align}
for each $n\geq 1$.\footnote{In the Theorem proven in \cite{HGIA}, $(\MA,+)$ is additionally assumed to be Mackey complete. We will discuss this condition in Example \ref{opfdopdfpofddfop}.\ref{exxxxcon2} in Sect.\ \ref{COMPAPPR}.}
 \hspace*{\fill}$\ddagger$
\end{enumerate}
\endgroup
\noindent
\end{example}
\noindent
We break up the proof of Theorem \ref{lfdskfddflkfdfd} into the two directions.
\subsection{The Triangle Inequality}
We first show that \eqref{aaajjhguoiuouo} holds if $G$ is 0$\pkt\infty$-continuous. For this, we recall that
	\begin{align}
	\label{dovcxjdsfjgf}
		\Der(\chartinv\cp\gamma)=\dermapdiff(\gamma,\dot\gamma)\qquad\quad\forall\: \gamma\in C^{1}([r,r'],\V),\:\: [r,r']\in \COMP
	\end{align}
	holds, for $\w$ defined by \eqref{opopopop1}; and conclude from \eqref{cpocpoxjdsndscxaaabc} that
\begin{lemma}
\label{poposposaadccx}
	For each $\mm\in \SEM$, there exists some $\V\lleq\oo\in \SEM$ with $\mm\leq \oo$, such that  
\begin{align*}	
	\textstyle\mmm(\Der(\chartinv\cp\gamma))\leq \oo(\dot\gamma)\quad\:\:\text{holds for each}\quad\:\: \gamma\in \bigsqcup_{[r,r']\in \COMP}C^1([r,r'],E)\quad\:\:\text{with}\quad\:\: \im[\gamma]\subseteq\OB_{\oo,1}. 
\end{align*}	
\end{lemma}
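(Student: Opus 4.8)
The plan is to read off the claim directly from the pointwise estimate \eqref{cpocpoxjdsndscxaaabc} for the auxiliary map $\dermapdiff$, using the identity \eqref{dovcxjdsfjgf} to rewrite the right logarithmic derivative. Given $\mm\in\SEM$, I would apply \eqref{cpocpoxjdsndscxaaabc} with $\vv\equiv\mm$ and take $\oo$ to be the seminorm it produces (the $\ww$ there). This already delivers a seminorm with $\V\lleq\oo$ and $\mm\leq\oo$, together with the bound $\mmm(\dermapdiff(x,X))\leq\oo(X)$ for all $x\in\OB_{\oo,1}$ and $X\in E$; note that $\vvv=\mmm$ precisely because $\vv=\mm$.

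Next I would fix $[r,r']\in\COMP$ and $\gamma\in C^1([r,r'],E)$ with $\im[\gamma]\subseteq\OB_{\oo,1}$. Since $\V\lleq\oo$ amounts to $\OB_{\oo,1}\subseteq\V$, the composite $\chartinv\cp\gamma$ is a well-defined $C^1$-curve in $G$, so \eqref{dovcxjdsfjgf} applies and identifies $\Der(\chartinv\cp\gamma)(t)=\dermapdiff(\gamma(t),\dot\gamma(t))$ at every $t\in[r,r']$. Evaluating the bound from the first step at $x\equiv\gamma(t)\in\OB_{\oo,1}$ and $X\equiv\dot\gamma(t)$ then yields $\mmm(\Der(\chartinv\cp\gamma)(t))=\mmm(\dermapdiff(\gamma(t),\dot\gamma(t)))\leq\oo(\dot\gamma(t))$ for each $t$, which is the asserted (pointwise) inequality.

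I expect essentially no obstacle here: all the analytic content sits in \eqref{cpocpoxjdsndscxaaabc}, whose proof runs through the multilinear continuity Lemma \ref{kldskldsksdklsdl} applied to the smooth map $\dermap$ whose partial derivative is $\dermapdiff$ — that is where the real work was done upstream. The only two points requiring care are (i) matching the specialization $\vv\equiv\mm$ so that the seminorm $\vvv$ appearing in \eqref{cpocpoxjdsndscxaaabc} becomes the required $\mmm$, and (ii) observing that the hypothesis $\im[\gamma]\subseteq\OB_{\oo,1}$ is exactly what licenses inserting $x\equiv\gamma(t)$ into the $x$-uniform bound along the entire curve.
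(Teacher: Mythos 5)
Your argument is exactly the paper's: the proof given there reads ``Up to renaming seminorms, this is clear from \eqref{dovcxjdsfjgf} and \eqref{cpocpoxjdsndscxaaabc},'' which is precisely your specialization $\vv\equiv\mm$, $\oo\equiv\ww$ followed by the pointwise evaluation at $x\equiv\gamma(t)$, $X\equiv\dot\gamma(t)$. Correct and same route.
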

\begin{proof}
Up to renaming seminorms, this is clear from \eqref{dovcxjdsfjgf} and \eqref{cpocpoxjdsndscxaaabc}.
\end{proof}
We obtain that 
\begin{lemma}
\label{hgffhhfhg}
$G$ is locally $\mu$-convex if G is $\rm 0\pkt\infty$-continuous.
\end{lemma}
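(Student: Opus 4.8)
The plan is to realize the product $\chartinv(X_1)\cdots\chartinv(X_n)$ as the product integral $\innt_0^1\phi$ of a single piecewise integrable curve $\phi\in\DP^\infty([0,1],\mg)$, and then to read off \eqref{aaajjhguoiuouo} from the $C^0$-continuity estimate for such curves supplied by Lemma \ref{pofdspospods}. Concretely, given $\uu\in\SEM$, I would first invoke Lemma \ref{pofdspospods} (with $\pp\equiv\uu$, $k\equiv\infty$, and $[r,r']\equiv[0,1]$) to obtain $\uu\leq\mm\in\SEM$ such that $\mmm_\infty(\phi)\leq 1$ forces $(\uu\cp\chart)(\innt_0^\bullet\phi)\leq\int_0^\bullet\mmm(\phi(s))\:\dd s$; and then feed this $\mm$ into Lemma \ref{poposposaadccx} to obtain $\V\lleq\oo_0\in\SEM$ with $\mm\leq\oo_0$ controlling $\Der(\chartinv\cp\gamma)$ by $\oo_0(\dot\gamma)$ on curves with image in $\OB_{\oo_0,1}$. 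The sought seminorm is $\oo:=2\oo_0\in\SEM$ (scaling as in the proof of Lemma \ref{pofdspospods}), so that $\sum_j\oo(X_j)\leq 1$ leaves the budget $\sum_j\oo_0(X_j)\leq 1/2$.

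For the construction, set $Y_p:=X_{n-p}$ and choose breakpoints $0=t_0<\dots<t_n\leq 1$ whose gaps $\ell_p:=t_{p+1}-t_p$ satisfy $\ell_p\geq\oo_0(Y_p)$ and $\ell_p>0$; the slack above makes this possible with $\sum_p\ell_p\leq 1$ (pad the remainder $[t_n,1]$ by the zero curve if $t_n<1$). On $[t_p,t_{p+1}]$ let $\gamma_p(t):=\tfrac{t-t_p}{\ell_p}Y_p$ be the affine segment from $0$ to $Y_p$ and put $\phi[p]:=\Der(\chartinv\cp\gamma_p)$, which lies in $\DIDE^\infty_{[t_p,t_{p+1}]}$ since $\chartinv\cp\gamma_p$ is smooth (this is where $0\pkt\infty$-continuity, rather than mere $0\pkt 0$-continuity, is used). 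Because $\gamma_p(t_p)=0$ and $\gamma_p(t_{p+1})=Y_p$, the defining property $\EV_{[t_p,t_{p+1}]}(\Der(\mu))=\mu\cdot\mu^{-1}(t_p)$ gives $\innt_{t_p}^{t_{p+1}}\phi[p]=\chartinv(Y_p)$; hence, gluing the $\phi[p]$ into $\phi\in\DP^\infty([0,1],\mg)$, formula \eqref{defpio} telescopes to $\innt_0^1\phi=\chartinv(Y_{n-1})\cdots\chartinv(Y_0)=\chartinv(X_1)\cdots\chartinv(X_n)$.

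It then remains to bound $\mmm_\infty(\phi)$ and $\int_0^1\mmm(\phi(s))\:\dd s$. Since $\im[\gamma_p]\subseteq[0,1]\cdot Y_p\subseteq\OB_{\oo_0,1}$, Lemma \ref{poposposaadccx} yields $\mmm(\phi[p](t))\leq\oo_0(\dot\gamma_p)=\oo_0(Y_p)/\ell_p\leq 1$, so $\mmm_\infty(\phi)\leq 1$, while $\int_{t_p}^{t_{p+1}}\mmm(\phi[p](s))\:\dd s\leq\ell_p\cdot\oo_0(Y_p)/\ell_p=\oo_0(Y_p)$, whence $\int_0^1\mmm(\phi(s))\:\dd s\leq\sum_p\oo_0(Y_p)=\sum_j\oo_0(X_j)\leq\sum_j\oo(X_j)$. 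Applying Lemma \ref{pofdspospods} at $\bullet=1$ now gives $(\uu\cp\chart)(\chartinv(X_1)\cdots\chartinv(X_n))\leq\sum_j\oo(X_j)$, which is \eqref{aaajjhguoiuouo}. I expect the main obstacle to be exactly this final reconciliation: Lemma \ref{pofdspospods} demands the \emph{uniform} bound $\mmm_\infty(\phi)\leq 1$, whereas the natural data are the \emph{per-factor} budgets $\oo(X_j)$, which control only an integral. The device that resolves the tension is to let the length $\ell_p$ of the $p$-th subinterval track $\oo_0(Y_p)$, so that each chart-segment is traversed at $\oo_0$-speed $\leq 1$ while the lengths still sum to at most $1$; this is precisely what makes the integral-type conclusion of Lemma \ref{pofdspospods} usable and what forces the slack factor $2$, both to absorb vanishing $\oo_0(Y_p)$ and to keep the whole construction inside the fixed interval $[0,1]$.
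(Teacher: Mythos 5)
Your proposal is correct and is essentially the paper's own argument: both realize $\chartinv(X_1)\cdots\chartinv(X_n)$ as the product integral of a piecewise-smooth curve built from affine chart segments $\gamma_p(t)=\tfrac{t-t_p}{\ell_p}Y_p$ traversed at $\oo$-speed at most $1$, bound $\mmm$ pointwise via Lemma \ref{poposposaadccx}, and conclude with the $L^1$-type estimate of Lemma \ref{pofdspospods}. The only (immaterial) difference is bookkeeping: the paper works on $[0,2]$ with segment lengths $\delta_p=\oo(Y_p)$ and length $1/\mathrm{d}$ for the factors with $\oo(Y_p)=0$, whereas you stay on $[0,1]$ by rescaling $\oo:=2\oo_0$ and padding the lengths $\ell_p$ above $\oo_0(Y_p)$.
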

\begin{proof}
For $\uu\equiv \pp\in \SEM$ fixed, we let $\pp\leq \mm$ be as in Lemma \ref{pofdspospods} for $[r,r']\equiv[0,2]$ there; and choose $\mm\leq \oo\in \SEM$ as in Lemma \ref{poposposaadccx}. 
Then, for $X_1,\dots,X_n\in E$ with $\oo(X_1)+{\dots}+\oo(X_n)=: \varepsilon\leq 1$ fixed, we define $Y_p:=X_{n-p}$ for $p=0,\dots,n-1$ as well as $Y_n:=0$. We let $\emptyset\neq \JI\subseteq \{0,\dots,n\}$ denote the set of all indices $0\leq p\leq n$ with $\oo(Y_p)=0$; and denote its cardinality by $\mathrm{d}:=|\JI|\geq 1$. 
We define
$$
\delta_p:= 
\begin{cases}
\:1/\mathrm{d} &\text{for each}\quad\:\:\: p\in \JI\\
\oo(Y_{p})\:\:\: &\text{for each}\quad\:\:\:  p \in \{0,\dots,n\}-\JI;
\end{cases}
$$
and let $t_0:=0$, as well as $t_p:=\delta_0+{\dots}+\delta_{p-1}$ for $p=1,\dots,n+1$. We furthermore consider
\begin{align*}
	 \phi[p]:=\Der(\chartinv\cp \gamma[p])  \qquad\quad \text{with}\qquad\quad \gamma[p]\colon [t_p,t_{p+1}]\colon t\mapsto (t-t_p)\cdot \delta_{p}^{-1}\cdot Y_{p}
\end{align*}
for $p=0,\dots,n$; and define $\phi\in \DP^\infty([0,2],\mg)$ by $\phi|_{[1+\varepsilon,2]}:=0$, as well as
\begin{align*}
	\phi|_{[t_p,t_{p+1})}:=\phi[p]|_{[t_p,t_{p+1})}\qquad\quad\forall\: p=0,\dots,n. 
\end{align*}
Then, $\oo_\infty(\gamma[p])\leq 1$ holds by construction for each $0 \leq p\leq n$; so that  
 Lemma \ref{poposposaadccx} shows  
 $$
\mmm_\infty\big(\phi|_{[t_p,t_{p+1})}\big)\leq \delta_p^{-1}\cdot\oo(Y_p) =  
\begin{cases}
0 &\:\:\text{for}\quad\:\: p\in \JI\\
1 &\:\:\text{for}\quad\:\: p\in \{0,\dots,n\}-\JI.
\end{cases}
$$
We thus have $\mmm_\infty(\phi)\leq 1$, as well as 
\begin{align*}
	\textstyle\int \mmm(\phi(s))\:\dd s= \sum_{p\in \{0,\dots,n\}-\mathrm{J}}\int \mmm(\phi[p](s))\:\dd s \leq \sum_{p\in \{0,\dots,n\}-\mathrm{J}}\delta_p=\oo(X_1)+{\dots}+\oo(X_n)=\varepsilon;
\end{align*} 
so that Lemma \ref{pofdspospods} shows
\begin{align*}
	\varepsilon&\textstyle\geq  (\uu\cp\chart)(\innt\phi)\textstyle\\
	&\textstyle=(\uu\cp\chart)\big(\innt_{1+\varepsilon}^{2}\phi\cdot \innt_{t_{n}}^{t_{n+1}}\phi[n]\cdot {\dots}\cdot \innt_{t_0}^{t_1}\phi[0]\:\big)\\
	&\textstyle=(\uu\cp\chart)\big(\chart^{-1}(Y_n)\cdot {\dots}\cdot \chart^{-1}(Y_0)\big)\\[1pt]
	&\textstyle =(\uu\cp\chart)\big(\chart^{-1}(X_1)\cdot {\dots}\cdot \chart^{-1}(X_n)\big),
\end{align*}
from which the claim is clear. 
\end{proof}
\subsection{Continuity of the Integral}
Let us next show that $G$ is 0-continuous if it is locally $\mu$-convex. 
For this, we recall that 
\begin{align}
\label{pofdpofdpofdpofd}
	\textstyle\dot\gamma=\dermapinvdiff(\gamma,\phi)\qquad\quad\text{holds for}\qquad\quad\gamma:=\chart\cp \innt_r^\bullet\phi,
\end{align}
for each $\phi\in \DIDE_{[r,r']}^k$ with $\innt_r^\bullet\phi\in \U$; and conclude from \eqref{dssdsdsdsdsddsds} that
\begin{lemma}
\label{pofdpofdpofdpofdaaa}
For each $\oo\in \SEM$, there exists some $\V\lleq\ww\in \SEM$ with $\oo\leq \ww$, such that 
\begin{align*}
	\textstyle (\oo\cp\chart)\big(\innt_r^\bullet\phi\big)\leq \int_r^\bullet \www(\phi(s))\:\dd s \quad\:\:\:\text{holds for each}\quad\:\:\:\phi\in \DIDE_{[r,r']}\quad\:\:\:\text{with}\quad\:\:\: \innt_r^\bullet\phi\in\chartinv(\OB_{\ww,1}).
\end{align*}
\end{lemma}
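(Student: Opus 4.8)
The plan is to integrate the infinitesimal estimate \eqref{dssdsdsdsdsddsds} against Lemma \ref{ofdpofdpopssssaaaasfffff}, in exact analogy with the proof of Lemma \ref{poposposaadccx} (which did the same thing with \eqref{cpocpoxjdsndscxaaabc} but stopped at the pointwise level). First I would, for the prescribed $\oo\in\SEM$, apply \eqref{dssdsdsdsdsddsds} with $\vv\equiv\oo$ to obtain some $\V\lleq\ww\in\SEM$ with $\oo\leq\ww$ such that $\oo(\dermapinvdiff(x,X))\leq\www(X)$ holds for all $X\in\mg$ whenever $\ww(x)\leq 1$, i.e.\ whenever $x\in\OB_{\ww,1}$. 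Since $\V\lleq\ww$ gives $\OB_{\ww,1}\subseteq\V$, the hypothesis $\innt_r^\bullet\phi\in\chartinv(\OB_{\ww,1})$ forces $\innt_r^\bullet\phi\in\chartinv(\V)\subseteq\U$, so that the flow identity \eqref{pofdpofdpofdpofd} is legitimately applicable to $\phi$.

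Next I would fix $\phi\in\DIDE_{[r,r']}$ subject to $\innt_r^\bullet\phi\in\chartinv(\OB_{\ww,1})$, recall that the solution curve $\innt_r^\bullet\phi$ is of class $C^1$, and set $\gamma:=\chart\cp\innt_r^\bullet\phi\in C^1([r,r'],\V)$, noting $\gamma(r)=\chart(e)=0$. By \eqref{pofdpofdpofdpofd} we have $\dot\gamma=\dermapinvdiff(\gamma,\phi)$. The hypothesis on $\phi$ says precisely that $\ww(\gamma(s))\leq 1$, i.e.\ $\gamma(s)\in\OB_{\ww,1}$, for every $s\in[r,r']$; hence the estimate selected in the first step yields the pointwise bound $\oo(\dot\gamma(s))=\oo(\dermapinvdiff(\gamma(s),\phi(s)))\leq\www(\phi(s))$.

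Finally I would feed the $C^1$-curve $\gamma$ and the continuous seminorm $\oo$ on $E$ into Lemma \ref{ofdpofdpopssssaaaasfffff}; using $\gamma(r)=0$, this gives for each $t\in[r,r']$
\begin{align*}
	(\oo\cp\chart)\big(\innt_r^t\phi\big)=\oo(\gamma(t)-\gamma(r))\leq\int_r^t\oo(\dot\gamma(s))\:\dd s\leq\int_r^t\www(\phi(s))\:\dd s,
\end{align*}
which is the asserted inequality (read off along the free variable $\bullet$). I do not expect any genuine obstacle here; the only point demanding a little care is the seminorm bookkeeping, namely arranging that the single seminorm $\ww$ simultaneously realizes the infinitesimal bound \eqref{dssdsdsdsdsddsds}, keeps $\gamma$ inside the chart domain via $\V\lleq\ww$, and coincides with the ball $\OB_{\ww,1}$ appearing in the hypothesis $\chartinv(\OB_{\ww,1})$. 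Everything beyond this is the routine passage from the pointwise derivative estimate to its integrated form.
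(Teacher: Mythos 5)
Your argument is exactly the paper's proof: choose $\ww$ via \eqref{dssdsdsdsdsddsds} for $\vv\equiv\oo$, observe $\dot\gamma=\dermapinvdiff(\gamma,\phi)$ for $\gamma:=\chart\cp\innt_r^\bullet\phi$ from \eqref{pofdpofdpofdpofd}, and integrate the pointwise bound via Lemma \ref{ofdpofdpopssssaaaasfffff}. The seminorm bookkeeping you flag is handled correctly, so nothing is missing.
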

\begin{proof}
We choose $\oo\leq \ww\in \SEM$ as in \eqref{dssdsdsdsdsddsds} for $\vv\equiv \oo$ there. Then, the rest is clear from \eqref{pofdpofdpofdpofd} and Lemma \ref{ofdpofdpopssssaaaasfffff}.
\end{proof}
In addition to that, we observe that
\begin{lemma}
\label{cxcxcxxccxcxxccx}
For each $\ww\in \SEM$ and  $\phi\in \DIDE_{[r,r']}$, there exist $r=t_0<{\dots}<t_n=r'$ with
\begin{align*}
	\textstyle(\ww\cp\chart)\big(\innt_{t_p}^\bullet \phi|_{[t_p,t_{p+1}]}\big)\leq 1\qquad\quad\forall\: p=0,\dots,n-1.
\end{align*}
\end{lemma}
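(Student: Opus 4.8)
The plan is to reduce the statement to a compactness argument about the continuous curve $\mu := \innt_r^\bullet\phi$. Since $\phi\in\DIDE_{[r,r']}$, the curve $\mu := \EV_{[r,r']}(\phi)=\innt_r^\bullet\phi$ is continuous (indeed $C^1$) with $\mu(r)=e$, and by \ref{pogfpogf} one has $\innt_a^b\phi=\mu(b)\cdot\mu(a)^{-1}$ for all $r\le a\le b\le r'$ (apply the cocycle identity $\innt_r^b\phi=\innt_a^b\phi\cdot\innt_r^a\phi$ and solve). Hence the claim is equivalent to producing a partition $r=t_0<{\dots}<t_n=r'$ with $(\ww\cp\chart)\big(\mu(t)\cdot\mu(t_p)^{-1}\big)\leq 1$ for all $t\in[t_p,t_{p+1}]$ and all $p$.

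Next I would introduce the open neighbourhood $O := \{g\in\U\:|\:(\ww\cp\chart)(g)<1\}$ of $e$; it is open because $\ww\cp\chart$ is continuous on the open set $\U$, and it contains $e$ since $\chart(e)=0$ and $\ww(0)=0$. Crucially $O\subseteq\U$, so membership in $O$ guarantees that $\chart$ is defined on the relevant group elements and the estimate is meaningful. I would then consider the compact set $\Delta := \{(s,t)\in[r,r']\times[r,r']\:|\:s\leq t\}\subseteq\RR^2$ together with the continuous map $F\colon\Delta\to G,\ (s,t)\mapsto\mult(\mu(t),\inv(\mu(s)))=\innt_s^t\phi$. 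Since $F(s,s)=e\in O$ for every $s$, the diagonal $\{(s,s)\:|\:s\in[r,r']\}$ is contained in the open set $F^{-1}(O)\subseteq\Delta$.

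The key step, and the only genuine obstacle, is to pass from this pointwise information to a uniform mesh bound; because $G$ need not be metrizable I avoid ``uniform continuity of $\mu$'' and instead work in the metric space $\Delta$. The set $\Delta\setminus F^{-1}(O)$ is compact (closed in the compact $\Delta$) and disjoint from the compact diagonal, so the two have strictly positive Euclidean distance $\delta>0$ (the statement is trivial if either set is empty). I would then pick any partition $r=t_0<{\dots}<t_n=r'$ of mesh smaller than $\delta$: for $t\in[t_p,t_{p+1}]$ the point $(t_p,t)$ lies within distance $t-t_p<\delta$ of the diagonal point $(t_p,t_p)$, hence cannot belong to $\Delta\setminus F^{-1}(O)$, so $(t_p,t)\in F^{-1}(O)$.

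Consequently $\innt_{t_p}^t\phi=F(t_p,t)\in O$, which gives $(\ww\cp\chart)\big(\innt_{t_p}^t\phi\big)<1\leq 1$ for every $t\in[t_p,t_{p+1}]$ and every $p=0,\dots,n-1$, establishing the lemma. Everything apart from the separation-of-compacta step is routine bookkeeping; an equivalent route to the uniform $\delta$ would be to cover the diagonal by finitely many continuity boxes of $F$ and extract a Lebesgue number, but the disjoint-compacta argument is the cleanest.
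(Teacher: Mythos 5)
Your proof is correct and takes essentially the same route as the paper: the paper also reduces the claim to the continuous map $(t,s)\mapsto \mu(t)\cdot\mu(s)^{-1}$ being equal to $e$ on the diagonal and then invokes compactness of $[r,r']$ to get a uniform $\delta$ such that $\innt_t^{t+s}\phi$ stays in a prescribed neighbourhood of $e$ for $0\leq s\leq\delta$. Your disjoint-compacta distance argument is just an explicit implementation of that uniformity step (the paper extends $\mu$ to an open interval first, which you avoid by working on the compact triangle directly), so there is nothing substantive to add.
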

\begin{proof}
	We fix $\mu\colon I\rightarrow G$ ($I\subseteq \RR$ open with $[r,r']\subseteq I$) of class $C^1$ with $\Der(\mu)|_{[r,r']}=\phi$, choose $d>0$ such small that $K_d\equiv [r-d,r'+d]\subseteq I$ holds, and define 
\begin{align*}
	\alpha\colon I\times I\ni(t,s)\mapsto \mu(t)\cdot \mu(s)^{-1}\in G.  
\end{align*}	
Since $[r,r']$ is compact, and since $\alpha$ is continuous with $\alpha(t,t)=e$ for each $t\in [r,r']$, to each open neighbourhood $U$ of $e$, there exists some $0<\delta_U\leq d$, such that 
	\begin{align*}
	\textstyle U\ni \alpha(t+s,t)=\textstyle \innt_t^{t+s}\Der(\mu)\qquad\quad\forall\: t\in [r,r'],\:\:0\leq s\leq \delta_U
	\end{align*}
	holds; from which the claim is clear. 
\end{proof}
We conclude from Lemma \ref{pofdpofdpofdpofdaaa} and Lemma \ref{cxcxcxxccxcxxccx} that 
\begin{proposition}
\label{aaapofdpofdpofdpofd}
Suppose that $G$ is locally $\mu$-convex. Then, for each $\pp\in \SEM$, there exists some $\pp\leq \qq\in \SEM$, such that
\begin{align*}
	\textstyle\int\qqq(\phi(s))\:\dd s \leq 1\quad\:\text{for}\quad\:\phi\in \DP^0([r,r'],\mg)\qquad\:\:\Longrightarrow\qquad\:\:	\textstyle(\pp\cp\chart)\big(\innt_r^\bullet\phi\big)\leq \int_r^\bullet \qqq(\phi(s))\:\dd s,
\end{align*} 
for each $[r,r']\in \COMP$.
\end{proposition}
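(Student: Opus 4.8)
The plan is to produce the required seminorm $\qq$ by chaining the local $\mu$-convexity condition \eqref{aaajjhguoiuouo} with the two preceding lemmas, and then to exploit the multiplicativity of the product integral \eqref{defpio} together with its refinement-independence. First I would fix $\pp\in\SEM$ and feed $\uu\equiv\pp$ into \eqref{aaajjhguoiuouo}, obtaining some $\pp\leq\oo\in\SEM$ such that
\begin{align*}
	(\pp\cp\chart)(\chartinv(X_1)\cdot{\dots}\cdot\chartinv(X_n))\leq\oo(X_1)+{\dots}+\oo(X_n)
\end{align*}
whenever $\oo(X_1)+{\dots}+\oo(X_n)\leq 1$. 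Then I would apply Lemma \ref{pofdpofdpofdpofdaaa} to this $\oo$ to obtain $\V\lleq\ww\in\SEM$ with $\oo\leq\ww$ such that $(\oo\cp\chart)(\innt_a^\bullet\chi)\leq\int_a^\bullet\www(\chi(s))\:\dd s$ for every $\chi\in\DIDE_{[a,b]}$ with $\innt_a^\bullet\chi\in\chartinv(\OB_{\ww,1})$; and set $\qq:=\ww$, so that $\qqq=\www$.

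Now let $[r,r']\in\COMP$ and $\phi\in\DP^0([r,r'],\mg)$ with $\int\qqq(\phi(s))\:\dd s\leq 1$ be given, written as $\phi|_{(t_p,t_{p+1})}=\phi[p]|_{(t_p,t_{p+1})}$ with $\phi[p]\in\DIDE_{[t_p,t_{p+1}]}$ as in \eqref{opopooppo}. Applying Lemma \ref{cxcxcxxccxcxxccx} to each piece $\phi[p]$ with the seminorm $\ww$, I would refine the partition to $r=\tau_0<{\dots}<\tau_N=r'$ so that each $[\tau_j,\tau_{j+1}]$ lies inside a single $[t_p,t_{p+1}]$ (whence $\phi|_{[\tau_j,\tau_{j+1}]}\in\DIDE_{[\tau_j,\tau_{j+1}]}$ by the restriction rule) and $\innt_{\tau_j}^t\phi\in\chartinv(\OB_{\ww,1})$ for all $t\in[\tau_j,\tau_{j+1}]$. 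By refinement-independence of \eqref{defpio} and the factorization \ref{pogfpogf}, for $t\in(\tau_p,\tau_{p+1}]$ the product integral then factors as $\innt_r^t\phi=\chartinv(Y_p)\cdot\chartinv(Y_{p-1})\cdot{\dots}\cdot\chartinv(Y_0)$, where $Y_p:=\chart(\innt_{\tau_p}^t\phi)$ and $Y_j:=\chart(\innt_{\tau_j}^{\tau_{j+1}}\phi)$ for $j<p$; each factor lies in $\U$ because the corresponding sub-integral lies in $\chartinv(\OB_{\ww,1})$.

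The last step combines the two estimates. Lemma \ref{pofdpofdpofdpofdaaa}, applied on each subinterval, gives $\oo(Y_j)\leq\int_{\tau_j}^{\tau_{j+1}}\qqq(\phi(s))\:\dd s$ for $j<p$ and the analogous partial bound $\oo(Y_p)\leq\int_{\tau_p}^t\qqq(\phi(s))\:\dd s$; summing telescopically yields $\oo(Y_0)+{\dots}+\oo(Y_p)\leq\int_r^t\qqq(\phi(s))\:\dd s\leq\int\qqq(\phi(s))\:\dd s\leq 1$. Hence the hypothesis of the local $\mu$-convexity estimate for $\oo$ is met, and it delivers $(\pp\cp\chart)(\innt_r^t\phi)\leq\oo(Y_0)+{\dots}+\oo(Y_p)\leq\int_r^t\qqq(\phi(s))\:\dd s$, which is exactly the asserted inequality evaluated at $\bullet=t$. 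The one delicate point I expect is the bookkeeping in the middle paragraph, namely verifying that the refinement keeps every sub-integral inside the chart domain and that passing to the finer partition does not alter the value of $\innt_r^t\phi$; both are consequences of Lemma \ref{cxcxcxxccxcxxccx} and the standard refinement argument underlying \eqref{defpio}, so I anticipate no obstruction beyond careful indexing.
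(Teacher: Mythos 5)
Your proposal is correct and follows essentially the same route as the paper: the same chain $\pp\leq\oo\leq\ww=\qq$ via \eqref{aaajjhguoiuouo} and Lemma \ref{pofdpofdpofdpofdaaa}, the same refinement via Lemma \ref{cxcxcxxccxcxxccx}, and the same final application of local $\mu$-convexity to the factors of the product integral. The only (immaterial) difference is that you treat an intermediate time $t$ directly by including the partial factor $Y_p=\chart(\innt_{\tau_p}^t\phi)$, whereas the paper first reduces to the endpoint case by noting that restrictions of $\DP^0$-curves are again $\DP^0$-curves.
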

\begin{proof}
For $\uu\equiv\pp$ fixed, we let $\uu\leq \oo\in \SEM$ be as in \eqref{aaajjhguoiuouo}; and choose $\oo\leq\ww\equiv \qq\in \SEM$ as in Lemma \ref{pofdpofdpofdpofdaaa}. 
Then, since $\phi|_{[\ell,\ell']}\in \DP^0([\ell,\ell'],\mg)$ holds for each $\phi\in \DP^0([r,r'],\mg)$ and $\COMP\ni [\ell,\ell']\subseteq [r,r']\in \COMP$,  
the claim follows if we show that 
\begin{align*}
	\textstyle\int\www(\phi(s))\:\dd s \leq 1\quad\:\text{for}\:\quad\phi\in \bigsqcup_{[r,r']\in \COMP}\DP^0([r,r'],\mg) \qquad\:\:\Longrightarrow\qquad\:\: \textstyle(\uu\cp\chart)\big(\innt\phi\big)\leq \int \www(\phi(s))\:\dd s.
\end{align*}
To verify this, we fix $\phi\in \DP^0([r,r'],\mg)$ with $\int \www(\phi(s))\:\dd s\leq 1$; and let $r=t_0<{\dots}<t_n=r'$ as well as $\phi[0],\dots,\phi[n-1]$ be as in \eqref{opopooppo}. By Lemma \ref{cxcxcxxccxcxxccx}, we can refine this decomposition in such a way  that
\begin{align*} 
	\textstyle\mu[p]\colon [t_p,t_{p+1}]\ni t\mapsto \innt_{t_p}^{t}\phi[p]\in \chartinv(\OB_{\ww,1}) \qquad\quad\forall\: p=0,\dots,n-1
\end{align*}
holds; so that Lemma \ref{pofdpofdpofdpofdaaa} shows
\begin{align*}
	\textstyle(\oo\cp\chart)(\mu[p](t_{p+1}))\leq \int_{t_p}^{t_{p+1}} \www(\phi(s))\:\dd s\qquad\quad\forall\: p=0,\dots, n-1. 
\end{align*}
We define $X_{n-p}:= (\chart\cp\mu[p])(t_{p+1})$ for each $0\leq p\leq n-1$; and obtain 
\begin{align*}
	\textstyle\oo(X_1)+{\dots}+\oo(X_n)\leq\int\www(\phi(s))\:\dd s\leq 1.
\end{align*}
Then, \eqref{aaajjhguoiuouo} provides us with
\begin{align*}
	\textstyle(\uu\cp\chart)(\innt \phi)
	&=(\uu\cp\chart)(\chart^{-1}(X_1)\cdot {\dots}\cdot \chart^{-1}(X_n))\textstyle\leq \oo(X_1)+{\dots}+\oo(X_n)\leq\int\www(\phi(s))\:\dd s;
\end{align*}
which proves the claim.
\end{proof}
We are ready for the 
\begin{proof}[Proof of Theorem \ref{lfdskfddflkfdfd}]
	Clearly, $G$ is $\rm 0\pkt\infty$-continuous if G is 0-continuous. 
	Moreover, if $G$ is $\rm 0\pkt\infty$-continuous, then $G$ is locally $\mu$-convex by Lemma \ref{hgffhhfhg}. Finally, if $G$ is locally $\mu$-convex, then $\EVE_{[0,1]}^0$ is $C^0$-continuous at zero by Proposition \ref{aaapofdpofdpofdpofd}; so that $G$ is $0$-continuous by Lemma \ref{klklllkjlaaa}.
\end{proof} 

\section{Completeness and Approximation}
\label{COMPAPPR}
In this section, we discuss completeness properties  of Lie groups; and prove certain approximation statements for continuous-, and Lipschitz curves. Both will be relevant for our investigation of semiregularity in Sect.\ \ref{sec:confined}. 
\subsection{Completeness Conditions}
\label{jjdsjksdljdsjldsljdsljdsds}
A sequence $\{g_n\}_{n\in \NN}\subseteq G$ is said to be a 
\begingroup
\setlength{\leftmargini}{12pt}
\begin{itemize}
\item
{\bf Cauchy sequence} \defff for each $\pp\in \SEM$ and $\varepsilon>0$, there exists some $p\in \NN$ with
\begin{align}
\label{djdslkjdsdslkdsklds}
	(\pp\cp\chart)(g_m^{-1}\cdot g_n)\leq \varepsilon\qquad\quad\forall\: m,n\geq p.
\end{align}
We then clearly can assume that $g_m^{-1}\cdot g_n\in \U$ holds for all $m,n\in \NN$ in the following.
\item
{\bf Mackey-Cauchy sequence} \defff 
 \begin{align}
\label{psdopsdopds}
 	(\pp\cp\chart)(g^{-1}_m\cdot g_{n})\leq \mackeyconst_\pp\cdot \lambda_{m,n}\qquad\quad\hspace{2.4pt}\forall\: m,n\geq \mackeyindex_\pp,\:\:\pp\in\SEM
 \end{align}
 holds for certain $\{\mackeyconst_\pp\}_{\pp\in \SEM}\subseteq \RR_{\geq 0}$, $\{\mackeyindex_\pp\}_{\pp\in \SEM}\subseteq \NN$, and $\RR_{\geq 0}\supseteq \{\lambda_{m,n}\}_{(m,n)\in \NN\times \NN}\rightarrow 0$.
 
 Clearly, each Mackey-Cauchy sequence is a Cauchy sequence.
\end{itemize}
\endgroup
\begin{remark}
\label{popodspodspodssds}
	It is straightforward from Lemma \ref{kldskldsksdklsdl} and Lemma \ref{ofdpofdpopssssaaaasfffff} (applied to coordinate changes) that these definitions are independent of the explicit choice of $\chart$, cf.\ Appendix \ref{Mackeyind}. 
\end{remark}
\noindent
 We say that $G$ is 
 \begingroup
\setlength{\leftmargini}{12pt}
\begin{itemize}
\item
{\bf sequentially complete} \defff each Cauchy sequence in $G$ converges in $G$.
\item
{\bf Mackey complete} \defff each Mackey-Cauchy sequence in $G$ converges in $G$.
\end{itemize}
\endgroup
\noindent
We say that a locally convex vector space $F$ is sequentially\slash Mackey complete \defff $F$ is sequentially\slash Mackey complete when considered as the Lie group $(F,+)$. Obviously, these definitions coincide with the standard definitions given in the literature.
\begin{remark}
\label{confkjdsjdskjsdkjds}
\begingroup
\setlength{\leftmargini}{17pt}
\begin{enumerate}
\item[]
\item
\label{confev12}
Since each Mackey-Cauchy sequence is a Cauchy sequence, sequentially completeness of $G$ implies Mackey completeness of $G$.
\item
\label{confev22}
	It is straightforward from the definitions that a Cauchy\slash{Mackey-Cauchy} sequence converges \deff one of its subsequences converges.
\item
\label{confev23}
	If $\{g_n\}_{n\in \NN}\subseteq G$ is a Cauchy\slash{Mackey-Cauchy} sequence, then $\{h\cdot g_n\}_{n\in \NN}\subseteq G$ is a Cauchy\slash{Mackey-Cauchy} sequence for each $h\in G$; and (evidently) $\{g_n\}_{n\in \NN}$ converges \deff $\{h\cdot g_n\}_{n\in \NN}$ converges for each $h\in G$. 
\item
\label{confev24}
If $\{g_n\}_{n\in \NN}\subseteq G$ is a Cauchy\slash{Mackey-Cauchy} sequence, and $U\subseteq G$ an open neighbourhood of $e$, then there exists some $q\in \NN$ with $\{g^{-1}_q\cdot g_n\}_{n\geq q}\subseteq U$. 
	  
Thus, in order to show that $G$ is sequentially\slash{Mackey-Cauchy}, by the previous two points, it suffices to verify convergence of each Cauchy\slash{Mackey-Cauchy} sequence that is contained in a fixed open neighbourhood $U$ of $e$.
 \hspace*{\fill}$\ddagger$
\end{enumerate}
\endgroup
\end{remark}
\begin{example}
\label{opfdopdfpofddfop}
\begingroup
\setlength{\leftmargini}{17pt}
\begin{enumerate}
\item[]
\item
\label{exxxxcon0}
Let $\Gamma$ be a discrete subgroup of $(E,+)$. Then, $E$ is sequentially\slash{Mackey} complete \deff $E\slash \Gamma$ is sequentially\slash{Mackey} complete,   
cf.\ Appendix \ref{QuotientMackey}. 
\item
\label{exxxxcon1}
Banach Lie groups are sequentially complete, cf.\ Appendix \ref{BanachMackey}.
\item
\label{exxxxcon2}
The unit group $\MAU$ of a continuous inverse algebra $\MA$ fulfilling the condition $(*)$ from \cite{HGIA} (i.e., condition  \eqref{invACond}) is sequentially/Mackey complete if $(\MA,+)$ is sequentially/Mackey complete,  
cf.\ Appendix \ref{HGMackey}.
 \hspace*{\fill}$\ddagger$
\end{enumerate}
\endgroup
\noindent
\end{example}
\noindent
We now are going to show that
\begin{theorem}
\label{dsjkhjsdjhsd}
$G$ is Mackey complete if $G$ is $C^\infty$-semiregular.
\end{theorem}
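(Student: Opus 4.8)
The plan is to realise a given Mackey--Cauchy sequence as the endpoint value of the product integral of a single smooth curve, and then to extract convergence from $C^\infty$-semiregularity. First I would reduce: let $\{g_n\}_{n\in\NN}\subseteq G$ be Mackey--Cauchy. By Remark~\ref{confkjdsjdskjsdkjds}.\ref{confev22} it suffices to produce one convergent subsequence, and by Remark~\ref{confkjdsjdskjsdkjds}.\ref{confev24} we may assume all relevant products lie in $\U$. I fix the partition $a_i:=1-2^{-i}$ of $[0,1)$, with lengths $\ell_i:=a_{i+1}-a_i=2^{-(i+1)}$, and a smooth profile $\beta\colon[0,1]\to[0,1]$ that is $\equiv 0$ near $0$, $\equiv 1$ near $1$, and has $\beta^{(m)}(0)=\beta^{(m)}(1)=0$ for all $m\geq 1$ (e.g.\ $\beta(s)=\int_0^s\rhon$ with $\rhon$ as in \eqref{odsposdposdpopospoadsasa}). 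For a subsequence $\{g_{n_i}\}$ still to be chosen, I put $h_i:=g_{n_{i+1}}^{-1}g_{n_i}$ and $X_i:=\chart(h_i)$, and define $\phi$ on each $[a_i,a_{i+1}]$ by $\phi|_{[a_i,a_{i+1}]}:=\Der(\chartinv\cp\gamma_i)$ with $\gamma_i(t):=\beta\big((t-a_i)/\ell_i\big)\cdot X_i$. Since $\chartinv\cp\gamma_i$ runs from $e$ to $h_i$, the definition of $\EV$ gives $\innt_{a_i}^{a_{i+1}}\phi=h_i$; and because all derivatives of $\gamma_i$ vanish at the nodes, the pieces glue to a curve $\phi\in C^\infty([0,1),\mg)$. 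The orientation $h_i=g_{n_{i+1}}^{-1}g_{n_i}$ is chosen precisely so that these are exactly the increments controlled by \eqref{psdopsdopds}, and so that the product formula \ref{pogfpogf} telescopes: $\innt_0^{a_{j+1}}\phi=h_j h_{j-1}\cdots h_0=g_{n_{j+1}}^{-1}g_{n_0}$.

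The heart of the argument, and the step I expect to be the main obstacle, is showing that $\phi$ extends smoothly across the accumulation point $t=1$ with $\phi^{(m)}(1)=0$ for all $m$. Writing $\phi|_{[a_i,a_{i+1}]}=\dermapdiff(\gamma_i,\dot\gamma_i)$ via \eqref{dovcxjdsfjgf} and differentiating with Corollary~\ref{fddfd}, each $\phi^{(m)}$ is a finite sum of terms $\dermapdiff[q]\big(\gamma_i,\gamma_i^{(z_1)},\dots,\gamma_i^{(z_{q+1})}\big)$ with $z_l\leq m+1$. Since $\gamma_i^{(z)}=\ell_i^{-z}\,\beta^{(z)}\big((\cdot-a_i)/\ell_i\big)\,X_i$, the multilinear estimate \eqref{omegakll} (valid once $\ww(X_i)\leq 1$, where $\ww=\ww(\pp,m)\in\SEM$) yields a bound of the shape $\sup_{[a_i,a_{i+1}]}\ppp(\phi^{(m)})\leq C_{\pp,m}\,\ell_i^{-(m+1)}\,\ww(X_i)$. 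Now the Mackey estimate \eqref{psdopsdopds} controls $\ww(X_i)=(\ww\cp\chart)\big(g_{n_{i+1}}^{-1}g_{n_i}\big)\leq\mackeyconst_\ww\cdot\lambda_{n_{i+1},n_i}$, and since $\lambda_{m,n}\to 0$ I can choose the subsequence inductively so that $\lambda_{n_{i+1},n_i}\leq \ell_i^{\,i+1}2^{-i}$. For any fixed $\pp$ and $m$ this gives $\ell_i^{-(m+1)}\,\ww(X_i)\leq \mackeyconst_\ww\,\ell_i^{\,i-m}2^{-i}\to 0$ (in particular $\ww(X_i)\to 0$, so the bound applies for all large $i$), whence $\sup_{[a_i,a_{i+1}]}\ppp(\phi^{(m)})\to 0$. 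Thus every $\phi^{(m)}$ extends continuously to $[0,1]$ by $0$, and the fundamental theorem of calculus \eqref{isdsdoisdiosd} promotes this to $\phi\in C^\infty([0,1],\mg)$. The whole point of passing to a subsequence is exactly to make the decay of $\ww(X_i)$ beat every power $\ell_i^{-(m+1)}$ simultaneously, for all seminorms and all orders at once.

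Finally I would invoke $C^\infty$-semiregularity: $\phi\in C^\infty([0,1],\mg)=\DIDE^\infty_{[0,1]}$, so $\mu:=\EV_{[0,1]}(\phi)\in C^1([0,1],G)$ is well defined with $\mu(0)=e$, and for each $j$ Lemma~\ref{sddssdsdsd} applied on $[0,a_{j+1}]$ gives $\mu(a_{j+1})=\innt_0^{a_{j+1}}\phi=g_{n_{j+1}}^{-1}g_{n_0}$. Continuity of $\mu$ and $a_{j+1}\to 1$ then force $g_{n_{j+1}}^{-1}g_{n_0}\to\mu(1)$, so by continuity of multiplication and inversion $g_{n_{j+1}}\to g_{n_0}\cdot\mu(1)^{-1}\in G$. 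Hence the subsequence $\{g_{n_j}\}$ converges, and by Remark~\ref{confkjdsjdskjsdkjds}.\ref{confev22} the entire Mackey--Cauchy sequence converges; therefore $G$ is Mackey complete.

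To summarise the difficulty: the construction of $\phi$ and the telescoping identity are routine once the orientation is chosen to match \eqref{psdopsdopds}; the only genuinely delicate point is the simultaneous control of all derivatives of $\phi$ near the accumulation point $t=1$, which is what dictates both the explicit bump construction (infinite-order flatness at the nodes) and the inductive thinning of the sequence so that $\lambda_{n_{i+1},n_i}$ decays faster than every fixed power of $\ell_i$.
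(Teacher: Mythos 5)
Your proposal is correct and follows essentially the same route as the paper's proof: the same dyadic partition of $[0,1)$, the same bump-reparametrized coordinate segments joining $e$ to $g_{n_{i+1}}^{-1}g_{n_i}$ (your $\gamma_i=\beta((\cdot-a_i)/\ell_i)\cdot X_i$ is literally the paper's $\gamma_n\cp\varrho_n$), the same multilinear estimate \eqref{omegakll} on the derivatives, and the same crucial thinning of the subsequence so that the seminorm-independent rates $\lambda_{n_{i+1},n_i}$ beat every fixed power of $\ell_i^{-1}$ uniformly in $\pp$. The only (harmless) difference is at $t=1$: the paper explicitly verifies the stronger decay $\phi^{(m)}(h)/(1-h)\to 0$ to get differentiability there, whereas you get smoothness at the endpoint from continuity of all $\phi^{(m)}$ via the fundamental theorem of calculus, which is an equally valid finish.
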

\begin{remark}
\label{sequrem}
\begingroup
\setlength{\leftmargini}{17pt}
\begin{enumerate}
\item[]
\item
\label{sequrem1}
The idea of the proof of Theorem \ref{dsjkhjsdjhsd} is to construct some $\phi\in C^\infty([0,1],\mg)$ whose integral $\innt\phi$ is the limit of a (subsequence of a) given Mackey-Cauchy sequence $\{g_n\}_{n\in\NN}\subseteq G$. Roughly speaking,  
we will use the substitution formula \ref{subst} in order to glue together smooth curves whose integrals equal $g_n^{-1}\cdot g_{n-1}$ via suitable bump functions. Here, we will use that, passing to a subsequence if necessary, we can achieve that $(\pp\cp \chart)(g_n^{-1}\cdot g_{n-1})$ decreases suitably fast; namely, (up to a factor $\mackeyconst_\pp$) in the same way for all seminorms -- This ensures that the so-constructed $\phi$ is defined and smooth at $1$ (where all of its derivatives must necessarily be zero).
\item
\label{sequrem2}
An analogous result cannot hold for the $C^0$-semiregular case; i.e., we cannot have that $C^0$-semiregularity implies sequentially completeness. This can be seen immediately by considering the special situation where $G$ equals a Hausdorff locally convex vector space $(E,+)$ as then integrability of all continuous curves -- which is \emph{integral completeness} in the sense of \cite{HGGG} -- is equivalent to the ``metric convex compactness property'' \cite{Weiz} that, in general, is strictly weaker than sequentially completeness \cite{Voigt}, cf.\ proof of Theorem C.(d) in \cite{HGGG}.  

Indeed, the strategy, sketched in \ref{sequrem1} for the $C^\infty$-semiregular case does not work out in the $C^0$-semiregular situation; i.e., given a Cauchy sequence $\{g_n\}_{n\in\NN}\subseteq G$, we cannot apply the same procedure to construct some $\phi\in C^0([0,1],\mg)$ whose integral is the limit of (a subsequence of) $\{g_n\}_{n\in \NN}$. The problem is that by passing to a subsequence, we can only assure that $\lim_{t\rightarrow 1}(\ppp\cp\phi)(t)=0$ holds for finitely many seminorms $\pp\in \SEM$, but not for all of them. \hspace*{\fill}$\ddagger$ 
\end{enumerate}
\endgroup
\noindent
\end{remark}
\noindent 
Now, before we can prove Theorem \ref{dsjkhjsdjhsd}, we first need some preparation:
\begingroup
\setlength{\leftmargini}{12pt}
\begin{itemize}
\item
For $\gamma\colon [r,r']\ni t\rightarrow |t-r|\cdot Y$ with $[r,r']\cdot Y\subseteq\V$, we have
\begin{align*}
	\phi:=\Der(\chartinv\cp\gamma)=
	\dermapdiff(\gamma,Y)=
	\omega[0](\gamma,Y)\in \DIDE_{[r,r']}^\infty.
\end{align*}
\vspace{-18pt}
\item
	For $\varrho\colon [r,r']\rightarrow [r,r']$ smooth, $p\in \NN$, and $\rho\equiv\dot\varrho$, we define
	\begin{align*}
	\textstyle C[\rho,p]:=\max_{0\leq m,n \leq p}(\sup\{ |\rho^{(m)}(t)|^{n+1}\:| \: t\in [r,r']\})\qquad\quad \forall\: p\in \NN;
\end{align*}
	and obtain from \ref{chainrule}, \ref{productrule} that 
\begin{align*}
	\textstyle(\rho\cdot \rcK{\phi\cp\varrho})^{(p)}&\textstyle=\sum_{q,m,n=0}^{p} h_p(q,m,n)\cdot \big(\he\rho^{(m)}\big)^{n+1}\cdot \dermapdiff[q](\gamma\cp\varrho,Y,\dots,Y)\\[1pt]
	&\leq (p+1)^3\cdot C[\rho,p]\cdot  \dermapdiff[q](\gamma\cp\varrho,Y,\dots,Y)
\end{align*}
holds, for a map $h_p\colon (0,\dots,p)^3\rightarrow \{0,1\}$ that is independent of $\varrho,\rho,Y$.
\item
\label{assaassaasa3}
For $\vv\in \SEM$, we choose $\V\lleq \ww\in \SEM$ with $\vv\leq \ww$ as in \eqref{omegakll}; and conclude that
\begin{align*}
	\ww(Y),\: \ww(\gamma\cp\varrho)\equiv \ww(|\varrho-r|\cdot Y)\leq 1\qquad\text{implies}\qquad \vvv\big((\rho\cdot \rcK{\phi\cp\varrho})^{(q)}\big)\leq (p+1)^3\cdot C[\rho,p]\cdot \ww(Y)
\end{align*}
for $0\leq q\leq p$, for each fixed $p\in \NN$.
\end{itemize}
\endgroup
\noindent
Let now $\rhon\colon [0,1]\rightarrow [0,2]$ be as in Sect.\ \ref{opsdpods}, cf.\ \eqref{odsposdposdpopospoadsasa}; and suppose 
that we are given $\{Y_n\}_{n\in \NN}\subseteq \V$, as well as $\{t_n\}_{n\in \NN}\subseteq [0,1]$ strictly increasing with $t_0=0$. 
\vspace{6pt}

\noindent
Then, for each $n\in \NN$,
\begingroup
\setlength{\leftmargini}{12pt}
\begin{itemize}
\item
we let $\delta_n:=t_{n+1}-t_n$, and define 
\begin{align*}
	\kappa_n\colon [t_n,t_{n+1}]\ni t\mapsto \delta_n^{-1}\cdot |t-t_n|\in [0,1]\qquad\quad\text{as well as}\qquad\quad\gamma_n\colon [t_n,t_{n+1}]\ni t\mapsto |t-t_n|\cdot Y_n. 
\end{align*}
\vspace{-19pt}
\item
we let $\phi_n:=\Der(\chartinv\cp\gamma_n)$, and define 
\begin{align*}
	\textstyle\rho_n:=\rhon\cp\kappa_n\qquad\quad\text{as well as}\qquad\quad \varrho_n\colon [t_n,t_{n+1}]\ni t\mapsto  t_n+\int_{t_n}^t\rho_n(s)\:\dd s \in [t_n,t_{n+1}].
\end{align*}
\vspace{-19pt}
\item
we define $\phi\colon [0,1]\rightarrow \mg$ by $\phi(1):=0$, and $\phi|_{[t_n,t_{n+1}]}:=\rho_n\cdot \rcK{\phi_n\cp\varrho_n}$ for each $n\in \NN$.
\vspace{-6pt}
\end{itemize}
\endgroup 
\vspace{6pt}

\noindent
Then, the same arguments as in the proof of Lemma \ref{pofdspospods} show that   
$\phi|_{[0,t_{n}]}\in \DIDE_{[0,t_ {n}]}^\infty$ holds, with 
\begin{align}
\label{podfopdfopffd}
	\textstyle \innt_0^{t_n}\phi=\chartinv(\delta_{n-1}\cdot Y_{n-1})\cdot {\dots}\cdot \chartinv(\delta_0\cdot Y_0)\qquad\quad\forall\: n\geq 1.
\end{align}
Moreover, for $\vv\leq \ww\in \SEM$ as above, $p\in \NN$, and $n\in \NN$ with $\ww(Y_n)\leq 1$ (thus, $\ww(|\varrho_n-t_n|\cdot Y_n)\leq 1$), we have   
\begin{align}
\label{dspoopsdopdsaasas}
	\vvv\big((\rho_n\cdot\rcK{\phi_n\cp\varrho_n})^{(q)}\big)\leq (p+1)^3\cdot C[\rho_n,p]\cdot \ww(Y_n)\leq (p+1)^3\cdot \delta_n^{-(p+1)^2}\cdot C[\rhon,p]\cdot \ww(Y_n)
\end{align}
for $q=0,\dots,p$. 
\vspace{6pt}

\noindent
We are ready for the
\begin{proof}[Proof of Theorem \ref{dsjkhjsdjhsd}]
Let $\{g_n\}_{n\in \NN}\subseteq G$ be a Mackey-Cauchy sequence; and $U\subseteq G$ a symmetric open neighbourhood of $e$ with $U\cdot U\subseteq \U$. 
 By Remark \ref{confkjdsjdskjsdkjds}.\ref{confev22}, we can assume that $\{g_n\}_{n\in \NN}\subseteq U$ holds; and, by Remark \ref{confkjdsjdskjsdkjds}.\ref{confev22}, it suffices to show that a subsequence of $\{g_n\}_{n\in \NN}$ converges. Passing to a subsequence if necessary, we thus can assume that $\lambda_{n,n-1}\leq 2^{-n^2}$ holds for each $n\geq 1$. Then,  
\begingroup
\setlength{\leftmargini}{12pt}
\begin{itemize}
\item
We define $X_0:=0$, as well as $\V\ni X_n:=\chart\big(g_{n}^{-1}\cdot g_{n-1}\big)$ for each $n\geq 1$.
\item
For each $\ww\in \SEM$, we fix some $\wmackeyindex_\ww\in \NN$ with
\begin{align}
\label{usaiusauisaiusa}
	\ww(X_{n})\leq \mackeyconst_\ww\cdot 2^{-n^2}\qquad\quad\forall\:n\geq \wmackeyindex_\ww.
\end{align}
\vspace{-17pt}
\item
We define $t_0:=0$, as well as $t_n:=\sum_{k=1}^n 2^{-k}$ for $n \geq 1$; and obtain 
\begin{align}
\label{asklklsaklsalk}
	1/(1-h)\leq 2^{n+2}\qquad\quad\forall\:h\in [t_{n},t_{n+1}],\:\: n \in\NN,
\end{align}
from $1-h\geq 1-t_{n+1}=1-\sum_{k=1}^{n+1}2^{-k}=\sum_{k=n+2}^\infty 2^{-k}\geq 2^{-(n+2)}$.
\item
We let   
$Y_n:= 2^{n+1}\cdot X_n$ for each $n\in \NN$; and define $\delta_n$, $\gamma_n$, $\phi_n$, as well as $\phi\colon [0,1]\rightarrow\mg$ as described above; i.e., we have $\delta_n\equiv |t_{n+1}-t_n|=2^{-(n+1)}$ for each $n\in \NN$.
\end{itemize}
\endgroup
\noindent
	The claim now follows once we have shown that $\phi$ is smooth, because then  
	\eqref{podfopdfopffd} provides us with
	\begin{align*}
		\textstyle\big(\innt\phi \cdot g^{-1}_{0}\he\big)^{-1} &\textstyle\:\he=\:\he \lim_n \big(\big[\innt_{0}^{t_{n+1}}\phi\big]\cdot g^{-1}_{0}\he\big)^{-1}\\[-2pt]
		&\stackrel{\eqref{podfopdfopffd}}{=}\textstyle \lim_n \big(\chartinv(\delta_{n}\cdot Y_{n})\cdot{\dots}\cdot \chartinv(\delta_0\cdot Y_0)\cdot  g^{-1}_{0}\he\big)^{-1}\\[1pt]
		&\:\he=\:\he\textstyle \lim_n \big(\chartinv(X_{n})\cdot{\dots}\cdot \chartinv(X_1)\cdot  g^{-1}_{0}\he\big)^{-1}\\[3pt]
		&\:\he=\:\he\textstyle \lim_n g_{n}.
	\end{align*}	 
Since $\phi$ is smooth on $[0,1)$, here we only have to verify that 
\begin{align*}
\textstyle\lim_{[0,1)\ni h\rightarrow 1} \rcf{h} \cdot \phi^{(p)}(h)=0\qquad\quad\forall\: p\in \NN 
\end{align*}
holds.\footnote{We then automatically have $\lim_{[0,1)\ni h\rightarrow 1} \phi^{(p)}(h)=0$.} 
To show this, we fix $p\in \NN$ and $\vv\in \SEM$, choose $\ww$ as in \eqref{dspoopsdopdsaasas}; and observe that
\begin{align*}
	\ww(Y_n)=2^{n+1}\cdot \ww(X_n)\leq \mackeyconst_\ww\cdot 2^{-n^2+n+1}\leq 1\qquad\quad\forall\:n\geq  \wmackeyindex'_\ww 
\end{align*}
holds, for some $\wmackeyindex'_\ww\geq \max(2,\wmackeyindex_\ww)$ suitably large.
We conclude from \eqref{dspoopsdopdsaasas} that 
\begin{align*}
	\vvv\big((\rho_n\cdot \rcK{\phi_n\cp\varrho_n})^{(p)}\big)&\leq (p+1)^3\cdot 2^{(n+1)\cdot (p+1)^2} \cdot C[\rhon,p]\cdot  \mackeyconst_\ww \cdot 2^{-n^2+n+1}\\
	&= (p+1)^3\cdot C[\rhon,p]\cdot  \mackeyconst_\ww \cdot 2^{-n^2+(n+1)\cdot {((p+1)^2+1)}}
\end{align*}
holds for each $n\geq \wmackeyindex'_\ww$. 
Then, for $h\in [t_{n},t_{n+1}]$ with $n \geq \wmackeyindex'_\ww$, we obtain from \eqref{asklklsaklsalk} that
\begin{align*} 
	1/(1-h)\cdot \vvv\big(\phi^{(p)}(h)\big)&\leq 2^{n+2}\cdot \vvv\big((\rho_n\cdot\rcK{\phi_n\cp\varrho_n})^{(p)}(h)\big)\\
	&\leq (p+1)^3\cdot C[\rhon,p]\cdot  \mackeyconst_\ww \cdot 2^{1-n^2+(n+1)\cdot {((p+1)^2+2)}}\\
&= (p+1)^3\cdot C[\rhon,p]\cdot  \mackeyconst_\ww \cdot 2^{1-n\cdot [n-\frac{n+1}{n}\cdot ((p+1)^2+2)]}
\end{align*}
holds; 
which clearly tends to zero for $n\rightarrow \infty$.
\end{proof}

\subsection{Approximation}
We now finally provide some approximation statements for curves that will be important for our discussions of particular situations in the context of Theorem \ref{confev} in Sect.\ \ref{sec:confined}.
\vspace{6pt}

\noindent
In analogy to Sect.\ \ref{jjdsjksdljdsjldsljdsljdsds}, we say that  $\{\phi_n\}_{n\in \NN}\subseteq \CP^0([r,r'],\mg)$ is a
\begingroup
\setlength{\leftmargini}{12pt}
\begin{itemize}
\item
Cauchy sequence \defff for each $\pp\in \SEM$ and $\varepsilon>0$, there exists some $p\in \NN$, such that 
\begin{align*}
	\ppp_\infty(\phi_m-\phi_n)\leq \varepsilon \qquad\quad 
	\forall\: m,n\geq p.
\end{align*}
\vspace{-18pt}
\item
Mackey-Cauchy sequence \defff 
\begin{align}
\label{mfgjjghfghhghg}
	\ppp_\infty(\phi_m-\phi_n)\leq \mackeyconst_\pp\cdot \lambda_{m,n} \qquad\quad\hspace{2.4pt}\forall\: m,n\geq \mackeyindex_\pp,\:\:\pp\in \SEM
\end{align}
holds for certain $\{\mackeyconst_\pp\}_{\pp\in \SEM}\subseteq \RR_{\geq 0}$, $\{\mackeyindex_\pp\}_{\pp\in \SEM}\subseteq \NN$, and $\RR_{\geq 0}\supseteq \{\lambda_{m,n}\}_{(m,n)\in \NN\times \NN}\rightarrow 0$.
\end{itemize}
\endgroup
\noindent
We say that $\{\phi_n\}_{n\in \NN}\rightarrow \phi$ (converges) uniformly for $\phi\in C^0([r,r'],\mg)$ \defff 
\begin{align*}
	\textstyle\lim_{n\rightarrow \infty}\ppp_\infty(\phi-\phi_n)=0 \qquad\quad\text{holds for each}\qquad\quad \pp\in \SEM;
\end{align*}
and obtain
\begin{lemma}
\label{jshjsdahjsd}
Let $[r,r']\in \COMP$ be fixed. 
\begingroup
\setlength{\leftmargini}{17pt}
\begin{enumerate}
\item
\label{jshjsdahjsd1}
For each $\phi\in C^0([r,r'],\mg)$, there exists a \emph{Cauchy sequence} $\{\phi_n\}_{n\in \NN}\subseteq \DP^\infty([r,r'],\mg)$ with $\{\phi_n\}_{n\in \NN}\rightarrow \phi$ uniformly.
\item
\label{jshjsdahjsd2}
For each $\phi\in C^\lip([r,r'],\mg)$, there exists a \emph{Mackey-Cauchy sequence} $\{\phi_n\}_{n\in \NN}\subseteq \DP^\infty([r,r'],\mg)$ with $\{\phi_n\}_{n\in \NN}\rightarrow \phi$ uniformly.
\end{enumerate}
\endgroup
\end{lemma}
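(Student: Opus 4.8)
The plan is to treat both parts with a single partition-based construction, building the approximants from right logarithmic derivatives of linear segments in the chart, and to let the continuous/Lipschitz dichotomy enter only through the available modulus of continuity of $\phi$. Fix $[r,r']\in\COMP$, and for $n\geq 1$ set $t^n_p:=r+p\cdot(r'-r)/n$ (for $p=0,\dots,n$) and $\delta_n:=(r'-r)/n$. Writing $Y^n_p:=\phi(t^n_p)$, I would take on each subinterval the building block $\phi_n[p]:=\Der(\chartinv\cp\gamma^n_p)$ with $\gamma^n_p(t):=|t-t^n_p|\cdot Y^n_p$; by \eqref{dovcxjdsfjgf} this equals $\dermapdiff(\gamma^n_p(\cdot),Y^n_p)$, and, as noted just before Lemma \ref{poposposaadccx}, it lies in $\DIDE^\infty_{[t^n_p,t^n_{p+1}]}$. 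Gluing these pieces yields $\phi_n\in\DP^\infty([r,r'],\mg)$, for all $n$ large enough that each $\gamma^n_p$ stays inside $\V$ (arranged below; the finitely many small $n$ are discarded).

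Next I would establish a master estimate. Given $\pp\in\SEM$, choose $\ww\in\SEM$ via \eqref{omegakll} (for orders $q\leq 1$) with $\pp\leq\ww$ and $\V\lleq\ww$. Since $\dermapdiff(0,Y)$ equals $Y$ under the identification $\dd_e\chart\colon\mg\to E$, the fundamental theorem of calculus \eqref{isdsdoisdiosd} applied in the first slot of $\dermapdiff$, together with the bilinearity of $\dermapdiff[1]$ and \eqref{omegakll}, gives
\begin{align*}
	\ppp(\dermapdiff(x,Y)-\dermapdiff(0,Y))\leq \ww(x)\cdot\ww(Y)\qquad\quad\forall\: x,Y\in E,\:\: \ww(x)\leq 1.
\end{align*}
Applying this with $x=\gamma^n_p(t)$ and $Y=Y^n_p$ on $[t^n_p,t^n_{p+1}]$, and using $\ww(\gamma^n_p(t))=|t-t^n_p|\cdot\ww(Y^n_p)\leq\delta_n\cdot M_\ww$ with $M_\ww:=\sup_{t}\ww(\phi(t))<\infty$ (finite since $\phi([r,r'])$ is compact), I obtain both $\gamma^n_p(t)\in\OB_{\ww,1}\subseteq\V$ (once $\delta_n M_\ww\leq 1$) and
\begin{align*}
	\ppp(\phi_n(t)-\phi(t^n_p))\leq |t-t^n_p|\cdot\ww(Y^n_p)^2\leq \delta_n\cdot M_\ww^2\qquad\quad\forall\: t\in[t^n_p,t^n_{p+1}].
\end{align*}

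Finally I would split $\ppp(\phi_n(t)-\phi(t))\leq\ppp(\phi_n(t)-\phi(t^n_p))+\ppp(\phi(t^n_p)-\phi(t))$ and bound the two pieces. The first is $\leq\delta_n M_\ww^2$ by the master estimate. For \ref{jshjsdahjsd1}, the second piece is controlled by the (seminorm-dependent) modulus of uniform continuity of the continuous $\phi$, so $\ppp_\infty(\phi-\phi_n)\to 0$ for every $\pp$; uniform convergence then yields the Cauchy property at once by the triangle inequality. For \ref{jshjsdahjsd2}, the Lipschitz estimate $\ppp(\phi(t^n_p)-\phi(t))\leq L_\pp\cdot|t-t^n_p|\leq L_\pp\cdot\delta_n$ gives $\ppp_\infty(\phi-\phi_n)\leq\delta_n\cdot(M_\ww^2+L_\pp)=\mackeyconst_\pp/n$ with $\mackeyconst_\pp:=(r'-r)(M_\ww^2+L_\pp)$, whence $\ppp_\infty(\phi_m-\phi_n)\leq\mackeyconst_\pp\cdot(1/m+1/n)$; taking $\lambda_{m,n}:=1/m+1/n\to 0$ (independent of $\pp$) exhibits $\{\phi_n\}_{n\in\NN}$ as a Mackey-Cauchy sequence.

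The main obstacle is exactly the control of the seminorm dependence of the convergence rate, which is also what genuinely separates the two parts. The drift term coming from $\dermapdiff$ always decays at rate $O(\delta_n)$ with a $\pp$-dependent constant, but the decisive term $\ppp(\phi(t^n_p)-\phi(t))$ decays at a rate governed by the modulus of continuity of $\phi$: only under the Lipschitz hypothesis is this a rate $O(\delta_n)$ attached to a \emph{single}, seminorm-independent null sequence $\lambda_{m,n}$ -- precisely the definition of a Mackey-Cauchy sequence -- whereas for merely continuous $\phi$ the rate is intrinsically seminorm-dependent and one can conclude no more than a Cauchy sequence. A secondary technical point, the uniform (in $p$) validity of the construction, is handled by choosing $\ww$ with $\V\lleq\ww$ and discarding the finitely many $n$ with $\delta_n M_\ww>1$.
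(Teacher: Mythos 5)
Your proposal is correct and follows essentially the same route as the paper's proof: the same uniform partition, the same building blocks $\Der(\chartinv\cp\gamma^n_p)$ for linear segments $\gamma^n_p(t)=|t-t^n_p|\cdot\dd_e\chart(\phi(t^n_p))$, the same drift estimate via $\dermapdiff[1]$, \eqref{omegakll}, and Lemma \ref{ofdpofdpopssssaaaasfffff}, and the same triangle-inequality split in which the continuous/Lipschitz dichotomy enters only through the modulus of continuity of $\phi$. The constants and the choice $\lambda_{m,n}=1/m+1/n$ match the paper's $\mackeyconst_\vv=L_\vv+\www_\infty(\phi)^2$ argument.
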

\begin{proof}
We let $\phi\in C^0([r,r'],\mg)$ be fixed; and, for the case that $\phi\in C^\lip([r,r'],\mg)$ holds, we denote the Lipschitz constants of $\phi$ by $\{L_\pp\}_{\pp\in \SEM}\subseteq \RR_{\geq 0}$. 
\begingroup
\setlength{\leftmargini}{12pt}
\begin{itemize}
\item
We choose 
$\Delta>0$ such small that $[0,\Delta]\cdot \dd_e\chart(\im[\phi])\subseteq \V$ holds; and fix $m\geq 1$ with $|r'-r|/m\leq \Delta$.
\item
We define $\gamma[t']\colon [0,\Delta]\ni t\mapsto t\cdot \dd_e\chart(\phi(t'))$ for each $t'\in [r,r']$; and let 
\begin{align*}
	\Phi(t,t'):= \w(t\cdot \dd_e\chart(\phi(t')),\dd_e\chart(\phi(t')))\equiv \Der(\chartinv\cp \gamma[t'])(t)\qquad\quad\forall\: (t,t')\in [0,\Delta]\times [r,r'].
\end{align*} 
\end{itemize}
\endgroup
\noindent
For each $n\geq m$, we construct $\phi_n\in \DP^\infty([r,r'],\mg)$ as follows: 
\begingroup
\setlength{\leftmargini}{12pt}
\begin{itemize}
\item
	We define $\Delta_n:=|r'-r|/n$; and let $t_{n,p}:=r+ p\cdot \Delta_n$ for $p=0,\dots,n$.
\item
	We define $\phi|_{[t_{n,n-1},t_{n,n}]}:=\Phi(\cdot-t_{n,n-1},t_{n,n-1})$, as well as
\begin{align*}
	\phi_n|_{[t_{n,p},t_{n,p+1})}:=\Phi(\cdot-t_{n,p},t_{n,p})\qquad\quad\forall\: p=0,\dots,n-2.\hspace{30.7pt} 
\end{align*}
\end{itemize}
\endgroup
\noindent
By construction, we have
\begin{align}
\label{opggfogpo}
	\phi_n(t_{n,p})=\Phi(0,t_{n,p})=\phi(t_{n,p})\qquad\quad\forall\:  n\geq m,\:\:p=0,\dots,n.
\end{align}
Let now $\vv\in \SEM$ be fixed. We choose $\V\lleq\ww$ as in \eqref{omegakll} for $p\equiv 1$ there; and let $\mackeyindex_\vv\geq m$ be such large that $\Delta_{\mackeyindex_\vv}\cdot \www_\infty(\phi)\leq 1$ holds, i.e., we have $\ww_\infty(\gamma[t_{n,p}]|_{[0,\Delta_n]})\leq 1$ for each $n\geq \mackeyindex_\vv$ and $p=0,\dots,n-1$. 
\vspace{6pt}

\noindent
Then, for each $n\geq \mackeyindex_\vv$, 
\begingroup
\setlength{\leftmargini}{12pt}
\begin{itemize}
\item
we obtain from \eqref{omegakll} and Lemma \ref{ofdpofdpopssssaaaasfffff} that
\begin{align}
\label{pofddfpopofdfd}
\begin{split}
	\vvv\big(\phi_n(t)- \phi_n(t_{n,p})\big)&=\vvv(\Phi(t-t_{n,p},t_{n,p})-\Phi(0,t_{n,p}))\\[1pt]
	&\textstyle\leq \int_0^{t-t_{n,p}}(\vvv\cp\dermapdiff[1])(\gamma[t_{n,p}](s),\dd_e\chart(\phi(t_{n,p})),\dd_e\chart(\phi(t_{n,p})))\:\dd s\\[1pt]
	&\textstyle\leq  \ww(\dd_e\chart(\phi(t_{n,p})))^2\cdot |t-t_{n,p}|\\
	&\leq  \www_\infty(\phi)^2\cdot |t-t_{n,p}|
	\end{split}
\end{align}
holds, for each 
  	\begin{align}
  	\label{sdpodspopodspodspodsaassaposa}
t\in  
\begin{cases}
[t_{n,p},t_{n,p+1})\quad\:\: \text{for}\quad\:0\leq p\leq n-2,\\
[t_{n,n-1},t_{n,n}]\quad\:\: \text{for}\quad\:\:p=n.
\end{cases}
  	\end{align} 
\item
we obtain from \eqref{opggfogpo} and \eqref{pofddfpopofdfd} that
  	  	  	\begin{align}
  	  	  	\label{AAAAAA}
  	  	  	\begin{split}
  		\vvv(\phi(t)-\phi_n(t))&\leq \vvv(\phi(t)-\phi(t_{n,p}))  + \vvv(\phi(t_{n,p})-\phi_{n}(t_{n,p}))+\vvv(\phi_{n}(t)-\phi_{n}(t_{n,p}))\\
  		&\leq \vvv(\phi(t)-\phi(t_{n,p}))  +\www_\infty(\phi)^2\cdot  |t-t_{n,p}|
  		\end{split}
  	\end{align}	
  	holds, for $t$ as in \eqref{sdpodspopodspodspodsaassaposa}.  
\end{itemize}
\endgroup
\noindent
Clearly, \eqref{AAAAAA} implies that $\{\phi_{n-m}\}_{n\in \NN}$ is a Cauchy sequence with $\{\phi_{n-m}\}_{n\in \NN}\rightarrow  \phi$ uniformly. 
Moreover, for the case that $\phi\in C^\lip([r,r'],\mg)$ holds, we define $\mackeyconst_\vv:=L_\vv + \www_\infty(\phi)^2$, and obtain
\vspace{-6pt} 
  	\begin{align*}
  		\vvv(\phi(t)-\phi_n(t))
  		\stackrel{\eqref{AAAAAA}}{\leq} L_\vv\cdot |t-t_{n,p}|  +  \www_\infty(\phi)^2\cdot |t-t_{n,p}|
  		= \mackeyconst_\vv \cdot|t-t_{n,p}|\qquad\quad\forall\: n\geq \mackeyindex_\vv
  	\end{align*}	 
  	for $t$ as in \eqref{sdpodspopodspodspodsaassaposa};  
  	so that \eqref{mfgjjghfghhghg} is clear from the triangle inequality.
\end{proof}
Obviously, we also have
\begin{lemma}
\label{jshjsdahjsda}
Let $[r,r']\in \COMP$ be fixed. Then,
\begingroup
\setlength{\leftmargini}{17pt}
\begin{enumerate}
\item
For each $\phi\in C^0([r,r'],\mg)$, there exists a \emph{Cauchy sequence} $\{\phi_n\}_{n\in \NN}\subseteq \COP([r,r'],\mg)$ with $\{\phi_n\}_{n\in \NN}\rightarrow \phi$ uniformly.
\item
For each $\phi\in C^\lip([r,r'],\mg)$, there exists a \emph{Mackey-Cauchy sequence} $\{\phi_n\}_{n\in \NN}\subseteq \COP([r,r'],\mg)$ with $\{\phi_n\}_{n\in \NN}\rightarrow \phi$ uniformly.
\end{enumerate}
\endgroup
\end{lemma}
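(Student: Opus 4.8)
The plan is to mirror the construction of Lemma \ref{jshjsdahjsd}, but in the much simpler piecewise-\emph{constant} setting, where no integration and no use of the maps $\dermapdiff[\cdot]$ is needed. Fix $\phi\in C^0([r,r'],\mg)$. For each $n\geq 1$ I would set $\Delta_n:=|r'-r|/n$ and $t_{n,p}:=r+p\cdot \Delta_n$ for $p=0,\dots,n$, and define $\phi_n\in \COP([r,r'],\mg)$ by declaring it constant on each subinterval, equal to the left-endpoint value:
\begin{align*}
	\phi_n|_{[t_{n,p},t_{n,p+1})}:=\phi(t_{n,p})\quad (p=0,\dots,n-1),\qquad \phi_n(r'):=\phi(t_{n,n-1}).
\end{align*}
By construction $\phi_n\in \COP([r,r'],\mg)$ (membership only constrains the values on the open subintervals) and $\phi_n(t_{n,p})=\phi(t_{n,p})$ for $p=0,\dots,n-1$.

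The only point requiring an argument is uniform continuity of $\phi$ with respect to each seminorm. I would note that for fixed $\pp\in \SEM$ the map $(t,t')\mapsto \ppp(\phi(t)-\phi(t'))$ is continuous on the compact square $[r,r']\times[r,r']$ and vanishes on the diagonal; hence for every $\epsilon>0$ the open set on which it is $<\epsilon$ contains a neighbourhood of the (compact) diagonal, which yields some $\delta>0$ with $\ppp(\phi(t)-\phi(t'))\leq \epsilon$ whenever $|t-t'|\leq \delta$. Consequently, for $t\in[t_{n,p},t_{n,p+1})$ we have $|t-t_{n,p}|\leq \Delta_n$, so that
\begin{align*}
	\ppp(\phi(t)-\phi_n(t))=\ppp(\phi(t)-\phi(t_{n,p}))\leq \epsilon\qquad\text{once}\quad \Delta_n\leq \delta,
\end{align*}
and therefore $\ppp_\infty(\phi-\phi_n)\leq \epsilon$ for all large $n$. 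This establishes $\{\phi_n\}_{n\in \NN}\rightarrow \phi$ uniformly, and the Cauchy property of $\{\phi_n\}_{n\in \NN}$ follows immediately from the triangle inequality $\ppp_\infty(\phi_m-\phi_n)\leq \ppp_\infty(\phi_m-\phi)+\ppp_\infty(\phi-\phi_n)$.

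For the Lipschitz case, with Lipschitz constants $\{L_\pp\}_{\pp\in \SEM}$ of $\phi$, the same estimate becomes explicit: for $t\in[t_{n,p},t_{n,p+1})$,
\begin{align*}
	\ppp(\phi(t)-\phi_n(t))=\ppp(\phi(t)-\phi(t_{n,p}))\leq L_\pp\cdot |t-t_{n,p}|\leq L_\pp\cdot |r'-r|/n,
\end{align*}
hence $\ppp_\infty(\phi-\phi_n)\leq L_\pp\cdot |r'-r|/n$. Setting $\mackeyconst_\pp:=L_\pp\cdot |r'-r|$, $\mackeyindex_\pp:=1$, and $\lambda_{m,n}:=1/m+1/n$ (which tends to $0$ and is independent of $\pp$), the triangle inequality yields $\ppp_\infty(\phi_m-\phi_n)\leq \mackeyconst_\pp\cdot \lambda_{m,n}$ for all $m,n$, which is precisely the Mackey-Cauchy condition \eqref{mfgjjghfghhghg}. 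I do not expect any genuine obstacle here; the sole non-formal ingredient is the uniform-continuity step, which is immediate from compactness of $[r,r']$, and everything else is a direct transcription of the estimates already carried out in Lemma \ref{jshjsdahjsd}, with the integral curves replaced by constants.
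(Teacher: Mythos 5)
Your proof is correct and is exactly the argument the paper has in mind: Lemma \ref{jshjsdahjsda} is stated without proof (``Obviously, we also have''), and the intended justification is precisely the left-endpoint step-function approximation on the uniform partition $t_{n,p}=r+p\cdot|r'-r|/n$, i.e.\ the degenerate (constant-curve) version of the construction in Lemma \ref{jshjsdahjsd}. Your uniform-continuity step via compactness of the diagonal and the explicit Mackey constants $\mackeyconst_\pp=L_\pp\cdot|r'-r|$, $\lambda_{m,n}=1/m+1/n$ in the Lipschitz case are all in order.
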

\noindent
This Lemma will be relevant for our discussion of the situation where $G$ admits an exponential map, as then clearly $\COP([r,r'],\mg)\subseteq \DP^\infty([r,r'],\mg)$ holds for each $[r,r']\in \COMP$.

\section{The Confined Condition}
\label{sec:confined}
In this section, we clarify under which circumstance a locally $\mu$-convex Lie group is $C^k$-semiregular for $k\in \NN\sqcup\{\lip,\infty\}$ (partially for $k\equiv 0$). We first provide the basic definitions; and then prove the main result in Sect.\ \ref{Evmain}. In the last part of this section, we will discuss several particular situations. 
\vspace{6pt}

\noindent
A sequence $\{\phi_n\}_{n\in \NN}\subseteq\DP^0([r,r'],\mg)$ is said to be {\bf tame} \defff for each $\vv\in \SEM$, there exists some $\vv\leq \ww\in \SEM$ with
	\begin{align}
	\label{confii}
		\vvv\cp \Ad_{[\innt_r^\bullet \phi_n]^{-1}}\leq \www\qquad\quad\forall\: n \in \NN.
	\end{align} 
We say that $\phi\in \bigsqcup_{[r,r']\in \COMP}C^0([r,r'],\mg)$ is
	 \begingroup
\setlength{\leftmargini}{12pt}
\begin{itemize}
\item
		{\bf $\sequy$-integrable} \defff there exists a tame Cauchy sequence 
	$\{\phi_n\}_{n\in \NN}\subseteq\DP^0(\dom[\phi],\mg)$ with $\{\phi_n\}_{n\in \NN}\rightarrow\phi$ uniformly.
	
	The set of all such $\phi$ will be denoted by $\sequ$ in the following. 
\item
	{\bf $\mackey$-integrable} \defff there exists a tame Mackey-Cauchy sequence 
	$\{\phi_n\}_{n\in \NN}\subseteq\DP^0(\dom[\phi],\mg)$ with $\{\phi_n\}_{n\in \NN}\rightarrow\phi$ uniformly.

	The set of all such $\phi$ will be denoted by $\mack$ in the following. 
\end{itemize}
\endgroup
\noindent
Evidently,
\begin{lemma}
\label{sdsddsd}
We have $\DIDE\subseteq \mack\subseteq \sequ$.
\end{lemma}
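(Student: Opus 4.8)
The plan is to prove the two inclusions separately, beginning with the formal one, $\mack\subseteq\sequ$. Suppose $\phi\in\mack$, and let $\{\phi_n\}_{n\in\NN}\subseteq\DP^0(\dom[\phi],\mg)$ be a tame Mackey-Cauchy sequence with $\{\phi_n\}_{n\in\NN}\to\phi$ uniformly. Directly from the defining estimate \eqref{mfgjjghfghhghg} (choosing, for given $\pp\in\SEM$ and $\epsilon>0$, an index beyond which $\lambda_{m,n}\leq\epsilon/(\mackeyconst_\pp+1)$), such a sequence is in particular a Cauchy sequence — exactly as a Mackey-Cauchy sequence in $G$ is a Cauchy sequence. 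Hence the very same tame sequence witnesses that $\phi$ is $\sequy$-integrable, so $\phi\in\sequ$, and nothing further is required.

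For $\DIDE\subseteq\mack$ I would take, for a fixed $\phi\in\DIDE$, the constant sequence as witness. Choose $[r,r']\in\COMP$ with $\phi\in\DIDE_{[r,r']}$. Since $\Der$ sends $C^1([r,r'],G)$ into $C^0([r,r'],\mg)$, we have $\phi\in\DIDE^0_{[r,r']}$, and the one-block decomposition $r=t_0<t_1=r'$ yields $\DIDE^0_{[r,r']}\subseteq\DP^0([r,r'],\mg)$; thus $\phi_n:=\phi$ defines a sequence in $\DP^0([r,r'],\mg)$. This sequence converges uniformly to $\phi$ and is trivially Mackey-Cauchy: with $\mackeyconst_\pp:=1$, $\mackeyindex_\pp:=0$, and $\lambda_{m,n}:=0$ one has $\ppp_\infty(\phi_m-\phi_n)=0\leq\mackeyconst_\pp\cdot\lambda_{m,n}$ for all $\pp\in\SEM$. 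It remains only to check \emph{tameness}.

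The tameness verification is the one genuine step, and it is where I would use compactness. The curve $\mu:=\innt_r^\bullet\phi=\EV_{[r,r']}(\phi)$ is continuous on $[r,r']$, so $\im[\mu]$ is compact, and hence so is $\compact:=\inv(\im[\mu])=\{[\innt_r^t\phi]^{-1}\:|\:t\in[r,r']\}$. Applying the adjoint estimate \eqref{askasjkjksaasqwqqwasw} with $\nn\equiv\vv$ to this compact $\compact$ supplies, for each $\vv\in\SEM$, some $\vv\leq\ww\in\SEM$ with $\vvv\cp\Ad_g\leq\www$ for all $g$ in an open neighbourhood of $\compact$; in particular $\vvv\cp\Ad_{[\innt_r^t\phi]^{-1}}\leq\www$ for all $t\in[r,r']$. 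Since $\phi_n\equiv\phi$, this is precisely \eqref{confii}, so the constant sequence is tame and $\phi$ is $\mackey$-integrable. The main obstacle is thus entirely concentrated in this compactness-plus-uniform-adjoint-bound argument — the same observation that underlies Remark~\ref{fddffdd} — while the Cauchy/Mackey-Cauchy and uniform-convergence conditions are met trivially by the constant sequence.
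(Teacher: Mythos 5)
Your proposal is correct and follows essentially the same route as the paper: the inclusion $\mack\subseteq\sequ$ is the evident one, and for $\DIDE\subseteq\mack$ you take the constant sequence $\phi_n:=\phi$ and verify tameness via compactness of $\compact=\inv(\im[\innt_r^\bullet\phi])$ together with the uniform adjoint estimate \eqref{askasjkjksaasqwqqwasw}. The only difference is that you spell out the trivial Mackey-Cauchy and convergence checks that the paper leaves implicit.
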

\begin{proof}
The second inclusion is evident. For the first inclusion, we fix $\phi\in \DIDE_{[r,r']}$ for $[r,r']\in \COMP$; and define 
 $\{\phi_n\}_{n\in \NN}\subseteq \DP^0([r,r'],\mg)$ by $\phi_n:=\phi$ for each $n\in \NN$. Since $\compact:=\inv(\im[\innt_r^\bullet\phi])$ is compact, the first inclusion is clear from \eqref{askasjkjksaasqwqqwasw}. 
\end{proof}
\noindent
Conversely, we have, cf.\ Sect.\ \ref{Evmain} 
\begin{proposition}
\label{dfopfdpoofdp}
Suppose that $G$ is locally $\mu$-convex. Then, 
\begingroup
\setlength{\leftmargini}{17pt}
\begin{enumerate}
\item
\label{dfopfdpoofdp1}
$\sequ\subseteq \DIDE$ holds if $G$ is sequentially complete.
\item
\label{dfopfdpoofdp2}
$\mack\subseteq \DIDE$ holds if $G$ is Mackey complete.
\end{enumerate}
\endgroup
\end{proposition}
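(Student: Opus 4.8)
The plan is to carry out the program sketched in Remark~\ref{fddffdd}, treating both statements at once: the only distinction between part~\ref{dfopfdpoofdp1} and part~\ref{dfopfdpoofdp2} is whether the approximating sequence is Cauchy or Mackey-Cauchy, and correspondingly whether I invoke sequential or Mackey completeness of $G$. So let $\phi$ be $\sequy$-integrable (resp.\ $\mackey$-integrable) with $\dom[\phi]=[r,r']$, and fix a tame Cauchy (resp.\ Mackey-Cauchy) sequence $\{\phi_n\}_{n\in\NN}\subseteq\DP^0([r,r'],\mg)$ with $\{\phi_n\}_{n\in\NN}\rightarrow\phi$ uniformly. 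Writing $\mu_n:=\innt_r^\bullet\phi_n$ -- a continuous, piecewise $C^1$ curve with $\Der(\mu_n)=\phi_n$ off its finitely many breakpoints -- the goal is to define $\mu(t):=\lim_n\mu_n(t)$ and to show $\mu\in C^1([r,r'],G)$ with $\Der(\mu)=\phi$; then $\phi=\Der(\mu)\in\DIDE_{[r,r']}\subseteq\DIDE$ as required.

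The first and third points of Remark~\ref{fddffdd} (pointwise existence and uniform convergence) I would establish in one stroke by a uniform-in-$t$ estimate. Given $\pp\in\SEM$, choose $\pp\leq\qq$ as in Proposition~\ref{aaapofdpofdpofdpofd}, and then $\qq\leq\ww$ from tameness~\eqref{confii}, so that $\qqq\cp\Ad_{[\innt_r^\bullet\phi_m]^{-1}}\leq\www$ for all $m$. By~\eqref{dfdssfdsfd} the integrand $\Ad_{[\innt_r^\bullet\phi_m]^{-1}}(\phi_n-\phi_m)$ lies in $\DP^0([r,r'],\mg)$ and satisfies $\int\qqq(\cdot)\,\dd s\leq|r'-r|\cdot\www_\infty(\phi_n-\phi_m)$, which is $\leq1$ once $m,n$ are large. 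Proposition~\ref{aaapofdpofdpofdpofd} then yields, for such $m,n$ and all $t$,
\begin{align*}
	\textstyle(\pp\cp\chart)\big(\mu_m(t)^{-1}\mu_n(t)\big)=(\pp\cp\chart)\big(\innt_r^t\Ad_{[\innt_r^\bullet\phi_m]^{-1}}(\phi_n-\phi_m)\big)\leq|r'-r|\cdot\www_\infty(\phi_n-\phi_m).
\end{align*}
The right-hand side is independent of $t$ and tends to $0$ in the Cauchy case, resp.\ is bounded by $|r'-r|\cdot\mackeyconst_\ww\cdot\lambda_{m,n}$ in the Mackey-Cauchy case. Hence $\{\mu_n(t)\}_{n\in\NN}$ is a Cauchy (resp.\ Mackey-Cauchy) sequence in $G$ for every $t$, uniformly in $t$; sequential (resp.\ Mackey) completeness produces the limit $\mu(t)$, and the uniformity in $t$ upgrades this to uniform convergence $\mu_n\rightarrow\mu$.

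Continuity of $\mu$ (the second point) then follows from the standard $3\epsilon$ argument for uniform limits of continuous curves, carried out in a chart. For the decisive fourth point -- that $\mu$ is $C^1$ with $\Der(\mu)=\phi$ -- I would argue locally. Fix $t_0$; by continuity of $\mu$ and the uniform convergence there is a subinterval $J\ni t_0$ and a chart $\wt{\chart}$ centred at $\mu(t_0)$ (a translate of $\chart$, needed since $\mu(t_0)$ may lie outside $\U$) such that $\mu(J)$ and, for large $n$, $\mu_n(J)$ lie in its domain. Off the breakpoints the curves $\gamma_n:=\wt{\chart}\cp\mu_n$ satisfy $\dot\gamma_n=\wt{\dermapinvdiff}(\gamma_n,\phi_n)$, where $\wt{\dermapinvdiff}$ is attached to $\wt{\chart}$ exactly as $\dermapinvdiff$ is to $\chart$ in~\eqref{opopopop2} and $\Der(\mu_n)=\phi_n$ is chart-independent; hence $\gamma_n(t)=\gamma_n(t_0)+\int_{t_0}^t\wt{\dermapinvdiff}(\gamma_n(s),\phi_n(s))\,\dd s$ by~\eqref{isdsdoisdiosd}. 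I would then pass to the limit: since $\wt{\dermapinvdiff}$ is smooth and linear in its second argument, the local bounds of Lemma~\ref{kldskldsksdklsdl} together with $\gamma_n\rightarrow\gamma:=\wt{\chart}\cp\mu$ and $\phi_n\rightarrow\phi$ uniformly give $\wt{\dermapinvdiff}(\gamma_n,\phi_n)\rightarrow\wt{\dermapinvdiff}(\gamma,\phi)$ uniformly on $J$, so by~\eqref{absch2} the integral equation survives: $\gamma(t)=\gamma(t_0)+\int_{t_0}^t\wt{\dermapinvdiff}(\gamma(s),\phi(s))\,\dd s$. The integrand being continuous, \eqref{opgfgofppof} shows $\gamma$ is $C^1$ with $\dot\gamma=\wt{\dermapinvdiff}(\gamma,\phi)$; inverting $\dd_\mu\wt{\chart}$ yields $\dot\mu=\dd_e\RT_\mu(\phi)$, i.e.\ $\Der(\mu)=\phi$ on $J$. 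Since $t_0$ was arbitrary and being $C^1$ is local, $\mu\in C^1([r,r'],G)$ with $\Der(\mu)=\phi$.

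I expect the main obstacle to be precisely this fourth step: the chart bookkeeping (keeping all $\mu_n$ and $\mu$ inside one coordinate patch on a fixed subinterval, which relies on the uniform convergence just established) and the justification that $\wt{\dermapinvdiff}(\gamma_n,\phi_n)\rightarrow\wt{\dermapinvdiff}(\gamma,\phi)$ uniformly, so that the limit may be interchanged with the integral. Steps~1--3 are comparatively routine once Proposition~\ref{aaapofdpofdpofdpofd} and the tameness estimate~\eqref{confii} are in hand.
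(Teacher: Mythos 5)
Your proposal is correct and follows essentially the same route as the paper's proof in Sect.\ \ref{Evmain}: the same Cauchy estimate obtained from Proposition \ref{aaapofdpofdpofdpofd}, \eqref{dfdssfdsfd} and tameness, the same four-step program, and the same passage to the limit in the integral equation $\gamma_n=\int_r^\bullet\dermapinvdiff(\gamma_n(s),\phi_n(s))\:\dd s$. The only substantive differences are organizational --- the paper decomposes $[r,r']$ at the outset (via Lemmas \ref{sddssdsdsd} and \ref{pofdspospods}) so that every $\mu_n$ stays in the fixed chart $\U$, whereas you localize around each $t_0$ only at the end --- and you somewhat invert where the difficulty sits: the steps you call routine (continuity and uniform convergence of $\mu$) are where the paper does most of its work, since combining the three small factors in the $3\epsilon$ argument, and converting the Cauchy estimate on $\mu_m^{-1}\mu_n$ into one on $\mu^{-1}\mu_p$, requires \eqref{aaajjhguoiuouo} applied to arbitrarily long telescoping products together with the coordinate-change Lemma \ref{fhfhfhffhaaaa}.
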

\noindent
We say that $G$ is {\bf k-confined} 
\begingroup
\setlength{\leftmargini}{12pt}
\begin{itemize}
\item
	For $k\equiv 0$:\hspace{63.6pt}\qquad \defff $C^0([0,1],\mg)\subseteq \sequ$ holds.
\item
	For $k\in \NN_{\geq 1}\sqcup\{\lip,\infty\}$:\qquad \defff $C^k([0,1],\mg)\subseteq\mack$ holds.
\end{itemize}
\endgroup
\noindent
We conclude from Lemma \ref{sdsddsd} and Proposition \ref{dfopfdpoofdp} that:
\begin{theorem}
\label{confev}
Suppose that $G$ is locally $\mu$-convex. Then,
\begingroup
\setlength{\leftmargini}{17pt}
\begin{enumerate}
\item
\label{confev1}
$G$ is $C^0$-semiregular if $G$ is sequentially complete and {\rm 0}-confined.
\item
\label{confev2}
$G$ is $C^k$-semiregular for $k\in \NN_{\geq 1}\sqcup\{\lip,\infty\}$ \deff $G$ is Mackey complete and {\rm k}-confined.
\end{enumerate}
\endgroup
\end{theorem}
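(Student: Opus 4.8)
The plan is to deduce Theorem~\ref{confev} as a formal consequence of the three results already established: Lemma~\ref{sdsddsd} (the inclusions $\DIDE\subseteq\mack\subseteq\sequ$), Proposition~\ref{dfopfdpoofdp} (the reverse inclusions into $\DIDE$ under completeness hypotheses), and Theorem~\ref{dsjkhjsdjhsd} ($C^\infty$-semiregularity forces Mackey completeness). The entire argument amounts to chasing these inclusions, together with a purely formal bookkeeping step that I would record at the outset: the object $\DIDE$ is the disjoint union $\bigsqcup_{[r,r']\in\COMP}\DIDE_{[r,r']}$, so for any curve $\phi$ whose domain is the interval $[0,1]$, membership $\phi\in\DIDE$ automatically localises to $\phi\in\DIDE_{[0,1]}$, and if moreover $\phi\in C^k([0,1],\mg)$ then $\phi\in\DIDE_{[0,1]}\cap C^k([0,1],\mg)=\DIDE^k_{[0,1]}$. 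This reconciles the ambient integrability classes $\sequ,\mack,\DIDE$ with the fibre $\DIDE^k_{[0,1]}$ appearing in the definition of $C^k$-semiregularity.

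I would first dispatch statement~\ref{confev1} and the \emph{if}-direction of statement~\ref{confev2} together, since both rely only on Proposition~\ref{dfopfdpoofdp}. Assume $G$ is sequentially complete and $0$-confined. By definition of $0$-confinedness, $C^0([0,1],\mg)\subseteq\sequ$; and since $G$ is (by hypothesis) locally $\mu$-convex and sequentially complete, Proposition~\ref{dfopfdpoofdp}.\ref{dfopfdpoofdp1} gives $\sequ\subseteq\DIDE$. Combining these with the domain-matching step yields $C^0([0,1],\mg)=\DIDE^0_{[0,1]}$, i.e.\ $G$ is $C^0$-semiregular. The Mackey version is identical: if $G$ is Mackey complete and $k$-confined for some $k\in\NN_{\geq 1}\sqcup\{\lip,\infty\}$, then $C^k([0,1],\mg)\subseteq\mack$ by $k$-confinedness and $\mack\subseteq\DIDE$ by Proposition~\ref{dfopfdpoofdp}.\ref{dfopfdpoofdp2}, whence $C^k([0,1],\mg)=\DIDE^k_{[0,1]}$ and $G$ is $C^k$-semiregular.

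For the \emph{only if}-direction of statement~\ref{confev2} I would argue in two independent steps. Suppose $G$ is $C^k$-semiregular for some $k\in\NN_{\geq 1}\sqcup\{\lip,\infty\}$. For $k$-confinedness, observe that $C^k([0,1],\mg)=\DIDE^k_{[0,1]}\subseteq\DIDE$, so Lemma~\ref{sdsddsd} immediately gives $C^k([0,1],\mg)\subseteq\DIDE\subseteq\mack$, which is exactly $k$-confinedness. For Mackey completeness, I would first upgrade the semiregularity: since $C^\infty([0,1],\mg)\subseteq C^k([0,1],\mg)$ for every such $k$ (smooth curves being in particular $C^k$, and Lipschitz), $C^k$-semiregularity forces $C^\infty([0,1],\mg)\subseteq\DIDE$, i.e.\ $G$ is $C^\infty$-semiregular; Theorem~\ref{dsjkhjsdjhsd} then delivers Mackey completeness of $G$. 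Together with the previous step this completes the equivalence.

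Since the substantial analytic content has already been carried out in Proposition~\ref{dfopfdpoofdp} and Theorem~\ref{dsjkhjsdjhsd}, I expect no genuine obstacle inside this theorem: it is a deduction rather than a construction. The only points demanding care are formal, namely keeping the disjoint-union object $\DIDE$ distinct from its interval fibres $\DIDE^k_{[r,r']}$, and recognising that $C^k$-semiregularity bootstraps to $C^\infty$-semiregularity so that Theorem~\ref{dsjkhjsdjhsd} applies. This last observation is also what clarifies the asymmetry of the statement: statement~\ref{confev1} is phrased only as a sufficient condition because the corresponding converse — sequential completeness from $C^0$-semiregularity — genuinely fails, cf.\ Remark~\ref{sequrem}.\ref{sequrem2}.
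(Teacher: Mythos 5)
Your proposal is correct and follows essentially the same route as the paper: the paper's own proof likewise combines Lemma \ref{sdsddsd} and Proposition \ref{dfopfdpoofdp} for the inclusion-chasing and invokes Theorem \ref{dsjkhjsdjhsd} for Mackey completeness. The only difference is that you spell out two steps the paper leaves implicit (the identification of $\DIDE\cap C^k([0,1],\mg)$ with $\DIDE^k_{[0,1]}$, and the bootstrap from $C^k$- to $C^\infty$-semiregularity so that Theorem \ref{dsjkhjsdjhsd} applies also for $k\equiv\lip$), which is a harmless and indeed welcome clarification.
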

\begin{proof}
If $G$ is $C^k$-semiregular for $k\in \NN_{\geq 1}\sqcup \{\infty\}$, then $G$ is Mackey complete by Theorem \ref{dsjkhjsdjhsd}, as well as {\rm k}-confined by Lemma \ref{sdsddsd}. Moreover, 
\begingroup
\setlength{\leftmargini}{12pt}
\begin{itemize}
\item
	If $G$ is sequentially complete and 0-confined, then $C^0([0,1],\mg)\subseteq  \sequ\subseteq \DIDE$ holds by Proposition \ref{dfopfdpoofdp}.\ref{dfopfdpoofdp1}; so that $G$ is $C^0$-semiregular.
\item
	If $G$ is Mackey complete and k-confined for $k\in \NN_{\geq 1}\sqcup \{\infty\}$, then $C^k([0,1],\mg)\subseteq  \mack\subseteq \DIDE$ holds by Proposition \ref{dfopfdpoofdp}.\ref{dfopfdpoofdp2}; so that $G$ is $C^k$-semiregular.
\end{itemize}
\endgroup
\noindent
This proves the theorem.
\end{proof}

\subsection{Semiregularity}
\label{Evmain}
We now provide the 
\begin{proof}[Proof of Proposition \ref{dfopfdpoofdp}]
We fix $\phi\in \sequ\slash\mack$, and choose a tame Cauchy/Mackey-Cauchy sequence $\{\phi_n\}_{n\in \NN}\subseteq \DP^0(\dom[\phi],\mg)$ that converges uniformly to $\phi$; i.e.,
\begingroup
\setlength{\leftmargini}{12pt}
\begin{itemize}
\item
if $\phi\in \sequ$ holds, then for each $\pp\in \SEM$ and $\varepsilon>0$, there exists some $p\in \NN$, such that 
\begin{align*}
	\ppp_\infty(\phi_m-\phi_n)\leq \varepsilon \qquad\quad 
	\forall\: m,n\geq p.
\end{align*}
\item
\vspace{-10pt}
if $\phi\in \mack$ holds, then we have
\begin{align*}
	\ppp_\infty(\phi_m-\phi_n)\leq \mackeyconst_\pp\cdot \lambda_{m,n} \qquad\quad\forall\: m,n\geq \mackeyindex_\pp,\:\:\pp\in \SEM
\end{align*}
for sequences $\{\mackeyconst_\pp\}_{\pp\in \SEM}\subseteq \RR_{\geq 0}$, $\{\mackeyindex_\pp\}_{\pp\in \SEM}\subseteq \NN$, and $\RR_{\geq 0}\supseteq \{\lambda_{m,n}\}_{(m,n)\in \NN\times \NN}\rightarrow 0$. 
\end{itemize}
\endgroup
\noindent
We let $[r,r']\equiv \dom[\phi]$, define $\mu_n:=\innt_r^\bullet \phi_n$ for each $n\in \NN$; and fix an open neighbourhood $\OO\subseteq G$ of $e$ with $\ovl{\OO}\subseteq \U$.\footnote{Recall that topological groups are $T_3$ spaces.} 
Since 
\begin{align*}
	\textstyle\bound:= \im[\phi]\cup\bigcup_{n\in \NN}\im[\phi_n]
\end{align*}
is bounded, decomposing $[r,r']$ if necessary, we can assume that $\im[\mu_n]\subseteq \ovl{\OO}\subseteq \U$ holds for each $n\in \NN$: This is just clear from Lemma \ref{sddssdsdsd} and Lemma \ref{pofdspospods}. 
We now will show in three steps that $\mu=\lim_n\mu_n$ exists, is of class $C^1$, and fulfills $\Der(\mu)=\phi$ with $\mu(0)=e$. 
\vspace{6pt}

\noindent
{\bf Existence of the Limit:}
\vspace{3pt}

\noindent
For $\pp\in \SEM$, we choose $\qq\in \SEM$ as in Proposition \ref{aaapofdpofdpofdpofd}; and let $\qq\leq \ww\in \SEM$ be as in \eqref{confii} for $\vv\equiv \qq$ there. We choose $p\in \NN$ such large that 
	$|r'-r|\cdot \www_\infty(\phi_m-\phi_n)\leq 1$ holds for each $m,n\geq p$,    
and obtain from \eqref{confii} that 
\begin{align*}
	\textstyle\int \qqq\big( \Ad_{[\innt_r^s \phi_m]^{-1}}(\phi_n(s)-\phi_m(s)) \big) \: \dd s\leq |r'-r|\cdot \www_\infty(\phi_m-\phi_n)\leq 1\qquad\quad \forall\: m,n\geq p.
\end{align*}
Then, \ref{kdskdsdkdslkds} in combination with Proposition \ref{aaapofdpofdpofdpofd} gives
\begin{align}
\label{flkfdlklkfdfd}
\begin{split}
	\textstyle(\pp\cp\chart)\big(\mu_m^{-1}(t)\cdot \mu_n(t)\big)&\textstyle=(\pp\cp\chart)\big(\innt_r^t \Ad_{[\innt_r^\bullet \phi_m]^{-1}}(\phi_n-\phi_m)\big)\\
	&\leq \textstyle\int_r^t \qqq\big( \Ad_{[\innt_r^s \phi_m]^{-1}}(\phi_n(s)-\phi_m(s)) \big)\:\dd s \\
	&\leq\textstyle |r'-r|\cdot \www_\infty(\phi_n-\phi_m)
	\end{split}
\end{align}
for each $m,n\geq p$, and each $t\in [r,r']$. Now, 
\begingroup
\setlength{\leftmargini}{12pt}
\begin{itemize}
\item
This implies that $\{\mu_n(t)\}_{n\in \NN}$ is a Cauchy sequence for each $t\in [r,r']$; thus, converges to some $\mu(t)\in \ovl{\OO}\cap G\subseteq \U$ with $\mu(r)=e$, provided that $G$ is sequentially complete.
\item
	If $\{\phi_n\}_{n\in \NN}$ is a Mackey-Cauchy sequence (i.e., we have $\phi\in \mack$), we replace $\mackeyindex_\pp$ by $\max(\mackeyindex_\pp,p)$ as well as $\mackeyconst_\pp$ by $|r'-r|\cdot \max(\mackeyconst_\pp,\mackeyconst_\ww)$ for each $\pp\in \SEM$. Then, $\{\mu_n(t)\}_{n\in \NN}$ is a Mackey-Cauchy sequence for each $t\in [r,r']$; thus, converges to some $\mu(t)\in \ovl{\OO}\cap G\subseteq \U$ with $\mu(r)=e$, provided that $G$ is Mackey complete. 	
\end{itemize}
\endgroup
\noindent
The rest of the proof is the same for both situations, as we will only use the fact that $\{\phi_n\}_{n\in \NN}$ is a Cauchy sequence in the following. 
We now first have to show that $\mu\colon [r,r']\ni t\mapsto \mu(t)\in G$ is continuous.
\vspace{6pt}

\noindent
{\bf Continuity of the Limit:}
\vspace{3pt}

\noindent
We fix $\pp\in \SEM$, $t\in [r,r']$, $1\geq \varepsilon>0$, and define $J_\delta:=[[r,r']-t\he]\cap (-\delta,\delta)$ for each $\delta>0$. We now have to show that for $\delta>0$ suitably small, we have
\begin{align}
\label{fdfddfdffd}
	\pp\big(\chart(\mu(t))-\chart(\mu(t+\tau))\big)\leq \varepsilon\qquad\quad \forall\: \tau\in J_\delta.
\end{align}
We choose $\pp\leq \uu\in \SEM$ as in Lemma \ref{fhfhfhffhaaaa} for $\compact\equiv\{\mu(t)\}$ there; and obtain
\begin{align*}
	\pp\big(\chart(\mu(t))-\chart(\mu(t+\tau))\big)\leq \uu\big(\chart_{\mu(t)}(\mu(t))-\chart_{\mu(t)}(\mu(t+\tau))\big)=(\uu\cp\chart)\big(\mu^{-1}(t)\cdot\mu(t+\tau)\big)
\end{align*}
provided that $(\uu\cp\chart)\big(\mu^{-1}(t)\cdot\mu(t+\tau)\big)\leq 1$ holds. 
Thus, in order to prove \eqref{fdfddfdffd}, it suffices to show that there exist $p\in \NN$ and $\delta>0$, such that
\begin{align}
\label{saposaopsa}
\begin{split}
	\varepsilon\geq (\uu\cp\chart)\big(\mu^{-1}(t)\cdot\mu(t+\tau)\big)= (\uu\cp\chart)\big(&(\chartinv\cp\chart)\big(\mu^{-1}(t)\cdot \mu_p(t)\big)\he\cdot\\
	& (\chartinv\cp\chart)\big(\mu^{-1}_p(t)\cdot \mu_p(t+\tau)\big)\he\cdot\\
	& (\chartinv \cp\chart)\big(\mu^{-1}_p(t+\tau)\cdot \mu(t+\tau)\big)\big)
\end{split}
\end{align}
holds for each $\tau\in J_\delta$. 
\vspace{6pt}

\noindent
For this, we let $\uu\leq \oo\in \SEM$ be as in \eqref{aaajjhguoiuouo}; and will now show that there exist $p\in \NN$, $\delta>0$, such that
\begin{align*}
	(\oo\cp\chart)\big(\mu^{-1}(t)\cdot \mu_p(t)\big),\:\:(\oo\cp\chart)\big(\mu^{-1}_p(t)\cdot \mu_p(t+\tau)\big),\:\: (\oo\cp\chart)\big(\mu^{-1}_p(t+\tau)\cdot \mu(t+\tau)\big)\:\:\leq\:\: \varepsilon/3 
\end{align*} 
holds for all $\tau\in J_\delta$: Then,  \eqref{saposaopsa} is clear from \eqref{aaajjhguoiuouo}. 

Now, 
\begingroup
\setlength{\leftmargini}{12pt}
\begin{itemize}
\item
In order to estimate the second term, 
\begingroup 
\setlength{\leftmarginii}{12pt}
\begin{itemize}
\item[$\circ$]
We choose $\oo\leq \nn$ as in \eqref{invrel}, for $\mm\equiv\oo$ there. 
\item[$\circ$]
We choose $\nn\leq\qq\in \SEM$ as in Proposition \ref{aaapofdpofdpofdpofd}, for $\pp\equiv\nn$ there.
\item[$\circ$]
\label{ccc}
We choose $\qq\leq \ww\in \SEM$ as in \eqref{confii} for $\vv\equiv \qq$ there; and fix 
\begin{align*}
	1\geq \delta:=\varepsilon/3 \cdot\max(1,\sup\{X\in \bound\:|\: \ww(X)\})^{-1}. 
\end{align*}
\end{itemize}
\endgroup
\noindent
We then have to discuss the cases $\tau\geq 0$ and $\tau<0$ separately. 
\begingroup
\setlength{\leftmarginii}{11pt}
\begin{itemize}
\item[$\triangleright$]
Let $\tau\in J_\delta$ with $\tau\geq 0$. Then, for each $p\in \NN$, we have 
\begin{align*}
	\textstyle (\oo\cp\chart)\big(\mu_p^{-1}(t)\cdot \mu_p(t+\tau)\big)&\leq  (\nn\cp\chart)\big(\mu_p^{-1}(t)\cdot \mu_p(t+\tau)\big)\\
	&\textstyle=(\nn\cp\chart)\big(\mu_p^{-1}(t)\cdot \big[\innt_t^{t+\tau}\phi\big]\cdot \mu_p(t)\big)\\
	&\textstyle=(\nn\cp\chart)\big(\innt_t^{t+\tau} \Ad_{\mu^{-1}_p(t)}(\phi_p)\big)\\
	&\textstyle \leq \int_t^{t+\tau} \qqq\big(\Ad_{\mu^{-1}_p(t)}(\phi_p(s))\big)\:\dd s\\&\textstyle\leq  \int_t^{t+\tau} \www(\phi_p(s))\:\dd s\leq \varepsilon/3.
\end{align*}
In the second step, we have used \ref{pogfpogf}; and in the third step, we have applied \ref{homtausch} to $\Psi\equiv\conj_{\mu^{-1}_p(t)}$. 
\item[$\triangleright$]
Let $\tau\in J_\delta$ with $\tau< 0$. Then, for each $p\in \NN$, we have
\begin{align*}
	\textstyle(\oo\cp\chart)\big(\mu^{-1}_p(t)\cdot \mu_p(t-|\tau|)\big)&\textstyle=(\oo\cp\chart\cp\inv)\big(\mu^{-1}_p(t-|\tau|)\cdot \mu_p(t)\big)\\
	&\leq \textstyle(\nn\cp\chart)\big(\mu^{-1}_p(t-|\tau|)\cdot \mu_p(t)\big)\\ 
	&\textstyle= (\nn\cp\chart)\big(\mu^{-1}_p(t-|\tau|)\cdot \big[\innt_{t-|\tau|}^t\phi_p\big]\cdot \mu_p(t-|\tau|)\big)\\
	&\textstyle= (\nn\cp\chart)\big(\innt_{t-|\tau|}^t \Ad_{\mu^{-1}_p(t-|\tau|)}(\phi_p)\big)\\
	&\textstyle\leq \int_{t-|\tau|}^t \qqq\big(\Ad_{\mu^{-1}_p(t-|\tau|)}(\phi_p(s))\big)\:\dd s\\
	&\textstyle \leq \int_{t-|\tau|}^t \www(\phi_p(s))\:\dd s\textstyle\leq \varepsilon/3.
\end{align*}  
\end{itemize}
\endgroup
\noindent
\item
In order to estimate the first-, and the third term, we let $\oo\leq\ff\in \SEM$ be as in \eqref{aaajjhguoiuouo} for $\uu\equiv\oo$ and $\oo\equiv\ff$ there. We choose $\iota\colon \NN\rightarrow\NN$ strictly increasing with (use \eqref{flkfdlklkfdfd})
\begin{align*}
	\textstyle \sum_{n=0}^\infty (\ff\cp\chart)\big(\mu_{\iota(n+1)}^{-1}\cdot \mu_{\iota(n)}\big)\leq \varepsilon/3\qquad\quad\:\:\text{and}\qquad\quad\:\: \sum_{n=0}^\infty (\ff\cp\chart)\big(\mu_{\iota(n)}^{-1}\cdot \mu_{\iota(n+1)}\big)\leq \varepsilon/3;
\end{align*}
and observe that 
\begin{align*}
\begin{split}
	\textstyle\mu^{-1}&\cdot \mu_{\iota(0)}\textstyle=\lim_n \big((\mu^{-1}_{\iota(n)}\cdot \mu_{\iota(n-1)})\cdot (\mu_{\iota(n-1)}^{-1}\cdot \mu_{\iota(n-2)})\cdot {\dots}\cdot(\mu_{\iota(1)}^{-1}\cdot \mu_{\iota(0)})\big)\\
	\textstyle\mu^{-1}_{\iota(0)}&\cdot \mu\hspace{14.5pt} \textstyle=\lim_n \big((\mu_{\iota(0)}^{-1}\cdot \mu_{\iota(1)})\cdot {\dots}\cdot (\mu_{\iota(n-2)}^{-1}\cdot \mu_{\iota(n-1)})\cdot(\mu_{\iota(n-1)}^{-1}\cdot \mu_{\iota(n)})\big)
\end{split}
\end{align*}
holds. It is thus clear from \eqref{aaajjhguoiuouo} that 
\begin{align*}
	\textstyle(\oo\cp\chart)\big(\mu^{-1}(t)\cdot \mu_{p}(t)\big)\leq \varepsilon/3\qquad\quad\text{and}\qquad\quad (\oo\cp\chart)\big(\mu^{-1}_{p}(t)\cdot \mu(t)\big)\leq \varepsilon/3
\end{align*} 
holds for each $t\in [r,r']$, for $p:=\iota(0)$. From this, the claim is clear. 
\end{itemize}
\endgroup
\vspace{6pt}

\noindent
{\bf Uniform Convergence:}
\vspace{3pt}

\noindent
We define $\gamma:=\chart\cp\mu$, as well as $\gamma_n:=\chart\cp\mu_n$ for each $n\in \NN$; and now show that  
 $\{\gamma_n\}_{n\in \NN}$ converges uniformly to $\gamma$. 
For this, we let $\pp\in \SEM$, and $1\geq \varepsilon>0$ be fixed; and observe that $\compact\equiv\im[\mu]$ is compact, because $\mu$ is continuous. By Lemma \ref{fhfhfhffhaaaa}, there thus exists some  
$\pp\leq \uu\in \SEM$, such that (let $\compact\equiv \im[\mu]$,  $g\equiv g(t):= \mu(t)$, $q\equiv q(t):=\mu(t)$, $q'\equiv q'(t):=\mu_m(t)$, $h\equiv e$ there)
\begin{align*}
	(\uu\cp\chart)\big(\mu^{-1}\cdot \mu_m\big)\leq 1\quad\:\:\text{for}\quad\:\: m\in \NN\qquad\quad\:\Longrightarrow\qquad\quad\: \pp(\gamma-\gamma_m)\leq (\uu\cp\chart)\big(\mu^{-1}\cdot \mu_m\big).
\end{align*}
We choose $\uu\leq \oo$ as in \eqref{aaajjhguoiuouo}; and let $\iota\colon \NN\rightarrow\NN$ be strictly increasing with (use \eqref{flkfdlklkfdfd})
\begin{align*}
	\textstyle  (\oo\cp\chart)\big(\mu_m^{-1}\cdot \mu_{\iota(0)}\big)\leq \varepsilon/2\qquad\forall\: m\geq \iota(0)\qquad\quad\:\:\text{and}\quad\qquad\:\: \sum_{n=0}^\infty (\oo\cp\chart)\big(\mu_{\iota(n+1)}^{-1}\cdot \mu_{\iota(n)}\big)\leq \varepsilon/2.
\end{align*}
Then, \eqref{aaajjhguoiuouo} shows  
\begin{align*}
	(\uu\cp\chart)\big(\mu^{-1}\cdot\mu_{m}\big)&=\textstyle (\uu\cp\chart)\big(\big(\mu^{-1}\cdot \mu_{\iota(0)}\big) \cdot \big(\mu_{\iota(0)}^{-1} \cdot\mu_{m}\big)\big)\\
	&\textstyle=\lim_n (\uu\cp\chart)\big(\big(\mu^{-1}_{\iota(n)}\cdot \mu_{\iota(n-1)}\big)\cdot \big(\mu_{\iota(n-1)}^{-1}\cdot \mu_{\iota(n-2)}\big)
	\cdot {\dots}\cdot\big(\mu_{\iota(1)}^{-1}\cdot \mu_{\iota(0)}\big)\\
	&\hspace{144.8pt}\cdot \big(\mu_{\iota(0)}^{-1} \cdot\mu_{m}\big)\big)\\
	&\textstyle\leq \varepsilon
\end{align*}
for each $m\geq \iota(0)$, which proves the claim. 
 
We are ready to show 
\vspace{6pt}

\noindent
{\bf The solution property:}
\vspace{3pt}

\noindent
Let $\dermapinvdiff$ be as in \eqref{opopopop2}. 
Then, it is straightforward from the definitions that
\begin{align}
\label{asiosaioaois}
	\textstyle\gamma_n=\int_r^\bullet\dermapinvdiff(\gamma_n(s),\phi_n(s))\:\dd s\qquad\quad\forall\: n\in \NN
\end{align}
holds, cf.\ Appendix \ref{assaasssaaggggs}. Moreover, since $\dermapinvdiff$ is continuous, since $\im[\gamma]\times \im[\phi]$ is compact, and since $\{\gamma_n\}_{n\in \NN}$ and $\{\phi_n\}_{n\in \NN}$ converge uniformly to $\gamma$ and $\phi$, respectively,  we additionally  obtain
\begin{align*}
	\textstyle\lim_n\int_r^\bullet\dermapinvdiff(\gamma_n(s),\phi_n(s))\:\dd s=\int_r^\bullet\dermapinvdiff(\gamma(s),\phi(s))\:\dd s\in \comp{E}.
\end{align*}
Together with  \eqref{asiosaioaois}, this shows
\begin{align*}
	\textstyle\gamma=\lim_n\gamma_n=\lim_n\int_r^\bullet\dermapinvdiff(\gamma_n(s),\phi_n(s))\: \dd s=\int_r^\bullet\dermapinvdiff(\gamma(s),\phi(s))\:\dd s;
\end{align*}
i.e., that $\gamma$ is of class $C^1$ with $\dot\gamma=\dermapinvdiff(\gamma,\phi)\in E$. We obtain
\begin{align*}
\Der(\mu)&=\hspace{5pt}\dd_\mu\RT_{\mu^{-1}}\big(\dd_\gamma\chartinv(\dot\gamma)\big)\\
&=\hspace{5pt}\dd_\mu\RT_{\mu^{-1}}\big(\dd_\gamma\chartinv(\dermapinvdiff(\gamma,\phi))\big)\\
&=\big(\dd_\mu\RT_{\mu^{-1}}\cp \dd_\gamma\chartinv \cp \dd_{\chartinv(\gamma)}\chart\cp\dd_e\RT_{\chartinv(\gamma)}\big)(\phi)\\
&=\big(\dd_\mu\RT_{\mu^{-1}}\cp \dd_e\RT_\mu\big)(\phi)=\phi,
\end{align*}
which proves the claim. 
\end{proof}

\subsection{Particular Cases}
\label{patcases}
In this subsection, we discuss several situations in which $G$ is automatically k-confined for each $k\in \NN\sqcup\{\lip,\infty\}$. We start with
\subsubsection{Reliable Lie Groups}
We say that $G$ is {\bf reliable} \defff for each $\vv\in \SEM$, there exists a symmetric neighbourhood $V\subseteq G$ of $e$ as well as a sequence $\{\ww_n\}_{n\in \NN_{\geq 1}}\subseteq \SEM$ with
\begin{align}
\label{sdpodspods}
	\vvv\cp \Ad_{g_1}\cp{\dots}\cp\Ad_{g_n}\leq \www_{n}\qquad\quad\forall\: g_1,\dots,g_n\in V,\:\:n\geq 1.
\end{align}
For instance, $G$ is reliable: 
\begingroup
\setlength{\leftmargini}{20pt}
{
\renewcommand{\theenumi}{{\Alph{enumi}})} 
\renewcommand{\labelenumi}{\theenumi}
\begin{enumerate}
\item
\label{ExA}
If $G$ is abelian.
\item
\label{ExB}
	If for each $\vv\in \SEM$, there exist $\ww\in \SEM$, $C\geq 0$, and $V$ open with $e\in V$, such that 
	\begin{align*}
		\vvv\cp \Ad_{g_1}\cp{\dots}\cp\Ad_{g_n}\leq C^n\cdot \www\qquad\quad\forall\: g_1,\dots,g_n\in V,\:\:n\geq 1.
	\end{align*}	
	In particular, this is the case for the unit group $\MAU$ of a  continuous inverse algebra $\MA$ in the sense of \cite{HGIA}, just by \eqref{invACond}.
\item
\label{ExC}
If for each $\vv\in \SEM$, there exist $\vv\leq \ww\in \SEM$, $C\geq 0$, and $V$ open with $e\in V$, such that
\begin{align*}
	\www\cp\Ad_g\leq C\cdot \www\qquad\quad\forall\: g\in V.
\end{align*}
	In particular, this is the case 
		if $(\mg,\bl\cdot,\cdot\br)$ is submultiplicative, cf.\ Proposition \ref{sopsopdsop}; so that Banach Lie groups are reliable (of course, this can also be directly seen from \eqref{odspospodpof}).
\end{enumerate}}
\endgroup
\noindent
Then, 
\begin{lemma}
\label{opdopdspo}
Suppose that $G$ is locally $\mu$-convex and reliable, let $\bound\subseteq \mg$ be bounded, and $[r,r']\in \COMP$ be fixed. Then, for each $\vv\in \SEM$, there exists some $\vv\leq \ww\in \SEM$, such that
\begin{align*}
	\vvv\cp\Ad_{[\innt_r^\bullet\phi]^{-1}}\leq \www
\end{align*}
holds for each $\phi\in \DP^0([r,r'],\mg)$ with $\im[\phi]\subseteq \bound$.
\end{lemma}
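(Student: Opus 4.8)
The plan is to decompose $[r,r']$ into a \emph{fixed} number $n$ of subintervals — with $n$ depending only on $\bound$, $|r'-r|$ and $\vv$, but not on $\phi$ — on each of which the partial product integral lands inside the symmetric neighbourhood $V$ furnished by reliability; then to write $[\innt_r^t\phi]^{-1}$ as a product of at most $n$ elements of $V$ and apply \eqref{sdpodspods}.

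First I would fix $\vv\in\SEM$ and invoke reliability to obtain a symmetric open neighbourhood $V\subseteq G$ of $e$ together with $\{\ww_n\}_{n\in\NN_{\geq 1}}\subseteq\SEM$ satisfying \eqref{sdpodspods}. Since $\chart(V)$ is an open neighbourhood of $0$, I may choose $\pp\in\SEM$ with $\OB_{\pp,1}\subseteq\chart(V)$, and apply Proposition \ref{aaapofdpofdpofdpofd} to this $\pp$ to obtain some $\pp\leq\qq\in\SEM$ such that $\int_a^b\qqq(\phi(s))\,\dd s\leq 1$ forces $(\pp\cp\chart)(\innt_a^\bullet\phi)\leq\int_a^\bullet\qqq(\phi(s))\,\dd s$ on $[a,b]$. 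Because $\bound$ is bounded, $M:=\sup\{\qqq(X)\mid X\in\bound\}<\infty$; I then pick $n\geq 1$ so large that $|r'-r|\cdot M/n\leq 1$, and set the uniform partition $t_p:=r+p\cdot|r'-r|/n$ for $p=0,\dots,n$.

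Now let $\phi\in\DP^0([r,r'],\mg)$ with $\im[\phi]\subseteq\bound$ be arbitrary. On each subinterval, $\int_{t_p}^{t_{p+1}}\qqq(\phi(s))\,\dd s\leq (|r'-r|/n)\cdot M\leq 1$, so Proposition \ref{aaapofdpofdpofdpofd} gives $(\pp\cp\chart)(\innt_{t_p}^t\phi)\leq 1$, hence $\innt_{t_p}^t\phi\in V$, for every $t\in[t_p,t_{p+1}]$ and every $p$. Fixing $t\in[r,r']$, say $t\in(t_p,t_{p+1}]$, the decomposition \ref{pogfpogf} yields $\innt_r^t\phi=\innt_{t_p}^t\phi\cdot\innt_{t_{p-1}}^{t_p}\phi\cdots\innt_{t_0}^{t_1}\phi$, whence
\begin{align*}
	[\innt_r^t\phi]^{-1}=[\innt_{t_0}^{t_1}\phi]^{-1}\cdots[\innt_{t_{p-1}}^{t_p}\phi]^{-1}\cdot[\innt_{t_p}^t\phi]^{-1}
\end{align*}
is a product of $p+1\leq n$ elements of $\inv(V)=V$. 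Since $\conj_{g\cdot h}=\conj_g\cp\conj_h$ gives $\Ad_{g\cdot h}=\Ad_g\cp\Ad_h$, I obtain $\Ad_{[\innt_r^t\phi]^{-1}}=\Ad_{[\innt_{t_0}^{t_1}\phi]^{-1}}\cp\cdots\cp\Ad_{[\innt_{t_p}^t\phi]^{-1}}$ as a composition of at most $n$ factors $\Ad_g$ with $g\in V$. Padding with $\Ad_e=\id_\mg$ (using $e\in V$) to exactly $n$ factors and applying \eqref{sdpodspods}, I conclude $\vvv\cp\Ad_{[\innt_r^t\phi]^{-1}}\leq\ww_n$. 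Setting $\ww:=\ww_n$ — which satisfies $\vv\leq\ww$, as seen by taking all $g_i=e$ in \eqref{sdpodspods} — and noting that $n$, $V$ and $\ww$ were chosen independently of $t$ and of $\phi$, this is the asserted estimate.

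The main obstacle is the uniformity of the partition size $n$: the seminorm $\ww_n$ furnished by reliability degrades as the number of factors grows, so it is essential that $n$ be controlled independently of $\phi$. This is exactly what the interplay of boundedness of $\bound$ (providing the uniform bound $M$) and local $\mu$-convexity (converting an $L^1$-smallness condition on $\phi$ over a subinterval into membership of the partial integrals in $V$, via Proposition \ref{aaapofdpofdpofdpofd}) achieves. A secondary technical point is that all partial product integrals must remain in the chart domain $\U$ for the chart estimates to be meaningful, which is guaranteed precisely by the $L^1$-smallness enforced on each subinterval.
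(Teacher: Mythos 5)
Your argument is correct and is essentially the paper's own proof: both fix $V$ and $\{\ww_n\}$ from reliability, use Proposition \ref{aaapofdpofdpofdpofd} together with boundedness of $\bound$ to choose a uniform partition of size $n$ (independent of $\phi$) on whose pieces the partial product integrals lie in $V$, and then apply \ref{pogfpogf} and $\Ad_{g\cdot h}=\Ad_g\cp\Ad_h$ to conclude via \eqref{sdpodspods}. The only cosmetic difference is that you pad with $\Ad_e$ to reach exactly $n$ factors, whereas the paper replaces $\ww_n$ by $\ww_1+\dots+\ww_n$ to make the sequence increasing and then uses $\www_{p+1}\leq\www_n$.
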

\begin{proof}
	We choose $\{\ww_n\}_{n\in \NN_{\geq 1}}\subseteq \SEM$ and $V$ as in \eqref{sdpodspods}; and can assume that $\vv\leq \ww_1\leq \ww_2\leq {\dots}$ holds, just by replacing $\ww_n\rightarrow \ww_1+{\dots}+\ww_n$ for each $n\geq 1$ if necessary. Then, 
	\begingroup
\setlength{\leftmargini}{12pt}
\begin{itemize}
\item
By Proposition \ref{aaapofdpofdpofdpofd}, there exists some $\qq\in \SEM$, such that $\innt_\ell^\bullet \psi\in V$ holds for each $\psi\in \DP^0([\ell,\ell'],\mg)$,  $[\ell,\ell']\in \COMP$, with $\int \qq(\psi(s))\: \dd s\leq 1$. 
\item
 We define $\lambda:=\sup\{\qq(X)\:|\:X\in \bound\}$; and choose $n\geq 1$ such large that $\lambda\cdot |r'-r|/n \leq 1$ holds.
\item 
  We define $t_p:= r+ p\cdot |r'-r|/n$ for $p=0,\dots,n$; and obtain $\textstyle\big[\innt_{t_p}^{t}\phi\big]^{-1} \in V$ for each $t\in [t_p,t_{p+1}]$,
 for $p=0,\dots,n-1$.	
\end{itemize}
\endgroup
\noindent
We define $\ww:=\ww_n$, and obtain 
	 \begin{align*}
	 	\textstyle\vvv\cp\Ad_{[\innt_r^{t}\phi]^{-1}}\stackrel{\ref{pogfpogf}}{=}\vvv\cp \Ad_{[\innt_{t_0}^{t_1} \phi]^{-1}\cdot {\dots}\cdot [\innt_{t_p}^t\phi]^{-1}}= \vvv\cp \Ad_{[\innt_{t_0}^{t_1} \phi]^{-1}}\cp {\dots} \cp\Ad_{[\innt_{t_p}^t\phi]^{-1}} \leq \www_{p+1}\leq\www
	 \end{align*}
	for each $t\in [t_p,t_{p+1}]$, for $p=0,\dots,n-1$.
\end{proof}
We obtain
\begin{lemma}
\label{opsdopds}
Suppose that $G$ is locally $\mu$-convex and reliable. Then, $G$ is {\rm k}-confined for each $k\in \NN\sqcup\{\lip,\infty\}$. 
\end{lemma}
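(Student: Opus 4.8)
The plan is to combine the two approximation results of Lemma~\ref{jshjsdahjsd} with the reliability estimate of Lemma~\ref{opdopdspo}; the only point requiring care is that the approximating sequences furnished by Lemma~\ref{jshjsdahjsd} can be made \emph{tame} with a bound that is uniform in $n$, and this is exactly where reliability (together with local $\mu$-convexity) enters.

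First I would fix $\phi$ in the relevant regularity class and produce the approximating sequence. For $k\equiv 0$, Lemma~\ref{jshjsdahjsd}.\ref{jshjsdahjsd1} yields a Cauchy sequence $\{\phi_n\}_{n\in\NN}\subseteq\DP^\infty([0,1],\mg)\subseteq\DP^0([0,1],\mg)$ with $\{\phi_n\}_{n\in\NN}\rightarrow\phi$ uniformly (the inclusion holds since $\DIDE^\infty_{[t_p,t_{p+1}]}\subseteq\DIDE^0_{[t_p,t_{p+1}]}$). For $k\in\NN_{\geq 1}\sqcup\{\lip,\infty\}$ I would first observe that $C^k([0,1],\mg)\subseteq C^\lip([0,1],\mg)$ — a $C^1$-curve on a compact interval is Lipschitz, by Lemma~\ref{ofdpofdpopssssaaaasfffff} applied to its (bounded) derivative — so that Lemma~\ref{jshjsdahjsd}.\ref{jshjsdahjsd2} applies and delivers a \emph{Mackey}-Cauchy sequence $\{\phi_n\}_{n\in\NN}\subseteq\DP^\infty([0,1],\mg)\subseteq\DP^0([0,1],\mg)$ with $\{\phi_n\}_{n\in\NN}\rightarrow\phi$ uniformly.

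Next I would check that the images of these curves lie in a common bounded set. Since $\{\phi_n\}_{n\in\NN}\rightarrow\phi$ uniformly, for each $\pp\in\SEM$ the real sequence $\{\ppp_\infty(\phi_n-\phi)\}_{n\in\NN}$ is bounded, whence $\sup_{n}\ppp_\infty(\phi_n)\leq\ppp_\infty(\phi)+\sup_{n}\ppp_\infty(\phi_n-\phi)<\infty$; consequently
\[
	\bound:=\im[\phi]\cup\textstyle\bigcup_{n\in\NN}\im[\phi_n]\subseteq\mg
\]
is bounded. Now the key step: applying Lemma~\ref{opdopdspo} to this $\bound$ and $[r,r']\equiv[0,1]$ provides, for each $\vv\in\SEM$, a \emph{single} $\vv\leq\ww\in\SEM$ with $\vvv\cp\Ad_{[\innt_0^\bullet\psi]^{-1}}\leq\www$ for \emph{every} $\psi\in\DP^0([0,1],\mg)$ whose image is contained in $\bound$. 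Since $\ww$ depends only on $\vv$ and $\bound$, and not on the individual $\psi$, specializing to $\psi\equiv\phi_n$ yields $\vvv\cp\Ad_{[\innt_0^\bullet\phi_n]^{-1}}\leq\www$ simultaneously for all $n\in\NN$; i.e.\ $\{\phi_n\}_{n\in\NN}$ is tame in the sense of \eqref{confii}.

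This makes $\{\phi_n\}_{n\in\NN}$ a tame Cauchy (resp.\ Mackey-Cauchy) sequence in $\DP^0([0,1],\mg)$ converging uniformly to $\phi$, so $\phi\in\sequ$ (resp.\ $\phi\in\mack$). As $\phi$ was arbitrary, $C^0([0,1],\mg)\subseteq\sequ$ in the case $k\equiv 0$, and $C^k([0,1],\mg)\subseteq\mack$ for $k\in\NN_{\geq 1}\sqcup\{\lip,\infty\}$, which is precisely what it means for $G$ to be {\rm k}-confined. The only genuine subtlety — hence the ``hard part'' — is recognizing that reliability lets the tameness bound $\ww$ be chosen \emph{uniformly in $n$} through the single bounded set $\bound$; everything else is bookkeeping around the already-established approximation and estimate lemmas.
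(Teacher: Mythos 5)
Your proposal is correct and follows exactly the paper's intended route: the paper's own proof of this lemma is literally ``clear from Lemma \ref{jshjsdahjsd} and Lemma \ref{opdopdspo}'', and you have filled in precisely the details it leaves implicit (the inclusion $C^k\subseteq C^\lip$ via Lemma \ref{ofdpofdpopssssaaaasfffff}, the boundedness of $\bound$ from uniform convergence, and the uniformity in $n$ of the tameness seminorm $\ww$ coming from Lemma \ref{opdopdspo}). No gaps.
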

\begin{proof}
This is just clear from Lemma \ref{jshjsdahjsd} and Lemma \ref{opdopdspo}.
\end{proof}
We thus have
\begin{proposition}
\label{sddsdsds}
Suppose that $G$ is locally $\mu$-convex and reliable. Then, 
\begingroup
\setlength{\leftmargini}{17pt}
\begin{enumerate}
\item
\label{confevv1}
$G$ is $C^0$-semiregular if $G$ is sequentially complete.
\item
\label{confevv2}
$G$ is $C^\lip$-semiregular \deff $G$ is Mackey complete \deff $G$ is $C^\infty$-semiregular.
\end{enumerate}
\endgroup
\end{proposition}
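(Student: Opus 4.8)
The plan is to obtain Proposition \ref{sddsdsds} as an essentially immediate consequence of the general semiregularity criterion in Theorem \ref{confev}, once the \emph{confined} hypotheses occurring there are discharged for free by the reliability assumption. Since $G$ is assumed locally $\mu$-convex and reliable, Lemma \ref{opsdopds} already tells us that $G$ is $\mathrm{k}$-confined for \emph{every} $k\in \NN\sqcup\{\lip,\infty\}$. Thus in each clause of Theorem \ref{confev} the confinedness requirement is automatically met, and the only remaining input is the relevant completeness condition. The whole proof is then a matter of packaging.

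For the first statement I would simply invoke Theorem \ref{confev}.\ref{confev1}, which yields $C^0$-semiregularity whenever $G$ is sequentially complete and $0$-confined. Local $\mu$-convexity is part of the hypothesis, $0$-confinedness is supplied by Lemma \ref{opsdopds}, and sequential completeness is assumed; hence $G$ is $C^0$-semiregular.

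For the second statement I would close a cycle of three implications among the properties (A) $C^\lip$-semiregular, (B) Mackey complete, and (C) $C^\infty$-semiregular. The implication (C)$\Rightarrow$(B) is exactly Theorem \ref{dsjkhjsdjhsd}. For (B)$\Rightarrow$(A) I would apply Theorem \ref{confev}.\ref{confev2} with $k\equiv\lip$: it characterizes $C^\lip$-semiregularity as the conjunction of Mackey completeness and $\lip$-confinedness, and the latter is again provided by Lemma \ref{opsdopds}. Finally (A)$\Rightarrow$(C) is the monotonicity of semiregularity noted right after its definition, namely that a $C^k$-semiregular group is $C^p$-semiregular for all $p\geq k$; since in the ordering of regularity classes $\lip$ lies below $1$ while $\infty$ is the largest, a $C^\lip$-semiregular group is a fortiori $C^\infty$-semiregular. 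Chaining (A)$\Rightarrow$(C)$\Rightarrow$(B)$\Rightarrow$(A) yields all three equivalences at once.

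Because every ingredient — Theorem \ref{confev}, Lemma \ref{opsdopds}, Theorem \ref{dsjkhjsdjhsd}, and the monotonicity remark — is already in hand, I expect essentially no obstacle here: the proposition is bookkeeping that assembles these results under the single umbrella hypothesis ``locally $\mu$-convex and reliable''. The one point deserving explicit care is to confirm that the order conventions make (A)$\Rightarrow$(C) legitimate (i.e.\ that $C^\lip$ genuinely sits below $C^\infty$ in the regularity hierarchy), so that the cycle of implications is truly closed rather than merely circular in name.
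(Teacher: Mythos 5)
Your proposal is correct and follows essentially the same route as the paper: Lemma \ref{opsdopds} discharges $\mathrm{k}$-confinedness for all $k$, part 1 is Theorem \ref{confev}.\ref{confev1}, and part 2 closes the cycle via Theorem \ref{confev}.\ref{confev2}, the monotonicity of semiregularity (with $\lip$ below $1\leq\infty$), and Theorem \ref{dsjkhjsdjhsd}. The paper merely compresses your explicit (A)$\Rightarrow$(C)$\Rightarrow$(B) chain into the remark that "the converse direction is clear from Theorem \ref{dsjkhjsdjhsd}".
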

\begin{proof}
By Lemma \ref{opsdopds}, $G$ is k-confined for each $k\in \NN\sqcup\{\lip,\infty\}$. Thus, 
\begingroup
\setlength{\leftmargini}{12pt}
\begin{itemize}
\item
	If $G$ is sequentially complete, then $G$ is $C^0$-semiregular by Theorem \ref{confev}.\ref{confev1}.
\item
	If $G$ is Mackey complete, then $G$ is $C^\lip$-semiregular by Theorem \ref{confev}.\ref{confev2}.
\end{itemize}
\endgroup
\noindent
The converse direction in \ref{confevv2} is clear from Theorem \ref{dsjkhjsdjhsd}. 
\end{proof}
For instance,
\begin{corollary}
\label{sddsdsdsx}
Suppose that $G$ is abelian and locally $\mu$-convex. Then, 
\begingroup
\setlength{\leftmargini}{17pt}
\begin{enumerate}
\item
\label{confevvx1}
$G$ is $C^0$-semiregular if $G$ is sequentially complete.
\item
\label{confevvx2}
$G$ is $C^\lip$-semiregular \deff $G$ is Mackey complete \deff $G$ is $C^\infty$-semiregular.
\end{enumerate}
\endgroup
\end{corollary}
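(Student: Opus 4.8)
The plan is to obtain Corollary \ref{sddsdsdsx} as an immediate specialization of Proposition \ref{sddsdsds}; the sole point requiring verification is that an abelian Lie group is automatically \emph{reliable}, so that Proposition \ref{sddsdsds} applies. Since $G$ is also assumed locally $\mu$-convex, once reliability is established both parts follow verbatim.

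First I would record the triviality of the adjoint action in the abelian case. For $G$ abelian one has $\conj_g=\LT_g\cp\RT_{g^{-1}}=\id$ on $G$ for every $g\in G$, and hence $\Ad_g=\dd_e\conj_g=\id_\mg$ -- the fact already invoked in the proof of Lemma \ref{ssssss}. Consequently $\vvv\cp\Ad_{g_1}\cp{\dots}\cp\Ad_{g_n}=\vvv$ for all $\vv\in\SEM$, all $g_1,\dots,g_n\in G$, and all $n\geq 1$.

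Next I would check the defining inequality \eqref{sdpodspods} for reliability. Given $\vv\in\SEM$, it suffices to take the symmetric open neighbourhood $V:=G$ of $e$ together with the constant sequence $\ww_n:=\vv$ for every $n\geq 1$; the computation of the previous paragraph then gives $\vvv\cp\Ad_{g_1}\cp{\dots}\cp\Ad_{g_n}=\vvv\leq\www_n$ for all $g_1,\dots,g_n\in V$ and $n\geq 1$. Thus $G$ is reliable -- this is precisely case \ref{ExA} in the list of examples preceding Lemma \ref{opdopdspo}.

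With $G$ now known to be locally $\mu$-convex and reliable, I would simply quote Proposition \ref{sddsdsds}: its part \ref{confevv1} yields assertion \ref{confevvx1}, and its part \ref{confevv2} yields the chain of equivalences in \ref{confevvx2}. I expect no genuine obstacle in this argument: all of the substance is carried by Proposition \ref{sddsdsds} (ultimately by Lemma \ref{opsdopds} and Theorem \ref{confev}, together with Theorem \ref{dsjkhjsdjhsd} for the converse direction in part 2), and the abelian hypothesis enters only through the collapse of the adjoint action, which supplies reliability in the most economical possible form.
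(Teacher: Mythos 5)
Your proposal is correct and coincides with the paper's intended argument: the corollary is stated as an immediate specialization of Proposition \ref{sddsdsds}, using precisely the fact (case \ref{ExA} in the list of reliable groups) that $\Ad_g=\id_\mg$ for abelian $G$, so that reliability holds with $V=G$ and constant seminorm sequence. Nothing further is needed.
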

In particular, we recover the well known fact that\footnote{Clearly, Corollary \ref{sddsdsdsx}.\ref{confevv1} proves the obvious fact that each $\phi\in \bigsqcup_{[r,r']\in \COMP}C^0([r,r'],E)$ is Riemann integrable if $E$ is sequentially complete. } 
\begin{corollary}
\label{Mackeycor}
	$E$ is Mackey complete \deff the Riemann integral $\int \phi(s) \:\dd s\in E$ exists for each $\phi\in C^\infty([0,1],E)$ \deff the Riemann integral $\int \phi(s) \:\dd s\in E$ exists for each $\phi\in \bigsqcup_{[r,r']\in \COMP}C^\lip([r,r'],E)$. 
\end{corollary}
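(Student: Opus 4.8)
The plan is to realize $E$ as the abelian Lie group $(E,+)$ and reduce the statement to Corollary~\ref{sddsdsdsx}. First I would note that $(E,+)$ is abelian, and that choosing the global identity chart $\chart=\id_E$ turns the defining inequality~\eqref{aaajjhguoiuouo} of local $\mu$-convexity into $\uu(X_1+{\dots}+X_n)\leq \oo(X_1)+{\dots}+\oo(X_n)$, which holds with $\oo=\uu$ by the triangle inequality; hence $(E,+)$ is locally $\mu$-convex (and this is chart-independent by Theorem~\ref{lfdskfddflkfdfd}). Moreover, the Lie-group notion of Mackey completeness from Sect.~\ref{jjdsjksdljdsjldsljdsljdsds} specializes for $(G,\cdot)\equiv(E,+)$ to the usual Mackey completeness of the locally convex space $E$, as already remarked after Theorem~\ref{A}. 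Thus Corollary~\ref{sddsdsdsx}.\ref{confevvx2} will give the three-fold equivalence once its semiregularity conditions are rephrased in terms of Riemann integrals.

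The key step is the identification of the product integral with the ordinary Riemann integral. For $(E,+)$ the inversion is $g\mapsto -g$ and $\RT_g$ is the translation by $g$, so $\dd_\mu\RT_{\mu^{-1}}=\id$ and therefore $\Der(\mu)=\dot\mu$ for every $\mu\in C^1([r,r'],E)$. Consequently $\DIDE_{[r,r']}$ consists precisely of those $\phi\in C^0([r,r'],E)$ that admit a $C^1$-antiderivative taking values in $E$: by~\eqref{isdsdoisdiosd} such an antiderivative must equal $t\mapsto \mu(r)+\int_r^t\phi(s)\,\dd s$, while conversely~\eqref{opgfgofppof} shows that $t\mapsto \int_r^t\phi(s)\,\dd s$ is of class $C^1$ with derivative $\phi$ whenever it takes values in $E$. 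Hence $\phi\in\DIDE_{[r,r']}^k$ if and only if $\phi\in C^k([r,r'],E)$ and $\int_r^t\phi(s)\,\dd s\in E$ for every $t\in[r,r']$, and in that case $\innt_r^t\phi=\int_r^t\phi(s)\,\dd s$ is the Riemann integral.

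It remains to upgrade the pointwise-in-$t$ condition to the single Riemann integral over one interval appearing in the statement, and to pass between $[0,1]$ and arbitrary $[r,r']$. For the first point I would use the substitution formula~\eqref{substitRI}: writing $\int_r^t\phi(s)\,\dd s=\int\dot\varrho(s)\cdot\phi(\varrho(s))\,\dd s$ for an affine reparametrization $\varrho$ of $[r,r']$ onto $[r,t]$, the integrand $\dot\varrho\cdot\phi\cp\varrho$ is again of class $C^\infty$ (respectively Lipschitz), so existence in $E$ of the Riemann integral of every smooth (respectively Lipschitz) curve already forces $\int_r^t\phi\in E$ for all $t$; in the Lipschitz case the restriction $\phi|_{[r,t]}$ also handles this directly. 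For the second point, Lemma~\ref{assasaasas} gives that $C^k$-semiregularity is equivalent to $\DIDE_{[r,r']}^k=C^k([r,r'],E)$ for all $[r,r']\in\COMP$. Combining these, $(E,+)$ is $C^\infty$-semiregular exactly when $\int\phi(s)\,\dd s\in E$ exists for each $\phi\in C^\infty([0,1],E)$, and $C^\lip$-semiregular exactly when $\int\phi(s)\,\dd s\in E$ exists for each $\phi\in\bigsqcup_{[r,r']\in\COMP}C^\lip([r,r'],E)$; feeding this into Corollary~\ref{sddsdsdsx}.\ref{confevvx2} yields the claim. The only mildly delicate points are the antiderivative identification and the reparametrization argument just described; everything else is a direct translation of definitions.
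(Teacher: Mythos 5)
Your proposal is correct and follows the same route the paper intends: Corollary \ref{Mackeycor} is stated there as an immediate consequence of Corollary \ref{sddsdsdsx}.\ref{confevvx2} applied to $(E,+)$, and you supply exactly the implicit identifications (local $\mu$-convexity via the triangle inequality, $\Der(\mu)=\dot\mu$, product integral equals Riemann integral, and the reparametrization/restriction step bridging the single integral over one interval with integrability of all restrictions). Nothing is missing.
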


\subsubsection{Constricted Lie Groups}
We say that $G$ is {\bf constricted} \defff for each bounded subset $\bound\subseteq \mg$, and each $\vv\in \SEM$, there exist $C\geq 0$ and $\vv\leq \ww\in \SEM$, such that 
\begin{align}
\label{assssasaasas}
	\vvv\cp \com{X_1}\cp {\dots}\cp \com{X_n}\leq  C^n\cdot \www\qquad\quad\forall\: X_1,\dots,X_n\in \bound,\:\: n\geq 1
\end{align}
holds, with $\com{X} \colon \mg\ni Y\mapsto [X,Y]\in \mg$ for each $X\in \mg$. 
We define $\com{X}^0:=\id_\mg$ as well as inductively 
\begin{align*}
	(\com{X})^n:=\com{X}\cp(\com{X})^{n-1}\qquad\quad\forall\: n\geq 1.
\end{align*}  
Clearly, $G$ is constricted: 
\begingroup
\setlength{\leftmargini}{12pt}
\begin{itemize}
\item
If $(\mg,\bl\cdot,\cdot\br)$ is asymptotic estimate in the sense of \cite{BOSECK}.
\item
If $\bl\cdot,\cdot\br$ is submultiplicative; i.e., \defff for each $\vv\in \SEM$, there exists some $\vv\leq \ww\in \SEM$, such that 
\begin{align}
\label{nbxcnxnbcbxc}
 	\www(\bl X,Y\br)\leq \www(X)\cdot \www(Y)\qquad\quad\forall\: X,Y\in \mg.
\end{align}
\vspace{-18pt}
\item
If $\bl\cdot,\cdot \br$ is nilpotent in the sense that there exists some $n\geq 2$, such that
\begin{align*}
	\com{X_1}\cp {\dots}\cp \com{X_n}=0\qquad\quad\forall\: X_1,\dots,X_n\in \mg.
\end{align*}
\end{itemize}
\endgroup
\noindent
We will now show step by step that
\begin{proposition}
\label{opopsdods}
Suppose that $G$ is constricted, and admits an exponential map; and that $\mg$ is sequentially complete. Then, $G$ is {\rm k}-confined for each $k\in \NN\sqcup\{\lip,\infty\}$.
\end{proposition}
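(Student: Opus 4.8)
The plan is to produce, for every $\phi\in C^0([0,1],\mg)$ a tame Cauchy sequence, and for every $\phi\in C^\lip([0,1],\mg)$ a tame Mackey--Cauchy sequence, in $\DP^0([0,1],\mg)$ converging uniformly to $\phi$; this gives $C^0([0,1],\mg)\subseteq\sequ$ and $C^\lip([0,1],\mg)\subseteq\mack$, whence (since $C^k([0,1],\mg)\subseteq C^\lip([0,1],\mg)$ for all $k\in\NN_{\geq1}\sqcup\{\infty\}$) $G$ is $\mathrm{k}$-confined for every $k\in\NN\sqcup\{\lip,\infty\}$. Because $G$ admits an exponential map, $\COP([r,r'],\mg)\subseteq\DP^\infty([r,r'],\mg)\subseteq\DP^0([r,r'],\mg)$, so Lemma \ref{jshjsdahjsda} already supplies the required piecewise-constant approximating sequences $\{\phi_n\}_{n\in\NN}$; moreover these take their values among the values of $\phi$, so $\im[\phi_n]\subseteq\bound:=\im[\phi]$ for all $n$, with $\bound$ compact and hence bounded. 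The entire remaining task is therefore to show that such a sequence is \emph{tame}.

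The core is the following uniform estimate: for $\bound\subseteq\mg$ bounded and $[r,r']\in\COMP$ fixed, and each $\vv\in\SEM$, there is $\vv\leq\ww\in\SEM$ with $\vvv\cp\Ad_{[\innt_r^\bullet\phi]^{-1}}\leq\www$ for every $\phi\in\COP([r,r'],\mg)$ with $\im[\phi]\subseteq\bound$. First I would identify the adjoint action of an exponential: fixing the constricted data $C\geq0$ and $\vv\leq\ww\in\SEM$ from \eqref{assssasaasas} for $\bound$, the estimate $\vvv(\com{X}^k(Y))\leq C^k\www(Y)$ ($X\in\bound$) furnishes a geometric majorant, so the series $c(s):=\sum_{k\geq0}\tfrac{s^k}{k!}\com{X}^k(Y)$ has Cauchy partial sums in every seminorm and converges in $\mg$ by sequential completeness. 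Term-by-term differentiation (again justified by the geometric majorant, uniformly on compact $s$-intervals) gives $c\in C^1$ with $\dot c=\com{X}\cp c$ and $c(0)=Y$. Setting $d(s):=\Ad_{\exp(-sX)}(c(s))$ and using $\Ad_{\exp(-sX)}(X)=X$ (conjugation fixes the one-parameter group $\exp(tX)$, cf.\ Remark \ref{exponentialmap}.\ref{exponentialmap1}) together with the automorphism identity $\Ad_g\cp\ad_X=\ad_{\Ad_gX}\cp\Ad_g$, one computes $\dot d=-\com{X}(d)+\com{X}(d)=0$, whence $d\equiv d(0)=Y$ by \eqref{isdsdoisdiosd}; that is, $\Ad_{\exp(sX)}(Y)=\sum_{k\geq0}\tfrac{s^k}{k!}\com{X}^k(Y)$.

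Given $\phi\in\COP([r,r'],\mg)$ with pieces $X_0,\dots,X_{n-1}\in\bound$ on $r=t_0<\dots<t_n=r'$, Remark \ref{exponentialmap}.\ref{exponentialmap1} and the concatenation \ref{pogfpogf} give $\innt_r^t\phi=\exp((t-t_p)X_p)\cdot\exp(\delta_{p-1}X_{p-1})\cdots\exp(\delta_0X_0)$ for $t\in(t_p,t_{p+1}]$ (with $\delta_q:=t_{q+1}-t_q$); since $\Ad$ is a homomorphism, $\Ad_{[\innt_r^t\phi]^{-1}}$ is the composition of the operators $\Ad_{\exp(-s_qX_q)}$, each equal (by the previous step) to $\sum_{k}\tfrac{(-s_q)^k}{k!}\com{X_q}^k$, where the nonnegative scalars $s_q$ sum to $t-r\leq r'-r$. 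Expanding this composition into its iterated series, pulling $\vvv$ inside (legitimate by continuity and convergence), and applying \eqref{assssasaasas} to each monomial $\com{X_0}^{k_0}\cp\dots\cp\com{X_p}^{k_p}$ of total degree $N$ — all of whose bracket-arguments lie in $\bound$ — one bounds each term by $C^N\www(Y)$ and sums the factorial series to obtain $\vvv(\Ad_{[\innt_r^t\phi]^{-1}}(Y))\leq\www(Y)\prod_q e^{C|s_q|}\leq e^{C|r'-r|}\www(Y)$. Absorbing the constant $e^{C|r'-r|}$ into $\ww$ yields the claimed estimate; applied to the approximating sequence of the first paragraph (with $[r,r']\equiv[0,1]$ and $\bound=\im[\phi]$) it gives tameness uniformly in $n$.

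The main obstacle is precisely this uniform control of the composed adjoint action. The constricted condition is used exactly because it dominates \emph{arbitrarily long} bracket-words with arguments in $\bound$ by $C^N$ times a \emph{single} seminorm $\www$; this uniformity (one fixed $\ww$, geometric $C^N$) is what lets both the operator-series manipulations and the bound $e^{C|r'-r|}\www$ close. A naive factor-by-factor iteration fails, since each application of a $\com{X_q}$-exponential would enlarge the seminorm and the product of the resulting ever-growing constants could not be controlled. Sequential completeness of $\mg$ and the fundamental theorem \eqref{isdsdoisdiosd} stand in for the ODE uniqueness theorem that is unavailable in the generic locally convex setting.
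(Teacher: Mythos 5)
Your proposal is correct and follows essentially the same route as the paper: approximate by piecewise constant curves (Lemma \ref{jshjsdahjsda}, available since the exponential map gives $\COP([r,r'],\mg)\subseteq\DP^\infty([r,r'],\mg)$), identify $\Ad_{\exp(-sX)}$ with the series $\sum_k\frac{(-s)^k}{k!}\com{X}^k$ via sequential completeness and an ODE-uniqueness argument (your ``$d(s):=\Ad_{\exp(-sX)}(c(s))$ is constant'' computation is exactly the mechanism behind the paper's Lemma \ref{omori} and Corollary \ref{saklasklklsa}), and then bound the composed adjoint operators by $\exxp(C\cdot|r'-r|)\cdot\www$ using the constricted estimate \eqref{assssasaasas} on each monomial, which is the paper's Lemma \ref{klsdklsdkldsklds}. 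No gaps.
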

Let us first recall that
\begin{lemma}
\label{Potreih}
	Suppose that 
	 $\sum_{n=0}^\infty r^n\cdot a_n\in \mgc$ converges for some $r\in \RR_{\neq 0}$, and $\{a_n\}_{n\in \NN}\subseteq \mgc$.  
Then, 
	$\textstyle\alpha\colon I\rightarrow \mgc,\quad
		t\mapsto \sum_{n=0}^\infty t^n\cdot a_n$ 
	is of class $C^1$ (smooth) for each open interval $I\subseteq [-r,r]$, with
	\begin{align*}
		\textstyle\dot\alpha=\sum_{n=1}^\infty n\cdot t^{n-1}\cdot a_n\qquad\quad\text{as well as}\qquad\quad \int_0^t\alpha(s)\:\dd s=\sum_{n=0}^\infty \frac{t^{n+1}}{n+1}\cdot a_n.
	\end{align*}	
\end{lemma}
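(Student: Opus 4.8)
The plan is to treat $\alpha$ as a vector-valued power series and reduce every assertion to seminorm estimates, exploiting that the completion $\mgc$ is complete and that the fundamental theorem of calculus \eqref{opgfgofppof} together with the integral estimate \eqref{absch2} are already available. Two preliminary observations organize the whole argument. First, since $I$ is an \emph{open} interval contained in the closed interval $[-r,r]$, it cannot contain $\pm r$, so in fact $I\subseteq(-r,r)$; it therefore suffices to establish the claims for the function defined by the series on all of $(-r,r)\ni 0$ and then restrict to $I$. Second, because $\sum_{n=0}^\infty r^n\cdot a_n$ converges in $\mgc$ its terms tend to $0$, so the convergent sequence $\{r^n\cdot a_n\}_{n\in\NN}$ is bounded; i.e.\ $M_\ppp:=\sup_{n\in\NN}\comp{\ppp}(r^n\cdot a_n)<\infty$ for each $\ppp\in\SEML$ (with $\comp{\ppp}$ its extension to $\mgc$).

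For each $k\in\NN$ I would consider the formal $k$-th derivative series $\alpha_k(t):=\sum_{n\geq k}\tfrac{n!}{(n-k)!}\,t^{\,n-k}\cdot a_n$, so that $\alpha_0=\alpha$ and $\alpha_1=\sum_{n\geq 1}n\,t^{\,n-1}\cdot a_n=:\beta$. On any compact interval $[c,d]\subseteq(-r,r)$ set $\theta:=\max\{|c|,|d|\}/|r|<1$; writing $t^{\,n-k}\cdot a_n=t^{\,n-k}r^{-n}\,(r^n\cdot a_n)$ gives, for $t\in[c,d]$, the bound $\comp{\ppp}\big(\tfrac{n!}{(n-k)!}t^{\,n-k}a_n\big)\leq |r|^{-k}M_\ppp\cdot\tfrac{n!}{(n-k)!}\,\theta^{\,n-k}$, and $\sum_{n}\tfrac{n!}{(n-k)!}\theta^{\,n-k}<\infty$ since $\theta<1$. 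Hence the series defining $\alpha_k$ converges absolutely and uniformly on compact subsets of $(-r,r)$; by completeness of $\mgc$ each $\alpha_k$ is a well-defined map $(-r,r)\to\mgc$, and, being a locally uniform limit of continuous polynomial curves, it is continuous.

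Next I would integrate termwise. For a constant vector one has $\int_0^t s^{\,m}\cdot a\,\dd s=\tfrac{t^{m+1}}{m+1}\cdot a$ by linearity of the integral, so the partial sums integrate explicitly; combining this with uniform convergence on $[0,t]$ and with \eqref{absch2}, which bounds $\comp{\ppp}\big(\int_0^t(\alpha_{k}-S_N)(s)\,\dd s\big)\leq |t|\cdot\sup_{s\in[0,t]}\comp{\ppp}((\alpha_{k}-S_N)(s))\to 0$ for the partial sums $S_N$ (with the obvious sign change for $t<0$ via \eqref{fdopfdpo}), permits interchanging the limit and the integral. This yields $\int_0^t\alpha_{k+1}(s)\,\dd s=\alpha_k(t)-\alpha_k(0)$ for all $t\in(-r,r)$, and in particular, for $k=0$, the asserted identity $\int_0^t\alpha(s)\,\dd s=\sum_{n=0}^\infty\tfrac{t^{n+1}}{n+1}\cdot a_n$. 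Differentiability then comes for free: since $\alpha_{k+1}$ is continuous, \eqref{opgfgofppof} shows that $t\mapsto\int_0^t\alpha_{k+1}(s)\,\dd s$ is of class $C^1$ with derivative $\alpha_{k+1}$, and combined with $\alpha_k=\alpha_k(0)+\int_0^\bullet\alpha_{k+1}$ this gives $\alpha_k\in C^1$ with $\dot\alpha_k=\alpha_{k+1}$. An induction on $k$ now shows that $\alpha=\alpha_0$ is smooth with $\alpha^{(k)}=\alpha_k$; in particular $\dot\alpha=\beta=\sum_{n\geq 1}n\,t^{\,n-1}\cdot a_n$. Restricting to $I$ finishes the proof.

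I expect the only genuinely delicate points to be the convergence bookkeeping for the differentiated series — one must keep $\theta$ strictly below $1$ and argue on compact subintervals rather than on $(-r,r)$ itself — and the justification of termwise integration in the locally convex setting via \eqref{absch2}. By contrast, differentiation is cheap, precisely because I route it through the fundamental theorem \eqref{opgfgofppof} and termwise integration rather than through a direct difference-quotient estimate, which would otherwise be the awkward step in the absence of a usable metric on $\mgc$.
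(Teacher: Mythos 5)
Your argument is correct and is precisely the classical power-series proof that the paper invokes by simply remarking that the claim ``follows as in the case where $\mg=\mgc=\CC$ holds'': the seminormwise Weierstrass bounds on compact subintervals, termwise integration justified via \eqref{absch2}, and the detour through \eqref{opgfgofppof} to obtain differentiability without difference quotients all go through in $\mgc$. The only cosmetic slip is that the displayed identity $\int_0^t\alpha(s)\,\dd s=\sum_{n\geq 0}\tfrac{t^{n+1}}{n+1}\cdot a_n$ is the termwise integration of $\alpha_0$ itself rather than the $k=0$ instance of your formula $\int_0^t\alpha_{k+1}(s)\,\dd s=\alpha_k(t)-\alpha_k(0)$ (which instead gives $\int_0^t\alpha_1=\alpha(t)-a_0$), but the identical computation applies one index down.
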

\begin{proof}
	This just follows as in the case where $\mg=\mgc=\CC$ holds.
\end{proof}
We obtain
\begin{lemma}
\label{opqwopqwowpwopqwq}
Suppose that $G$ is constricted, and that $\mg$ is sequentially complete. Then,
\begin{align*}
	\textstyle\alpha_{X,Y}\colon \RR\ni t\mapsto \sum_{n=0}^\infty \frac{t^n}{n!}\cdot (\com{X})^n(Y)\in \mg\qquad\quad\forall\: X,Y\in \mg
\end{align*}
is of class $C^1$ with $\dot\alpha_{X,Y}=\bl X,\alpha_{X,Y}\br$; thus, smooth by Corollary \ref{bhsbsshshdkksjdhjsd}.
\end{lemma}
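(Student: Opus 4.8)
Here is how I would organize the argument. The plan is to first secure pointwise convergence of the power series \emph{in $\mg$} (this is where sequential completeness and the constricted hypothesis enter), then to read off the $C^1$-property and the differential equation from Lemma \ref{Potreih} over the completion $\mgc$, and finally to descend this information from $\mgc$ back to $\mg$.

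First I would fix $X,Y\in \mg$ and apply the constricted hypothesis to the bounded set $\bound:=\{X\}$: for each $\vv\in \SEM$ it yields $C\geq 0$ and $\vv\leq \ww\in \SEM$ with
\begin{align*}
	\vvv\big(\com{X}^n(Y)\big)\leq C^n\cdot \www(Y)\qquad\quad\forall\: n\in \NN
\end{align*}
(the case $n=0$ holding because $\vv\leq \ww$ gives $\vvv\leq \www$). Consequently the partial sums $S_N(t):=\sum_{n=0}^N \frac{t^n}{n!}\com{X}^n(Y)$ obey $\vvv(S_M(t)-S_N(t))\leq \sum_{n=N+1}^M \frac{|t|^n}{n!}\,C^n\,\www(Y)$, a tail of the convergent series summing to $e^{C|t|}\www(Y)$. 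Hence $\{S_N(t)\}_{N\in \NN}$ is a Cauchy sequence with respect to every $\vv\in \SEM$, and since $\mg$ is sequentially complete, $\alpha_{X,Y}(t)=\lim_N S_N(t)\in \mg$ exists for each $t\in \RR$.

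Next I would invoke Lemma \ref{Potreih} with $a_n:=\tfrac{1}{n!}\com{X}^n(Y)\in \mg\subseteq \mgc$; the estimate above shows that $\sum_n r^n\cdot a_n$ converges for every $r>0$, so the lemma gives that $\alpha_{X,Y}$ is of class $C^1$ as a curve into $\mgc$ with $\dot\alpha_{X,Y}(t)=\sum_{n\geq 1}\tfrac{t^{n-1}}{(n-1)!}\com{X}^n(Y)$. Since $\com{X}=\ad_X$ is continuous and linear on $\mg$, it passes through the $\mg$-convergent series defining $\alpha_{X,Y}(t)$, whence
\begin{align*}
	\bl X,\alpha_{X,Y}(t)\br=\com{X}\big(\alpha_{X,Y}(t)\big)=\sum_{m\geq 0}\tfrac{t^m}{m!}\com{X}^{m+1}(Y)=\dot\alpha_{X,Y}(t)
\end{align*}
(the middle and right sums agreeing after reindexing $m=n-1$); in particular $\dot\alpha_{X,Y}(t)\in \mg$ for every $t$. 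Finally, because $\mg$ carries the subspace topology inherited from $\mgc$ (the canonical seminorms on $\mgc$ restrict to those of $\mg$), and both $\alpha_{X,Y}$ and its $\mgc$-derivative take values in $\mg$, the difference quotients lie in $\mg$ and converge there; thus $\alpha_{X,Y}\colon \RR\to \mg$ is genuinely of class $C^1$ with $\dot\alpha_{X,Y}=\bl X,\alpha_{X,Y}\br$. Smoothness then follows from Corollary \ref{bhsbsshshdkksjdhjsd}, applied with the smooth map $\Omega\colon \mg\times \mg\ni (Z,X')\mapsto \bl X',Z\br$ and the constant ($C^\infty$) curve $t\mapsto X$.

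The main obstacle, as this outline makes clear, is the distinction between $\mg$ and its completion $\mgc$: Lemma \ref{Potreih} only produces a curve into $\mgc$, so the genuine content of the proof is to show that the series already converges in $\mg$ (via sequential completeness together with the constricted estimate) and that the derivative again lands in $\mg$ — which is exactly what licenses the descent to a $C^1$-statement in $\mg$ itself and then the bootstrap to smoothness.
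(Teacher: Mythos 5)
Your proposal is correct and follows essentially the same route as the paper: the constricted estimate plus sequential completeness gives convergence of the partial sums in $\mg$, Lemma \ref{Potreih} then yields the $C^1$-property and the series formula for the derivative over $\mgc$, and the identification $\dot\alpha_{X,Y}=\bl X,\alpha_{X,Y}\br$ forces the derivative back into $\mg$. Your write-up merely spells out the reindexing, the continuity of $\com{X}$, and the descent from $\mgc$ to $\mg$ that the paper leaves implicit.
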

\begin{proof}
	It is straightforward from the definitions that $\{\sum_{k=0}^n \frac{t^k}{k!}\cdot (\com{X})^k(Y)\}_{n\in \NN}\subseteq \mg$ is a Cauchy sequence for each $t\in \RR$, and $X,Y\in \mg$; thus, converges to some $\alpha_{\bl X,Y\br}(t)\in\mg$. By Lemma \ref{Potreih}, $\alpha_{X,Y}\colon \RR\rightarrow  \mg\subseteq \mgc$ is of class $C^1$ with $\dot\alpha_{X,Y}=\bl X,\alpha_{X,Y}\br$; which implies $\im[\dot\alpha_{X,Y}]\subseteq\mg$.	
\end{proof}
Let now $[r,r']\in \COMP$ be fixed; and recall that \cite{Omori}
\begin{lemma}[Omori]
\label{omori}
Let $\phi\in \DIDE_{[r,r']}$, $Y\in \mg$, and $\alpha\in C^1([r,r'],\mg)$ be fixed. Then, we have
\begin{align*}
		\textstyle \alpha=\Ad_{\mu}(Y)\quad\:\:\text{for}\quad\:\: \mu:=\innt_r^\bullet \phi \qquad\quad\:\: \Longleftrightarrow\qquad\quad\:\:
		\dot\alpha=\bl \phi,\alpha\br\quad\:\:\text{holds with}\quad\:\: \alpha(r)=Y.
	\end{align*} 
\end{lemma}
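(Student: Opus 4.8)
The plan is to prove both implications by exploiting the fundamental relation between the adjoint action along $\mu=\innt_r^\bullet\phi$ and the differential equation $\dot\alpha=\bl\phi,\alpha\br$. First I would establish the forward direction ($\Rightarrow$). Assume $\alpha=\Ad_\mu(Y)$ with $\mu=\innt_r^\bullet\phi$. Clearly $\alpha(r)=\Ad_{\mu(r)}(Y)=\Ad_e(Y)=Y$ since $\mu(r)=\innt_r^r\phi=e$. For the derivative, I would differentiate $t\mapsto\Ad_{\mu(t)}(Y)$ using smoothness of $\Ad$ together with the fact that $\mu$ solves $\Der(\mu)=\phi$, i.e.\ $\dd_\mu\RT_{\mu^{-1}}(\dot\mu)=\phi$, so that $\dot\mu=\dd_e\RT_\mu(\phi(t))$. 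The key computational identity is that differentiating $\conj_{\mu(t)}=\LT_{\mu(t)}\cp\RT_{\mu(t)^{-1}}$ and then restricting to the Lie-algebra level via $\Ad_g=\dd_e\conj_g$ produces the bracket: one obtains $\frac{\dd}{\dd t}\Ad_{\mu(t)}(Y)=\bl\phi(t),\Ad_{\mu(t)}(Y)\br=\bl\phi,\alpha\br$. This is essentially the infinitesimal version of $\ad_X=\dd_e\Ad\bl\cdot\br(X)$ combined with the relation $\ad_X(Z)=\bl X,Z\br$ recorded in the preliminaries.

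For the converse direction ($\Leftarrow$), I would use a uniqueness argument. Suppose $\alpha\in C^1([r,r'],\mg)$ satisfies $\dot\alpha=\bl\phi,\alpha\br$ with $\alpha(r)=Y$. By the forward direction, $\beta:=\Ad_\mu(Y)$ also solves the same initial-value problem: $\dot\beta=\bl\phi,\beta\br$ with $\beta(r)=Y$. Thus both $\alpha$ and $\beta$ solve the linear ODE $\dot\delta=\bl\phi,\delta\br$ with the same initial condition, and I would conclude $\alpha=\beta$ by an argument adapted to the locally convex setting. Since no general uniqueness theory for ODEs is available here, the natural route is to consider $\gamma:=\Ad_{\mu^{-1}}(\alpha)=\Ad_{\mu^{-1}}(\alpha-\beta)$ and show $\dot\gamma=0$: differentiating $\Ad_{\mu(t)^{-1}}(\alpha(t))$ with the product rule for the two $t$-dependencies, the contribution from $\dot\alpha=\bl\phi,\alpha\br$ exactly cancels the contribution from differentiating $\Ad_{\mu^{-1}}$ (which produces $-\bl\phi,\cdot\br$ in the transported frame). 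Hence $\gamma$ is constant, equal to $\gamma(r)=\Ad_e(Y)=Y$, so $\alpha=\Ad_\mu(Y)=\beta$.

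The main obstacle I anticipate is the differentiation step in the locally convex category: justifying $\frac{\dd}{\dd t}\Ad_{\mu(t)}(Y)=\bl\phi(t),\Ad_{\mu(t)}(Y)\br$ rigorously requires applying the chain and product rules (rules \ref{chainrule}, \ref{productrule}) to the smooth map $(g,Z)\mapsto\Ad_g(Z)$ composed with $t\mapsto(\mu(t),Y)$, and then correctly identifying the resulting tangent expression with a Lie bracket. One must carefully unwind $\dot\mu=\dd_e\RT_{\mu}(\phi)$ and track how right-translation interacts with $\Ad$; the cleanest bookkeeping uses $\Ad_g(Z)=\dd_{(g,e)}\conj(0,Z)$ and the definition $\ad_X=\dd_e\Ad\bl\cdot\br(X)$ from the preliminaries, reducing everything to the identity $\ad_\phi(\alpha)=\bl\phi,\alpha\br$. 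For the converse, the corresponding obstacle is verifying the cancellation showing $\gamma$ has vanishing derivative; this is where I would invest the most care, though it reduces to the same bracket computation applied in the transported frame. Once the derivative formula is secured, both directions follow with only routine manipulation.
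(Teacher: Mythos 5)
Your proposal is correct and follows essentially the same route as the paper: the forward direction by differentiating $t\mapsto\Ad_{\mu(t)}(Y)$ and identifying the derivative with $\bl\phi,\alpha\br$ via $\ad_X(Z)=\bl X,Z\br$ (the paper organizes this computation by factoring the increment as $\Ad_{\innt_t^{t+h}\psi}\cp\Ad_{\mu(t)}$ for an extension $\psi$ of $\phi$ and taking the right derivative at $h=0$), and the converse by showing that $\gamma:=\Ad_{\mu^{-1}}(\alpha)$ has vanishing derivative, hence is constant equal to $Y$. The only blemish is the written identity $\Ad_{\mu^{-1}}(\alpha)=\Ad_{\mu^{-1}}(\alpha-\beta)$, which is false as stated but harmless, since the argument you actually carry out is the constancy of $\Ad_{\mu^{-1}}(\alpha)$ itself.
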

\begin{proof} 
	The proof is elementary, and can be found in Appendix \ref{assaauuuuuuu}. 
\end{proof}
We conclude that
\begin{corollary}
\label{saklasklklsa}
	Suppose that $G$ is constricted, and admits an exponential map; and that $\mg$ is sequentially complete. Then, we have $\Ad_{\exp(-t\cdot X)}(Y)=\alpha_{-X,Y}(t)$ for all $t\geq 0$, and $X,Y\in \mg$.
\end{corollary}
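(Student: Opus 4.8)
The plan is to recognize $\alpha_{-X,Y}$ as the unique solution of the linear ODE characterized by Omori's Lemma \ref{omori}, and then to identify the product integral attached to that ODE with the curve $t\mapsto\exp(-t\cdot X)$. Throughout I fix $X,Y\in\mg$ and an arbitrary $T>0$, and work on $[r,r']\equiv[0,T]$; since the final equality will hold for every such $T$, it will hold for all $t\geq 0$.

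First I would set up the product integral. Since $G$ admits an exponential map, the constant curve $\phi_{-X}\colon\RR\ni t\mapsto -X\in\mg$ satisfies $\phi_{-X}|_{[0,1]}\in\DIDE_{[0,1]}$; by Remark \ref{exponentialmap}.\ref{exponentialmap1} this propagates to $\phi_{-X}|_{[0,n]}\in\DIDE_{[0,n]}$ for each integer $n\geq 1$, and restricting to $[0,T]$ for any $n\geq T$ (using $\Der(\mu|_{D'})=\Der(\mu)|_{D'}$ from \eqref{fgfggf}) gives $\phi_{-X}|_{[0,T]}\in\DIDE_{[0,T]}$. The same Remark furnishes
\[
\innt_0^t\phi_{-X}=\exp(-t\cdot X)\qquad\forall\,t\in[0,T],
\]
so that $\mu:=\innt_0^\bullet\phi_{-X}$ is precisely $t\mapsto\exp(-t\cdot X)$.

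Next I would invoke Lemma \ref{opqwopqwowpwopqwq}: because $G$ is constricted and $\mg$ is sequentially complete, $\alpha_{-X,Y}\colon\RR\to\mg$ is (smooth, hence) of class $C^1$ with $\dot\alpha_{-X,Y}=\bl -X,\alpha_{-X,Y}\br=\bl\phi_{-X},\alpha_{-X,Y}\br$, and evaluating the defining series at $0$ gives $\alpha_{-X,Y}(0)=\com{-X}^0(Y)=Y$. These are exactly the two conditions appearing on the right-hand side of the equivalence in Omori's Lemma \ref{omori}, applied with $\phi\equiv\phi_{-X}$, with $\alpha\equiv\alpha_{-X,Y}|_{[0,T]}$, and with the prescribed initial value $Y$. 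The lemma therefore yields $\alpha_{-X,Y}(t)=\Ad_{\mu(t)}(Y)=\Ad_{\exp(-t\cdot X)}(Y)$ for all $t\in[0,T]$, and letting $T$ vary gives the claim for all $t\geq 0$.

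The one point that deserves care is purely bookkeeping rather than substance: one must confirm that $\phi_{-X}|_{[0,T]}$ genuinely lies in $\DIDE_{[0,T]}$ for non-integer $T$ (handled above by restriction) and that $\alpha_{-X,Y}$ is $\mg$-valued, not merely $\mgc$-valued, so that Omori's Lemma — stated for $\mg$-valued $C^1$-curves — applies verbatim; the latter is precisely what Lemma \ref{opqwopqwowpwopqwq} guarantees (its formula shows both $\alpha_{-X,Y}$ and $\dot\alpha_{-X,Y}=\bl -X,\alpha_{-X,Y}\br$ take values in $\mg$). Once these are in place, the argument is a direct matching of hypotheses, and I anticipate no analytic obstacle beyond the correct identification of signs via $\com{-X}^n=(-1)^n\com{X}^n$.
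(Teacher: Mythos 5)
Your proof is correct and follows essentially the same route as the paper's: both identify $\exp(-t\cdot X)=\innt_0^t\phi_{-X}$ via \eqref{lkdsklsdkdl}, use Lemma \ref{opqwopqwowpwopqwq} to see that $\alpha_{-X,Y}$ is a $\mg$-valued $C^1$-solution of $\dot\alpha=\bl\phi_{-X},\alpha\br$ with $\alpha(0)=Y$, and conclude by Omori's Lemma \ref{omori}. Your extra care about restricting to $[0,T]$ for non-integer $T$ and about $\mg$-valuedness is sound bookkeeping that the paper leaves implicit.
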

\begin{proof}
By \eqref{lkdsklsdkdl}, we have   
	$\alpha(t):=\Ad_{\exp(-t\cdot X)}(Y)= \Ad_{\innt_0^t \phi_{-X}}(Y)$, i.e.,  
 $\dot\alpha=\bl \phi_{-X},\alpha\br\equiv \bl-X,\alpha\br$ by Lemma \ref{omori}. The claim is thus clear from Lemma \ref{opqwopqwowpwopqwq} and Lemma \ref{omori}. 
\end{proof}
For the rest of this section, we let $\exxp$ denote the exponential function on $\RR$.

We obtain
\begin{lemma}
\label{klsdklsdkldsklds}
	Suppose that $G$ is constricted, and admits an exponential map; and that $\mg$ is sequentially complete. Then, for each $\pp\in \SEM$, and each bounded subset $\bound\subseteq \mg$, there exists some $\qq\in \SEM$, such that 
	$\ppp\cp \Ad_{[\innt_r^\bullet\phi]^{-1}}\leq \qqq$ holds for each $\phi\in \COP([r,r'],\mg)$ with $\im[\phi]\subseteq \bound$. 
\end{lemma}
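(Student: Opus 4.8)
The plan is to expand the adjoint action along $[\innt_r^\bullet\phi]^{-1}$ into a power series in the bracket operators and to control it directly via the constricted condition \eqref{assssasaasas}. First I would use that $G$ admits an exponential map, so that $\COP([r,r'],\mg)\subseteq \DP^\infty([r,r'],\mg)$ and $\innt_r^\bullet\phi$ is defined. Writing $\phi$ through a partition $r=t_0<{\dots}<t_n=r'$ with constant values $X_0,\dots,X_{n-1}\in\bound$, the product integral over each subinterval is a group exponential: by \eqref{lkdsklsdkdl} (together with the substitution rule) one has $\innt_{t_q}^{t_{q+1}}\phi|_{[t_q,t_{q+1}]}=\exp(\Delta_q\cdot X_q)$, where $\Delta_q:=t_{q+1}-t_q\geq 0$ for $q<p$ and $\Delta_p:=t-t_p\geq 0$. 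Hence, for $t\in(t_p,t_{p+1}]$, the definition \eqref{defpio} gives $\innt_r^t\phi=\exp(\Delta_p X_p)\cdot\exp(\Delta_{p-1}X_{p-1})\cdots\exp(\Delta_0 X_0)$, so that, $\Ad$ being a homomorphism, $\Ad_{[\innt_r^t\phi]^{-1}}=\Ad_{\exp(-\Delta_0 X_0)}\cp{\dots}\cp\Ad_{\exp(-\Delta_p X_p)}$ is a composition of $p+1$ factors whose exponents are $\geq 0$ and sum to $t-r\leq|r'-r|$.

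The second step is to expand each factor. By Corollary \ref{saklasklklsa} and Lemma \ref{opqwopqwowpwopqwq}, and using $\com{-X}^m=(-1)^m\com{X}^m$, each $\Ad_{\exp(-\Delta_q X_q)}$ acts as the convergent series $Z\mapsto\sum_{m\geq 0}\tfrac{(-\Delta_q)^m}{m!}\com{X_q}^m(Z)$ in $\mg$. Substituting these series into the composition turns $\Ad_{[\innt_r^t\phi]^{-1}}(Y)$ into a multi-indexed series whose generic term is a scalar multiple of a composition $\com{X_0}^{m_0}\cp{\dots}\cp\com{X_p}^{m_p}(Y)$ of $N:=m_0+{\dots}+m_p$ bracket operators, all with arguments in $\bound$.

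I would then estimate $\ppp$ of this expression. Applying constrictedness \eqref{assssasaasas} to $\vv\equiv\pp$ and the bounded set $\bound$ yields a single $C\geq 0$ and $\pp\leq\ww\in\SEM$ with $\ppp(\com{Z_1}\cp{\dots}\cp\com{Z_N}(Y))\leq C^N\www(Y)$ for all $Z_1,\dots,Z_N\in\bound$. Pushing $\ppp$ through the nested convergent series (using continuity of $\ppp$ and the triangle inequality, one factor at a time) and then applying this bound term by term gives
\begin{align*}
	\ppp\big(\Ad_{[\innt_r^t\phi]^{-1}}(Y)\big)\leq\www(Y)\cdot\sum_{m_0,\dots,m_p\geq 0}\frac{(C\Delta_0)^{m_0}\cdots(C\Delta_p)^{m_p}}{m_0!\cdots m_p!}=\www(Y)\cdot\exxp\!\big(C\cdot(t-r)\big),
\end{align*}
the $N=0$ term being handled by $\pp\leq\ww$. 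Since $t-r\leq|r'-r|$ and $C\geq 0$, setting $\qq:=\exxp(C\cdot|r'-r|)\cdot\ww$ finishes the argument.

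The main obstacle — and the reason the hypotheses are shaped as they are — is obtaining a bound uniform in the partition, i.e.\ independent of $n$ and of how fine the subdivision is. A naive induction on the number of pieces fails, because every application of a bracket operator would force a strictly larger seminorm, so the estimate would degrade as $n\to\infty$. The constricted condition circumvents this precisely because it provides one pair $(C,\ww)$ valid for bracket compositions of \emph{arbitrary} length; this is what lets the multi-series factorise into $\prod_q\exxp(C\Delta_q)=\exxp(C\sum_q\Delta_q)$, which telescopes to $\exxp(C(t-r))$ regardless of the subdivision. The remaining care is purely analytic: justifying the interchange of $\ppp$ with the nested infinite series, which is legitimate since each intermediate series converges in $\mg$ (Lemma \ref{opqwopqwowpwopqwq}) and the term-wise $\ppp$-estimates are summable by the very bound just derived.
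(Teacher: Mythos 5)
Your proposal is correct and follows essentially the same route as the paper: factor $\Ad_{[\innt_r^t\phi]^{-1}}$ into the composition $\Ad_{\exp(-\Delta_0 X_0)}\cp{\dots}\cp\Ad_{\exp(-\Delta_p X_p)}$ via \ref{pogfpogf} and \eqref{lkdsklsdkdl}, expand each factor by Corollary \ref{saklasklklsa}, and apply the single pair $(C,\ww)$ from constrictedness to the resulting multi-series, which sums to $\exxp(C\cdot(t-r))\cdot\www$ and yields $\qq:=\exxp(C\cdot|r'-r|)\cdot\ww$. The paper compresses the series manipulation into one sentence; your write-up merely makes that step explicit.
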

\begin{proof}
	We let $\ww$ be as in \eqref{assssasaasas}, for $\vv\equiv\pp$ there; and choose $r=t_0<{\dots}<t_n=r'$ as well as $X_0,\dots,X_{n-1}\in \mg$, with 
	$\phi|_{(t_p,t_{p+1})}=X_p$ for all $p=0,\dots,n-1$.  
Then, for $0\leq p\leq n-1$ and $t\in (t_p,t_{p+1}]$, we have 
\begin{align*}
	\Ad_{[\innt_r^t\phi]^{-1}}=\Ad_{\exp(-|t_1-t_0|\cdot X_0)}\cp {\dots}\cp \Ad_{\exp(-|t_p-t_{p-1}|\cdot X_{p-1})}\cp \Ad_{\exp(-|t-t_p|\cdot X_{p})};
\end{align*}
so that Corollary \ref{saklasklklsa} together with \eqref{assssasaasas} shows
\begin{align*}
	\big(\ppp\cp \Ad_{[\innt_r^t\phi]^{-1}}\big)(Y)\leq \exxp(|t-r|\cdot C)\cdot \www(Y)\qquad\quad\forall\: Y\in \mg,\:\: t\in [r,r'].
\end{align*}
The claim thus holds for $\qq:= \exxp(|r'-r|\cdot C)\cdot \ww$.
\end{proof}
We are ready for the
\begin{proof}[Proof of Proposition \ref{opopsdods}]
The claim is clear from Lemma \ref{jshjsdahjsda} and Lemma \ref{klsdklsdkldsklds}.
\end{proof}

\subsubsection{Submultiplicative Lie Algebras}
\label{pofdofdpofdpofdpofdfdofdpodf}
We finally want to discuss the situation where $(\mg,\bl\cdot,\cdot\br)$ is submultiplicative. Clearly, $G$ is constricted in this case; but, as we are going to show now, there exists a sharper version of Proposition \ref{opopsdods} neither presuming the existence of the exponential map nor sequentially completeness of $\mg$. More specifically, we will show that 
\begin{proposition}
\label{sopsopdsop}
If $(\mg,\bl\cdot,\cdot\br)$ is submultiplicative, then $G$ is 
{\rm k}-confined for each $k\in \NN\sqcup\{\lip,\infty\}$.  
Moreover, $G$ is reliable as it fulfills the condition introduced in \ref{ExC}.  
\end{proposition}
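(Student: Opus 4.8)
The plan is to reduce both assertions to a single \emph{a priori} estimate: for submultiplicative $\mg$ the adjoint action grows at most exponentially along \emph{every} integral curve, measured in one fixed seminorm. Precisely, I would first prove the following. Given $\vv\in\SEM$, choose $\vv\leq\ww\in\SEM$ with $\www(\bl X,Y\br)\leq\www(X)\cdot\www(Y)$ as in \eqref{nbxcnxnbcbxc}. Then for every $[r,r']\in\COMP$, every $\phi\in\DP^0([r,r'],\mg)$, every $t\in[r,r']$ and $Y\in\mg$,
\[
  \vvv\big(\Ad_{[\innt_r^t\phi]^{-1}}(Y)\big)\leq\exp\Big(\textstyle\int_r^t\www(\phi(s))\,\dd s\Big)\cdot\www(Y),
\]
and the analogous bound holds for $\Ad_{\innt_r^t\phi}$ (no inverse).

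To establish this I would argue first for a genuine $\phi\in\DIDE_{[r,r']}$, so that $\mu:=\innt_r^\bullet\phi$ is of class $C^1$. By \ref{pogfpogfaaa} the right logarithmic derivative of $\mu^{-1}$ is $-\Ad_{\mu^{-1}}(\phi)$, so Omori's Lemma~\ref{omori} applied to $\mu^{-1}$, together with the equivariance $\Ad_g\bl X,Y\br=\bl\Ad_gX,\Ad_gY\br$, shows that $t\mapsto\Ad_{\mu(t)^{-1}}(Y)$ solves the Volterra equation $\Ad_{\mu(t)^{-1}}(Y)=Y-\int_r^t\Ad_{\mu(s)^{-1}}(\bl\phi(s),Y\br)\,\dd s$. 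Iterating this equation $n$ times and estimating the iterated integrals by submultiplicativity of $\ww$ bounds the first $n$ Neumann terms by $\sum_{j<n}\frac{1}{j!}\big(\int_r^t\www(\phi)\big)^{j}\www(Y)$, while the remainder is controlled, via \eqref{askasjkjksaasqwqqwasw} applied to the compact set $\inv(\im[\mu])$, by $\frac{1}{n!}\big(\int_r^t\mmm(\phi)\big)^{n}\mmm(Y)\to 0$. Letting $n\to\infty$ yields the bound for one piece; the $\DP^0$-case then follows by composing over a decomposition via \ref{pogfpogf}, the crucial point being that each factor is dominated by a multiple of the \emph{same} seminorm $\ww$, so the exponents simply add.

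With this estimate both claims are short. For reliability I verify condition \ref{ExC}: fixing $\vv$ and the associated $\ww$, I choose $\V\lleq\oo\in\SEM$ with $\ww\leq\oo$ as in Lemma~\ref{poposposaadccx} and take a symmetric $V\subseteq\chartinv(\OB_{\oo,1})$; joining $g^{-1}\in V$ to $e$ by the chart-linear curve $t\mapsto\chartinv(t\cdot\chart(g^{-1}))$ gives $\int_0^1\www(\phi)\leq\oo(\chart(g^{-1}))\leq 1$ and $\innt_0^1\phi=g^{-1}$, whence $\www\cp\Ad_g\leq e\cdot\www$ for all $g\in V$, i.e.\ \ref{ExC} holds with $C=e$; the excerpt already records that \ref{ExC} implies reliability. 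For $\mathrm{k}$-confinedness, given $\phi\in C^k([0,1],\mg)$ I take the approximating sequence $\{\phi_n\}\subseteq\DP^\infty([0,1],\mg)$ of Lemma~\ref{jshjsdahjsd} (a Cauchy sequence by \ref{jshjsdahjsd1} for $k\equiv0$, and, since $C^k\subseteq C^\lip$, a Mackey--Cauchy sequence by \ref{jshjsdahjsd2} for $k\in\NN_{\geq1}\sqcup\{\lip,\infty\}$), which converges uniformly and hence has images in a common bounded set $\bound:=\im[\phi]\cup\bigcup_n\im[\phi_n]$. Applying the estimate with $M:=\sup\{\www(X)\:|\:X\in\bound\}<\infty$ gives $\vvv\cp\Ad_{[\innt_0^\bullet\phi_n]^{-1}}\leq e^{M}\www$ uniformly in $n$, so $\{\phi_n\}$ is tame in the sense of \eqref{confii}; thus $\phi\in\sequ$ (resp.\ $\phi\in\mack$), proving $G$ is $\mathrm{k}$-confined for every $k$.

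The main obstacle is the master estimate, and within it the rigorous passage to the limit in the Neumann/Peano--Baker series in the locally convex setting. Submultiplicativity is exactly what makes a single seminorm $\ww$ stable under each $\ad_{\phi(s)}=\bl\phi(s),\cdot\br$, so that the $j$-th iterated term is bounded self-referentially by $\ww$ alone, while the tail must be absorbed by the auxiliary seminorm $\mm$ supplied by \eqref{askasjkjksaasqwqqwasw}. It is this interplay --- single-seminorm control on the convergent part, compactness control on the remainder --- that lets the argument run without an exponential map and without completeness of $\mg$ (in contrast to Proposition~\ref{opopsdods}), and that also makes the estimate compose cleanly across the pieces of a piecewise-integrable curve, where the general reliable case would produce an ever-growing chain of distinct seminorms.
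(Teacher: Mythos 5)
Your proposal is correct, and its overall architecture coincides with the paper's: a master exponential estimate for $\Ad$ along integral curves in a single submultiplicative seminorm $\ww$, obtained from the ODE characterization $\dot\alpha=\bl\phi,\alpha\br$ of Omori's Lemma \ref{omori}; tameness of the approximating sequences of Lemma \ref{jshjsdahjsd} via that estimate; and condition \ref{ExC} via the chart-linear curve $t\mapsto\chartinv(t\cdot x)$ together with Lemma \ref{poposposaadccx}. The one genuine divergence is how the master estimate is proved. The paper integrates the differential inequality $\www(\alpha)\leq\www(Y)+\int_r^\bullet\www(\alpha(s))\cdot\www(\phi(s))\:\dd s$ and invokes Gr\"onwall (Lemmas \ref{apoapoapoapo} and \ref{Groenwall}), then passes to the inverse by Example \ref{fdpofdopdpof}; you instead sum the Peano--Baker series of the Volterra equation, bounding the $j$-th iterated simplex integral by $\frac{1}{j!}\big(\int\www(\phi)\big)^j\www(Y)$ and killing the remainder with the compactness estimate \eqref{askasjkjksaasqwqqwasw}. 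Both work; the Gr\"onwall route is shorter and needs no remainder control, while your series argument is self-contained and makes the role of submultiplicativity (stability of a single seminorm under each $\ad_{\phi(s)}$) completely explicit. Two cosmetic points: in the remainder estimate you should replace $\mm$ by a submultiplicative majorant $\mm\leq\mm'$ supplied by \eqref{nbxcnxnbcbxc} before iterating the bracket bound, and the constants $e^{M}$ you produce should be absorbed into a rescaled seminorm (as the paper does, e.g.\ $\qq:=\exxp(|r'-r|\cdot C)\cdot\ww$ in Lemma \ref{klsdklsdkldsklds}) so that \eqref{confii} is met literally. Your explicit composition of the estimate over the pieces of a $\DP^0$-curve, with the exponents adding because every factor is controlled by the same $\ww$, is exactly the point the paper leaves implicit when it cites Corollary \ref{sajksakjsa} together with Lemma \ref{jshjsdahjsd}.
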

For this, let $[r,r']\in\COMP$ be fixed; and recall that 
\begin{lemma}[Gr\"onwall]
\label{apoapoapoapo}
	Let $\alpha,\beta\colon [r,r']\rightarrow \RR_{\geq 0}$ be of class $C^1$, and $C\geq 0$. Then,
	\begin{align*}
		\textstyle\alpha\leq C+\int_r^\bullet (\alpha\cdot \beta)(s)\:\dd s\qquad\quad\Longrightarrow\qquad\quad \alpha\leq C\cdot \exxp\big(\int_r^\bullet\beta(s)\:\dd s\big).
	\end{align*}
\end{lemma}
We obtain that
\begin{lemma}
\label{Groenwall}
	Suppose that $(\mg,\bl\cdot,\cdot\br)$ is submultiplicative, and let $\vv\leq \ww\in \SEM$ be as in \eqref{nbxcnxnbcbxc}. 
		Then, for each $\phi\in C^0([r,r'],\mg)$, $Y\in \mg$, and  $\alpha\in C^1([r,r'],\mg)$ with 
		$\dot\alpha=\bl\phi,\alpha\br$ and $\alpha(r)=Y$, 
	we have 
\begin{align*} 
	\textstyle\vvv(\alpha)\leq  \www(Y)\cdot \exxp\big(\int_r^\bullet \www(\phi(s))\:\dd s\big).
\end{align*} 
\end{lemma}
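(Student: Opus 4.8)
The plan is to reduce this statement about the (possibly non-constant) curve $\alpha$ to a scalar differential inequality that Gr\"onwall's lemma (Lemma \ref{apoapoapoapo}) can digest. First I would set $f:=\vvv\cp\alpha\colon[r,r']\rightarrow\RR_{\geq 0}$; since $\alpha$ is of class $C^1$ and $\vvv$ is a continuous seminorm, $f$ is continuous, but it need not be differentiable. This is the technical nuisance I expect to be the main obstacle, so rather than differentiating $f$ directly I would estimate it integrally. Using $\alpha(r)=Y$ together with \eqref{isdsdoisdiosd} (the fundamental theorem of calculus for $C^1$-curves) and the seminorm estimate for integrals from Lemma \ref{ofdpofdpopssssaaaasfffff}, I would write
\begin{align*}
	\textstyle\vvv(\alpha(t))\leq \vvv(Y)+\vvv(\alpha(t)-Y)\leq \vvv(Y)+\int_r^t \vvv(\dot\alpha(s))\:\dd s.
\end{align*}

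Next I would feed in the differential equation $\dot\alpha=\bl\phi,\alpha\br$ together with submultiplicativity \eqref{nbxcnxnbcbxc}: for the chosen pair $\vv\leq\ww$ we have $\vvv(\dot\alpha(s))=\vvv(\bl\phi(s),\alpha(s)\br)\leq\www(\phi(s))\cdot\www(\alpha(s))$. Since $\vv\leq\ww$ gives $\vvv\leq\www$ pointwise, and in fact I want the estimate in terms of $\www(\alpha)$ throughout, I would instead run the whole argument with $g:=\www\cp\alpha$ in place of $f$, obtaining
\begin{align*}
	\textstyle\www(\alpha(t))\leq \www(Y)+\int_r^t \www(\phi(s))\cdot\www(\alpha(s))\:\dd s\qquad\quad\forall\: t\in[r,r'].
\end{align*}
Here I use $\www(Y)=\www(\alpha(r))$ and the same integral estimate as before, now with $\vv$ replaced by $\ww$ (which is legitimate since $\ww\in\SEM$ is itself a continuous seminorm).

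The remaining difficulty is that Lemma \ref{apoapoapoapo} as stated requires the dominated function $\alpha$ there to be of class $C^1$, whereas $\www\cp\alpha$ is only continuous. I would handle this by applying Gr\"onwall not to $\www\cp\alpha$ but to the $C^1$-function $\beta_0\colon t\mapsto \www(Y)+\int_r^t \www(\phi(s))\cdot\www(\alpha(s))\:\dd s$, which by \eqref{opgfgofppof} is of class $C^1$ with $\dot\beta_0=\www(\phi)\cdot\www(\alpha)\leq \www(\phi)\cdot\beta_0$ (the last inequality using the displayed bound $\www(\alpha)\leq\beta_0$). Thus $\beta_0\leq \www(Y)+\int_r^\bullet \beta_0\cdot\www(\phi)\:\dd s$ with both integrand factors of class $C^1$ (approximating $\www\cp\phi$ continuously is unnecessary since Lemma \ref{apoapoapoapo} only needs $\beta$ there continuous enough for the integral; if strict $C^1$-regularity of $\beta\equiv\www(\phi)$ is demanded I would invoke it in the integrated form it is actually proved in). Applying Lemma \ref{apoapoapoapo} with $C\equiv\www(Y)$ and $\beta\equiv\www\cp\phi$ then yields $\beta_0\leq \www(Y)\cdot\exxp(\int_r^\bullet\www(\phi(s))\:\dd s)$, and since $\vvv(\alpha)\leq\www(\alpha)\leq\beta_0$, the claimed estimate
\begin{align*}
	\textstyle\vvv(\alpha)\leq \www(Y)\cdot\exxp\big(\int_r^\bullet\www(\phi(s))\:\dd s\big)
\end{align*}
follows immediately.
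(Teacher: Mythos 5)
Your proposal is correct and follows essentially the same route as the paper: apply Lemma \ref{ofdpofdpopssssaaaasfffff} to get $\www(\alpha)\leq \www(Y)+\int_r^\bullet \www(\dot\alpha(s))\:\dd s$, insert submultiplicativity via $\www(\dot\alpha)=\www(\bl\phi,\alpha\br)\leq\www(\phi)\cdot\www(\alpha)$, and conclude with Lemma \ref{apoapoapoapo}. Your extra care in routing the Gr\"onwall step through the $C^1$-function $\beta_0$ addresses a regularity point the paper's one-line proof silently glosses over, but it does not change the substance of the argument.
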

\begin{proof}
	 We conclude from Lemma \ref{ofdpofdpopssssaaaasfffff} that 
	 \begin{align*}
	 	\textstyle\www(\alpha)\leq \www(Y)+\int_r^\bullet\www(\dot\alpha(s))\: \dd s\leq\www(Y)+\int_r^\bullet \www(\alpha(s))\cdot \www(\phi(s))\: \dd s
	 \end{align*}
	 holds; so that the claim is clear from Lemma \ref{apoapoapoapo}.
\end{proof}
\begin{corollary}
\label{sajksakjsa}
Suppose that $(\mg,\bl\cdot,\cdot\br)$ is submultiplicative, and let $\vv\leq \ww\in \SEM$ be as in 
\eqref{nbxcnxnbcbxc}. 
Then, 
\begin{align*}
	\textstyle\vvv\big(\Ad_{\innt_r^\bullet\phi}(Y)\big)\leq \exxp(\int_r^\bullet \www(\phi(s))\:\dd s)\cdot \www(Y)
\end{align*}
holds for each $Y\in \mg$ and $\phi\in \DIDE_{[r,r']}$; thus, 
\begin{align*}
	\textstyle\www\big(\Ad_{[\innt_r^\bullet\phi]^\pm}(Y)\big)\leq \exxp(|r'-r|\cdot \www_\infty(\phi))\cdot \www(Y)\qquad\quad\forall\: Y\in \mg,\:\: \phi\in \DIDE_{[r,r']}.
\end{align*}
\end{corollary}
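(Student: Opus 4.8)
The plan is to combine Omori's Lemma~\ref{omori} with the Gr\"onwall estimate of Lemma~\ref{Groenwall}. First I would fix $\phi\in\DIDE_{[r,r']}$ and $Y\in\mg$, set $\mu:=\innt_r^\bullet\phi$, and consider $\alpha:=\Ad_{\mu}(Y)$. Since $\mu$ is of class $C^1$ and $\Ad$ is smooth, $\alpha$ is of class $C^1$; and $\mu(r)=\innt_r^r\phi=e$ gives $\alpha(r)=\Ad_e(Y)=Y$. The forward implication of Lemma~\ref{omori} then yields $\dot\alpha=\bl\phi,\alpha\br$ with $\alpha(r)=Y$, so Lemma~\ref{Groenwall} applies verbatim and delivers the first displayed inequality $\vvv(\Ad_{\innt_r^\bullet\phi}(Y))\leq\exxp(\int_r^\bullet\www(\phi(s))\,\dd s)\cdot\www(Y)$.

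For the ``thus'' part I would first treat the $+$ sign. Observe that the seminorm $\ww$ furnished by submultiplicativity~\eqref{nbxcnxnbcbxc} is itself submultiplicative (apply the definition to $\vv\equiv\ww$, the resulting seminorm again being admissible as $\ww$), so the estimate just proven may be invoked with $\vv\equiv\ww$ to give $\www(\Ad_{\innt_r^\bullet\phi}(Y))\leq\exxp(\int_r^\bullet\www(\phi(s))\,\dd s)\cdot\www(Y)$. Since $\int_r^t\www(\phi(s))\,\dd s\leq|t-r|\cdot\www_\infty(\phi)\leq|r'-r|\cdot\www_\infty(\phi)$ for all $t\in[r,r']$, monotonicity of $\exxp$ upgrades this to the claimed bound with $[\innt_r^\bullet\phi]^{+}$.

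The $-$ sign is the step I expect to cause trouble, and the hard part is to avoid circularity: rewriting $[\innt_r^\bullet\phi]^{-1}$ as a product integral through~\ref{pogfpogfaaa} reintroduces $\Ad_{[\innt_r^\bullet\phi]^{-1}}$ in the integrand and leads nowhere. Instead I would exploit the time-reversal of Example~\ref{fdpofdopdpof}: for each fixed $t\in[r,r']$ one has $\phi|_{[r,t]}\in\DIDE_{[r,t]}$ by the restriction rule in~\eqref{fgfggf}, and the reversed curve $\inverse{(\phi|_{[r,t]})}\colon u\mapsto-\phi(r+t-u)$ satisfies $[\innt_r^t\phi]^{-1}=\innt_r^t\inverse{(\phi|_{[r,t]})}$. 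The constants in the $+$-case estimate do not depend on the underlying interval, so it may be applied on $[r,t]$ to the reversed curve; crucially $\www(\inverse{(\phi|_{[r,t]})}(u))=\www(\phi(r+t-u))\leq\www_\infty(\phi)$, whence $\www(\Ad_{[\innt_r^t\phi]^{-1}}(Y))\leq\exxp(|r'-r|\cdot\www_\infty(\phi))\cdot\www(Y)$ for every $t$. Collecting the two sign cases yields the second displayed inequality and completes the proof.
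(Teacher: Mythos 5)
Your proposal is correct and follows essentially the same route as the paper: the first inequality via the forward direction of Omori's Lemma \ref{omori} combined with the Gr\"onwall estimate of Lemma \ref{Groenwall}, and the inverse case via the time-reversal identity $[\innt\phi]^{-1}=\innt\inverse{\phi}$ of Example \ref{fdpofdopdpof}. Your extra observation that the pair $(\ww,\ww)$ is itself admissible in \eqref{nbxcnxnbcbxc} — needed because the second display carries $\www$ rather than $\vvv$ on the left — is a detail the paper leaves implicit, and you supply it correctly.
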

\begin{proof}
The first statement is clear from  Lemma \ref{omori}, and Lemma \ref{Groenwall}. Then, the second statement is immediate from Example \ref{fdpofdopdpof}.
	\end{proof}
We are ready for the
\begin{proof}[Proof of Proposition \ref{sopsopdsop}]
The first statement is clear from Corollary \ref{sajksakjsa} and Lemma \ref{jshjsdahjsd}. 
For the second statement, we let $\vv\leq\ww\in \SEM$ be as in \eqref{nbxcnxnbcbxc}, choose $\ww\leq \oo\in \SEM$ as in Lemma \ref{poposposaadccx} for $\mm\equiv \ww$ there; and define $V:=\chartinv(\B_{\oo,1})$. We furthermore define
\begin{align*}
	\gamma_x\colon [0,1]\ni t\mapsto t\cdot x\in V\qquad\:\:\text{as well as}\qquad\:\: \phi_x:= \Der(\chartinv\cp\gamma_x)\qquad\:\:\text{for each}\qquad\:\: x\in V.
\end{align*} 
Then, Lemma \ref{poposposaadccx} shows that $\www_\infty(\phi_x)\leq \oo_\infty(\dot\gamma_x)\leq 1$ holds for each $x\in V$; so that Corollary \ref{sajksakjsa} gives
\begin{align*}
	 \www(\Ad_{\chartinv(x)}(Y))=\www(\Ad_{(\chartinv\cp\gamma_x)(1)}(Y))=\www\big(\Ad_{\innt_0^1\phi_x}(Y)\big)\leq \exxp(1)\cdot \www(Y),
\end{align*}
for each $Y\in \mg$; which shows the claim.
\end{proof}

\section{Differentiation Under the Integral}
\label{asopsopdsopsdpoosdp}
In this section, we clarify under which circumstances $\EVE_{[r,r']}^k$ for $k\in \NN\sqcup\{\lip,\infty\}$ and $[r,r']\in \COMP$ is differentiable w.r.t.\ the (standard) the $C^k$-topology. In particular, we will show that\footnote{A $C^1$-version will also be proven for the Lipschitz case, cf.\ Corollary \ref{sddsdsdsds}.}
\begin{theorem}
\label{kdfklfklfdkjljljjllk} 
\noindent
\begingroup
\setlength{\leftmargini}{17pt}
{
\renewcommand{\theenumi}{{\arabic{enumi}})} 
\renewcommand{\labelenumi}{\theenumi}
\begin{enumerate}
\item
\label{aaaa2}
		If $G$ is {\rm 0}-continuous and $C^0$-semiregular, then  $\EVE^0_{[r,r']}$ is smooth for each $[r,r']\in \COMP$ \deff $\mg$ is integral complete \deff $\EVE^0_{[0,1]}$ is differentiable at zero.
\item
\label{aaaa1}
	If $G$ is {\rm k}-continuous and $C^k$-semiregular for $k\in \NN_{\geq 1}\sqcup\{\infty\}$, then $\EVE_{[r,r']}^k$ is smooth for each $[r,r']\in \COMP$ \deff $\mg$ is Mackey complete \deff $\EVE_{[0,1]}^k$ is differentiable at zero.
\end{enumerate}}
\endgroup
\noindent
Here, for $k=0$ in the first-, and $k\in \NN_{\geq 1} \sqcup\{\infty\}$ in the second case, we have 
\begin{align*}
	\textstyle\big(\dd_\phi\:\EVE_{[r,r']}^k\big)(\psi)\textstyle 
	=\dd_e\LT_{\innt \phi}\big(\int \Ad_{[\innt_r^s\phi]^{-1}}(\psi(s))\:\dd s\big)\qquad\quad\forall\: \phi,\psi\in C^k([r,r'],\mg),\:\: [r,r']\in \COMP.
\end{align*}
\end{theorem}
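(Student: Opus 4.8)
The plan is to prove both equivalences at once by closing the cycle \emph{(smooth on every $[r,r']$)} $\Rightarrow$ \emph{(differentiable at zero)} $\Rightarrow$ \emph{(completeness)} $\Rightarrow$ \emph{(smooth on every $[r,r']$)}. The first arrow is trivial. The middle equivalence is essentially the content of Corollary \ref{lkflkf}; concretely, Proposition \ref{pcvcvcvvccvopodfopdfopdfoppfdo} identifies the directional derivative of $\EVE^k_{[0,1]}$ at zero along $\phi$ with $\int\phi(s)\,\dd s\in\mgc$, and this lands in $\mg$ for every $\phi\in C^0([0,1],\mg)$ exactly when $\mg$ is integral complete, and for every $\phi\in C^k([0,1],\mg)$ with $k\geq 1$ exactly when $\mg$ is Mackey complete (using $C^\infty\subseteq C^k\subseteq C^\lip$ together with Corollary \ref{Mackeycor}). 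Thus everything reduces to the single implication: under the stated continuity and semiregularity hypotheses, integral (resp.\ Mackey) completeness forces $\EVE^k_{[r,r']}$ to be smooth on each $[r,r']\in\COMP$.

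To produce the derivative I would feed the affine family
\[
\Phi[\phi,\psi]\colon \RR\times[r,r']\ni(z,t)\longmapsto \phi(t)+z\cdot\psi(t)
\]
into Theorem \ref{E}; here $C^k$-semiregularity (via Lemma \ref{assasaasas}) guarantees $\phi+z\psi\in\DIDE^k_{[r,r']}$ for every $z\in\RR$. Its hypotheses hold trivially: $(\partial_z\Phi[\phi,\psi])(x,\cdot)=\psi\in C^k([r,r'],\mg)$ settles (a), and $\Phi[\phi,\psi](x+h,\cdot)-\Phi[\phi,\psi](x,\cdot)=h\cdot\psi$ gives $\tfrac1{|h|}\ppp^\dind_\infty(h\psi)=\ppp^\dind_\infty(\psi)$, so (b) holds with $L_{\pp,\dind}:=\ppp^\dind_\infty(\psi)$ and $I_{\pp,\dind}:=\RR$. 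Evaluating the conclusion of Theorem \ref{E} at the base point $x=0$ yields
\[
\textstyle\frac{\dd}{\dd h}\big|_{h=0}\big([\innt\phi]^{-1}[\innt(\phi+h\psi)]\big)=\int\Ad_{[\innt_r^s\phi]^{-1}}(\psi(s))\,\dd s,
\]
a priori an element of $\mgc$. The completeness hypothesis is precisely what places this integral in $\mg$: the integrand is continuous for $k=0$, and of class $C^k$ (hence Lipschitz) for $k\geq 1$, since $s\mapsto\innt_r^s\phi$ is of class $C^{k+1}$ by Lemma \ref{evk}. Writing $\EVE^k_{[r,r']}(\phi+h\psi)=\LT_{\innt\phi}\big([\innt\phi]^{-1}[\innt(\phi+h\psi)]\big)$ and differentiating the inner curve (whose value at $h=0$ is $e$) through the smooth map $\LT_{\innt\phi}$ via Lemma \ref{sdsdds} then produces the asserted formula $(\dd_\phi\,\EVE^k_{[r,r']})(\psi)=\dd_e\LT_{\innt\phi}\big(\int\Ad_{[\innt_r^s\phi]^{-1}}(\psi(s))\,\dd s\big)$.

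With all directional derivatives computed, smoothness would follow in two further steps. The right-hand side of this formula is linear in $\psi$ and, by Lemma \ref{fdfdfdf}, jointly continuous in $(\phi,\psi)$; read in a chart, the $C^1$-criterion \ref{productrule} then shows $\EVE^k_{[0,1]}$ is of class $C^1$, and Theorem E of \cite{HGGG} (which applies because $G$ is $C^k$-semiregular) upgrades this to smoothness. The transfer from $[0,1]$ to an arbitrary $[r,r']\in\COMP$ is finally handled by the substitution identity \eqref{substi}: with $\varrho\colon[0,1]\ni t\mapsto r+t\cdot|r'-r|$ one has $\EVE^k_{[r,r']}=\EVE^k_{[0,1]}\cp\eta$ for the evidently smooth reparametrization $\eta(\phi)=|r'-r|\cdot\phi\cp\varrho$, so $\EVE^k_{[r,r']}$ is smooth as well.

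The main obstacle I anticipate is the bookkeeping with the completion. Theorem \ref{E} only delivers the directional derivative in $\mgc$, and the entire role of the completeness assumptions is to pull it back into $\mg$ so that it represents a genuine tangent vector in $T_{\innt\phi}G$ and the continuity statement of Lemma \ref{fdfdfdf} can legitimately be invoked to obtain $C^1$-regularity rather than mere Gateaux differentiability; keeping careful track of which curves (continuous versus Lipschitz) the respective completeness notions integrate is exactly where the $k=0$ and $k\geq 1$ cases genuinely diverge. Everything else---verifying the hypotheses of Theorem \ref{E}, the left-translation chain rule, and the reparametrization transfer---should be routine given the machinery already assembled.
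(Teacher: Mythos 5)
Your proposal is correct and follows essentially the same route as the paper: reduce to Corollary \ref{MC} for the equivalence with differentiability at zero, obtain the derivative formula by applying Theorem \ref{kckjckjs} to the affine family $\Phi[\phi,\psi](h,t)=\phi(t)+h\cdot\psi(t)$, use Lemma \ref{fdfdfdf} to get continuity of the differential and hence $C^1$, invoke Theorem E of \cite{HGGG} for smoothness of $\EVE^k_{[0,1]}$, and transfer to arbitrary $[r,r']$ via the smooth reparametrization $\eta$. The only cosmetic difference is that you phrase the completeness bookkeeping as a closing of a cycle, whereas the paper states it as Corollary \ref{sddsdsdsds} plus Remark \ref{sddssdsd}; the substance is identical.
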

\begin{proof}
Confer Sect.\ \ref{osposdopopsd}. 
\end{proof}
\noindent
We recall \cite{HGGG} that a $\mg$ is said to be {\bf integral complete} \defff $\int\phi(s)\:\dd s\in \mg$ exists for each $\phi\in C^0([0,1],\mg)$.\footnote{Clearly, this is equivalent to require that $\int\phi(s)\:\dd s\in \mg$ exists for each $\phi\in C^0([r,r'],\mg)$, for each $[r,r']\in \COMP$.}  
Theorem \ref{kdfklfklfdkjljljjllk} will be a consequence of the more general Theorem \ref{kckjckjs}, being concerned with differentiation of parameter dependent integrals.  
The key point of the whole discussion is that
	if $G$ is k-continuous and $C^k$-semiregular for $k\in \NN\sqcup\{\lip,\infty\}$, then the directional derivative of $\EVE_{[r,r']}^k$ at zero along some $\phi\in C^k([r,r'],\mg)$ always exists; namely, in the completion of $\mgc$ of $\mg$ -- as explicitly given by 
\begin{align}
\label{dsdsdsdsds}
	\textstyle\frac{\dd}{\dd h}\big|_{h=0}\he \innt h\cdot \phi=\int \phi(s)\:\dd s\in \mgc.
\end{align}  
We thus have to clarify this elementary issues first.
\subsection{Differentiation at Zero}
We fix $[r,r']\in \COMP$ in the following; and let $\comp{\mg}$ and $\comp{E}$ denote the completions of $\mg$ and $E$, respectively. By Lemma \ref{pofsdisfdodjjxcycxj}, then $\dd_e \chart\colon \mg\rightarrow E$ extends uniquely to a continuous isomorphism $\comp{\dd_e\chart}\colon \comp{\mg}\rightarrow \comp{E}$. 
In order to prove \eqref{dsdsdsdsds}, we now first need to show that $\phi\in C^k([r,r'],\mg)$, and $\dind\llleq k\in \NN\sqcup \{\infty\}$ given, there exists a sequence $\{\phi_n\}_{n\in \NN}\subseteq C^\infty([r,r'],\mg)$ with
\begin{align*}
	\textstyle\lim_{n\rightarrow \infty} \ppp_\infty^\dind(\phi-\phi_n)= 0 \qquad\quad \text{as well as}\qquad\quad \int_r^\bullet \phi_n(s) \:\dd s\in \mg\qquad\forall\: n\in \NN.
\end{align*}
Such a sequence can be obtained, e.g., by approximating $\phi^{(\dind)}$ by polygonal curves, smoothening them by convolution, and then integrating them $\dind$-times.  
Basically, then \eqref{dsdsdsdsds} follows from the triangle inequality and Proposition \ref{hghghggh}.
\subsubsection*{Polygons and Convolution:} 
We let $\finite$ denote the set of all finite dimensional linear subspaces $F\subseteq\mg$ of $\mg$; and define 
\begin{align*}
	\textstyle C^k(D,\finite):=\bigsqcup_{F\in \finite } C^k(D,F)\qquad\quad\forall\: D\in \INT,\:\: k\in\NN\sqcup\{\infty\}.
\end{align*}
Moreover, for each $n\geq 1 $, we fix $\rho_n\colon (-1/n,1/n)\rightarrow \RR_{\geq 0}$ smooth and compactly supported with $\int \rho_n(s)\: \dd s=1$. Then, for $\chi\in C^0(I,\finite)$ with $[r,r']\subseteq I$ ($I\subseteq \RR$ an open interval) given, we choose $m \geq 1$ such large that $[r,r']+(-1/m,1/m)\subseteq I$ holds; and define (convolution)
\begin{align*}
	\textstyle C^\infty([r,r'],\finite)\ni \chi*\rho_n\colon [r,r']\ni t\mapsto \int_{t-1/n}^{t+1/n} \rho_n(t-s)\cdot \chi(s)\:\dd s \qquad\quad\forall\: n\geq m.
\end{align*} 
Clearly, $\{\chi*\rho_n\}_{n\geq m}\rightarrow \chi|_{[r,r']}$ converges uniformly (w.r.t.\ the seminorms $\{\ppp_\infty\}_{\pp\in \SEM}$) with
\begin{align*}
	(\chi*\rho_n)^{(p)}=\chi*\rho_n^{(p)}\qquad\quad\forall\: p\in \NN,\:\: n\geq m.
\end{align*}
Let now $\Poly([r,r'],\mg)\subseteq C^0([r,r'],\finite)$ denote the set of all maps $\chi\colon [r,r']\rightarrow \mg$, such that there exist $r=t_0<{\dots}<t_n=r'$ and $X_0,\dots,X_{n-1}\in \mg$ with
\begin{align*}
	\chi(t_p+\tau)=\chi(t_p)+\tau\cdot X_p\qquad\quad\forall\: p=0,\dots,n-1,\:\: \tau\leq t_{p+1}-t_p.
\end{align*}
Clearly, for each $\psi\in C^0([r,r'],\mg)$, there exists a sequence $\{\chi_n\}_{n\in \NN}\subseteq \Poly([r,r'],\mg)$ with $\{\chi_n\}_{n\in \NN}\rightarrow \psi$ uniformly; and, combining this with the statements made above, we easily obtain that
\begin{lemma}
\label{podpodspodspodspo}
	For each $\psi\in C^0([r,r'],\mg)$, there exists a sequence $\{\psi_n\}_{n\in \NN}\subseteq C^\infty([r,r'],\finite)$ with $\{\psi_n\}_{n\in \NN}\rightarrow \psi$ uniformly.
\end{lemma}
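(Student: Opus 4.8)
The goal is to approximate an arbitrary continuous curve $\psi\in C^0([r,r'],\mg)$ uniformly by smooth curves that, crucially, take values in finite dimensional subspaces of $\mg$. The plan is to assemble the statement from the two approximation ingredients already laid out just before the lemma: first reduce a continuous curve to a polygonal curve, then smoothen the polygonal curve by convolution while staying in a fixed finite dimensional subspace.

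\textbf{Step 1: Approximation by polygons.} Given $\psi\in C^0([r,r'],\mg)$, I would invoke the remark (stated immediately above the lemma) that there exists a sequence $\{\chi_n\}_{n\in \NN}\subseteq \Poly([r,r'],\mg)$ with $\{\chi_n\}_{n\in \NN}\rightarrow \psi$ uniformly, i.e.\ $\lim_{n}\ppp_\infty(\psi-\chi_n)=0$ for each $\pp\in \SEM$. This is the standard fact that a continuous curve on a compact interval is uniformly approximable by piecewise-linear interpolants through a sufficiently fine partition; uniform continuity of $\psi$ on the compact interval $[r,r']$ gives the uniform bound, and each interpolant visibly lies in $\Poly([r,r'],\mg)$. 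Note that each $\chi_n$ takes values in the finite dimensional subspace spanned by its finitely many nodes, so $\chi_n\in C^0([r,r'],\finite)$.

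\textbf{Step 2: Smoothening by convolution.} Each polygonal $\chi_n$ lies in $C^0(I,\finite)$ for a suitable open interval $I\supseteq [r,r']$ (extend $\chi_n$ slightly beyond the endpoints, e.g.\ constantly or linearly). I would then apply the convolution construction described above the lemma: $\chi_n*\rho_m\in C^\infty([r,r'],\finite)$, and $\{\chi_n*\rho_m\}_{m}\rightarrow \chi_n|_{[r,r']}$ converges uniformly as $m\to\infty$. Since convolution against a probability density supported in $(-1/m,1/m)$ does not enlarge the (finite dimensional) linear span of the values of $\chi_n$, the convolved curve remains in the same finite dimensional subspace, hence in $C^\infty([r,r'],\finite)$.

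\textbf{Step 3: Diagonal argument.} Combining the two steps, for each $n$ pick $m(n)$ large enough that $\ppp_\infty(\chi_n-\chi_n*\rho_{m(n)})\leq 1/n$ for all $\pp$ in a fixed exhausting (or, more carefully, for the finitely many seminorms relevant at stage $n$ — but here it is cleanest to note that for a single polygonal curve the convolution converges uniformly w.r.t.\ every seminorm simultaneously, so one can demand the $1/n$ bound for a chosen $\pp$ and then diagonalize). Setting $\psi_n:=\chi_n*\rho_{m(n)}\in C^\infty([r,r'],\finite)$ and using the triangle inequality
\begin{align*}
	\ppp_\infty(\psi-\psi_n)\leq \ppp_\infty(\psi-\chi_n)+\ppp_\infty(\chi_n-\psi_n)
\end{align*}
shows $\lim_n\ppp_\infty(\psi-\psi_n)=0$ for each $\pp\in \SEM$, which is the claim. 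The only mild subtlety — and thus the point deserving the most care — is the bookkeeping over the (possibly uncountably many) seminorms in $\SEM$ when diagonalizing: one must ensure the single sequence $\{\psi_n\}$ works for \emph{all} $\pp\in \SEM$ at once. This is handled by observing that in Step 2, for a fixed polygonal curve $\chi_n$, the convolved curves converge to $\chi_n$ uniformly with respect to every continuous seminorm simultaneously (the difference is supported in finite dimensions and controlled by the modulus of continuity of $\chi_n$, uniformly in $\pp$ up to the seminorm value), so no genuine obstruction arises.
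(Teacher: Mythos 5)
Your proof is correct and follows exactly the route the paper intends (the paper leaves the lemma as an immediate consequence of the polygon approximation and the convolution construction stated just before it): polygonal interpolation followed by mollification inside a fixed finite dimensional subspace. The seminorm bookkeeping you worry about in Step 3 resolves itself cleanly if you bound the modulus of continuity of $\chi_n$ by that of $\psi$ plus the approximation error, giving $\ppp_\infty(\chi_n-\chi_n*\rho_{m})\leq 2\cdot\ppp_\infty(\psi-\chi_n)+\sup\{\ppp(\psi(s)-\psi(t))\:|\:|s-t|\leq 1/m\}$, so that the fixed choice $m(n):=n$ (for $n$ beyond the threshold where the convolution is defined) works for every $\pp\in\SEM$ simultaneously and no diagonalization over seminorms is needed.
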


\subsubsection*{Iterated Integration:}
We define $\INTE[p]\colon \mg^p\times C^0([r,r'],\mgc)\rightarrow C^{p}([r,r'],\mgc)$ for $p\geq 1$, inductively by 
\begin{align*}
	\textstyle\INTE[1]\colon \mg\times C^0([r,r'],\mgc)\rightarrow C^1([r,r'],\mgc),\qquad (X,\phi)\mapsto X+\int_r^\bullet \phi(s)\:\dd s
\end{align*}
as well as  
\begin{align*}
	\INTE[p](X_1,\dots,X_p,\phi):=\INTE[1](X_p, \INTE[p-1](X_{p-1},\dots,X_{1},\phi))
\end{align*}
for all $X_1,\dots,X_p$ and $\phi\in C^0([r,r'],\mgc)$, for $p\geq 2$. 
Evidently, 
\begingroup
\setlength{\leftmargini}{12pt}
\begin{itemize}
\item
	for all $\phi\in C^0([r,r'],\finite)$ and $X_1,\dots,X_p\in \mg$, we have $\INTE[p](X_1,\dots,X_p,\phi) \in C^p([r,r'],\finite)$. 
\item
	for all $\phi,\psi\in C^0([r,r'],\mgc)$ and $X_1,\dots,X_p\in \mg$, we have 
	\begin{align*}
		\INTE[p](X_1,\dots,X_p,\phi)-\INTE[p](X_1,\dots,X_p,\psi)=\INTE[p](0,\dots,0,\phi-\psi).
\end{align*}
\vspace{-18pt}		
\item
for all $\phi\in C^p([r,r'],\mgc)$, we have $\phi=\INTE[p]\big(\phi^{(p-1)}(r),\dots,\phi^{(0)}(r),\phi^{(p)}\big)$.
\item
for all $\phi\in C^0([r,r'],\mgc)$, $p\geq 1$, and $X_1,\dots,X_p$, we have $\INTE[p](X_1,\dots,X_p,\phi)^{(p)}=\phi$ as well as
\begin{align*}
	\INTE[p](X_1,\dots,X_p,\phi)^{(\dind)}=\INTE[p-\dind](X_1,\dots,X_{p-\dind},\phi)\qquad\quad\forall\: 1\leq \dind\leq p-1.
\end{align*}
\vspace{-18pt}
\item
for all $\phi\in C^0([r,r'],\mgc)$, $p\geq 1$, and $\qq\in \SEM$, we have (apply Lemma \ref{ofdpofdpopssssaaaasfffff} successively)
\begin{align*}
	\textstyle_\cdot\ovl{\qq}_\infty(\INTE[p](0,\dots,0,\phi))\leq |r'-r|^p\cdot \ovl{\qq}_\infty(\phi).
\end{align*}
The previous (and the first) point thus shows that for $0 \leq \dindu\leq \dind\in \NN$, we have
\begin{align*}
	\qqq_\infty^\dindu(\INTE[\dind](0,\dots,0,\phi))\leq \max(1,|r'-r|)^{\dind}\cdot \qqq_\infty(\phi)\qquad\quad\forall\: \qq\in \SEM,\:\: \phi\in C^0([r,r'],\finite).
\end{align*}
\end{itemize}
\endgroup
\subsubsection*{Specific Estimates:}
Let $\dind\llleq  k\in \NN\sqcup\{\lip,\infty\}$, $\mm\in \SEM$, and $\phi\in C^k([r,r'],\mg)$ be given. 
\begingroup
\setlength{\leftmargini}{12pt}
\begin{itemize}
\item
We choose $\{\psi_n\}_{n\in \NN}\subseteq C^\infty([r,r'],\finite)$ with $\{\psi_n\}_{n\in \NN}\rightarrow \phi^{(\dind)}$ uniformly 
 (Lemma \ref{podpodspodspodspo}); and define
\begin{align*}
	\phi_n:=\INTE[\dind]\big(\phi^{(\dind-1)}(r),\dots,\phi^{(0)}(r),\psi_n\big)\in C^\infty([r,r'],\finite)\qquad\quad\forall\: n\in \NN.
\end{align*}
\vspace{-18pt}
\item
We conclude from the third-, second-, and the last point in the previous part that
\begin{align}
\label{pofdpofdpofdpopofdfdfd}
\begin{split}
	\qqq_\infty^{\dind}(\phi-\phi_n)&=  \qqq_\infty^{\dind}\big(\INTE[\dind]\big(\phi^{(\dind-1)}(r),\dots,\phi^{(0)}(r),\phi^{(\dind)}\big)-\INTE[\dind]\big(\phi^{(\dind-1)}(r),\dots,\phi^{(0)}(r),\psi_n\big)\big)\\[1pt]
	&=  \qqq_\infty^{\dind}\big(\INTE[\dind]\big(0,\dots,0,\phi^{(\dind)}-\psi_n\big)\big)\\
	&\leq  \max(1,|r'-r|)^{\dind}\cdot \qqq_\infty\big(\phi^{(\dind)}-\psi_n\big)
	\end{split}
\end{align}
holds for each $\qq\in \SEM$; thus, 
\begin{align}
\label{pofdpofdpofdpopofdfd}
	\qqq_\infty^{\dind}(\phi_n)&\leq \qqq_\infty^{\dind}(\phi)+ \max(1,|r'-r|)^{\dind}\cdot \qqq_\infty\big(\phi^{(\dind)}-\psi_n\big)\qquad\quad\forall\: n\in \NN.
\end{align}
\vspace{-18pt}
\item
For each $h\in \RR$ and $n\in \NN$, we define 
\begin{align*}
	\textstyle\gamma_{h,n}:= h\cdot \dd_e\chart(\int_r^\bullet \phi_n(s)\:\dd s)\equiv h\cdot \dd_e\chart\cp\INTE[\dind+1]\big(\phi^{(\dind-1)}(r),\dots,\phi^{(0)}(r),0,\psi_n\big)\in C^{\infty}([r,r'],E);
\end{align*}
and obtain from \eqref{pofdpofdpofdpopofdfd} that 
\begin{align}
\label{iufdiufdiuiufdiufd}
\begin{split}
	\ww^\dind_\infty(\gamma_{h,n})&\textstyle= |h|\cdot \www^\dind_\infty\big(\int_r^\bullet \phi_n(s)\: \dd s\big)\\
	&\leq |h|\cdot |r'-r|\cdot \www_\infty^\dind(\phi_n)\\
	&\textstyle\leq |h|\cdot |r'-r|\cdot \big(\www_\infty^{\dind}(\phi)+ \max(1,|r'-r|)^{\dind}\cdot \www_\infty\big(\phi^{(\dind)}-\psi_n\big)\big)
	\end{split}
\end{align}
holds for each $\ww\in \SEM$.
\item
Since $\bound:=\im[\phi^{(\dind)}]\cup \bigcup_{n \in \NN}\im[\psi_n]$ is bounded, there exists some $\delta>0$, such that 
\begin{align*}
	\mu_{h,n}:=\chartinv\cp\gamma_{h,n}\qquad\quad\text{and}\qquad\quad \phi_{h,n}:=\Der(\mu_{h,n})=\dermapdiff(\gamma_{h,n},\dot\gamma_{h,n})=h\cdot\dermapdiff(\gamma_{h,n},\dd_e\chart(\phi_n)) 
\end{align*}
 are defined, for each $|h|\leq \delta$ and $n\in \NN$. 
\item
 Then, for $\mm\in \SEM$ fixed, Lemma \ref{oopxcxopcoxpopcx}.\ref{oopxcxopcoxpopcx1} applied to $\Omega\equiv \dermapdiff(\cdot,\dd_e\chart(\cdot))$, $\gamma\equiv\gamma_{h,n}$, $\psi\equiv \phi_n$, $\pp\equiv \mm$, provides us with certain seminorms $\qq,\ww\in \SEM$, such that
\begin{align*}
	\www_\infty^\dind(\gamma_{h,n})\leq 1\qquad\quad\Longrightarrow\qquad\quad 
	\mmm_\infty^\dind(\phi_{h,n})=|h|\cdot \mm_\infty^\dind(\Omega(\gamma_{h,n},\phi_{n}))\leq |h|\cdot \qqq_\infty^\dind(\phi_n).
\end{align*}
Thus, shrinking $\delta$ if necessary, by \eqref{pofdpofdpofdpopofdfd} and boundedness of $\bound$, we can achieve that
\begin{align}
\label{nasmnasnmvbvbvbvsaansnmasnmsa}
	\mmm_\infty^\dind(h\cdot \phi)\leq 1,\:\:\mmm_\infty^\dind(\phi_{h,n})\leq 1,\:\: \mmm_\infty^\dind(h\cdot \phi-\phi_{h,n})\:\:\leq\:\: 1\qquad\quad\forall\: |h|\leq \delta,\:\: n\in \NN.
\end{align}
\end{itemize}
\endgroup
\noindent
We now have everything we need to prove
\begin{proposition}
\label{rererererr}
Suppose that $G$ is \emph{k-continuous} for $k\in \NN\sqcup \{\lip,\infty\}$; and that $(-\delta,\delta)\cdot \phi\subseteq \DIDE^k_{[r,r']}$ holds for some $\phi\in C^k([r,r'],\mg)$ and $\delta>0$.
Then, we have
\begin{align*}
	\textstyle\frac{\dd}{\dd h}\big|_{h=0} \innt h\cdot \phi=\int \phi(s)\:\dd s\in \comp{\mg}.
\end{align*}
\end{proposition}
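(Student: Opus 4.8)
The plan is to compute everything inside the chart: writing $a:=\dd_e\chart\big(\int\phi(s)\:\dd s\big)\in\comp{E}$ (the integral existing in $\mgc$), I will show that $\lim_{h\to 0}1/h\cdot\chart(\innt h\cdot\phi)=a$ in $\comp{E}$, and then transport this back through $\chartinv$ to obtain \eqref{dsdsdsdsds} in $\mgc$. First I would localize: since $G$ is $k$-continuous, $\EVE^k_{[r,r']}$ is continuous at zero, and $\ppp_\infty^\dind(h\cdot\phi)=|h|\cdot\ppp_\infty^\dind(\phi)\to 0$ forces $\innt h\cdot\phi\to e$, so $\innt h\cdot\phi\in\U$ for all small $|h|$ and the difference quotient $1/h\cdot\chart(\innt h\cdot\phi)\in E$ makes sense. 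The crucial bookkeeping point is the order in which the objects are chosen: I fix $\pp\in\SEM$, feed it into Proposition \ref{hghghggh} to obtain $\pp\leq\mm\in\SEM$ and $\dind\llleq k$, and only \emph{then} run the ``Specific Estimates'' construction with this $\mm$ and $\dind$. This yields $\phi_n\in C^\infty([r,r'],\finite)$ with $\int_r^\bullet\phi_n(s)\:\dd s\in\mg$, together with $\gamma_{h,n}$, $\mu_{h,n}=\chartinv\cp\gamma_{h,n}$ and $\phi_{h,n}=\Der(\mu_{h,n})\in\DIDE^\infty_{[r,r']}\subseteq\DIDE^k_{[r,r']}$, such that the three smallness conditions \eqref{nasmnasnmvbvbvbvsaansnmasnmsa} hold for all $|h|\leq\delta$ and all $n$ --- these are exactly the hypotheses of Proposition \ref{hghghggh} for the pair $(h\cdot\phi,\phi_{h,n})$.

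Next I would record that, because $\mu_{h,n}(r)=\chartinv(\gamma_{h,n}(r))=\chartinv(0)=e$, we have $\innt_r^\bullet\phi_{h,n}=\mu_{h,n}=\chartinv\cp\gamma_{h,n}$, so that $\chart(\innt_r^{r'}\phi_{h,n})=\gamma_{h,n}(r')=h\cdot a_n$ with $a_n:=\dd_e\chart\big(\int\phi_n(s)\:\dd s\big)\in E$. Evaluating the estimate of Proposition \ref{hghghggh} at $\bullet=r'$ and dividing by $|h|$ gives
\begin{align*}
	\textstyle\pp\big(1/h\cdot\chart(\innt h\cdot\phi)-a_n\big)\leq 1/|h|\cdot\int\mmm\big((\phi_{h,n}-h\cdot\phi)(s)\big)\:\dd s\qquad\quad\forall\:0<|h|\leq\delta,\:\:n\in\NN.
\end{align*}
To control the right-hand side as $h\to 0$ I would use that $\dermapdiff(0,\dd_e\chart(Y))=Y$ (since $\chartinv(0)=e$ and $\dd_0\chartinv=(\dd_e\chart)^{-1}$); as $\gamma_{h,n}\to 0$ uniformly, continuity of $\dermapdiff$ on compacta yields $1/h\cdot\phi_{h,n}=\dermapdiff(\gamma_{h,n},\dd_e\chart(\phi_n))\to\phi_n$ uniformly, whence $1/|h|\cdot\mmm(\phi_{h,n}(s)-h\cdot\phi(s))=\mmm(1/h\cdot\phi_{h,n}(s)-\phi(s))\to\mmm((\phi_n-\phi)(s))$ uniformly in $s$. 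Therefore the right-hand side tends, for fixed $n$, to $\int\mmm((\phi_n-\phi)(s))\:\dd s\leq|r'-r|\cdot\mmm_\infty(\phi_n-\phi)$.

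Finally I would assemble the two-index estimate. Writing $a_n-a=\comp{\dd_e\chart}\big(\int(\phi_n-\phi)(s)\:\dd s\big)$ and applying \eqref{absch2} gives $\comp{\pp}(a_n-a)\leq\int\ppp((\phi_n-\phi)(s))\:\dd s\leq|r'-r|\cdot\mmm_\infty(\phi_n-\phi)$ (using $\ppp\leq\mmm$). By the triangle inequality for $\comp{\pp}$,
\begin{align*}
	\textstyle\limsup_{h\to 0}\:\comp{\pp}\big(1/h\cdot\chart(\innt h\cdot\phi)-a\big)\leq 2\,|r'-r|\cdot\mmm_\infty(\phi_n-\phi)\qquad\quad\forall\:n\in\NN,
\end{align*}
and letting $n\to\infty$, using $\mmm_\infty(\phi_n-\phi)\to 0$ (which is \eqref{pofdpofdpofdpopofdfdfd}), the $\limsup$ vanishes. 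Since $\pp\in\SEM$ was arbitrary, $1/h\cdot\chart(\innt h\cdot\phi)\to a$ in $\comp{E}$; transporting through $\comp{\dd_0\chartinv}=\comp{(\dd_e\chart)^{-1}}$ then gives $\frac{\dd}{\dd h}\big|_{h=0}\innt h\cdot\phi=\int\phi(s)\:\dd s\in\comp{\mg}$. I expect the main obstacle to be precisely this interchange of the limits $h\to 0$ and $n\to\infty$ --- in particular the uniform convergence $1/h\cdot\phi_{h,n}\to\phi_n$ derived from $\dermapdiff(0,\dd_e\chart(\cdot))=\id$, and the discipline of fixing $\pp$ first (to extract $\mm,\dind$) before building the approximants; the remaining manipulations are routine applications of the estimates already prepared in the excerpt.
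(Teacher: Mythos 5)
Your proposal is correct and follows essentially the same route as the paper's proof: both hinge on applying Proposition \ref{hghghggh} to the pair $(h\cdot\phi,\phi_{h,n})$ built in the ``Specific Estimates'' part (with $\pp$ fixed first to extract $\mm$ and $\dind$, and the identity $\chart(\innt\phi_{h,n})=\gamma_{h,n}(r')=h\cdot\dd_e\chart(\int\phi_n(s)\,\dd s)$), and both reduce the remaining error to $\dermapdiff(\gamma_{h,n},\dd_e\chart(\cdot))\to\dermapdiff(0,\dd_e\chart(\cdot))=\id$ via compactness of the relevant images. The only (harmless) difference is bookkeeping of the double limit: the paper makes the $h$-dependent term small uniformly in $n$ by an extra splitting using linearity of $\dermapdiff$ in its second slot together with \eqref{cpocpoxjdsndscxaaabc}, whereas you take $\limsup_{h\to0}$ for fixed $n$ and then let $n\to\infty$, which needs only pointwise-in-$n$ convergence and is equally valid.
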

\begin{proof}
We fix $\pp\in \SEM$, and have 
to show that\footnote{For $|h|\leq \delta$ suitably small, this is defined by Lemma \ref{opdfopfdopfdpoas}.}
\begin{align*}
	\Delta_\phi(h)&:=\textstyle\rcf{|h|} \cdot \textstyle\cpp\big(\chart(\innt h\cdot \phi)-h\cdot  \comp{\dd_e\chart}(\int \phi(s) \:\dd s)\big)
\end{align*}
tends to zero if $h$ tends to zero. For this, we choose $\pp\leq \mm$ and $\dind\llleq k$ as in Proposition \ref{hghghggh}; and let $\{\phi_{h,n}\}_{n\in \NN}\subseteq C^\infty([r,r'],\mg)$, $\{\gamma_{h,n}\}_{n\in \NN}\subseteq C^{\infty}([r,r'],\mg)$, $\{\mu_{h,n}\}_{n\in \NN}\subseteq C^{\infty}([r,r'],G)$, $\delta>0$ be as above. 
Then, Proposition \ref{hghghggh} and \eqref{nasmnasnmvbvbvbvsaansnmasnmsa} (fourth step) show that for $|h|\leq \delta$, we have
\begin{align*}
	\Delta_\phi(h)&\textstyle\leq \rcf{|h|}\cdot \pp\big(\chart(\innt h\cdot \phi)-h\cdot  \dd_e\chart(\int \phi_n(s) \:\dd s)\big) \hspace{13.5pt} + \:\:\: \cpp\big(\ovl{\dd_e\chart}(\int \phi(s) \:\dd s)-\dd_e\chart(\int \phi_n(s) \:\dd s)\big)\\
&\leq \textstyle\rcf{|h|} \cdot\textstyle\pp\big(\chart(\innt h\cdot \phi)-\gamma_{h,n}(r')\big)\hspace{67.5pt}+ \:\:\: \int (\cpp\cp\comp{\dd_e\chart})(\phi(s)-\phi_n(s)) \:\dd s\\
	&=\textstyle\rcf{|h|} \cdot\textstyle\pp\big(\chart(\innt h\cdot \phi)-\chart(\mu_{h,n}(r'))\big)\hspace{50.8pt}+ \:\:\: \int \ppp(\phi(s)-\phi_n(s)) \:\dd s\\
	&\leq \textstyle\rcf{|h|}\cdot\textstyle \int \mmm\big(h\cdot\phi(s)-\phi_{h,n}(s)\big)\:\dd s
	\hspace{51.9pt}+ \:\:\: \int \ppp(\phi(s)-\phi_n(s)) \:\dd s	
	\\
	&\textstyle = 
	\rcfsp 
	\int \mmm\big(\phi(s)-\w(\gamma_{h,n}(s),\dd_e\chart(\phi_n(s)))\big)\:\dd s
	\hspace{7.8pt}+ \:\:\: \int \ppp(\phi(s)-\phi_n(s)) \:\dd s.
\end{align*}
Let now $\varepsilon>0$ be fixed. By \eqref{pofdpofdpofdpopofdfdfd}, there exists some $n_\varepsilon\in \NN$, such that the second summand is bounded by $\varepsilon/3$ for each $n\geq n_\varepsilon$. 
Moreover, since $\phi=\dermapdiff(0,\dd_e\chart(\phi))$ holds, we can estimate the first summand by
\begin{align}
\label{ksdkldsklsksdlklsdlk}
\begin{split}
	\textstyle\int \mmm\big(\phi(s)-\w(\gamma_{h,n}(s),\dd_e\chart(\phi_n(s)))\big)\:\dd s	
	&\textstyle=\int \mmm\big(\w(0,\dd_e\chart(\phi(s))))-\w(\gamma_{h,n}(s),\dd_e\chart(\phi_n(s)))\big)\:\dd s
	\\[3pt]
	&\textstyle\leq \int \mmm\big(\w(0,\dd_e\chart(\phi(s)))-\w(\gamma_{h,n}(s),\dd_e\chart(\phi(s)))\big)\:\dd s\textstyle\\[3pt]
	&\textstyle\quad\he +\int \mmm\big(\w(\gamma_{h,n}(s),\dd_e\chart(\phi(s)-\phi_n(s)))\big)\:\dd s.
\end{split}
\end{align}
\vspace{-10pt}

\noindent
Then, 
\begingroup
\setlength{\leftmargini}{12pt}
\begin{itemize}
\item
Since $\im[\phi]$ is compact, we can achieve that the second line in \eqref{ksdkldsklsksdlklsdlk} is bounded by $\varepsilon/3$ for each $n\in \NN$, just by shrinking $\delta$ if necessary.
\item
In order to estimate the third line in \eqref{ksdkldsklsksdlklsdlk}, we choose $\mm\leq \ww$ as in \eqref{cpocpoxjdsndscxaaabc} for $\vv\equiv \mm$ there. Then, by \eqref{iufdiufdiuiufdiufd}, we can achieve that $\ww_\infty(\gamma_{h,n})\leq \ww^\dind_\infty(\gamma_{h,n})\leq 1$ holds for each $|h|\leq \delta$, for $\delta>0$ suitably small; and obtain
\vspace{-8pt}
\begin{align*}
	\mmm_\infty\big(\w(\gamma_{h,n},\dd_e\chart(\phi-\phi_n))\big) \stackrel{\eqref{cpocpoxjdsndscxaaabc}}{\leq} \www_\infty(\phi-\phi_n)
	\qquad\quad\forall\:  |h|\leq \delta.
\end{align*}  
It is then clear from \eqref{pofdpofdpofdpopofdfdfd} that for $n_\varepsilon'\geq n_\varepsilon$ suitably large, the third line in \eqref{ksdkldsklsksdlklsdlk} is bonded by $\varepsilon/3$ for each $n\geq n'_\varepsilon$ and $|h|\leq \delta$.  
\end{itemize}
\endgroup
\noindent
We thus have $\Delta_\phi(h)\leq \varepsilon$  for each $|h|\leq \delta$; and conclude that $\lim_{h\rightarrow 0}\Delta_\phi(h)=0$ holds.
\end{proof}	 
We immediately obtain
\begin{corollary}
\label{MC}
\noindent
\begingroup
\setlength{\leftmargini}{17pt}
\begin{enumerate}
\item
Suppose that $G$ is {\rm 0}-continuous and $C^0$-semiregular. Then,   
 $\EVE^0_{[0,1]}$ is differentiable at zero \deff $\mg$ is \emph{integral complete}.
\item
Suppose that $G$ is {\rm k}-continuous and $C^\infty$-semiregular for $k\in \NN\sqcup\{\lip,\infty\}$. Then,   
 $\EVE^k_{[0,1]}\big|_{C^\infty([0,1],\mg)}$ is differentiable at zero \deff $\mg$ is Mackey complete.
\end{enumerate}
\endgroup
\end{corollary}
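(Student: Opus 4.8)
The plan is to obtain both equivalences as immediate consequences of Proposition \ref{rererererr}, whose content is exactly that the directional derivative of the evolution map at the zero curve along any admissible direction $\phi$ exists in the completion $\comp{\mg}$ and is given by $\int\phi(s)\,\dd s$. The only substantive step is to translate \emph{differentiability at zero} --- which, in the calculus of Sect.\ \ref{Diffmaps}, means existence of all directional derivatives --- into a statement about \emph{where} this limit lands.

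First I would record the translation principle. Fix the chart $\chart$ with $\chart(e)=0$, and note that the zero curve is sent to $\innt 0=e$, so that for $|h|$ small the image $\EVE^0_{[0,1]}(h\cdot\phi)$ (resp.\ $\EVE^k_{[0,1]}(h\cdot\phi)$) lies in the chart domain. As a curve into the manifold $G$, $h\mapsto\EVE^0_{[0,1]}(h\cdot\phi)$ is differentiable at $h=0$ if and only if $\lim_{h\to 0}\tfrac{1}{h}\,\chart\big(\EVE^0_{[0,1]}(h\cdot\phi)\big)$ exists \emph{in $E$}. By the proof of Proposition \ref{rererererr} this limit always exists \emph{in $\comp{E}$} and equals $\comp{\dd_e\chart}\big(\int\phi(s)\,\dd s\big)$. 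Since $\comp{\dd_e\chart}\colon\comp{\mg}\to\comp{E}$ is an isomorphism restricting to $\dd_e\chart\colon\mg\to E$, this value lies in $E$ if and only if $\int\phi(s)\,\dd s\in\mg$; hence the directional derivative at zero along $\phi$ exists (in $T_eG\cong\mg$) if and only if $\int\phi(s)\,\dd s\in\mg$.

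Next I would check that the semiregularity and continuity hypotheses supply precisely the directions needed to invoke Proposition \ref{rererererr}. For the first assertion, $C^0$-semiregularity gives $\RR\cdot\phi\subseteq C^0([0,1],\mg)=\DIDE^0_{[0,1]}$ for every $\phi\in C^0([0,1],\mg)$, and $0$-continuity is the hypothesis of Proposition \ref{rererererr} with $k\equiv 0$; so $\EVE^0_{[0,1]}$ is differentiable at zero if and only if $\int\phi(s)\,\dd s\in\mg$ for \emph{every} $\phi\in C^0([0,1],\mg)$, which is exactly integral completeness of $\mg$. For the second assertion, $C^\infty$-semiregularity gives $\RR\cdot\phi\subseteq\DIDE^\infty_{[0,1]}\subseteq\DIDE^k_{[0,1]}$ for every $\phi\in C^\infty([0,1],\mg)$, and $k$-continuity is again the hypothesis of Proposition \ref{rererererr}; so $\EVE^k_{[0,1]}\big|_{C^\infty([0,1],\mg)}$ is differentiable at zero if and only if $\int\phi(s)\,\dd s\in\mg$ for every $\phi\in C^\infty([0,1],\mg)$, which by the characterization recalled in Corollary \ref{Mackeycor} is Mackey completeness of $\mg$.

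The one genuinely delicate point --- and the only one I would spell out with care --- is the $\comp{\mg}$-versus-$\mg$ dichotomy: Proposition \ref{rererererr} invariably yields the derivative in the completion, and the whole force of the corollary is that differentiability \emph{in the manifold} $G$ forces this completion-valued limit back into $\mg$, simultaneously for all admissible directions. Everything else is a matter of matching the semiregularity and continuity hypotheses to those of Proposition \ref{rererererr}; no new estimates are required.
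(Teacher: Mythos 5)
Your proposal is correct and follows exactly the route the paper takes: its proof of Corollary \ref{MC} is the one-liner ``clear from Proposition \ref{rererererr} and Corollary \ref{Mackeycor}'', and your argument simply makes explicit the chart translation (differentiability at zero $\Leftrightarrow$ the $\comp{\mg}$-valued limit $\int\phi(s)\:\dd s$ from Proposition \ref{rererererr} lands in $\mg$ for every admissible direction) and the matching of the semiregularity/continuity hypotheses. No discrepancy to report.
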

\begin{proof}
This is clear from Proposition \ref{rererererr} and Corollary \ref{Mackeycor}.
\end{proof}

\subsection{Integrals with Parameters}
\label{popodspodsdspodspodspaaaa}
Given an open interval $J\subseteq \RR$ as well as $x\in J$, in the following, we denote 
\begin{align*}
	J[x]:=\{h\in \RR_{\neq 0}\:|\: x+h\in J\}.
\end{align*} 
We now will discuss the differentiation of parameter-dependent integrals. For this, we let $[r,r']\in \COMP$ be fixed; and observe that 
\begin{corollary}
\label{asghasgasghashsa}
Let $G$ be $C^k$-semiregular for $k\in \NN\sqcup\{\lip,\infty\}$. Then, for $\phi,\psi,\chi \in C^k([r,r'],\mg)$, we have
\begin{align*}
	\textstyle\innt (\phi+\psi+\chi)=\alpha\cdot \beta\cdot \gamma\qquad\quad\text{for}\qquad\quad\alpha:=\innt \phi,\quad \beta:=\innt\Ad_{\alpha^{-1}}(\psi),\quad \gamma:=\innt \Ad_{(\alpha\cdot\beta)^{-1}}(\chi).
\end{align*}
\end{corollary}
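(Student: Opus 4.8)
The whole statement is a twofold application of the product-integral form of the logarithmic-derivative product rule \eqref{alsalsalsa} (equivalently the identity \ref{kdsasaasassaas}); the only role of $C^k$-semiregularity is to guarantee, via Lemma \ref{assasaasas} together with Lemma \ref{Adlip}, that each curve occurring below is of class $C^k$ and hence lies in $\DIDE^k_{[r,r']}$, so that all the product integrals are defined. Throughout I read the subscripts $\alpha^{-1}$ and $(\alpha\cdot\beta)^{-1}$ of the adjoint as the inverse \emph{paths} $[\innt_r^\bullet\phi]^{-1}$ and $[\innt_r^\bullet(\phi+\psi)]^{-1}$ (this is the reading forced by \ref{kdsasaasassaas} and consistent with \eqref{poasopsaosappoasposa}); the asserted identity is then the evaluation at $t=r'$ of a corresponding equality of $G$-valued curves.

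\textbf{Key factorization.} First I would record the basic splitting law: for $\eta,\zeta\in C^k([r,r'],\mg)$,
\begin{align}
\label{eq:planfactor}
	\textstyle\innt_r^\bullet(\eta+\zeta)=\big[\innt_r^\bullet\eta\big]\cdot\big[\innt_r^\bullet\Ad_{[\innt_r^\bullet\eta]^{-1}}(\zeta)\big].
\end{align}
This follows from \ref{kdsasaasassaas} upon replacing $\zeta$ there by $\Ad_{[\innt_r^\bullet\eta]^{-1}}(\zeta)$ and using $\Ad_{\innt_r^\bullet\eta}\cp\Ad_{[\innt_r^\bullet\eta]^{-1}}=\id_\mg$. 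Equivalently, and more robustly, setting $\mu:=\innt_r^\bullet\eta$ and $\nu:=\innt_r^\bullet\Ad_{\mu^{-1}}(\zeta)$, the product rule \eqref{alsalsalsa} gives $\Der(\mu\cdot\nu)=\eta+\Ad_\mu\Ad_{\mu^{-1}}(\zeta)=\eta+\zeta=\Der(\innt_r^\bullet(\eta+\zeta))$, and since both sides agree at $r$ (value $e$), Lemma \ref{xckxklxc} forces $\mu\cdot\nu=\innt_r^\bullet(\eta+\zeta)$. Here $\Ad_{[\innt_r^\bullet\eta]^{-1}}(\zeta)\in C^k$ by Lemma \ref{Adlip}, hence integrable by $C^k$-semiregularity, which is exactly what is needed for $\nu$ to be defined.

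\textbf{Two applications.} I would then apply \eqref{eq:planfactor} with $\eta=\phi$ and $\zeta=\psi+\chi$, using linearity of $\Ad_g$ in its Lie-algebra argument, to get
\begin{align*}
	\textstyle\innt_r^\bullet(\phi+\psi+\chi)=\big[\innt_r^\bullet\phi\big]\cdot\big[\innt_r^\bullet\big(\Ad_{[\innt_r^\bullet\phi]^{-1}}(\psi)+\Ad_{[\innt_r^\bullet\phi]^{-1}}(\chi)\big)\big].
\end{align*}
Applying \eqref{eq:planfactor} once more to the second factor, now with $\eta=\Ad_{[\innt_r^\bullet\phi]^{-1}}(\psi)$ and $\zeta=\Ad_{[\innt_r^\bullet\phi]^{-1}}(\chi)$, splits it as $\big[\innt_r^\bullet\Ad_{[\innt_r^\bullet\phi]^{-1}}(\psi)\big]\cdot\big[\innt_r^\bullet\Ad_{[\innt_r^\bullet\eta]^{-1}}\Ad_{[\innt_r^\bullet\phi]^{-1}}(\chi)\big]$. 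Since $\Ad$ is a homomorphism, $\Ad_{[\innt_r^\bullet\eta]^{-1}}\cp\Ad_{[\innt_r^\bullet\phi]^{-1}}=\Ad_{([\innt_r^\bullet\phi]\cdot[\innt_r^\bullet\eta])^{-1}}$, and \eqref{eq:planfactor} applied to the pair $(\phi,\psi)$ identifies $[\innt_r^\bullet\phi]\cdot[\innt_r^\bullet\eta]=\innt_r^\bullet(\phi+\psi)$; hence this last adjoint is $\Ad_{[\innt_r^\bullet(\phi+\psi)]^{-1}}$. Evaluating the resulting threefold product of curves at $t=r'$ gives precisely $\alpha\cdot\beta\cdot\gamma$ with $\alpha,\beta,\gamma$ as defined in the statement.

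\textbf{Main point.} There is no serious obstacle; the entire content is the product law \eqref{eq:planfactor} plus uniqueness of the primitive (Lemma \ref{xckxklxc}). The one thing demanding care is the bookkeeping that cleanly separates the endpoint group elements $\alpha=\innt\phi$, $\beta$, $\gamma$ appearing in the final product from the full paths $\innt_r^\bullet(\cdot)$ that actually enter the adjoint subscripts, together with the identification $[\innt_r^\bullet\phi]\cdot[\innt_r^\bullet\eta]=\innt_r^\bullet(\phi+\psi)$ used to collapse the two nested adjoints into $\Ad_{[\innt_r^\bullet(\phi+\psi)]^{-1}}$.
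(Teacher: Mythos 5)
Your proposal is correct and takes essentially the same route as the paper: the paper applies the quotient identity \ref{kdskdsdkdslkds} twice (peeling off $\alpha^{-1}$ and then $\beta^{-1}$ on the left), which is just the mirror image of your twofold application of the product form \eqref{alsalsalsa}/\ref{kdsasaasassaas}, and both arguments invoke Lemma \ref{Adlip} together with $C^k$-semiregularity to ensure the adjoint-twisted curves are integrable. Your reading of the adjoint subscripts as the inverse paths, and the identification $[\innt_r^\bullet\phi]\cdot[\innt_r^\bullet\Ad_{\alpha^{-1}}(\psi)]=\innt_r^\bullet(\phi+\psi)$, match the paper's intent exactly.
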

\begin{proof}
Applying \ref{kdskdsdkdslkds} twice, we obtain 
	\begin{align*}
		\textstyle\beta^{-1}\cdot\alpha^{-1}\cdot [\innt \phi+\psi+\chi]=\beta^{-1}\cdot\innt_r^\bullet \Ad_{\alpha^{-1}}(\psi+\chi)=\innt_r^\bullet \Ad_{(\alpha\cdot\beta)^{-1}}(\chi),
	\end{align*} 
	because $\Ad_{\alpha^{-1}}(\psi+\chi),\: \Ad_{(\alpha\cdot\beta)^{-1}}(\chi)\in C^k([r,r'],\mg)$ holds by Lemma \ref{Adlip}.
\end{proof}
Moreover, let $\delta>0$, and suppose that $\mu,\nu\colon [0,\delta]\rightarrow G$ are maps with
\begin{align*}
	\textstyle\lim_{h\rightarrow 0} \mu(h)=\mu(0)=e&\textstyle\qquad\quad\text{and}\qquad\quad \lim_{h\rightarrow 0} \rcf{h}\cdot (\chart\cp \mu)(h)=X\in \ovl{E}\\
	\textstyle\lim_{h\rightarrow 0} \nu(h)=\nu(0)=e&\textstyle\qquad\quad\text{and}\qquad\quad \lim_{h\rightarrow 0} \rcf{h}\cdot (\chart\cp \nu)(h)=Y\in \ovl{E}.
\end{align*}
Then, we obtain from Lemma \ref{sdsdds} that, cf.\ Appendix \ref{dsdsdsds} 
\begin{align}
	\label{powqopqwopwasasaassaq}
	\textstyle\lim_{h\rightarrow 0} \rcf{h}\cdot \chart(\mu(h)\cdot \nu(h))=X+Y\in \ovl{E}
 \end{align}
 holds; and are ready for
\begin{theorem}
\label{kckjckjs}
Suppose that $G$ is {\rm k}-continuous and $C^k$-semiregular for some $k\in \NN\sqcup\{\lip,\infty\}$; and let $\Phi\colon I\times [r,r']\rightarrow \mg$ ($I\subseteq \RR$ open) be fixed with $\Phi(z,\cdot)\in C^k([r,r'],\mg)$ for each $z\in I$.  
Then, 
\begin{align*}
	\textstyle\frac{\dd}{\dd h}\big|_{h=0} \big([\innt \Phi(x,\cdot)]^{-1}[\innt\Phi(x+h,\cdot)]\big)=\textstyle\int \Ad_{[\innt_r^s\Phi(x,\cdot)]^{-1}}(\partial_z\Phi(x,s))\:\dd s\hspace{1pt}\in \comp{\mg}
\end{align*}
holds for $x\in I$, provided that
\begingroup
\setlength{\leftmargini}{17pt}{
\renewcommand{\theenumi}{{\alph{enumi}})} 
\renewcommand{\labelenumi}{\theenumi}
\begin{enumerate}
\item
\label{saasaassasasa2}
We have $(\partial_z \Phi)(x,\cdot)\in C^k([r,r'],\mg)$.\footnote{More specifically, this means that for each $t\in [r,r']$ the map $I\ni z\mapsto \Phi(z,t)$ is differentiable at $z=x$ with derivative $(\partial_z\Phi)(x,t)$, such that $(\partial_z\Phi)(x,\cdot)\in C^k([r,r'],\mg)$ holds. In particular, the latter condition ensures that $\ppp^\dind_\infty((\partial_z\Phi)(x,\cdot))< \infty$ holds For each $\pp\in \SEM$ and $\dind\llleq k$, cf.\ \ref{aaaaaw2}.}
\item
\label{saasaassasasa1}
For each $\pp\in \SEM$ and $\dind\llleq k$, there exists $L_{\pp,\dind}\geq 0$, as well as $I_{\pp,\dind}\subseteq I$ open with $x\in I_{\pp,\dind}$, such that
\begin{align*}
	1/|h|\cdot\ppp^\dind_\infty(\Phi(x+h,\cdot)-\Phi(x,\cdot))\leq L_{\pp,\dind}\qquad\quad \forall\: h\in I_{\pp,\dind}[x].
\end{align*}
\vspace{-22pt}
\end{enumerate}}
\endgroup
\end{theorem}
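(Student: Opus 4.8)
The plan is to reduce the statement to the directional-derivative formula at zero (Proposition \ref{rererererr}) by means of the left-translation identity \ref{kdskdsdkdslkds}. Abbreviate $\phi:=\Phi(x,\cdot)$ and $\psi_h:=\Phi(x+h,\cdot)$; since $G$ is $C^k$-semiregular, Lemma \ref{assasaasas} gives $\phi,\psi_h\in\DIDE^k_{[r,r']}$, so all product integrals below are defined. Identity \ref{kdskdsdkdslkds} yields
\begin{align*}
	\textstyle[\innt\phi]^{-1}[\innt\psi_h]=\innt\chi_h\qquad\text{with}\qquad \chi_h:=\Ad_{[\innt_r^\bullet\phi]^{-1}}(\psi_h-\phi)\in\DIDE^k_{[r,r']},
\end{align*}
the membership following from Lemma \ref{Adlip} together with semiregularity. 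Since $\Ad$ is linear in its second argument, $\chi_h=h\cdot\eta_h$ for $\eta_h:=\Ad_{[\innt_r^\bullet\phi]^{-1}}(\tfrac1h(\psi_h-\phi))$, and I set $\eta_0:=\Ad_{[\innt_r^\bullet\phi]^{-1}}((\partial_z\Phi)(x,\cdot))\in\DIDE^k_{[r,r']}$, which evaluated at $s$ is exactly the integrand on the right-hand side of the claim.

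Next I would dispose of the ``linear part''. As $G$ is k-continuous and $(-\delta,\delta)\cdot\eta_0\subseteq\DIDE^k_{[r,r']}$ (a vector space by semiregularity), Proposition \ref{rererererr} applies to $\eta_0$ and gives $\frac{\dd}{\dd h}|_{h=0}\innt h\cdot\eta_0=\int\eta_0(s)\,\dd s\in\comp\mg$; concretely $\tfrac1{|h|}\cpp(\chart(\innt h\eta_0)-h\cdot\comp{\dd_e\chart}(\int\eta_0\,\dd s))\to0$ for each $\pp\in\SEM$. Since $\chart(\innt\chi_h),\chart(\innt h\eta_0)\in E$ and $\cpp|_E=\pp$, the triangle inequality reduces the whole theorem to proving
\begin{align*}
	\textstyle\lim_{h\to0}\tfrac1{|h|}\,\cpp\big(\chart(\innt\chi_h)-\chart(\innt h\cdot\eta_0)\big)=0\qquad\quad\forall\:\pp\in\SEM.
\end{align*}

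To establish this I would apply Proposition \ref{hghghggh} to the pair $h\cdot\eta_0$, $\chi_h=h\cdot\eta_h$. Given $\pp$, it provides $\pp\leq\mm$ and $\dind\llleq k$, and its gauge hypotheses hold for small $h$: $\mmm^\dind_\infty(h\eta_0)=|h|\,\mmm^\dind_\infty(\eta_0)$ is harmless since $\eta_0\in C^k$, while Lemma \ref{opopsopsdopds} (applied to $\innt_r^\bullet\phi$, with $\mm$ in place of $\pp$) furnishes $\mm\leq\qq$ with $\mmm^\dind_\infty(\eta_h)\leq\qqq^\dind_\infty(\tfrac1h(\psi_h-\phi))$ and likewise for $\eta_h-\eta_0$; by hypothesis \ref{saasaassasasa1} these are bounded uniformly in $h$ near $0$ by $L_{\qq,\dind}$ and $L_{\qq,\dind}+\qqq^\dind_\infty((\partial_z\Phi)(x,\cdot))$, so $|h|$ times each of the three relevant gauges is $\leq1$ once $|h|$ is small. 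Proposition \ref{hghghggh}, using $\chi_h=h\eta_h$ and Lemma \ref{opopsopsdopds} once more with $\dindp=0$, then gives
\begin{align*}
	\textstyle\tfrac1{|h|}\,\pp\big(\chart(\innt\chi_h)-\chart(\innt h\eta_0)\big)\leq\int\mmm\big((\eta_h-\eta_0)(s)\big)\,\dd s\leq\int\qqq\big(\tfrac1h(\Phi(x+h,s)-\Phi(x,s))-(\partial_z\Phi)(x,s)\big)\,\dd s.
\end{align*}

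Finally I would let $h\to0$ in this last integral. By hypothesis \ref{saasaassasasa2} the integrand tends to $0$ pointwise on $[r,r']$, and by hypothesis \ref{saasaassasasa1} together with the finiteness of $\qqq_\infty((\partial_z\Phi)(x,\cdot))$ it is bounded uniformly in $h$ and $s$; being continuous, the integrands then force the integral to $0$ by Arzel\`a's bounded-convergence theorem for the Riemann integral. I expect this last step to be the main obstacle: the difference quotients converge only pointwise, not uniformly (for $k\in\{0,\lip\}$, assumption \ref{saasaassasasa1} with $\dind=0$ yields mere uniform boundedness, not equicontinuity in $s$), so one cannot pull the limit through by uniform convergence and must instead argue by dominated/bounded convergence. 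Combining the two displayed limits via the triangle inequality yields the asserted formula in $\comp\mg$.
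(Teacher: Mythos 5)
Your argument is correct, and all the gauge checks you flag (membership of $\chi_h$ and $\eta_0$ in $\DIDE^k_{[r,r']}$ via semiregularity and Lemma \ref{Adlip}, smallness of the three $\mmm^\dind_\infty$-gauges for $|h|$ small via Lemma \ref{opopsopsdopds} and hypothesis b), and the final passage to the limit by bounded convergence rather than uniform convergence) are exactly the points that need checking; none of them fails. Your route differs from the paper's in how the error term is isolated. The paper Taylor-expands $\Phi(x+h,\cdot)=\Phi(x,\cdot)+h\,\partial_z\Phi(x,\cdot)+h\,\epsilon(x+h,\cdot)$ and then factors $\innt\Phi(x+h,\cdot)=\alpha\cdot\beta(h,1)\cdot\gamma(h,1)$ multiplicatively via Corollary \ref{asghasgasghashsa}, so that the quantity to be differentiated becomes the product $\beta(h,1)\cdot\gamma(h,1)$; the linear factor $\beta$ is handled by Proposition \ref{rererererr}, the factor $\gamma$ is shown to satisfy $(\pp\cp\chart)(\gamma(h,1))=o(|h|)$ by chaining Lemmas \ref{opdfopfdopfdpoas}, \ref{jlkfdsjasasslkfdsjklfdsjklfsdjkl}, \ref{opopsopsdopds} with dominated convergence, and the two derivatives are added via the product-derivative formula \eqref{powqopqwopwasasaassaq}. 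You instead keep a single product integral $\innt\chi_h$ with $\chi_h=h\cdot\eta_h$ and compare it in the chart against $\innt h\cdot\eta_0$ using the Lipschitz-type estimate of Proposition \ref{hghghggh} -- the same estimate the paper deploys inside its proof of Proposition \ref{rererererr} but not in Theorem \ref{kckjckjs} itself. What your version buys is the elimination of the three-factor decomposition and of \eqref{powqopqwopwasasaassaq} (no need to differentiate a product of two $G$-valued curves); what it costs is a second, direct invocation of Proposition \ref{hghghggh} with its three gauge hypotheses, whereas the paper's $\gamma$-factor only requires the one-sided estimate of Lemma \ref{opdfopfdopfdpoas}. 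Both proofs reduce the essential analytic content to Proposition \ref{rererererr} plus a dominated-convergence argument on $\qqq\big(\tfrac1h(\Phi(x+h,\cdot)-\Phi(x,\cdot))-\partial_z\Phi(x,\cdot)\big)$, so the approaches are equivalent in strength.
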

\begin{proof}
For $x+h\in I$, we have 
\begin{align*}
	\Phi(x+h,t)=\Phi(x,t)+h\cdot \partial_z\Phi(x,t)+ h\cdot \varepsilon(x+h,t)\qquad\quad\forall\: t\in [r,r'],
\end{align*}
for some $\varepsilon\colon I\times [r,r']\rightarrow\mg$ with
\begingroup
\setlength{\leftmargini}{17pt}{
\renewcommand{\theenumi}{{\bf \roman{enumi}})} 
\renewcommand{\labelenumi}{\theenumi}
\begin{enumerate}
\item
\label{aaaaaw1}
	$\lim_{h\rightarrow 0}\varepsilon(x+h,t)=\varepsilon(x,t)=0$\hspace{128.2pt}$\forall\: t\in [r,r']$,
\item
\label{aaaaaw2}
	$\ppp^\dind_\infty(\varepsilon(x+h,\cdot))\leq L_{\pp,\dind} + \ppp_{\infty}^\dind((\partial_z\Phi)(x,\cdot))=:C_{\pp,\dind}<\infty$\hspace{19pt}$\forall\:h\in I_{\pp,\dind}[x]$\quad for all\quad $\pp\in \SEM$,\: $\dind\llleq k$.
\end{enumerate}}
\endgroup
\noindent
Then, {\it \ref{saasaassasasa2}} together with Corollary \ref{asghasgasghashsa} shows that  
	$\textstyle\innt \Phi(x +h,\cdot) =\textstyle \alpha(1)\cdot \beta(h,1) \cdot \gamma(h,1)$ holds, with    
\begin{align*}
\textstyle	\alpha(t)&\textstyle:=\innt_r^t \Phi(x,\cdot)\\
\beta(h,t)&\textstyle:=\innt_r^t h\cdot \Ad_{\alpha^{-1}}(\partial_z\Phi(x,\cdot))\\[0.5pt]
\gamma(h,t)&\textstyle:= \innt_r^t h\cdot \Ad_{(\alpha\cdot\beta(h,\cdot))^{-1}}(\varepsilon(x+h,\cdot))
\end{align*}
for each $t\in [r,r']$; thus,
\begin{align*}
	\textstyle\frac{\dd}{\dd h}\big|_{h=0}\: \chart\big([\innt \Phi(x,\cdot)]^{-1}[\innt\Phi(x+h,\cdot)]\big)=\frac{\dd}{\dd h}\big|_{h=0}\: \chart(\beta(h,1)\cdot \gamma(h,1))
\end{align*}
provided that the right side exists. Now, since $\Ad_{\alpha^{-1}}(\partial_z\Phi(x,\cdot))$ is of class $C^k$ by Lemma \ref{Adlip}, Proposition \ref{rererererr} shows that
\begin{align*}
	\textstyle\frac{\dd}{\dd h}\big|_{h=0}\:\beta(h,1)=\int \Ad_{\alpha^{-1}(s)}(\partial_z\Phi(x,s))\:\dd s = \int \Ad_{[\innt_r^s \Phi(x,\cdot)]^{-1}}(\partial_z\Phi(x,s))\:\dd s
\end{align*}
holds; so that the claim follows from \eqref{powqopqwopwasasaassaq} once we have verified that
\begin{align}
\label{dhjskdkjsd}
	\textstyle\lim_{h\rightarrow 0} 1/|h| \cdot(\pp\cp\chart)(\gamma(h,1))=0\qquad\quad\forall\: \pp\in \SEM.
\end{align}
To show this, we fix $\pp\in \SEM$, and let 
\begingroup
\setlength{\leftmargini}{12pt}
\begin{itemize}
\item
 $\pp\leq \qq\in \SEM$, $\dindu\llleq k$ be as in Lemma \ref{opdfopfdopfdpoas} for $\dind\equiv \dindu$ (and $p\equiv k$) there; i.e.,
 \begin{align}
 \label{opsdopdsopopaosaas}
	\textstyle\qqq^\dindu_\infty(\phi)\leq 1\quad\:\:\text{for}\quad\:\: \phi\in \DIDE^k_{[r,r']}\qquad\quad\Longrightarrow\qquad\quad (\pp\cp\chart)\big(\innt_r^\bullet\phi\big)\leq \int_r^\bullet \qqq(\phi(s))\:\dd s. 
\end{align} 
\item
 $\qq\leq \mm\in \SEM$, $\dind\llleq k$ be as in Lemma \ref{jlkfdsjasasslkfdsjklfdsjklfsdjkl} for $\pp\equiv \qq$ there; i.e., we have
 \begin{align}
 \label{ofdpoifdpofdfdfdfaaaa}
 	\qqq^\dindp(\Ad_{\beta^{-1}(h,\cdot)}(\psi))\leq \mmm^\dindp(\psi)\qquad\quad\forall\:\psi\in C^k([r,r'],\mg),\:\: 0\leq \dindp\leq\dindu,
 \end{align}
provided that $\mmm^\dind_\infty(h\cdot  \Ad_{\alpha^{-1}}(\partial_z\Phi(x,\cdot))\leq 1$ holds.
\item
 $\mm\leq \nn\in \SEM$ be as in Lemma \ref{opopsopsdopds} for $\pp\equiv\mm$, $\qq\equiv \nn$, $\dind\equiv\dindo:=\max(\dind,\dindu)$, and $\phi \equiv \Phi(x,\cdot)$ there; i.e., we have
\begin{align}
\label{spoaspopaossa}
\mmm^\dind(h\cdot  \Ad_{\alpha^{-1}}(\partial_z\Phi(x,\cdot))&\leq |h|\cdot \nnn^\dind(\partial_z\Phi(x,\cdot))\\
\label{spoaspopaossas}
	\mmm^\dindp(\Ad_{\alpha^{-1}}(\varepsilon(x+h,\cdot)))\hspace{3pt}&\leq\hspace{20.3pt}  \nnn^\dindp(\varepsilon(x+h,\cdot)).
\end{align} 
for each $0\leq\dindp\leq \dindo$ and $h\in I[x]$. 
\end{itemize}
\endgroup
\noindent
We choose $\delta>0$ such small that $(-\delta,0)\cup(0,\delta)\subseteq I_{\nn,\dindo}[x]$ holds, with $|h|\cdot \nnn_\infty^\dind(\partial_z\Phi(x,\cdot))\leq 1$ for each $0<|h|\leq \delta$.  
Then,  
\eqref{spoaspopaossa}, \eqref{ofdpoifdpofdfdfdfaaaa}, \eqref{spoaspopaossas}, and \ref{aaaaaw2} show that
\begin{align}
\label{podsopsdpod}
\begin{split}
	\qqq^\dindp\big(h\cdot \Ad_{(\alpha\cdot\beta(h,\cdot))^{-1}}(\varepsilon(x+h,\cdot))\big)&=|h|\cdot \qqq^\dindp\big( \Ad_{\beta^{-1}(h,\cdot)}\cp\Ad_{\alpha^{-1}}(\varepsilon(x+h,\cdot))\big)\\
	&\leq |h|\cdot\mmm^\dindp( \Ad_{\alpha^{-1}}(\varepsilon(x+h,\cdot)))\\
	&\leq |h|\cdot \nnn^\dindp(\varepsilon(x+h,\cdot))\\
	&\leq |h|\cdot \nnn_\infty^\dindo(\varepsilon(x+h,\cdot))\\
	&\leq  |h|\cdot C_{\nn,\dindo} 
\end{split} 
\end{align}
holds for each $0<|h|\leq \delta$, and each $0\leq \dindp\leq\dindu$. 
Thus, shrinking $\delta$ if necessary, we obtain from \eqref{opsdopdsopopaosaas}, as well as \eqref{podsopsdpod} for $\dindp\equiv\dindu$ there, that
\begin{align*}
	\textstyle 1/|h| \cdot(\pp\cp\chart)(\gamma(h,t))&\textstyle\leq \int_r^t \qqq(\Ad_{(\alpha(s)\cdot\beta(h,s))^{-1}}(\varepsilon(x+h,s)))\:\dd s\qquad\quad\forall\: 0<|h|\leq \delta
\end{align*}
holds. 
Then, \eqref{podsopsdpod}, for $\dindp\equiv0$ there, gives
\begin{align*}
	\textstyle 1/|h| \cdot(\pp\cp\chart)(\gamma(h,t))\leq \int \nnn(\varepsilon(x+h,s))\:\dd s\qquad\quad\forall\: 0<|h|\leq \delta.
\end{align*}
Since the integrand is measurable, and bounded by \ref{aaaaaw2}; and since $\lim_{h\rightarrow 0}\nn(\varepsilon(x+h,\cdot))=0$ converges pointwise by \ref{aaaaaw1}, the dominated convergence theorem shows \eqref{dhjskdkjsd}.
\end{proof}
\begin{remark}
\label{sddssdsd}
In the situation of Theorem \ref{kckjckjs}, we obtain from Lemma \ref{sdsdds} (and \ref{saasaassasasa1}) that
\begin{align}
\label{sopsodpdops1}
	\textstyle\frac{\dd}{\dd h}\big|_{h=0} \innt\Phi(x+h,\cdot)=\textstyle\dd_e\LT_{\innt \Phi(x,\cdot)}\big(\int \Ad_{[\innt_r^s\Phi(x,\cdot)]^{-1}}(\partial_z\Phi(x,s))\:\dd s\big)\in\mg
\end{align}
holds, provided that
\vspace{-1pt}
\begingroup
\setlength{\leftmargini}{12pt}
\begin{itemize}
\item
	$\mg$ is integral complete.
\item
	$\mg$ is Mackey complete with $\partial_z\Phi(x,\cdot)\in C^\lip([r,r'],\mg)$.
\end{itemize}
\endgroup
\vspace{-1pt}
\noindent
Here, the first criterion is obvious, and the second one is clear from Lemma \ref{Adlip}.\hspace*{\fill}$\ddagger$ 
\end{remark}

\subsection{Duhamel's Formula}
\label{kjdkjdjkdkjd}
Suppose that $G$ is $\infty$-continuous and $C^\infty$-semiregular, and that $\mg$ is Mackey complete. We fix $\MX\colon I\rightarrow \mg$ of class $C^1$, and define
\begin{align*}
	\Phi\colon I\times [0,1]\rightarrow \mg,\qquad (z,t)\mapsto \MX(z).	 
\end{align*}
Then, $\Phi$ fulfills the presumptions of Theorem \ref{kckjckjs} for $[r,r']\equiv[0,1]$ there, namely, for each $x\in I$; so that we have
\begin{corollary}
\label{sasassasasa}
Suppose that $G$ is $\infty$-continuous and $C^\infty$-semiregular, and that $\mg$ is Mackey complete. Then, for each $\MX\colon I\rightarrow \mg$ of class $C^1$, we have
\begin{align*}
	\textstyle\partial_z \exp(\MX(x))=\dd_e\LT_{\exp(\MX(x))}\big(\int_0^1 \Ad_{\exp(-s\cdot \MX(x))}(\partial_z\MX(x)) \:\dd s \big)\qquad\quad\forall\: x\in I.
\end{align*}
\end{corollary}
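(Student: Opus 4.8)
The plan is to recognize Corollary \ref{sasassasasa} as the special case of Theorem \ref{kckjckjs} obtained by taking the parameter-dependent curve $\Phi(z,t)=\MX(z)$ that is constant in the time variable $t$, and then to rewrite the resulting product integrals as exponentials. Concretely, the work splits into three tasks: (i) check that this $\Phi$ satisfies the hypotheses of Theorem \ref{kckjckjs} for $[r,r']\equiv[0,1]$ at every $x\in I$; (ii) identify $\innt_0^s\Phi(x,\cdot)$ with $\exp(s\cdot\MX(x))$; and (iii) upgrade the $\mgc$-valued directional derivative supplied by the theorem to an $\mg$-valued derivative carrying the $\dd_e\LT$-factor, using Remark \ref{sddssdsd}.

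For task (i): since $\Phi(z,\cdot)$ is the constant curve with value $\MX(z)\in\mg$, it is smooth, so $\Phi(z,\cdot)\in C^\infty([0,1],\mg)$ for every $z$; and $G$ is $\infty$-continuous and $C^\infty$-semiregular by hypothesis. Condition \ref{saasaassasasa2} holds because $(\partial_z\Phi)(x,\cdot)$ is again a constant curve, namely the one with value $\partial_z\MX(x)$ (this derivative exists as $\MX$ is of class $C^1$), hence smooth. For condition \ref{saasaassasasa1} I would use that $\Phi(x+h,\cdot)-\Phi(x,\cdot)$ is the constant curve with value $\MX(x+h)-\MX(x)$, so all its derivatives of order $\ge 1$ vanish and $\ppp_\infty^\dind(\Phi(x+h,\cdot)-\Phi(x,\cdot))=\ppp(\MX(x+h)-\MX(x))$ for every $\dind$. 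Thus condition \ref{saasaassasasa1} reduces to the local Lipschitz estimate $1/|h|\cdot\ppp(\MX(x+h)-\MX(x))\le L_{\pp}$ for $h$ in a small neighbourhood of $0$, which holds because $\MX$ is $C^1$ and therefore locally Lipschitz near $x$ (one may take $L_\pp:=\ppp(\partial_z\MX(x))+1$ and shrink the neighbourhood accordingly). This verification is the only genuinely non-vacuous step, and it is mild.

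For tasks (ii) and (iii): because $G$ is $C^\infty$-semiregular the constant curve $\phi_{\MX(x)}$ is integrable, so $\exp$ is defined, and \eqref{lkdsklsdkdl} gives $\innt_0^s\Phi(x,\cdot)=\innt_0^s\phi_{\MX(x)}=\exp(s\cdot\MX(x))$, whence $[\innt_0^s\Phi(x,\cdot)]^{-1}=\exp(-s\cdot\MX(x))$ and $\innt\Phi(x,\cdot)=\exp(\MX(x))$; also $\partial_z\Phi(x,s)=\partial_z\MX(x)$. Substituting these into the conclusion of Theorem \ref{kckjckjs} turns its right-hand integral into $\int_0^1\Ad_{\exp(-s\cdot\MX(x))}(\partial_z\MX(x))\,\dd s$. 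Finally, since $\mg$ is Mackey complete and $(\partial_z\Phi)(x,\cdot)$ is smooth (in particular of class $C^\lip$), the second bullet of Remark \ref{sddssdsd} applies and yields formula \eqref{sopsodpdops1}, i.e.\ $\frac{\dd}{\dd h}\big|_{h=0}\innt\Phi(x+h,\cdot)=\dd_e\LT_{\innt\Phi(x,\cdot)}\big(\int_0^1\Ad_{\exp(-s\cdot\MX(x))}(\partial_z\MX(x))\,\dd s\big)\in\mg$. Recognizing the left-hand side as $\partial_z\exp(\MX(x))$ (the derivative of $z\mapsto\exp(\MX(z))=\innt\Phi(z,\cdot)$ at $z=x$) completes the proof. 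The only point requiring care is that the upgrade from the $\mgc$-valued to the $\mg$-valued derivative genuinely needs Mackey completeness together with the Lipschitz regularity of $(\partial_z\Phi)(x,\cdot)$, both of which are present here.
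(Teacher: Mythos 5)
Your proposal is correct and follows exactly the route the paper takes: Corollary \ref{sasassasasa} is obtained by applying Theorem \ref{kckjckjs} to the time-constant family $\Phi(z,t)=\MX(z)$ on $[0,1]$, identifying $\innt_0^s\Phi(x,\cdot)=\exp(s\cdot\MX(x))$ via \eqref{lkdsklsdkdl}, and invoking Remark \ref{sddssdsd} (Mackey completeness plus $(\partial_z\Phi)(x,\cdot)\in C^\lip$) to land the derivative in $\mg$ with the $\dd_e\LT$-factor. The paper compresses all of this into ``Clear''; your verification of hypotheses a) and b) — in particular the reduction of $\ppp^\dind_\infty$ to $\ppp(\MX(x+h)-\MX(x))$ and the local Lipschitz bound from differentiability of $\MX$ at $x$ — is exactly the content being suppressed there.
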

\begin{proof}
Clear. 
\end{proof}
We want to provide a further version of this statement:
 
Referring to Lemma \ref{opqwopqwowpwopqwq}, we say that $G$ is {\bf quasi constricted} \defff 
\begin{align*}
	\textstyle\alpha_{X,Y}\colon \RR\ni t\mapsto \sum_{n=0}^\infty \frac{t^n}{n!}\cdot (\com{X})^n(Y)\in \mg\qquad\quad\forall\: X,Y\in \mg
\end{align*}

is defined and of class $C^1$ with $\dot\alpha_{X,Y}=\bl X,\alpha\br$; thus, of class $C^\infty$ by Corollary \ref{bhsbsshshdkksjdhjsd}.
Then, 

by Lemma \ref{Potreih}, we have
\begin{align}
\label{odsoidoioisdoidsoids}
	\textstyle\frac{\id_\mg-\exp(-\com{\MX(x)})}{\com{\MX(x)}}(Y):=\int_0^1 \alpha_{-X,Y}(s)\: \dd s=\sum_{n=0}^\infty \frac{1}{(n+1)!}\cdot (\com{-\MX(x)})^n(Y)\in\mgc\qquad\forall\: Y\in \mg;
\end{align}  

and, in analogy to Corollary \ref{saklasklklsa}, we obtain
\begin{corollary}
\label{saklasklklsaaa}
	Suppose that $G$ is quasi constricted, and admits an exponential map. Then,   
	\begin{align*}
		\Ad_{\exp(-t\cdot X)}(Y)=\alpha_{-X,Y}(t)\qquad\quad\forall\: t\in \RR,\:\: X,Y\in \mg.
	\end{align*}
\end{corollary}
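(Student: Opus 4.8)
The plan is to reduce the statement to the one-sided result already established in Corollary \ref{saklasklklsa}, whose proof transfers almost verbatim to the quasi constricted setting, and then to exploit the symmetry $X\mapsto -X$ in order to pass from $t\ge 0$ to all $t\in\RR$. The point is that the hypothesis ``$G$ is quasi constricted'' supplies directly what Lemma \ref{opqwopqwowpwopqwq} supplied under constrictedness and sequential completeness of $\mg$, so no further completeness assumption is needed.

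First I would prove the restricted claim $\Ad_{\exp(-t\cdot X)}(Y)=\alpha_{-X,Y}(t)$ for $t\ge 0$ only. Since $G$ admits an exponential map, \eqref{lkdsklsdkdl} gives $\innt_0^\bullet\phi_{-X}=\exp(-\bullet\cdot X)$ on $[0,t]$, so in particular $\phi_{-X}|_{[0,t]}\in\DIDE_{[0,t]}$. By quasi constrictedness, $\alpha_{-X,Y}$ is of class $C^1$ with $\dot\alpha_{-X,Y}=\bl -X,\alpha_{-X,Y}\br=\bl\phi_{-X},\alpha_{-X,Y}\br$, while $\alpha_{-X,Y}(0)=Y$ is immediate from the defining series (the $n=0$ term is $\com{-X}^{0}(Y)=Y$, all others vanish at $t=0$). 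Hence the reverse (``if'') direction of Omori's Lemma \ref{omori}, applied on $[0,t]$ with $\phi\equiv\phi_{-X}$ and $\mu\equiv\innt_0^\bullet\phi_{-X}$, yields $\alpha_{-X,Y}(s)=\Ad_{\exp(-s\cdot X)}(Y)$ for all $s\in[0,t]$, and in particular at $s=t$. This is exactly the argument of Corollary \ref{saklasklklsa}, only with the quasi constricted hypothesis standing in for Lemma \ref{opqwopqwowpwopqwq}.

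Second I would extend to negative times by symmetry. Applying the restricted claim with $X$ replaced by $-X$ gives $\Ad_{\exp(t\cdot X)}(Y)=\alpha_{X,Y}(t)$ for all $t\ge 0$. For $t<0$ I would write $\exp(-t\cdot X)=\exp((-t)\cdot X)$ with $-t>0$ and invoke this to obtain $\Ad_{\exp(-t\cdot X)}(Y)=\alpha_{X,Y}(-t)$. It then remains to note the elementary identity $\alpha_{X,Y}(-t)=\alpha_{-X,Y}(t)$, which follows termwise from $\com{-X}^{n}=(-1)^n\com{X}^{n}$; combining these gives the asserted formula for $t<0$ as well, completing the proof for all $t\in\RR$.

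The computations are routine; the only step requiring care is the bookkeeping ensuring that every invocation of Omori's lemma is legitimate, namely that the relevant constant curve lies in $\DIDE_{[0,t]}$ and that $\innt_0^\bullet\phi_{-X}$ genuinely coincides with the one-parameter subgroup $\exp(-\bullet\cdot X)$ — this is where \eqref{lkdsklsdkdl}, together with the fact (Remark \ref{exponentialmap}.\ref{exponentialmap1}) that $t\mapsto\exp(t\cdot X)$ is a smooth homomorphism, is used. No smoothness of $\alpha_{X,Y}$ beyond the class $C^1$ built into the definition of quasi constrictedness is required, although Corollary \ref{bhsbsshshdkksjdhjsd} upgrades it to $C^\infty$ if one wishes.
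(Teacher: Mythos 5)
Your proposal is correct and follows essentially the same route as the paper, which simply reuses the proof of Corollary \ref{saklasklklsa} (Omori's Lemma \ref{omori} applied to $\phi_{-X}$ together with \eqref{lkdsklsdkdl}), with the quasi constricted hypothesis standing in for Lemma \ref{opqwopqwowpwopqwq}. Your explicit reduction of the case $t<0$ to the case $t\geq 0$ via $X\mapsto -X$ and the termwise identity $\alpha_{X,Y}(-t)=\alpha_{-X,Y}(t)$ is a detail the paper leaves implicit, and you carry it out correctly.
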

\begin{proof}
The proof is the same as for Corollary \ref{saklasklklsa}, whereby the statement in Lemma \ref{opqwopqwowpwopqwq} now holds by definition. 
\end{proof}
We obtain
\begin{proposition}[{\bf Duhamel's formula}]
\label{sahjhjsahjsahjsahjsaqqwppowqpowq}
Suppose that $G$ is $\rm \infty$-continuous, $C^\infty$-semiregular, and quasi constricted; and that $\mg$ is Mackey complete.    
Then, for each $\MX\colon I\rightarrow \mg$ of class $C^1$, we have
\begin{align*}
	\textstyle\partial_z \exp(\MX(x))\textstyle=\dd_e\LT_{\exp(\MX(x))}\Big(\frac{\id_\mg-\exp(-\com{\MX(x)})}{\com{\MX(x)}}(\partial_z\MX(x))\Big)\qquad\quad\forall\: x\in I.
\end{align*}
\end{proposition}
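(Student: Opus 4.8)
The plan is to obtain Duhamel's formula as a direct specialization of Corollary \ref{sasassasasa}, whose hypotheses are met under the present presumptions since $G$ quasi constricted together with sequential/Mackey completeness guarantees the relevant convergence. The only work left is to rewrite the integrand $\int_0^1 \Ad_{\exp(-s\cdot \MX(x))}(\partial_z\MX(x))\:\dd s$ explicitly in terms of the bracket operator $\com{\MX(x)}$. First I would record that, since $G$ is quasi constricted and admits an exponential map (which it does, being $C^\infty$-semiregular), Corollary \ref{saklasklklsaaa} applies and yields
\begin{align*}
	\Ad_{\exp(-s\cdot \MX(x))}(\partial_z\MX(x))=\alpha_{-\MX(x),\partial_z\MX(x)}(s)\qquad\quad\forall\: s\in \RR.
\end{align*}

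Next I would integrate this identity in $s$ over $[0,1]$. By Lemma \ref{Potreih}, applied to the power series defining $\alpha_{-\MX(x),\partial_z\MX(x)}$ (whose convergence on all of $\RR$ is exactly the quasi constricted hypothesis), term-by-term integration is legitimate and gives
\begin{align*}
	\textstyle\int_0^1 \Ad_{\exp(-s\cdot \MX(x))}(\partial_z\MX(x))\:\dd s=\sum_{n=0}^\infty \frac{1}{(n+1)!}\cdot \com{-\MX(x)}^n(\partial_z\MX(x))=\frac{\id_\mg-\exp(-\com{\MX(x)})}{\com{\MX(x)}}(\partial_z\MX(x)),
\end{align*}
where the last equality is just the definition of the displayed quotient operator recorded immediately before the statement.

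Finally I would invoke Corollary \ref{sasassasasa} itself: under $G$ being $\infty$-continuous, $C^\infty$-semiregular, and $\mg$ Mackey complete, that corollary gives
\begin{align*}
	\textstyle\partial_z \exp(\MX(x))=\dd_e\LT_{\exp(\MX(x))}\big(\int_0^1 \Ad_{\exp(-s\cdot \MX(x))}(\partial_z\MX(x))\:\dd s\big),
\end{align*}
and substituting the rewritten integrand yields the asserted formula. The step I expect to need the most care is the justification of term-by-term integration: one must check that the hypotheses of Lemma \ref{Potreih} are genuinely in force, i.e.\ that the series $\sum_n \frac{t^n}{n!}\com{-\MX(x)}^n(\partial_z\MX(x))$ converges in $\mgc$ for some $t\neq 0$ (here for all $t\in\RR$), which is precisely what \emph{quasi constricted} supplies by definition, so that Lemma \ref{Potreih} gives the integral as $\sum_n \frac{t^{n+1}}{(n+1)!}\cdot\com{-\MX(x)}^n(\partial_z\MX(x))$ evaluated at $t=1$. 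Everything else is bookkeeping, and the proof reduces to these three lines plus the remark that all presumptions of the cited results hold.
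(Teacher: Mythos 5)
Your proposal is correct and follows essentially the same route as the paper: invoke Corollary \ref{sasassasasa}, identify $\Ad_{\exp(-s\cdot \MX(x))}(\partial_z\MX(x))$ with $\alpha_{-\MX(x),\partial_z\MX(x)}(s)$ via Corollary \ref{saklasklklsaaa}, and integrate the series term by term using Lemma \ref{Potreih}, whose hypothesis is supplied by the quasi constricted condition. The only detail the paper adds is the closing remark that the resulting sum necessarily lies in $\mg$ (not merely $\mgc$), since it equals the integral appearing in Corollary \ref{sasassasasa}.
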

\begin{proof}
By Corollary \ref{sasassasasa}, we have
\begin{align*}
	\textstyle\partial_z \exp(\MX(x))=\dd_e\LT_{\exp(\MX(x))}\big(\int \Ad_{\exp(-s\cdot \MX(x))}(\partial_z\MX(x)) \:\dd s\big)\qquad\quad\forall\: x\in I.
\end{align*}
We obtain from Corollary \ref{saklasklklsaaa} and Lemma \ref{Potreih} that
\begin{align*}
	\textstyle\int \Ad_{\exp(-s\cdot \MX(x))}(\partial_z\MX(x)) \:\dd s&\textstyle=\int \sum_{n=0}^\infty \frac{s^n}{n!}\cdot (\com{-\MX(x)})^n(\partial_z\MX(x))\:\dd s\\
	&\textstyle=\sum_{n=0}^\infty \frac{1}{(n+1)!}\cdot \com{(-\MX(x)})^n(\partial_z\MX(x))
\end{align*}
holds for each $x\in I$; which is necessarily in $\mg$.
\end{proof}

\subsection{Smoothness of the Integral}
\label{osposdopopsd}
We now are going to prove Theorem \ref{kdfklfklfdkjljljjllk}. 
For this, we first observe that 
\begin{lemma}
\label{fdfdfdf}
Let $\Gamma\colon G\times \mg\rightarrow \mg$ be continuous, and $k\in \NN\sqcup\{\lip,\infty\}$ be fixed. Suppose furthermore that   
$G$ is {\rm k}-continuous and $C^k$-semiregular.
Then,
\begin{align*}
	\textstyle\wh{\Gamma}\colon C^k([r,r'],\mg)\times C^k([r,r'],\mg)\rightarrow \mgc,\qquad (\phi,\psi)\mapsto\int \Gamma\big(\innt_r^{s}\phi,\psi(s)\big)\:\dd s
\end{align*} 
is continuous for each $[r,r']\in \COMP$.
\end{lemma}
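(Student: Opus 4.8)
The plan is to prove continuity of $\wh{\Gamma}$ by decomposing it into two pieces: the continuity of the "solution map" $\phi\mapsto \innt_r^\bullet\phi$ in an appropriate sense, and the continuity of the integration functional built on top of a continuous, fibrewise-linear (well, fibrewise-continuous) integrand. Since $\Gamma$ is merely continuous (not multilinear), I would avoid Corollary \ref{ofdpopfdofdp} directly and instead exploit compactness: for a fixed reference pair $(\phi_0,\psi_0)$, the image $\im[\innt_r^\bullet\phi_0]\times\im[\psi_0]$ is compact in $G\times\mg$, and $\Gamma$ is uniformly continuous on a neighbourhood of this compact set. The goal is then to control, for nearby $(\phi,\psi)$, both how much $\innt_r^s\phi$ deviates from $\innt_r^s\phi_0$ (uniformly in $s$) and how much $\psi(s)$ deviates from $\psi_0(s)$, and to feed these two deviations into the uniform-continuity modulus of $\Gamma$.

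First I would fix $(\phi_0,\psi_0)\in C^k([r,r'],\mg)^2$ and $\pp\in \SEM$, and fix $\epsilon>0$. The compact set $\compact_0:=\im[\innt_r^\bullet\phi_0]$ has a relatively compact open neighbourhood $\OO\subseteq G$; and on $\ovl{\OO}\times \bound$, where $\bound$ is a bounded neighbourhood of $\im[\psi_0]$, continuity of $\Gamma$ gives, via $\dd_e\chart$ and a chart covering $\ovl{\OO}$, an estimate of the form: if $g$ is $\uu$-close to $\innt_r^s\phi_0$ in some chart and $X$ is $\uu$-close to $\psi_0(s)$, then $\pp(\Gamma(g,X)-\Gamma(\innt_r^s\phi_0,\psi_0(s)))\leq \epsilon/|r'-r|$. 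The key inputs are then Proposition \ref{hghghggh} and Lemma \ref{fddfxxxxfd}: the former provides some $\pp\leq\mm\in\SEM$ and $\dind\llleq k$ such that $\pp(\chart(\innt_r^\bullet\phi)-\chart(\innt_r^\bullet\phi_0))\leq \int_r^\bullet\mmm(\phi(s)-\phi_0(s))\,\dd s\leq |r'-r|\cdot\mmm_\infty(\phi-\phi_0)$ whenever the relevant $\mmm_\infty^\dind$-norms are bounded by $1$, and the latter (together with Lemma \ref{posdpospodspoaaaa}) guarantees that $\innt_r^\bullet\phi$ stays inside $\innt_r^\bullet\phi_0\cdot V$, hence inside the chart domain, for $\phi$ close to $\phi_0$ in the $C^k$-topology. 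Thus by taking $\phi$ with $\mmm_\infty^\dind(\phi-\phi_0)$ small and $\phi_0$ satisfying the normalization hypotheses of Proposition \ref{hghghggh} (which can be arranged by the substitution/scaling trick, or by decomposing $[r,r']$), I control the first argument of $\Gamma$ uniformly in $s$.

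Next I would control the second argument trivially: convergence in the $C^k$-topology implies $\qqq_\infty(\psi-\psi_0)\to 0$, i.e. $\psi(s)$ is uniformly close to $\psi_0(s)$. Combining the two controls and integrating, I get
\begin{align*}
	\textstyle\cpp\big(\wh{\Gamma}(\phi,\psi)-\wh{\Gamma}(\phi_0,\psi_0)\big)
	\leq \int \pp\big(\Gamma(\innt_r^s\phi,\psi(s))-\Gamma(\innt_r^s\phi_0,\psi_0(s))\big)\,\dd s
	\leq \epsilon,
\end{align*}
using \eqref{absch2} in the first inequality and the uniform-continuity estimate in the second. This establishes continuity of $\wh{\Gamma}$ at $(\phi_0,\psi_0)$; since $(\phi_0,\psi_0)$ was arbitrary, $\wh{\Gamma}$ is continuous. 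Well-definedness of the integral in $\mgc$ is clear since $s\mapsto\Gamma(\innt_r^s\phi,\psi(s))$ is continuous on the compact interval $[r,r']$.

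The main obstacle I anticipate is bridging the gap between the chart-level estimate of Proposition \ref{hghghggh} (which lives in a fixed chart $\chart$ around $e$, controlling $\chart(\innt_r^\bullet\phi)-\chart(\innt_r^\bullet\psi)$) and the genuinely local, non-algebraic continuity of $\Gamma$ on a compact subset of $G$ that need not sit inside the domain $\U$ of $\chart$. Handling this cleanly requires covering $\compact_0$ by finitely many translated charts $\chart_{h}$ (as introduced before Lemma \ref{fhfhfhffhaaaa}) and using Lemma \ref{fhfhfhffhaaaa} to transfer the $\chart$-estimate near $e$ to estimates in each $\chart_h$; equivalently, one decomposes $[r,r']$ into subintervals on which $\innt_{t_p}^\bullet\phi_0$ lands in a single chart, applies Proposition \ref{hghghggh} on each piece, and glues. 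Managing the uniformity of the moduli across this finite cover — while simultaneously keeping the normalization hypotheses of Proposition \ref{hghghggh} satisfied after the scaling/decomposition — is the technically delicate bookkeeping step, but it involves no new ideas beyond the lemmas already assembled.
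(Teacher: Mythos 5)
Your proposal is correct in its overall skeleton and matches the paper's strategy: reduce to sup-norm continuity of $t\mapsto \Gamma\big(\innt_r^t\phi,\psi(t)\big)$ via \eqref{absch2}, exploit compactness of $\im[\innt_r^\bullet\phi_0]\times\im[\psi_0]$ to get a uniform modulus of continuity for $\Gamma$ near that set, and control the two arguments separately. The one substantive difference is how you control the first argument. You route it through the quantitative chart estimate of Proposition \ref{hghghggh}, which forces you to (i) normalize $\phi_0$ to satisfy its hypotheses and (ii) cover $\im[\innt_r^\bullet\phi_0]$ by translated charts $\chart_h$ and transfer estimates via Lemma \ref{fhfhfhffhaaaa} — exactly the ``delicate bookkeeping'' you flag at the end. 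The paper shows this layer is unnecessary: since continuity of $\Gamma$ is a purely topological matter, the qualitative statement of Lemma \ref{fddfxxxxfd} alone — that $\mmm_\infty^\dind(\phi'-\phi)\leq 1$ forces $\innt_r^\bullet\phi'\in \innt_r^\bullet\phi\cdot V$ for a prescribed neighbourhood $V$ of $e$ — already gives the required uniform control in the group topology, with no charts and no normalization. The paper then encodes the uniform-continuity modulus pointwise as neighbourhoods $W[t]$ of $e$ and $U[t]$ of $0$ with $\alpha\big((\mu(t),\psi(t)),(g',Z')\big)\leq\epsilon$ on $[\mu(t)\cdot W[t]]\times[\psi(t)+U[t]]$, halves them ($V[t]\cdot V[t]\subseteq W[t]$, $O[t]+O[t]\subseteq U[t]$), takes a finite subcover, and concludes from the factorization $\mu(t_p)^{-1}\cdot\innt_r^\tau\phi'=\big(\mu(t_p)^{-1}\mu(\tau)\big)\cdot\big([\innt_r^\tau\phi]^{-1}[\innt_r^\tau\phi']\big)\in V\cdot V$. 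Your argument would go through once the chart-transfer details are executed, but you should note that the quantitative input buys you nothing here (also, your ``bounded neighbourhood $\bound$ of $\im[\psi_0]$'' need not exist in a general locally convex space — an arbitrary neighbourhood suffices and is what you actually use).
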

\begin{proof}
This follows by standard arguments from Lemma \ref{fddfxxxxfd}, cf.\ Appendix \ref{appLipasasasasas}.
\end{proof}
\noindent
Let now $k\in \NN\sqcup\{\lip,\infty\}$ be fixed; and suppose that $G$ is k-continuous and $C^k$-semiregular, i.e, locally $\mu$-convex  and $C^\lip$-semiregular for $k\equiv \lip$. Suppose furthermore that
\vspace{-2pt}
\begingroup
\setlength{\leftmargini}{12pt}
\begin{itemize}
\item
	$\mg$ is integral complete if $k\equiv 0$ holds.
\item
	$\mg$ is Mackey complete if $k\in \NN_{\geq 1}\sqcup\{\lip, \infty\}$ holds.
\end{itemize}
\endgroup
\vspace{-2pt}
\noindent
Clearly,  
\vspace{-6pt}
\begin{align*}
	\Phi[\phi,\psi]\colon(-1,1)\times [r,r']\rightarrow \mg,\qquad (h,t)\mapsto \phi(t)+ h\cdot \psi(t)
\end{align*}
fulfills the presumptions of Theorem \ref{kckjckjs} for each $\phi,\psi\in C^k([r,r'],\mg)$; i.e., we have, cf.\ Remark \ref{sddssdsd}
\begin{align}
\label{poasopsaosappoaspos}
	\textstyle\big(\dd_\phi\he\EVE_{[r,r']}^k\big)(\psi)\textstyle 
	=\dd_e\LT_{\innt \phi}\big(\int \Ad_{[\innt_r^s\phi]^{-1}}(\psi(s))\:\dd s\big)\qquad\quad\forall\: \phi,\psi\in C^k([r,r'],\mg),\:\: [r,r']\in \COMP.
\end{align}
This can be written as, cf.\ \eqref{LGPR}
\begin{align*}
	\textstyle \dd_\phi\he \EVE_{[r,r']}^k(\psi)=\dd_{(\innt\phi,e)}\mult\big(0,\wh{\Gamma}(\phi,\psi)\big)\qquad\quad\text{for}\qquad\quad  \Gamma\equiv \Ad(\inv(\cdot),\cdot);
\end{align*}
so that $\EVE_{[r,r']}^k$ is of class $C^1$ by Lemma \ref{fdfdfdf}. We thus have
\begin{corollary}
\label{sddsdsdsds}
Let $G$ be {\rm k}-continuous and $C^k$-semiregular for $k\in \NN\sqcup \{\lip,\infty\}$. Suppose furthermore that $\mg$ is 
\begingroup
\setlength{\leftmargini}{12pt}
\begin{itemize}
\item
	integral complete for $k\equiv 0$.
\item
	Mackey complete for $k\in \NN_{\geq 1}\sqcup\{\lip,\infty\}$.
\end{itemize}
\endgroup
\noindent
Then, $\EVE_{[r,r']}^k$ is of class $C^1$ with  
\begin{align*}
	\textstyle\dd_\phi\he \EVE_{[r,r']}^k(\psi)\textstyle 
	=\dd_e\LT_{\innt\phi}\big(\int \Ad_{[\innt_r^s\phi]^{-1}}(\psi(s))\:\dd s\big)\qquad\quad\forall\: \phi,\psi\in C^k([r,r'],\mg) 
\end{align*}
for each $[r,r']\in \COMP$. 
\end{corollary}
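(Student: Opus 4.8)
The plan is to assemble the $C^1$-statement from the directional-derivative formula \eqref{poasopsaosappoaspos} together with the continuity provided by Lemma \ref{fdfdfdf}, invoking the differential-calculus characterization \ref{productrule} of $C^1$-maps. First I would fix $[r,r']\in\COMP$, and for given $\phi,\psi\in C^k([r,r'],\mg)$ apply Theorem \ref{kckjckjs} to the affine family $\Phi[\phi,\psi]\colon(-1,1)\times[r,r']\ni(h,t)\mapsto\phi(t)+h\cdot\psi(t)$. Assumption \ref{saasaassasasa2} there holds because $\partial_z\Phi[\phi,\psi](h,\cdot)=\psi\in C^k([r,r'],\mg)$, and assumption \ref{saasaassasasa1} holds trivially since $\Phi[\phi,\psi](x+h,\cdot)-\Phi[\phi,\psi](x,\cdot)=h\cdot\psi$, giving the uniform Lipschitz bound $\ppp^\dind_\infty(\psi)$. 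Then Remark \ref{sddssdsd} upgrades the $\comp{\mg}$-valued directional derivative to a genuine $\mg$-valued one via \eqref{sopsodpdops1}, using integral completeness (for $k\equiv0$) or Mackey completeness together with $\partial_z\Phi[\phi,\psi](x,\cdot)=\psi\in C^\lip([r,r'],\mg)$ (for $k\in\NN_{\geq1}\sqcup\{\lip,\infty\}$). This yields exactly formula \eqref{poasopsaosappoaspos} as the directional derivative of $\EVE_{[r,r']}^k$ at $\phi$ along $\psi$.

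Next I would rewrite the right-hand side of \eqref{poasopsaosappoaspos} in a form to which Lemma \ref{fdfdfdf} applies directly. Setting $\Gamma:=\Ad(\inv(\cdot),\cdot)\colon G\times\mg\to\mg$, which is continuous since $\inv$ and $\Ad$ are smooth, the integral $\int\Ad_{[\innt_r^s\phi]^{-1}}(\psi(s))\,\dd s$ equals $\wh{\Gamma}(\phi,\psi)$ with $\wh{\Gamma}$ as in Lemma \ref{fdfdfdf}; hence $\wh{\Gamma}\colon C^k([r,r'],\mg)\times C^k([r,r'],\mg)\to\mgc$ is continuous. Composing with left translation via the product rule \eqref{LGPR}, I would express the directional derivative as
\begin{align*}
	\dd_\phi\he\EVE_{[r,r']}^k(\psi)=\dd_{(\innt\phi,e)}\mult\big(0,\wh{\Gamma}(\phi,\psi)\big),
\end{align*}
where $\mult\colon G\times G\to G$ is the (smooth) multiplication. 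The map $\phi\mapsto\innt\phi=\EVE_{[r,r']}^k(\phi)$ is continuous because $G$ is k-continuous and $C^k$-semiregular, so the composite $(\phi,\psi)\mapsto\dd_{(\innt\phi,e)}\mult(0,\wh{\Gamma}(\phi,\psi))$ is continuous jointly in $(\phi,\psi)$, being built from continuous maps and the continuous differential $\dd\mult$.

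Finally I would invoke the characterization \ref{productrule}: a continuous map between open subsets of locally convex spaces is of class $C^1$ iff its partial (here directional) derivatives exist and depend continuously on the base point and the direction. Since the directional derivative exists for every $\psi$ and, by the previous paragraph, $(\phi,\psi)\mapsto\dd_\phi\he\EVE_{[r,r']}^k(\psi)$ is continuous and linear in $\psi$, it follows that $\EVE_{[r,r']}^k$ is of class $C^1$ with the asserted differential. The main obstacle is not any one hard estimate — all the heavy lifting (the $\comp{\mg}$-valued derivative, the completeness upgrade, and the continuity of $\wh{\Gamma}$) has been carried out in Theorem \ref{kckjckjs}, Remark \ref{sddssdsd}, and Lemma \ref{fdfdfdf}. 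The point requiring care is bookkeeping the $k\equiv\lip$ case consistently: one must verify that $\partial_z\Phi[\phi,\psi](x,\cdot)=\psi$ indeed lies in $C^\lip$ so that the second criterion of Remark \ref{sddssdsd} applies, and that the Lipschitz variant of Lemma \ref{Adlip} keeps $\Ad_{[\innt_r^s\phi]^{-1}}(\psi)$ within $C^\lip([r,r'],\mg)$, ensuring the integrand is genuinely $\mg$-valued rather than merely $\mgc$-valued.
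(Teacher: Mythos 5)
Your proposal is correct and follows essentially the same route as the paper: the paper likewise applies Theorem \ref{kckjckjs} to the affine family $\Phi[\phi,\psi]$, upgrades the derivative to an $\mg$-valued one via Remark \ref{sddssdsd}, rewrites it as $\dd_{(\innt\phi,e)}\mult(0,\wh{\Gamma}(\phi,\psi))$ for $\Gamma\equiv\Ad(\inv(\cdot),\cdot)$, and concludes $C^1$ from the continuity supplied by Lemma \ref{fdfdfdf}. Your extra care with the $k\equiv\lip$ bookkeeping (via Lemma \ref{Adlip}) matches the paper's intent.
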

\begin{proof}
Clear.
\end{proof}

We are ready for the
\begin{proof}[Proof of Theorem \ref{kdfklfklfdkjljljjllk}]
By Corollary \ref{MC}, it remains to show that $\EVE_{[r,r']}^k$ is smooth for each $[r,r']\in \COMP$ 
\begingroup
\setlength{\leftmargini}{12pt}
\begin{itemize}
\item
	if $\mg$ is integral complete for $k\equiv 0$.
\item
	if $\mg$ is Mackey complete for $k\in \NN_{\geq 1}\sqcup\{\infty\}$.
\end{itemize}
\endgroup
\noindent
Now, since Corollary \ref{sddsdsdsds} shows that $\EVE_{[0,1]}^k$ is of class $C^1$, Theorem E in \cite{HGGG} shows that $\innt_{[0,1]}^k$ is smooth. Then, for $[r,r']\in \COMP$ fixed, we define 
\begin{align*}
	\varrho\colon [0,1]\rightarrow [r,r'],\qquad t\mapsto r+ t\cdot |r'-r|;
\end{align*}
 and recall that (cf.\ proof of Lemma \ref{klklllkjlaaa}) $\EVE_{[r,r']}^k=\EVE_{[0,1]}^k\cp \:\eta$ holds, for the k-continuous, linear map 
 	\begin{align*}
		\eta\colon C^k([r,r'],\mg)\rightarrow C^k([0,1],\mg),\qquad \phi\mapsto \dot\varrho\cdot \rcK{\phi\cp\varrho}\equiv |r'-r|\cdot \rcK{\phi\cp\varrho}.
	\end{align*}
	Since $\eta$ is smooth by \ref{linear}, the claim follows. 
\end{proof}
\begin{remark}
	It is to be expected that Theorem \ref{kdfklfklfdkjljljjllk},\ref{aaaa1} also holds for $k\equiv\lip$; i.e., that we have:
\begingroup
\setlength{\leftmargini}{20pt}
\begin{enumerate}
\item[2')]
If $G$ is $\lip$-continuous and $C^\lip$-semiregular, then $\EVE^\lip_{[r,r']}$ is smooth for each $[r,r']\in \COMP$ \deff $\mg$ is Mackey complete \deff $\EVE^\lip_{[0,1]}$ is differentiable at zero.	
\end{enumerate}
\endgroup
\noindent
Indeed, by Corollary \ref{MC}, it only remains to show that $\EVE^\lip_{[r,r']}$ smooth; whereby (due to the explicit formula \eqref{poasopsaosappoaspos}) Corollary \ref{sddsdsdsds} already shows that $\EVE^\lip_{[r,r']}$ is of class $C^1$. Using similar arguments as in Lemma \ref{fdfdfdf}, it should follow inductively from \eqref{poasopsaosappoaspos} that $\EVE^\lip_{[r,r']}$ is of class $C^\infty$. The details, however, seem to be quite elaborate and technical; so that we leave this issue to a another paper.	\hspace*{\fill}$\ddagger$
\end{remark}

\section*{Acknowledgements}
This work has been supported in part by the Alexander von Humboldt Foundation of Germany and NSF Grant PHY-1505490. The author thanks Stefan Waldmann for general remarks on a draft of the present article.

\addtocontents{toc}{\protect\setcounter{tocdepth}{0}}
\appendix

\section*{APPENDIX}

\section{Appendix to Sect.\ \ref{prelim}}

\subsection{}
\label{appA1}
\begin{proof}[Proof of Lemma \ref{kldskldsksdklsdl}]
Since $\Phi$ is continuous with $\Phi(x,0,\dots,0)=0$, there exist $\qq_1\in \SEMM_1,\dots,\qq_n\in \SEMM_n$ as well as $V\subseteq X$ open with $x\in V$, such that
\begin{align}
\label{fdfddffdfdd}
	(\pp\cp\Phi)(y,Y_1,\dots,Y_n) \leq 1\qquad\quad\forall\: y\in V
\end{align} 
holds for all $Y_1\in \OB_{\qq_1,1},\dots,Y_n\in \OB_{\qq_n,1}$. Let now $X_1\in F_1,\dots,X_n\in F_n$ be fixed; and define 
$$
Y_k:= 
\begin{cases}
\hspace{52pt} X_k\quad\:\: \text{for}\quad\:\: \qq_k(X_k)=0,\\
1/\qq_k(X_k)\cdot X_k\quad\:\: \text{for}\quad\:\:\qq_k(X_k)>0,
\end{cases}
$$
for $k=1,\dots,n$. Then,
\begingroup
\setlength{\leftmargini}{12pt}
\begin{itemize}
\item
if $\qq_1(X_1),\dots,\qq_n(X_n)>0$ holds, we obtain  
\begin{align*}
	(\pp\cp\Phi)(y,X_1,\dots,X_n)\stackrel{\eqref{fdfddffdfdd}}{\leq} \qq_1(X_1)\cdot{\dots}\cdot \qq_n(X_n)\qquad\quad\forall\: y\in V.
\end{align*}
\item
if $\qq_k(X_k)=0$ holds for some $1\leq k\leq n$, we have $\qq_k(n\cdot Y_k)=0$ for each $n\geq 1$; thus,
\begin{align*}
	(\pp\cp\Phi)(y,Y_1,\dots,Y_n)\stackrel{\eqref{fdfddffdfdd}}{\leq} 1/n\qquad\forall\:n\geq 1\qquad\quad
	&\Longrightarrow\qquad\quad (\pp\cp\Phi)(y,Y_1,\dots,Y_n)\hspace{5.3pt}=0\\
	&\Longrightarrow\qquad\quad (\pp\cp\Phi)(y,X_1,\dots,X_n)=0
\end{align*}
for each $y\in V$.
\end{itemize}
\endgroup
\noindent
From this, the claim is clear.
\end{proof}

\subsection{}
\label{appA2}
\begin{proof}[Proof of Corollary \ref{ofdpopfdofdp}]
Since $\compacto$ is compact, by Lemma \ref{kldskldsksdklsdl}, there exist seminorms $\qq[p]_1\in \SEMM_1,\dots,\qq[p]_n\in \SEMM_n$ for $p=1,\dots,m$, as well as $V_1,\dots,V_m\subseteq X$ open with $\compacto\subseteq V_1\cup{\dots}\cup V_m=:O$, such that  
\begin{align*}
		(\pp\cp\Phi)(y,X_1,\dots,X_n) \leq \qq[p]_1(X_1)\cdot {\dots}\cdot \qq[p]_n(X_n)\qquad\quad\forall\: y\in V_p,\:\: p=1,\dots,m 
	\end{align*} 
	holds for all $X_1\in F_1,\dots,X_n\in F_n$. Evidently, then \eqref{oaosapop} holds for any $\qq_1\in \SEMM_1,\dots,\qq_n\in \SEMM_n$ with $\qq[1]_k,\dots,\qq[m]_k\leq \qq_k$ for $k=1,\dots,n$.
\end{proof}

\subsection{}
\label{appdiffssddsssdds}

\begin{proof}[Proof of Lemma \ref{aaasasdswewe}]
It is clear that $\gamma^{(k)}$ is of class $C^1$ if $\gamma$ is of class $C^{k+1}$; and the other direction is clear if $D\equiv I$ is open.  
Thus, suppose that $D$ is not open; and that $\gamma$ is of class $C^k$ with $\gamma^{(k)}$ of class $C^1$. We define $r:= \inf\{D\}$ and $r':=\sup\{D\}$; and proceed as follows:
\begingroup
\setlength{\leftmargini}{12pt}
\begin{itemize}
\item
If $r\notin D$ holds, we let $D' := D$ and $\gamma':=\gamma$.
\item
If $r\in D$ holds, we let $D':=(r-\varepsilon,r)\sqcup D$ for some $\varepsilon>0$; and define $\gamma'\colon D'\rightarrow E$ by 
\begin{align*}
	\textstyle\gamma'|_{(r-\varepsilon,r)}:= (\cdot-r)^{k+1}/(k+1)\he !\cdot \big(\gamma^{(k)}\big)^{(1)}(r)+ \sum_{p=0}^{k} (\cdot-r)^p/p\he!\cdot \gamma^{(p)}(r)
\end{align*}
and $\gamma'|_D:=\gamma$.
\end{itemize}
\endgroup
\noindent
Then,
\begingroup
\setlength{\leftmargini}{12pt}
\begin{itemize}
\item
If $r'\notin D$ holds, we let $I:= D'$ and $\gamma'':=\gamma'$.
\item
If $r'\in D$ holds, we let $I:=D'\sqcup (r',r'+\varepsilon')$ for some $\varepsilon'>0$; and define $\gamma''\colon I\rightarrow E$ by 
\begin{align*}
	\textstyle \gamma''|_{(r',r'+\varepsilon')}:= (\cdot -r')^{k+1}/(k+1)\he!\cdot \big(\gamma^{(k)}\big)^{(1)}(r')+\sum_{p=0}^{k} (\cdot-r')^p/p\he!\cdot \gamma^{(p)}(r')
\end{align*}
and $\gamma''|_{D'}:=\gamma'$.
\end{itemize}
\endgroup
\noindent
By construction, $I$ is open; and we have $\gamma=\gamma''|_D$, for $\gamma''$ of class $C^{k+1}$.
\end{proof}

\subsection{}
\label{appdiff}
\begin{proof}[Proof of Lemma \ref{sddsdssd}]
Passing to $C^k$-extensions of $\gamma_i$ for $i=1,2$, we can assume that $D\equiv I$ is open. 
	Then, the first claim is clear from \ref{speccombo}, \ref{chainrule}. Moreover, for $\alpha$ as in \eqref{lkdslkdslkdslkdsds} and $t\in I$, we have
\begin{align*}	
	\textstyle\dot\alpha(t)&=\dd_t(\Psi\cp\beta)(1)\stackrel{\ref{chainrule}}{=}\dd\Psi(\beta(t),\dd_t\beta(1))\\
	&\stackrel{\ref{speccombo}}{=}\dd\Psi\big(\beta(t),\gamma^{(z_1+1)}_{i_1}(t)\times {\dots}\times \gamma_{i_m}^{(z_m+1)}(t)\big)\\&\textstyle\stackrel{\ref{productrule}}{=}\sum_{u=1}^m \partial_u\Psi\big(\beta(t),\gamma^{(z_u+1)}_{i_u}(t)\big),
\end{align*}
for $\beta\equiv\gamma^{(z_1)}_{i_1}\times {\dots}\times \gamma_{i_m}^{(z_m)}$; as well as 
\begin{align*}
	\partial_u\Psi=\dd \Psi|_{V\times\{0\}^{u-1} \times F_{i_u}\times \{0\}^{m-u}}\qquad\quad \forall\: u=1,\dots,m
\end{align*}   
smooth by \ref{iterated}. 
The second claim thus follows inductively, as it clearly holds for $p=0$.
\end{proof}

\subsection{}
\label{appdifff}
\begin{proof}[Proof of Lemma \ref{oopxcxopcoxpopcx}]
By Corollary \ref{fddfd}, for $0\leq \dindp\leq \dindu$, and each $\gamma\in C^\dindu([r,r'],W_1)$, $\psi\in C^\dindu([r,r'],F_2)$, we have $\Omega(\gamma,\psi)^{(\dindp)}=\sum_{i=1}^{d_\dindp} \alpha_{\dindp,i}(\gamma,\psi)$, with 
	\begin{align*}
		\alpha_{\dindp,i}\colon (\gamma,\psi)\mapsto ([\partial_1]^{m[\dindp,i]}\Omega)\big(\gamma,\gamma^{(z[\dindp,i]_{1})},\dots,\gamma^{(z[\dindp,i]_{m[\dindp,i]})},\psi^{(q[\dindp,i])}\big)
	\end{align*}  
	for certain $z[\dindp,i]_1,\dots,z[\dindp,i]_{m[\dindp,i]}, q[\dindp,i]\leq \dindp$ and $m[\dindp,i]\geq 1$. Then, 
	\begingroup
\setlength{\leftmargini}{15pt}
	{
\renewcommand{\theenumi}{{\arabic{enumi}})} 
\renewcommand{\labelenumi}{\theenumi}
\begin{enumerate}
\item
For $\pp\in \SEM$ fixed, Lemma \ref{kldskldsksdklsdl} provides us with $\qq_1\in \SEMM_1$, $\qq_2\in \SEMM_2$, and an open neighbourhood $V\subseteq F_1$ of $0$, such that 
	\begin{align*}
		\pp(\alpha_{\dindp,i}(\gamma,\psi))&\leq \qq_1\big(\gamma^{(z[\dindp,i]_{1})}\big)\cdot {\dots}\cdot \qq_1\big(\gamma^{(z[\dindp,i]_{m[\dindp,i]})}\big)\cdot \qq_2\big(\psi^{(q[\dindp,i])}\big)\qquad\forall\: i=1,\dots,d_\dindp,\:\: \dindp=0,\dots,\dindu
	\end{align*}
	holds, provided that we have $\im[\gamma]\subseteq V$. The claim thus holds for $\qq:= \max(d_0,\dots,d_\dindp)\cdot \qq_2$, and each $V\lleq \mm\in \SEMM_1$ with $\qq_1\leq \mm$.
\item
For $\pp\in \SEM$, $\gamma\in C^\dindu([r,r'],W_1)$ fixed, Corollary \ref{ofdpopfdofdp} provides us with $\qq_1\in \SEMM_1$, $\qq_2\in \SEMM_2$, such that 
	\begin{align*}
		\pp(\alpha_{\dindp,i}(\gamma,\psi))&\leq \qq_1\big(\gamma^{(z[\dindp,i]_{1})}\big)\cdot {\dots}\cdot \qq_1\big(\gamma^{(z[\dindp,i]_{m[\dindp,i]})}\big)\cdot \qq_2\big(\psi^{(q[\dindp,i])}\big)\qquad\forall\: i=1,\dots,d_\dindp,\:\: \dindp=0,\dots,\dindu.
	\end{align*}
	Since we have ${\qq_1}^\dindu_\infty(\gamma)<\infty$, the claim holds for $\qq= C\cdot \qq_2$, for $C\geq 0$ suitably large.
\end{enumerate}}
\endgroup
\noindent
This proves the claim.
\end{proof}

\subsection{}
\label{dkldksldkslsdlkklsdkldskl}
\begin{proof}[Proof of Lemma \ref{sdsdds}]
Recall that $\ovl{\dd_{\gamma(t)}f}$ is defined, linear, and continuous by Lemma \ref{pofsdisfdodjjxcycxj}. We choose $\delta>0$ such small that for each $h\in M:= (D-t)\cap ((-\delta,0) \cup (0,\delta))$, we have
\begin{align*}
	\gamma(t) + [0,1]\cdot \Delta_h\subseteq U\qquad\quad\text{for}\qquad\quad \Delta_h:=\gamma(t+h)-\gamma(t).
\end{align*}
We obtain from \eqref{Taylor} that
\begin{align}
	\textstyle\rcf{h}\:\cdot (f(\gamma(t+h))-f(\gamma(t)))
	&\textstyle=\rcf{h}\cdot \big(\dd_{\gamma(t)}f(\Delta_h) + \int_0^1 (1-s)\cdot \dd^2_{\gamma(t)+s\cdot \Delta_h}f(\Delta_h,\Delta_h) \:\dd s\big)\nonumber\\
	\label{pofdpofdpofdpofdasbsbsa}
	&\textstyle= \ovl{\dd_{\gamma(t)} f}\:(\rcf{h}\cdot\Delta_h) \hspace{2.5pt} +  \int_0^1 (1-s)\cdot \dd^2_{\gamma(t)+s\cdot \Delta_h}f(\rcf{h}\cdot \Delta_h,\Delta_h) \:\dd s
\end{align} 
holds for each $h\in M$. Since $\ovl{\dd_{\gamma(t)}f}$ is continuous, we have
\begin{align*}
	\textstyle\lim_{h\rightarrow 0} \ovl{\dd_{\gamma(t)} f}\:(\rcf{h}\cdot\Delta_h)=\ovl{\dd_{\gamma(t)}f}\he(X).
\end{align*} 
The claim thus follows once we have shown that the second summand in \eqref{pofdpofdpofdpofdasbsbsa} tends to zero if $h$ tends to zero. 
For this, we fix $\pp\in \SEM$; and choose $\qq_1,\qq_2\in \SEMM$ as well as $V\subseteq U$ open with $\gamma(t)\in V$ as in Lemma \ref{kldskldsksdklsdl},  
for $\Phi\equiv \dd^2f\colon U\times F\times F\rightarrow E$ and $x\equiv \gamma(t)$ there. Since $\lim_{h\rightarrow 0}\Delta_h=0$ holds by continuity of $\gamma$ at $t\in D$, we obtain
\vspace{-6pt}
\begin{align*}
	\textstyle\lim_{h\rightarrow 0}\pp\big(\int_0^1 (1-s)\cdot \dd^2_{\gamma(t)+s\cdot \Delta_h}f(\rcf{h}\cdot \Delta_h,\Delta_h) \:\dd s\big)&\textstyle\stackrel{\eqref{absch2}}{\leq}\lim_{h\rightarrow 0} \pp\big(\dd^2_{\gamma(t)+s\cdot \Delta_h}f(\rcf{h}\cdot \Delta_h,\Delta_h)\big)\\[-8pt]
	&\stackrel{\phantom{\eqref{absch2}}}{\leq}\textstyle\lim_{h\rightarrow 0} \qq_1(\rcf{h}\cdot \Delta_h)\cdot \qq_2(\Delta_h)\\[-4pt]
	&\stackrel{\phantom{\eqref{absch2}}}{=}\textstyle\lim_{h\rightarrow 0} \ovl{\qq}_1(\rcf{h}\cdot \Delta_h)\cdot \qq_2(\Delta_h)\\[-6pt]
	&\stackrel{\phantom{\eqref{absch2}}}{=}0;
\end{align*}
which shows the claim.
\end{proof}

\subsection{}
\label{ceinseig}

\begin{proof}[Proof of Lemma \ref{sddssdsdsd}]
By Lemma \ref{evk}.\ref{aaaaaaaaaaaaaaaab} and \ref{pogfpogf}, it suffices to show that there exists some $\mu\in C^1(I,G)$, for $I\subseteq \RR$ an open interval containing $[r,r']$, such that $\Der(\mu|_{[r,r']})=\phi$ holds:
\vspace{6pt}

\noindent
By assumption, for $p=0,\dots,n-1$, we have 
\begin{align}
\label{opaopopaopsopas}
	\phi|_{[t_p,t_{p+1}]} =\Der(\mu[p]|_{[t_p,t_{p+1}]})\qquad\quad\text{for some}\qquad\quad \mu[p]\in C^{k+1}(I_p,G)
\end{align}
with $I_p\subseteq \RR$ an open interval containing $[t_p,t_{p+1}]$; and, due to the first identity in \eqref{fgfggf}, we can assume that 
\begin{align*}
		\mu[p](t_{p+1})=\mu[p+1](t_{p+1})\qquad\quad\forall\: p=0,\dots,n-2
\end{align*}
holds. 
We write $I_0\equiv (\iota, \ell')$, $I_{n-1}\equiv (\ell,\iota')$, let $I\equiv (\iota,\iota')$, and define
\begingroup
\setlength{\leftmargini}{12pt}
\begin{itemize}
\item
$\psi\in C^0(I,\mg)$\:\:\hspace{2.2pt} by\:\: $\psi|_{(\iota,r)}:=\Der(\mu[0]|_{(\iota,r)})$,\:\: $\psi|_{[r,r']}:=\phi$,\:\: $\psi|_{(r',\iota')}:=\Der(\mu[0]|_{(r',\iota')})$.
\item
$\mu\in C^0(I,G)$\:\: by
\begin{align*}
	\mu|_{(\iota,r]}&:=\mu[0]|_{(\iota,r]},\\
	\mu|_{(t_p,t_{p+1}]}&:=\mu[p]|_{(t_p,t_{p+1}]} \qquad\qquad\forall\: p=0,\dots, n-1,\\
	\mu|_{(r',\iota')}&:=\mu[n-1]|_{(r',\iota')}.
\end{align*}  
\end{itemize}
\endgroup
\noindent
We obtain from \eqref{opaopopaopsopas} that
\begin{align}
\label{poapoaopaopsayxx}
\textstyle	\lim_{h\rightarrow 0}\rcf{h}\cdot \chart(\mu(t+h)\cdot \mu(t)^{-1})=\dd_e\chart(\psi(t))\qquad\quad\forall\: t\in I
\end{align}
holds; and now will conclude from Lemma \ref{sdsdds} that $\mu$ is of class $C^1$. 
\vspace{6pt}

\noindent
For this, we let $\tau \in I$ be fixed; and choose a chart $\chart'\colon G\supseteq \U'\rightarrow \V'\subseteq E$ with $\mu(\tau)\in \U'$. Moreover, 
we choose $V\subseteq \V$ open with $0\in V$, as well as $J\subseteq I$ open with $\tau\in J$, such that $\chartinv(V)\cdot \mu(J)\subseteq \U'$ holds. 
Then, shrinking $J$ if necessary, we can assume that $(\chart\cp\mult)(\mu(J), (\inv\cp \mu)(J))\subseteq V$ holds. 
For each $s\in J$, we define  
$\gamma_s\colon J\ni t\mapsto (\chart\cp\mult)(\mu(t),(\inv\cp \mu)(s))\in V$, as well as 
\begin{align*}
	f_s\colon V\rightarrow \V',\qquad x\mapsto (\chart'\cp\mult)(\chartinv(x), \mu(s)).  
\end{align*} 
Then, Lemma \ref{sdsdds} (second step) shows that
\begin{align*}
	\textstyle\lim_{h\rightarrow 0} \rcf{h}\cdot ((\chart'\cp\mu|_J)(s+h)- (\chart'\cp\mu|_J)(s)) &\textstyle\stackrel{\phantom{\eqref{poapoaopaopsayxx}}}{=}\lim_{h\rightarrow 0} \rcf{h}\cdot (f_s(\gamma_s(s+h))-f_s(\gamma_s(s)))\\[-2pt]
	&\textstyle \stackrel{\phantom{\eqref{poapoaopaopsayxx}}}{=}\dd_{\gamma_s(s)} f_s(\lim_{h\rightarrow 0 } \rcf{h}\cdot (\gamma_s(s+h) -\gamma_s(s)) )\\
	&\textstyle \stackrel{\eqref{poapoaopaopsayxx}}{=}(\dd_{\mu(s)} \chart' \cp \dd_{e}\RT_{\mu(s)})(\psi(s))
\end{align*}
holds, which shows that $\mu$ is of class $C^1$.
\end{proof}

\subsection{}
\label{appLip}
\begin{proof}[Proof of the Lipschitz Case in Lemma \ref{Adlip}]
We have to show that $\Ad_\mu(\phi)\in C^\lip([r,r'],\mg)$ holds for each $\mu\in C^1([r,r'],\mg)$, and each $\phi\in C^\lip([r,r'],\mg)$ with Lipschitz constants $\{L_\pp\}_{\pp\in \SEM}\subseteq \RR_{\geq 0}$. For this, we fix $\pp\in \SEM$; and observe that
\begin{align*}
	\ppp\big(\Ad_{\mu(t)}(\phi(t))-\Ad_{\mu(t')}(\phi(t'))\big)
	&\leq \ppp\big(\Ad_{\mu(t)}(\phi(t)-\phi(t'))\big) + \ppp\big(\big(\Ad_{\mu(t)}-\Ad_{\mu(t')}\big)(\phi(t'))\big)
\end{align*} 
holds. Then, 
\begingroup
\setlength{\leftmargini}{12pt}
\begin{itemize}
\item
	We let $\compact:=\im[\mu]$, choose $\pp\leq \mm\in \SEM$ as in \eqref{askasjkjksaasqwqqwasw} for $\nn\equiv \pp$ there; and obtain
\begin{align*}
	\ppp\big(\Ad_{\mu(t)}(\phi(t)-\phi(t'))\big)\leq  \mmm(\phi(t)-\phi(t'))\leq L_\mm\cdot |t'-t| 
\end{align*}
for $r\leq t<t'\leq r'$. 
\item
Since $\alpha\colon [r,r']\times \im[\phi]\ni(s,X)\rightarrow \partial_s\Ad_{\mu(s)}(X)$ is defined and continuous, Lemma \ref{ofdpofdpopssssaaaasfffff} shows that 
\begin{align*}
	\textstyle\ppp\big(\big(\Ad_{\mu(t)}-\Ad_{\mu(t')}\big)(\phi(t'))\big)\leq \int_t^{t'}  \ppp\big(\partial_s \Ad_{\mu(s)}(\phi(t'))\:\dd s\big)\leq C\cdot |t'-t|
\end{align*}
holds, for $C:=\sup\{\ppp(\alpha(s,X)) \:|\: (s,X)\in [r,r']\times \im[\phi] \}<\infty$.
\end{itemize}
\endgroup
\noindent
From this, the claim is clear.
\end{proof}

\subsection{}
\label{ceinseigaaaa}

\begin{proof}[Proof of the statement made in Remark \ref{exponentialmap}.\ref{exponentialmap1}]
We obtain from \eqref{lkdsklsdkdl}, \ref{pogfpogf}, \ref{subst} that
\begin{align}
\label{oopop}
\textstyle\exp(r\cdot X)\cdot \exp(s\cdot X)=\exp((r+s)\cdot X)=\exp(s\cdot X)\cdot \exp(r\cdot X)\qquad\quad\forall\: s,t\geq 0
\end{align}
holds. Then, \eqref{lkdsklsdkdl} shows $\Ad_{\exp(t\cdot X)}(X)=X$ for each $t\geq0$; thus,
\begin{align*}
	\textstyle\exp(t\cdot X)^{-1}\equiv [\innt_0^t \phi_X]^{-1} \stackrel{\ref{pogfpogfaaa}}{=} \innt_0^t -\phi_X\equiv \exp(-t\cdot X)\qquad\quad\forall\: t\geq 0.
\end{align*}
It follows that \eqref{oopop} even holds for all $s,r\in \RR$; i.e., that  $\beta \colon \RR\ni t\mapsto \exp(t\cdot X)\in G$ is a  group homomorphisms. 
Then, smoothness of $\beta$ is clear from \eqref{fgfggf}, \eqref{lkdsklsdkdl}, and Lemma \ref{evk}.\ref{aaaaaaaaaaaaaaaab}.
\end{proof}

\subsection{}
\label{ceinseigaaaaa}

\begin{proof}[Proof of the statement made in Remark \ref{exponentialmap}.\ref{exponentialmap3}]
We define $\psi\in C^0([r-2,r'+2],\mg)$ by 
\begin{align*}
	\psi|_{[r-2,r)}:=\phi(r),\qquad\quad\:\:\psi|_{[r,r']}:=\phi,\qquad\quad\:\:\psi|_{(r',r'+2]}:=\phi(r'),
\end{align*}
as well as $\beta\in C^1((r-1,r'+1),\mg)$ by 
\begin{align*}
	\textstyle\beta\colon (r-1,r'+1)\ni t\mapsto \int_{r-1}^t\psi(s)\:\dd s.
\end{align*} 
For $t\in (r-1,r'+1)$ fixed, and $0<h\leq 1$, we let 
\begin{align*}
	\textstyle Y:=\int_{r-1}^{t+h}\psi(s) \: \dd s,\qquad\quad\:\: X:=\int_{r-1}^{t}\psi(s) \: \dd s,\qquad\quad\:\: X_h:=\int_t^{t+h}\psi(s) \: \dd s;
\end{align*}
 and obtain  
\begin{align*}
	\textstyle(\exp\cp\beta)(t+h)&\textstyle\equiv\innt_0^1\phi_Y=\innt_0^1 \phi_X+\phi_{X_h}
	\textstyle\stackrel{\ref{kdsasaasassaas}}{=}\innt_0^1 \phi_{X_h} \cdot \innt_0^1 \phi_X\\
	&\textstyle= \exp\big(\int_t^{t+h}\psi(s) \: \dd s\big)\cdot \exp\big(\int_{r-1}^{t}\psi(s) \: \dd s\big).
\end{align*}
Since $\exp$ is of class $C^1$, we obtain from \eqref{lkdsklsdkdl} and \ref{chainrule} that $\Der(\exp\cp\beta)|_{[r,r']}=\phi$ holds; which shows the claim.
\end{proof}

\section{Appendix to Sect.\ \ref{dsjshdhkjshkhjsd}}

\subsection{}
\label{appB}
\begin{proof}[Proof of Equation \eqref{dfdssfdsfd}]
Applying a standard refinement argument, we obtain $r=t_0<{\dots}<t_n=r'$ as well as $\phi[p],\psi[p]\in \DIDE^k_{[t_p,t_{p+1}]}$ for $p=0,\dots,n-1$ with
\begin{align*}
	\phi|_{(t_p,t_{p+1})}=\phi[p]|_{(t_p,t_{p+1})},\: \psi|_{(t_p,t_{p+1})}=\psi[p]|_{(t_p,t_{p+1})}\qquad\quad\forall\: p=0,\dots,n-1.
\end{align*}
We let $\alpha:=[\innt_r^\bullet \phi]^{-1}[\innt_r^\bullet \psi],\:\mu:= \innt_r^\bullet\phi,\:\nu:= \innt_r^\bullet\psi$; and define
\begin{align}
\label{oopopoo}
	\textstyle\alpha_p&\textstyle:=\alpha|_{[t_p,t_{p+1}]}\stackrel{\eqref{defpio}}{=} \mu(t_p)^{-1}\big[\innt_{t_p}^\bullet \phi[p]\big]^{-1}\big[\innt_{t_p}^\bullet \psi[p]\big]\cdot \nu(t_p) \\
\label{oopopooasass}
	\mu_p&\textstyle:= \mu|_{[t_p,t_{p+1}]}\in C^{k+1}([t_p,t_{p+1}],\mg)
\end{align} 
for $p=0,\dots,n-1$. 
We obtain from \ref{kdskdsdkdslkds} that
\begin{align}
\label{hjdfhjfdd}
	\Der(\alpha_p)|_{(t_p,t_{p+1})}=\Ad_{\mu_p^{-1}}(\psi[p]-\phi[p])|_{(t_p,t_{p+1})}\qquad\quad\forall\: p=0,\dots,n-1
\end{align}
holds; so that 
Lemma \ref{Adlip} and \eqref{oopopooasass} show $\Ad_{\mu^{-1}}(\psi-\phi)\in \DP^k([r,r'],\mg)$. Then, for $t\in (t_{p},t_{p+1}]$ with $0\leq p\leq n-1$, we have
\begin{align*}
	\textstyle\innt_r^t\Ad_{\mu^{-1}}(\psi-\phi)&\stackrel{\eqref{defpio}, \eqref{hjdfhjfdd}}{=}\big[\alpha_p(t)\cdot \alpha_{p}(t_{p})^{-1}\big]\cdot \big[\alpha_{p-1}(t_{p})\cdot \alpha_{p-1}(t_{p-1})^{-1}\big]\cdot {\dots}\: \cdot \textstyle \big[\alpha_{0}(t_{1})\cdot \alpha_{0}(t_{0})^{-1}\big]\\
&\hspace{8.8pt}\stackrel{\eqref{oopopoo}}{=}\hspace{10.4pt}\alpha(t)
\end{align*}
which proves the claim.
\end{proof}	
	
\subsection{}
\label{appAaaaa}
\begin{proof}[Proof of Equation \eqref{zsazusazuzusazuzusazusazusazusa} and the $C^k$-statement made in the proof of Lemma \ref{pofdspospods}] It is straightforward from the triangle inequality, the properties of $\rho$, and smoothness of $\varrho$ that  $\psi\in C^\lip([r,r'],\mg)$ holds for $k\equiv \lip$. Thus, in order to prove that $\psi$ is of class $C^k$, and to verify Equation \eqref{zsazusazuzusazuzusazusazusazusa}, we can assume that $k\in \NN\sqcup \{\infty\}$ holds in the following. 
\vspace{6pt}

\noindent
Now, to prove the $C^k$-statement, we have to show that $\psi=\psi'|_{[r,r']}$ holds for some $\psi'\in C^k(I,\mg)$ with $I\subseteq \RR$ open containing $[r,r']$. For this, we define   
$\varrho'\in C^\infty(\RR,\RR)$ by 
\begin{align*}
	\varrho'|_{(-\infty,r)}:=\varrho(r)\qquad\qquad\varrho'|_{[r,r']} :=\varrho\qquad\qquad  \varrho'|_{(r',\infty)}:=\varrho(r');
\end{align*}
and let $\psi':=\dot\varrho'\cdot \rcK{\phi\cp\varrho'}\colon \RR\rightarrow \mg$. Then,
\begingroup
\setlength{\leftmargini}{12pt}
\begin{itemize}
\item
	we have $\psi'^{(m)}|_{\RR-[r,r']}=0$ for $0\leq m\leq k$, as well as
\begin{align}
\label{popdfpoodfpodpofdf}
	(\psi'|_{(t_p,t_{p+1})})^{(m)}=((\dot\varrho[p]\cdot\rcK{\wt{\phi}[p]\cp\varrho[p]})|_{(t_p,t_{p+1})})^{(m)}\qquad\quad\forall\: 0\leq m\leq k,\:\: 0\leq p\leq n-1.
\end{align}
\item
we obtain from \ref{chainrule} and \ref{productrule} that
\begin{align}
\label{popdfpoodfpodpofdfllll}
	(\dot\varrho[p]\cdot\rcK{\wt{\phi}[p]\cp\varrho[p]})^{(m)}(t_p)=0=(\dot\varrho[p]\cdot\rcK{\wt{\phi}[p]\cp\varrho[p]})^{(m)}(t_{p+1})
\end{align}
holds, for $0\leq m\leq k$ and $p=0,\dots,n-1$.
\end{itemize}
\endgroup
\noindent
Now, since $\psi'$ is of class $C^0$, we can assume that it is of class $C^q$ for some $0\leq q\leq k-1$. Then,  \eqref{popdfpoodfpodpofdf} (for $m\equiv q$ there) shows that 
\begin{align}
\label{nmdsnmsnmsdnmdsnmdsnmdsnmdnmds}
\psi'^{(q)}|_{[t_p,t_{p+1}]}=(\dot\varrho[p]\cdot\rcK{\wt{\phi}[p]\cp\varrho[p]})^{(q)}|_{[t_p,t_{p+1}]}\qquad\quad\forall\: p=0,\dots,n-1
\end{align}
holds; with $\psi'^{(q)}|_{\RR-[(t_0,t_1)\:\cup\:{\dots}\:\cup\: (t_{n-1},t_n)]}=0$ by the first-, and by the second point (for $m\equiv q$ there). 
Together with \eqref{popdfpoodfpodpofdfllll} (for $m\equiv q$ there), this implies that  
$\psi'^{(q)}$ is differentiable with 
\begin{align*}
	\psi'^{(q+1)}|_{\RR-[(t_0,t_1)\:\cup\:{\dots}\:\cup\: (t_{n-1},t_n)]}=0;
\end{align*}
 so that \eqref{popdfpoodfpodpofdf} and \eqref{popdfpoodfpodpofdfllll} (for $m\equiv q+1$ there) show that  $\psi'^{(q+1)}$ is continuous. It thus follows inductively that $\psi'$ is of class $C^k$.
\vspace{6pt} 
 
\noindent
In particular, \eqref{zsazusazuzusazuzusazusazusazusa} is now clear from 
\begin{align*}
	\psi|_{[t_p,t_{p+1}]}=\psi'|_{[t_p,t_{p+1}]}\stackrel{\eqref{nmdsnmsnmsdnmdsnmdsnmdsnmdnmds}}{=}(\dot\varrho[p]\cdot\rcK{\wt{\phi}[p]\cp\varrho[p]})|_{[t_p,t_{p+1}]}=\Der(\mu[p]\cp\varrho[p]|_{[t_p,t_{p+1}]})
\end{align*}
for $p=0,\dots,n-1$.
\end{proof}

\section{Appendix to Sect.\ \ref{podposddpospodpods}}

\subsection{}
\label{QuotConstrained}
\begin{proof}[Proof of the statement made in Example \ref{exxxcon}.\ref{exxxcon0}]
We let $\pi\colon E\rightarrow G\equiv E\slash \Gamma,\:\:X\mapsto[X]$ denote the canonical projection, define $e:=[0]$, and fix an   open neighbourhood $\OO\subseteq E$ of $0$, such that $\OO\cap [\OO+ [\Gamma-\{e\}]]=\emptyset$ holds.\footnote{Confer, e.g., Theorem 1.10 in \cite{Rudin} for the existence of such $\OO$.} 
Then, a chart of $G$ that is centered at $e\equiv [0]\in G$, is given by
\begin{align*}
	\chart\colon \U\equiv \pi(\OO)\rightarrow \V\equiv \OO,\qquad [X]\mapsto \pi^{-1}(X)\cap \OO\subseteq E.
\end{align*}
Then, for $\V\lleq \pp\in \SEM$ and $X_1,\dots,X_n\in E$ with $\pp(X_1)+{\dots}+\pp(X_n)\leq 1$, we have 
\begin{align*}
	\pp(X_1+{\dots}+X_n)\leq 1\qquad\quad\text{implying}\qquad\quad [X_1+{\dots}+X_n]\in \U;
\end{align*}
and obtain
\begin{align*}
	(\pp\cp\chart)(\chartinv(X_1)\cdot{\dots}\cdot\chartinv(X_n))&=(\pp\cp\chart)([X_1]\cdot{\dots}\cdot[X_n])\\
	&\equiv(\pp\cp\chart)([X_1+{\dots}+X_n])\\
	&=\pp(X_1+{\dots}+X_n)\\
	&\leq \pp(X_1)+{\dots}+\pp(X_n),
\end{align*} 
which shows that $G$ is locally $\mu$-convex. 
\end{proof}

\subsection{}
\label{BanachConstrained}
\begin{proof}[Proof of the statement made in Example \ref{exxxcon}.\ref{exxxcon1}]
We let $\uu\equiv\|\cdot\|$ denote the Banach norm on $E$; and can assume that $\V\lleq \uu$ holds, just by rescaling $\uu$ if necessary. We fix $0<\rad\leq 1$ with, cf.\ Remark \ref{banachchchch}
\begin{align}
\label{fdpofdpodfpo}
	\big\|\dd_{g\cdot\chartinv(x)\cdot q}\chart\big(\dd_{\chartinv(x)\cdot q}\LT_{g}\cp\dd_{\chartinv(x)}\RT_{q}\cp \dd_x\chartinv\big)\big\|_\op\leq \rad^{-1}
\end{align}
for all $g,q\in \U$ and $x\in \V$ with 
	$(\uu\cp\chart)(g),(\uu\cp\chart)(q),\uu(x)\leq \rad$; 
 and define $\oo:=\rad^{-2}\cdot \uu$. 
 Then, since we have $\rad\leq 1$, it suffices to show that 
\begin{align}
\label{opdfoopfd}
	(\uu\cp\chart)(\chart^{-1}(X_1)\cdot {\dots}\cdot \chart^{-1}(X_n))\leq \rad\cdot \varepsilon
\end{align}
holds for all $X_1,\dots,X_n\in E$ with $\oo(X_1)+{\dots}+\oo(X_n)=: \varepsilon\leq 1$. 
\vspace{6pt}

\noindent
Now, \eqref{opdfoopfd} is clear for $n=1$, as 
\begin{align*}
	\oo(X)\leq \varepsilon\quad\:\:\text{for}\quad\:\: X\in E\qquad\quad\Longrightarrow\qquad\quad (\uu\cp\chart)(\chartinv(X))=\rad^2\cdot \oo(X)\leq\rad\cdot \varepsilon. 
\end{align*}
We thus can assume that \eqref{opdfoopfd} holds for all $1\leq q\leq n$ for some $n\geq 1$, fix $X_1,\dots,X_{n+1}\in E$ with $\oo(X_1)+{\dots}+\oo(X_{n+1})=: \varepsilon\leq 1$, and define
\begin{align*}
	\rho\colon [0,1]\ni t\mapsto \chart(\chartinv(t\cdot  X_1)\cdot {\dots}\cdot \chartinv(t\cdot X_{n+1})). 
\end{align*} 
Then, applying the induction hypotheses, Lemma \ref{ofdpofdpopssssaaaasfffff} together with \eqref{LGPR} and \eqref{fdpofdpodfpo} gives
\begin{align*}
	\textstyle\uu(\rho(1))
	&\textstyle\leq  \sup_{t\in[0,1]} \uu(\dot\rho(t))\leq \rad^{-1}\cdot (\uu(X_1)+{\dots}+\uu(X_{n+1}))=\rad\cdot (\oo(X_1)+{\dots}+\oo(X_{n+1}))\leq \rad\cdot \varepsilon.
\end{align*}
Equation \eqref{opdfoopfd} thus follows inductively for each $n\geq 1$.
\end{proof}

\subsection{}
\label{HGConstrained}
\begin{proof}[Proof of the statement made in Example \ref{exxxcon}.\ref{exxxcon2}]
Let us first observe that  
\begin{align}
\label{opsopdsopsd}
	\textstyle(1+\varepsilon_{1})\cdot {\dots}\cdot (1+\varepsilon_n)-1\leq 2\cdot \sum_{k=1}^n \varepsilon_k
\end{align} 
holds, for $\varepsilon_1,\dots,\varepsilon_n>0$ with $\sum_{k=1}^n \varepsilon_k\leq 1/2$.  
This is clear for $n=1$; and follows inductively for each $n\geq 1$. In fact, suppose that \eqref{opsopdsopsd} holds for $n \geq 1$, and let $\varepsilon_1,\dots,\varepsilon_{n+1}>0$ with $\sum_{k=1}^{n+1} \varepsilon_k\leq 1/2$. Then, we obtain from \eqref{opsopdsopsd} that
\begin{align*}
	\textstyle(1+\varepsilon_{n+1}) \cdot (1+\varepsilon_{1})\cdot {\dots}\cdot (1+\varepsilon_n)-1\leq \big(2\cdot \sum_{k=1}^n \varepsilon_k\big) + \big(\varepsilon_{n+1}\cdot (1+2\cdot \sum_{k=1}^n \varepsilon_k)\leq 2\cdot \sum_{k=1}^{n+1} \varepsilon_k\big).
\end{align*}
Let now $\uu\in  \SEM$ be fixed. We choose $\uu\leq \ww\in \SEM$ as in \eqref{invACond} for $\vv\equiv\uu$ there, let $\oo:=2\cdot \ww$; and consider the chart 
\begin{align*}
	\chart\colon \U\equiv \MAU\rightarrow \V\equiv\MAU-\EINS,\qquad a\mapsto a-\EINS
\end{align*}
 for $\EINS\equiv e$; and obtain from \eqref{invACond} that
\begin{align}
\label{sakjaskjsakjasjkas}
\begin{split}
	(\uu\cp\chart)(\chart^{-1}(X_1)\cdot {\dots}\cdot \chart^{-1}(X_n))&=\uu((\EINS+X_1)\cdot {\dots}\cdot (\EINS+X_n)-\EINS)\\
	&\leq (1+\ww(X_1))\cdot {\dots}\cdot (1+\ww(X_n)) -1
\end{split}
\end{align}
holds for all $X_1,\dots, X_n\in \V$ with $n\geq 1$. 
Then, 
\begin{align*}
	\textstyle\oo(X_1)+{\dots}+\oo(X_n)=: \varepsilon\leq 1\qquad\quad\Longrightarrow\qquad\quad \sum_{k=1}^n \ww(X_k)\leq 1/2;
\end{align*}
and we conclude from \eqref{opsopdsopsd} and \eqref{sakjaskjsakjasjkas} that
\begin{align*}
	(\uu\cp\chart)\big(\chart^{-1}(X_1)\cdot {\dots}\cdot \chart^{-1}(X_n)\big)
	&\textstyle\leq 2\cdot \sum_{k=1}^n \ww(X_k)=\sum_{k=1}^n \oo(X_k)= \varepsilon
\end{align*} 
holds, which shows the claim.
\end{proof}

\section{Appendix to Sect.\ \ref{COMPAPPR}}

\subsection{}
\label{Mackeyind}
\begin{proof}[Proof of the statement made in Remark \ref{popodspodspodssds}] 
Let $\chart'\colon G\supseteq \U'\rightarrow \V'\subseteq E$ be a further chart of $G$ with $e\in \U'$ and $\chart'(e)=0$. Then, shrinking $\V$ if necessary, we can assume that 
\begin{align*}
	\Phi\equiv\dd\he (\chart'^{-1}\cp\chart) \colon \V\times E\rightarrow E 
\end{align*}
is defined. 
Let now $\pp\in \SEM$ be fixed. We choose $\qq\equiv \qq_1\in \SEM$ and $V\subseteq \V$ as in Lemma \ref{kldskldsksdklsdl}, additionally convex; and define $\gamma_x\colon [0,1]\ni t\mapsto (\chart'^{-1}\cp\chart)(t\cdot x)\in V$ for each $x\in V$. Then, Lemma \ref{ofdpofdpopssssaaaasfffff} shows
\begin{align*}
	\textstyle\pp(\chart'^{-1}\cp\chart)(x)&=\textstyle\pp(\gamma_x(1)- \gamma_x(0))\textstyle\leq  \int \pp(\dot\gamma_x(s)) \:\dd s\textstyle= \int (\pp\cp\Phi)(\gamma_x(s),\dot\gamma_x(s)) \:\dd s\leq \int \qq(x) \:\dd s=\qq(x)
\end{align*}
for each $x\in V$, from which the claim is clear.
\end{proof}

\subsection{}
\label{QuotientMackey}
\begin{proof}[Proof of the statement made in Example \ref{opfdopdfpofddfop}.\ref{exxxxcon0}] We let $\chart\colon \U\ni [X]\rightarrow X\in \V$   
be defined as in Appendix \ref{QuotConstrained}; and fix $V\subseteq \V$ symmetric open with $\ovl{V}\subseteq \V$ and $\ovl{V}+\ovl{V}\subseteq \V$. Then, for $X,Y\in \ovl{V}$ (or, alternatively, $[X],[Y]\in \chartinv(\ovl{V})$), we have 
\begin{align*}
	\pp(-X + Y)=(\pp\cp\chart)([-X+Y])=(\pp\cp\chart)([X]^{-1}\cdot [Y])\qquad\quad\forall\: \pp\in \SEM.
\end{align*}
The claim now follows easily from Remark \ref{confkjdsjdskjsdkjds}.\ref{confev24}, when applied to $U\equiv V$ as well as $U\equiv \chartinv(V)$ there. 
\end{proof}

\subsection{}
\label{BanachMackey}
\begin{proof}[Proof of the statement made in Example \ref{opfdopdfpofddfop}.\ref{exxxxcon1}]
Let $\|\cdot\|$ denote the Banach norm on $E$. Then, Lemma \ref{fhfhfhffhaaaa} applied to $\compact\equiv\{e\}$ and $\pp\equiv\|\cdot\|$, provides us with an open neighbourhood $V$ of $e$ as well as some $C>0$, such that
\begin{align}
\label{jksdjksdjksd}
	\|\chart(q)-\chart(q')\|\leq C\cdot \|\chart_{h}(q)-\chart_{h}(q')\|\qquad\quad\forall\: q,q',h\in V
\end{align}  
holds. 
We fix an open neighbourhood $U\subseteq G$ of $e$ with $\ovl{U}\subseteq V\cap\U$; and recall that in order to show that $G$ is sequentially complete -- by Remark \ref{confkjdsjdskjsdkjds}.\ref{confev24} -- it suffices to show that each Cauchy sequence $\{g_n\}_{n\in \NN}\subseteq U\subseteq G$ converges in $G$.   
Now, \eqref{jksdjksdjksd} applied to $h\equiv g_{m}$ gives
\begin{align*}
	\|\chart(g_m)-\chart(g_n)\|\leq C\cdot \|\chart(g_m^{-1}\cdot g_n)\|\qquad\quad\forall\: m,n\in \NN,
\end{align*} 
which shows that $\{\chart(g_n)\}_{n\in \NN}\subseteq \chart(U)\subseteq \V$ is a Cauchy sequence in $E$. By assumption, $\lim_n \chart(g_n)=x\in \chart(\he\ovl{U}\he)\subseteq \V$ exists; so that $\{g_n\}_{n\in \NN}$ converges to $\chartinv(x)\in G$.
\end{proof}

\subsection{}
\label{HGMackey}
\begin{proof}[Proof of the statement made in Example \ref{opfdopdfpofddfop}.\ref{exxxxcon2}]
Recall that $\MAU$ is locally $\mu$-convex by Example \ref{exxxcon}.\ref{exxxcon2}; and let 
$\chart\colon \U\cong\MAU\ni a\mapsto  a-\EINS\in \V\equiv\MAU-\EINS$ be as in Appendix \ref{HGConstrained}. 
	Let furthermore $\{a_n\}_{n\in \NN}\subseteq \MAU$ be a fixed sequence. 
\begingroup
\setlength{\leftmargini}{12pt}
\begin{itemize}
\item
We fix $\vv\in \SEM$, choose $\vv\leq \mm\in \SEM$ as in \eqref{invACond} for $\ww\equiv \mm$ there, and obtain
\begin{align}
\label{dfpodfopdfofdpo}
	\vv(a_{n}-a_{n-1})&\textstyle=\vv\big(a_{n-1}\cdot \big(a^{-1}_{n-1}\cdot a_n-\EINS\big)\big)\leq \mm(a_{n-1})\cdot (\mm\cp\chart)\big(a^{-1}_{n-1}\cdot a_n\big)\qquad\quad\forall\: n\geq 1.
\end{align}
\item
We choose $\mm\leq \uu\in \SEM$ as in \eqref{invACond} for $\vv\equiv \mm$ and $\ww\equiv \uu$ there, and let  
$\uu\leq \oo\in \SEM$ be as in \eqref{aaajjhguoiuouo}. Then, passing to a subsequence if necessary, we can achieve that   
\begin{align}
\label{saoqqsaopsaklsaklaskllask}
 	\textstyle\sum_{n=1}^\infty\oo(X_n)\leq 1\qquad\quad\:\:\text{holds for}\qquad\quad\:\: X_n:=\chart\big(a^{-1}_{n-1}\cdot a_{n}\big)\qquad\forall\: n\geq 1.
\end{align}
We obtain 
\begin{align*}
	\textstyle\mm(a_n)&\:\he=\:\he\mm(a_0\cdot\chartinv(X_1)\cdot {\dots}\cdot\chartinv(X_n))\stackrel{\eqref{invACond}}{\leq} \uu(a_0)\cdot \uu(\chartinv(X_1)\cdot {\dots}\cdot\chartinv(X_n))\\
	&\:\he\leq\:\he \uu(a_0)\cdot \big(\uu(\EINS) + (\uu\cp\chart)(\chartinv(X_1)\cdot {\dots}\cdot\chartinv(X_n))\big) \stackrel{\eqref{aaajjhguoiuouo},\eqref{saoqqsaopsaklsaklaskllask}}{\leq} \uu(a_0)\cdot (\uu(\EINS)+1),
\end{align*}
implying $\textstyle\sup\{\mm(a_n)\:|\: n\in \NN\}<\infty$.
\end{itemize}
\endgroup
\noindent
It is thus clear from \eqref{dfpodfopdfofdpo} that: 
\begingroup
\setlength{\leftmargini}{12pt}
\begin{itemize}
\item
If $\MA$ is sequentially complete, and $\{a_n\}_{n\in \NN}\subseteq \MAU$ a Cauchy sequence, then $\lim_n a_n=a\in \MA$ exists.
\item
If $\MA$ is Mackey complete, and $\{a_n\}_{n\in \NN}\subseteq \MAU$ a Mackey-Cauchy sequence, then $\lim_n a_n=a\in \MA$ exists.
\end{itemize}
\endgroup
\noindent
Now, since $\MAU$ is open with $\EINS\in \MAU$, there exists an open neighbourhood $V$ of $\EINS$ with $\ovl{V}\subseteq \MAU$, as well as some $p\geq 0$ with $\{a_{p}^{-1}\cdot a_n\}_{n\geq p}\in V$. Then, 
\begin{align*}
	\textstyle a_p^{-1}\cdot a=\lim_{n}(a_p^{-1}\cdot a_n)\in \ovl{V}\subseteq \MAU, 
\end{align*}
implies $a\in \MAU$; which proves the claim. 
\end{proof}

\section{Appendix to Sect.\ \ref{sec:confined}}

\subsection{}
\label{assaasssaaggggs}
\begin{proof}[Proof of Equation \eqref{asiosaioaois}] 
We fix $q\in \NN$; and 
 choose  $r=t_0<{\dots}<t_n=r'$, as well as $\phi_q[p]$ for $p=0,\dots,n-1$, as in \eqref{opopooppo} for $\phi\equiv \phi_q$ and $\phi[p]\equiv\phi_q[p]$ there. 
 Then, it is clear from \eqref{defpio} that $\mu_q$ is of class $C^1$ on $J:=\bigsqcup_{p=0}^{n-1}(t_p,t_{p+1})$ with 
 $\phi_q=\dd_{\mu_q}\RT_{\mu_q^{-1}}(\dot\mu_q)$ thereon; so that we have
\begin{align}
\label{posdopsdopsdp}
	\dermapinvdiff(\gamma_q,\phi_q)|_J=(\dd_{\mu_q}\chart\cp\dd_e\RT_{\mu_q})(\phi_q)|_J=(\dd_{\mu_q}\chart\cp\dd_e\RT_{\mu_q}\cp\dd_{\mu_q}\RT_{\mu_q^{-1}})(\dot\mu_q)|_J=\dot\gamma_q|_J.
\end{align}
We define $\alpha_p:=\dermapinvdiff(\gamma_q|_{[t_p,t_{p+1}]},\phi_q[p])$ for $p=0,\dots,n-1$; and conclude from \eqref{isdsdoisdiosd} and \eqref{posdopsdopsdp} that
\begin{align}
\label{dspodspodspodspopodspodsaaa}
	\textstyle\gamma_q(\tau')-\gamma_q(\tau) = \int_\tau^{\tau'} \dermapinvdiff(\gamma_q(s),\phi_q(s))\:\dd s
	= \int_\tau^{\tau'} \alpha_p(s)\: \dd s
\end{align}
holds, for each $\COMP\ni[\tau,\tau']\subseteq (t_p,t_{p+1})$. 
Since $\gamma_q,\alpha_0,\dots,\alpha_{n-1}$ are continuous, we obtain
\begin{align*}
	\textstyle\gamma_q(\tau')-\gamma_q(\tau)&\textstyle\:\he=\:\he \lim_{k\rightarrow \infty} \he (\gamma_q(\tau'-1/k)-\gamma_q(\tau+1/k))\textstyle\\
	& \textstyle\stackrel{\eqref{dspodspodspodspopodspodsaaa}}{=}\lim_{k\rightarrow \infty}  \int_{\tau+1/k}^{\tau'-1/k}\alpha_p(s)\: \dd s\textstyle=\int_{\tau}^{\tau'} \alpha_p(s) \:\dd s= \int_{\tau}^{\tau'} \dermapinvdiff(\gamma_q(s),\phi_q[p](s)) \:\dd s
\end{align*}
for each $t_p\leq \tau< \tau'\leq t_{p+1}$, for $p=0,\dots,n-1$. The claim is thus clear from \eqref{opofdpopfd} and $\gamma_q(r)=0$.
\end{proof}

\subsection{}
\label{assaauuuuuuu}
\begin{proof}[Proof of Lemma \ref{omori}]
Let $\frac{\dd}{\dd h}\big|^>_{h=0}$ denote the right derivative; and define $\mu:=\innt_r^\bullet \phi$.
\vspace{6pt}

\noindent
For the implication ``$\Longrightarrow$'', 
\begingroup
\setlength{\leftmargini}{12pt}
\begin{itemize}
\item
we observe that $\alpha:=\Ad_{\mu}(Y)$ is of class $C^1$ with $\alpha(r)=Y$.
\item
we choose an extension $\psi \in \DIDE_{[r,r'+\delta]}$ of $\phi$, for some $\delta>0$; and define $\beta:=\Ad_{\nu}(Y)$ for $\nu:=\innt_r^\bullet \psi$. 
\item
we obtain from \ref{pogfpogf} that
	\begin{align*}
	\dot\alpha(t)=\dot\beta(t)=\textstyle\frac{\dd}{\dd h}\big|^>_{h=0}\: \Ad_{\innt_t^{t+h}\psi\he}(\Ad_{\mu(t)}(Y))=\bl\phi(t),\alpha(t)\br\qquad\quad\forall\: t\in [r,r'].
\end{align*} 
\vspace{-18pt}
\end{itemize}
\endgroup
\noindent
For the implication ``$\Longleftarrow$'', 
\begingroup
\setlength{\leftmargini}{12pt}
\begin{itemize}
\item
we suppose that $\dot\alpha=\bl\phi,\alpha\br$ holds for $\alpha\in C^1([r,r'],\mg)$.
\item
	we choose an extension $\psi \in \DIDE_{[r,r'+\delta]}$ of $\phi$, and an extension $\beta\in C^1([r,r'+\delta],\mg)$ of $\alpha$, for some $\delta>0$; and define 
		$\gamma:=\Ad_{[\innt_r^{\bullet}\psi]^{-1}}(\beta)$. 
\item
	we recall that, cf.\ \ref{pogfpogfaaa} 
\begin{align*}
	\textstyle\big[\innt_t^{t+h}\psi\big]^{-1}=\innt_t^{t+h}-\Ad_{[\innt_t^\bullet \psi]^{-1}}(\psi)\qquad\quad\forall\: 0<h\leq\delta,\:\: t\in [r,r'];
\end{align*}
and conclude from \ref{pogfpogf}, \ref{linear}, \ref{productrule} that 
\begin{align*}
	\textstyle\dot\gamma(t)&\textstyle=\frac{\dd}{\dd h}\big|^>_{h=0}\:\Ad_{[\innt_r^{t+h}\psi]^{-1}}(\beta(t+h))
	\textstyle=\frac{\dd}{\dd h}\big|^>_{h=0}\: \Ad_{\mu^{-1}(t)}\big(\Ad_{[\innt_t^{t+h}\psi]^{-1}}(\beta(t+h))\big)\\
	&=\Ad_{\mu^{-1}(t)}\big([-\phi(t),\alpha(t)]+ \dot\alpha(t)\big)=0 
\end{align*}
holds for each $t\in [r,r']$.
\item
We thus conclude from \eqref{isdsdoisdiosd} that $\Ad_{[\innt_r^{\bullet}\phi]^{-1}}(\alpha)=\alpha(r)=Y$ holds; thus, $\alpha=\Ad_{\mu}(Y)$.
\end{itemize}
\endgroup
\noindent
This proves the claim.
\end{proof}

\section{Appendix to Sect.\ \ref{asopsopdsopsdpoosdp}}

\subsection{}
\label{dsdsdsds}
\begin{proof}[Proof of Equation \eqref{powqopqwopwasasaassaq}]
We choose an open neighbourhood $V\subseteq E$ of $0$, such that  
\begin{align*}
	f\colon V\times V\ni (x,y)\mapsto (\chart\cp\mult)(\chartinv(x),\chartinv(y))
\end{align*}
is defined. Then, shrinking $\delta$ if necessary, we can assume that
\begin{align*}
	\gamma\colon [0,\delta]\ni t\mapsto ((\chart\cp\mu)(t),(\chart\cp\nu)(t))\in V\times V
\end{align*}
holds; and conclude from Lemma \ref{sdsdds} (for $F\equiv E\times E$ and $U=V\times V$ there) that 
\begin{align}
\label{weweeewweweldldlld}
\begin{split}
	\textstyle\lim_{h\rightarrow 0}\rcf{h}\cdot \chart(\mu(h)\cdot\nu(h))&\textstyle=\lim_{h\rightarrow 0}\rcf{h}\cdot (f(\gamma(h))-f(\gamma(0)))\\
	&=\ovl{\dd_{\gamma(0)}f}\he(X,Y)\\
	&=X+Y
\end{split}
\end{align}
holds. For the last step, observe that $\dd_{(e,e)}\mult(v,w)=v+w$  
holds for all $v,w\in \mg$ by \eqref{LGPR}; thus,
\begin{align*}
	\dd_{\gamma(0)}f(Z,Z')=(\dd_e\chart\cp \dd_e\mult)(\dd_0\chartinv(Z),\dd_0\chartinv(Z'))=Z+Z'\qquad\quad\forall\: Z,Z'\in E.
\end{align*}
The last step in \eqref{weweeewweweldldlld} is thus clear from continuity of $\ovl{\dd_{\gamma(0)}f}$.
\end{proof}

\subsection{Appendix}
\label{appLipasasasasas}
\begin{proof}[Proof of Lemma \ref{fdfdfdf}]
By \eqref{absch2}, it suffices to show that 
\begin{align*}
	\textstyle\wt{\Gamma}\colon C^k([r,r'],\mg)\times C^k([r,r'],\mg)\rightarrow C^0([r,r'],\mg),\qquad (\phi,\psi)\mapsto \big[t\mapsto \Gamma\big(\innt_r^{t}\phi,\psi(t)\big)\big]
\end{align*}
is continuous.  
For this, we let $\pp\in\SEM$, $\varepsilon>0$, and $(\phi,\psi)\in C^k([r,r'],\mg)\times C^k([r,r'],\mg)$ be fixed; and have to show that there exist $\qq\in \SEM$ and $\dind\llleq k$, such that
\begin{align}
\label{podspodspodspodsds}
	\qqq^\dind_\infty(\phi'-\phi),\:\: \qqq^\dind_\infty(\psi'-\psi)\leq 1\qquad\quad\Longrightarrow\qquad\quad\ppp_\infty\big(\wt{\Gamma}(\phi',\psi')-\wt{\Gamma}(\phi,\psi)\big)\leq \varepsilon 
\end{align}
for $\phi',\psi'\in C^k([r,r'],\mg)$. 
We let $\mu:=\innt_r^\bullet\phi$, and consider the continuous map
\begin{align*}
	\alpha\colon G\times \mg \times  G\times \mg \rightarrow \mg,\qquad ((g,X),(g',X'))\mapsto \ppp(\Gamma(g,X)-\Gamma(g',X')).
\end{align*}
 Then, for $t\in [r,r']$ fixed, there exists an open neighbourhood $W[t]\subseteq G$ of $e$, as well as $U[t]\subseteq \mg$ open with $0\in U[t]$, such that 
\begin{align}
\label{pofsofspofs}
	\textstyle\alpha((g,X),(g',X'))\leq \varepsilon\qquad\quad\forall\: (g,X),(g',X')\in \big[\mu(t)\cdot W[t]\big]\times \big[\psi(t) + U[t]\big]
\end{align}
holds. We  choose
\begingroup
\setlength{\leftmargini}{12pt}
\begin{itemize}
\item
	$V[t]\subseteq G$ open with $e\in V[t]$ and $V[t]\cdot V[t]\subseteq W[t]$.
\item
	$O[t]\subseteq \mg$ open with $0\in O[t]$ and $O[t]+O[t]\subseteq U[t]$.
\item
	$J[t]\subseteq \RR$ open with $t\in J$, such that for $D[t]:=J[t]\cap [r,r']$, we have 
	\begin{align}
	\label{askljasjaljajskasa}
		\mu(D[t])\subseteq \mu(t)\cdot V[t]\subseteq \mu(t)\cdot W[t]\qquad\text{and}\qquad \psi(D[t])\subseteq \psi(t)+O[t]\subseteq \psi(t)+U[t].
	\end{align}
	\vspace{-22pt}
\end{itemize}
\endgroup
\noindent
Since $[r,r']$ is compact, there exist $t_0,\dots,t_n\in [r,r']$, such that $[r,r']\subseteq D_0\cup{\dots}\cup D_n$ holds.
\begingroup
\setlength{\leftmargini}{12pt}
\begin{itemize}
\item
	We define $V:= V[t_0]\cap{\dots}\cap V[t_n]$.
	 
	Then, Lemma \ref{fddfxxxxfd} provides us with some $\mm$ and $\dind\llleq k$, such that 
	\begin{align}
	\label{asljaljsasaljljsaas}
		\textstyle\innt_r^\bullet \phi'\in \innt_r^\bullet \phi\cdot V\qquad\text{holds for each}\qquad \phi'\in C^k([r,r'],\mg)\qquad\text{with}\qquad \mmm_\infty^\dind(\phi'-\phi)\leq 1. 
	\end{align}
\item
	We define $O:=O[t_0]\cap{\dots}\cap O[t_n]$; and fix some $O\lleq \qq\in \SEM$ with $\mm\leq \qq$.   
\end{itemize}
\endgroup
\noindent
Let now $\phi',\psi'\in C^k([r,r'],\mg)$ be given with $\qqq^\dind_\infty(\phi'-\phi),\:\qqq^\dind_\infty(\psi'-\psi)\leq 1$. 
Then, for $\tau\in D_p$ wit $0\leq p\leq n$, we obtain from \eqref{asljaljsasaljljsaas}, $O\lleq \qq$, and \eqref{askljasjaljajskasa} for $t\equiv t_p$ there that
\begingroup
\setlength{\leftmargini}{12pt}
\begin{itemize}
\item
$\textstyle \mu(t_p)^{-1} \cdot \innt_r^{\tau} \phi'  = \textstyle\big(\mu(t_p)^{-1}\cdot\mu(\tau)\big) \cdot \big([\innt_r^{\tau} \phi]^{-1} [\innt_r^{\tau} \phi']\big)  \in V\cdot V\subseteq W[t_p]$.
\vspace{2pt}
\item
$\psi'(\tau)-\psi(t_p)= (\psi'(\tau)-\psi(\tau))+ (\psi(\tau)-\psi(t_p))\in O + O\subseteq U[t_p]$.
\end{itemize}
\endgroup
\noindent
The claim is thus clear from \eqref{pofsofspofs} and \eqref{askljasjaljajskasa}. 
\end{proof}

\end{document}